\documentclass{article}

\usepackage{graphicx} % Required for inserting images
\usepackage[T1]{fontenc}
\usepackage[utf8]{inputenc}
\usepackage[english]{babel}
\usepackage{amsmath}
\usepackage{amsthm}
\usepackage{amssymb}
\usepackage{tikz}
\usetikzlibrary{arrows.meta,decorations.pathreplacing,positioning}

\usepackage{graphicx}
\usepackage{faktor}
\usepackage{xcolor, colortbl}
\usepackage{makecell}
\usepackage{caption}
\usepackage{subcaption}
\usepackage{comment}
\usepackage{setspace}
\usepackage[11pt]{moresize}
\usepackage{wasysym}
\usepackage{float}
\usepackage[margin=1in]{geometry}
\usepackage{mathrsfs}
\usepackage{url}
\usepackage{authblk}
\usepackage{esint}
\usepackage{bbm}
\usepackage{hyperref}
\usepackage{cleveref}
\usepackage{comment}
\usepackage{bm}
\hypersetup{
colorlinks = true,
urlcolor   = blue,
citecolor  = blue,
linkcolor  = blue,
}

\DeclareMathOperator{\id}{id}
\DeclareMathOperator{\dist}{dist}
\DeclareMathOperator{\opt}{opt}

\DeclareMathOperator{\Lip}{Lip}

\DeclareMathOperator{\loc}{loc}
\DeclareMathOperator{\diver}{div}

\DeclareMathOperator{\supp}{supp}

\DeclareMathOperator{\even}{even}

\theoremstyle{plain}
\newtheorem{thm}{Theorem}[section]

\newtheorem*{thm*}{Theorem}
\newtheorem{lem}[thm]{Lemma}
\newtheorem{prop}[thm]{Proposition}
\newtheorem{cor}[thm]{Corollary}
\newtheorem{defn}[thm]{Definition}
\theoremstyle{remark}
\newtheorem{rmk}[thm]{Remark}

\newcommand{\R}{\mathbb{R}}
\newcommand{\Sph}{\mathbb{S}}
\newcommand{\T}{\mathbb{T}}
\newcommand{\Sp}{\mathbb{S}}

\newcommand{\Z}{\mathbb{Z}}
\newcommand{\N}{\mathbb{N}}

\newcommand{\eps}{\varepsilon}

\newcommand{\fr}{\penalty-20\null\hfill$\blacksquare$}

\numberwithin{equation}{section}

\title{Quantitative Convergence of Wasserstein Gradient Flows \\ of Kernel Mean Discrepancies}
\author{Lénaïc Chizat, Maria Colombo, Roberto Colombo, Xavier Fernández-Real}
\date{}

\begin{document}
\maketitle
\begin{abstract}
We study the quantitative convergence of Wasserstein gradient flows of Kernel Mean Discrepancy (KMD) (also known as Maximum Mean Discrepancy (MMD)) functionals.
Our setting covers in particular the training dynamics of shallow neural networks in the infinite-width and continuous time limit, as well as interacting particle systems with pairwise Riesz kernel interaction in the mean-field and overdamped limit.
Our main analysis concerns the model case of KMD functionals given by the squared Sobolev  distance $ \mathscr{E}^{\nu}_{s}(\mu)= \frac{1}{2}\lVert \mu-\nu\rVert_{\dot H^{-s}}^{2}$ for any $s\geq 1 $ and $\nu$ a fixed probability measure on the $d$-dimensional torus.
First, inspired by Yudovich theory for the $2d$-Euler equation, we establish existence and uniqueness in natural weak regularity classes.
Next, we show that for $s=1$ the flow converges globally at an exponential rate under minimal assumptions, while for $s>1$ we prove local convergence at polynomial rates that depend explicitly on $s$ and on the Sobolev regularity of $\mu$ and $\nu$. These rates hold both at the energy level and in higher regularity classes and are tight for $\nu$ uniform.
We then consider the gradient flow of the population loss for shallow neural networks with ReLU activation, which can be cast as a Wasserstein--Fisher--Rao gradient flow on the space of nonnegative measures on the sphere $\mathbb{S}^d$. 
Exploiting a correspondence with the Sobolev energy case with $s=(d+3)/2$, we derive an explicit polynomial local convergence rate for this dynamics.
Except for the special case $s=1$, even non-quantitative convergence was previously open in all these settings.
We also include numerical experiments in dimension $d=1$ using both PDE and particle methods which illustrate our analysis.
\end{abstract}

\bigskip
\noindent\textbf{MSC:} 49Q22, 68T07, 49J45, 35Q68.\\
\noindent\textbf{Keywords:} Wasserstein gradient flows, maximum mean discrepancy, Riesz kernels, ReLU neural networks, quantitative convergence.

\tableofcontents

\bigskip

\section{Introduction}

Let $\mathscr{P}(M)$ denote the set of probability measures on a $d$-dimensional smooth manifold $M$. Given a target $\nu\in \mathscr{P}(M)$, we study the well-posedness and long-time behavior of the Wasserstein gradient flow of \emph{Kernel Mean Discrepancy} (KMD) functionals (also known as \emph{Maximum Mean Discrepancy} (MMD) functionals\footnote{The terminology MMD was introduced in~\cite{gretton2012kernel} where it refers to a general integral probability metric (IPM). Here, following~\cite{wainwright2019high}, we adopt the name KMD to indicate that we use a kernel-based IPM. In machine learning, it is often assumed that the kernel generates a RKHS (e.g.~$s>d/2$ in the Riesz case of~\eqref{eq:energy-Riesz-intro}) but this assumption is not needed for our analysis.}) starting from some $\bar \mu\in \mathscr{P}(M)$. These are functionals of the form 
\begin{equation}\label{eq:energy-general-intro}
    \mathscr{E}^{\nu}(\mu):= \frac{1}{2}\int_{M} \int_{M}K(x,y)d(\mu-\nu)(x)d(\mu-\nu)(y),
\end{equation}
where $K:M\times M\to \mathbb{R}$ is a symmetric and conditionally positive definite (but not necessarily continuous) kernel.
The dynamics can be expressed as solutions to the Cauchy problem for active-scalar continuity equations  of the form  
\begin{equation}\label{eq:PDE-general-intro}
    \partial_{t}\mu_{t}=\diver \left(\mu_{t}\nabla \mathcal{K}\left(\mu_{t}-\nu\right)\right)\qquad \text{in $(0,T)\times M$},\qquad\qquad \mu_{0}=\bar\mu,
\end{equation}
where $\mathcal{K}$ is the positive semidefinite operator given by
\begin{equation*}
    \mathcal{K}(\eta)(x):=\int_{M}K(x,y)d\eta(y).
\end{equation*}

Equation~\eqref{eq:PDE-general-intro} can be interpreted as the evolution of an overdamped system of positively charged particles with law $\mu_t$, interacting with a fixed negatively charged background $\nu$ through the potential $K$.

\medskip 

\paragraph{Motivations.} Interest in studying KMD-type (or MMD-type) discrepancies comes from machine learning and statistics. For instance, \eqref{eq:PDE-general-intro} describes the mean-field (infinite-width) limit of the training dynamics of shallow (i.e.~one hidden layer) neural networks, where $\mu_t$ represents the evolving distribution of the parameters, the objective functional $\mathscr{E}^{\nu}$ coincides with the population loss, and the kernel $K$ depends on the activation function and on the input data distribution~\cite{mei2018mean,rotskoff2022trainability, sirignano2020mean,chizat2018global} (see also \cite{fernandez2022continuous}). Such dynamics have also been motivated in the context of generative modeling where the goal is to find a map transporting an easy-to-sample source density towards a target density (see e.g.~\cite{arbel2019maximum, altekruger2023neural, galashovdeep, chen2025regularized}). In this context,~\eqref{eq:PDE-general-intro} can be interpreted as a simplified model of the training dynamics which consists in minimizing with gradient-based optimization the objective $\theta \mapsto \mathscr{E}^{\nu}((f_\theta)_\# \mu_0)$ where $f_\theta$ is a neural network with parameters $\theta$ and $\mu_0$ a reference probability measure~\cite{unterthiner2018coulomb}. 

\medskip 

\paragraph{Quantitative convergence.} Despite the apparent simplicity of the quadratic objective~\eqref{eq:energy-general-intro}, the qualitative and quantitative convergence properties of its Wasserstein gradient flow are poorly understood beyond special cases. The key obstruction is geometric: while $\mathscr E^\nu$ is convex for the linear structure on measures (by positivity of the kernel), it is typically not geodesically convex in $(\mathscr P(M),W_2)$, so the standard contraction and quantitative convergence mechanisms for Wasserstein gradient flows in geodesically (uniformly) convex scenarios \cite{ambrosio2005gradient} do not apply.

Existing works (see references above) have sought to obtain guarantees on the long-time behavior. For instance,~\cite{chizat2018global} shows that if the support of $\bar \mu$ satisfies an ``omnidirectionality'' condition and if the Wasserstein--Fisher--Rao gradient flow (see Section~\ref{subsec:arccos-intro})  converges, then it is towards a global minimizer; but this result does not apply to Wasserstein gradient flows. Even for Wasserstein--Fisher--Rao flows, it is also only a partial guarantee since it does not establish convergence and, a fortiori, offers no quantitative information on convergence rates. Also,~\cite{arbel2019maximum} shows that  under a boundedness condition on $\mu_t - \nu$ along the flow, then the Wasserstein gradient flow converges globally at a rate $O(1/t)$. However, it is unclear when this assumption may hold or fail, or even if global convergence is to be expected in general, in light of counter-examples in closely related settings~\cite{safran2018spurious}. All in all, long-time convergence guarantees for Wasserstein (or Wasserstein--Fisher--Rao) gradient flows of KMDs is still an open question, even at the local and non-quantitative level. 

\medskip
 
\paragraph{A model case: Riesz kernels on the torus.} We focus on the model case of Riesz kernels on the $d$-torus. We stress, however, that our approach is more broadly applicable, as illustrated by our analysis of ReLU shallow neural networks in \Cref{subsec:arccos-intro} (where $M=\Sph^d$ and $K$ is a different kernel).

Let $M$ be the $d$-dimensional torus $\T^{d} \cong \R^d / \Z^d$, and let  $\mathcal{K}$ be the inverse Laplacian to some power $s\ge 1$,  
\begin{equation*}
    \mathcal{K}(\eta):=(-\Delta)^{-s}\eta,\qquad \int_{\T^{d}} \eta=0.
\end{equation*}
The corresponding energy $\mathscr{E}^{\nu}_{s}(\mu)$, \eqref{eq:energy-general-intro}, is the homogeneous Sobolev $\dot H^{-s}$-discrepancy between $\mu$ and $\nu$,
\begin{equation}\label{eq:energy-Riesz-intro}
    \mathscr{E}^{\nu}_{s}(\mu)= \frac{1}{2}\lVert \mu-\nu\rVert_{\dot H^{-s}}^{2}= \frac{1}{2}\int_{\T^{d}}\int_{\T^{d}}K_{s}(x-y)d(\mu-\nu)(x)d(\mu-\nu)(y),
\end{equation}
where $K_{s}:\T^{d}\to \R$ is the Riesz kernel, solving $(-\Delta)^{s}K_{s}=\delta_{0}-1$ in the sense of distributions. 
In this framework,  the evolution \eqref{eq:PDE-general-intro} takes the form
\begin{equation}\label{eq:PDE-GF}
    \left\{
    \begin{array}{rclll}
         \partial_{t}\mu+\diver(\mu v)&=&0\qquad &\text{in $(0,T)\times \T^{d}$},\\
            v_{t}&=&-\nabla K_{s}*(\mu_{t}-\nu)\qquad &\forall t\in (0,T),\\
            \mu_{0}&=&\bar \mu.
    \end{array}
    \right.
\end{equation}
(See \Cref{def:solution-active-scalar-equation} for the precise notion of weak solution to \eqref{eq:PDE-GF}).

Riesz kernel interactions, as above,   have already attracted the attention of both theoretical and applied communities. As $s\ge 1$ varies, the different asymptotic behavior of short-distance interactions $K_{s}(x-y)\sim c_{d,s}|x-y|^{2s-d}$ leads to a range of regularity regimes, shown in \Cref{fig:riesz} (see details in \Cref{lem:asymptotic-behavior-kernels}), which in turn dictate diverse qualitative behaviors of solutions to \eqref{eq:PDE-GF}. We can identify at least three values for $s$ of particular interest:
\begin{itemize}
    \item [1)] $\left(s=1\right)$. In this case, \eqref{eq:energy-Riesz-intro} is precisely the Coulomb interaction energy of the signed distribution of charge $\mu-\nu$. Due to its physical significance, Coulomb interactions have been studied extensively in the mathematical literature (see \cite{serfaty2024lectures}, \cite{boufadeneVialard2023} and references therein).
 
    \item [2)] $\left(s=\frac{d}{2}+\frac{1}{2}\right)$. In this case, the kernel $K(x-y)$ behaves as the negative distance $-|x-y|$ for short interactions. Negative distance kernels~\cite{szekely2013energy} and their Wasserstein gradient flows have been studied for instance in \cite{hagemann2023posterior, hertrich2023generative, hertrich2024wasserstein}, with applications to machine learning and generative modeling.   
    
    \item [3)] $\left(s=\frac{d}{2}+\frac{3}{2}\right)$. As explained in detail in \Cref{subsec:arccos-intro}, this case is relevant to understanding gradient flows on infinite-width shallow ReLU neural networks. 
\end{itemize}

In this paper we provide an analysis of the equation \eqref{eq:PDE-GF} for all $s\ge 1$, addressing both questions of well-posedness in natural classes and quantitative convergence to minimizers.

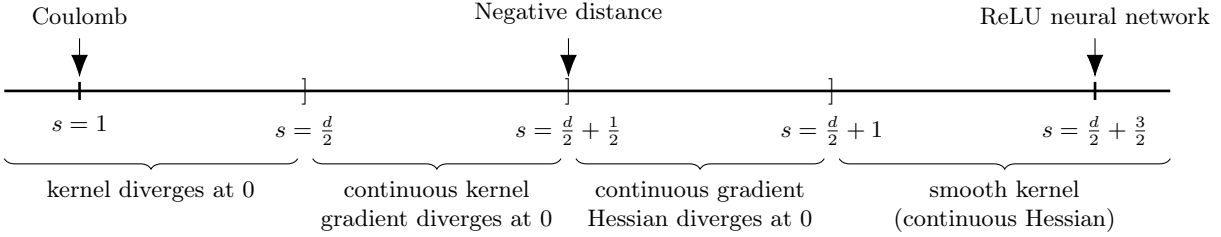
\begin{figure}
\centering
\begin{tikzpicture}[
  >=Latex,
  axis/.style={line width=1.0pt},
  tick/.style={line width=1.0pt},
  pt/.style={circle, fill=black, inner sep=1.6pt},
  lab/.style={font=\small},
  seglab/.style={
    font=\small,
    align=center,
    text width=3.6cm,
    fill=white,
    inner sep=2pt
  },
  ann/.style={font=\small, align=center},
  brace/.style={decorate, decoration={brace, amplitude=4pt, mirror}}
]

% --- intentionally non-uniform horizontal scale
\def\xA{0.0}    % s=1
\def\xB{3.0}    % s=d/2
\def\xC{6.5}    % s=d/2+1/2
\def\xD{10.0}    % s=d/2+1
\def\xE{13.5}   % s=d/2+3/2

% --- axis
\draw[axis] (\xA-1.0,0) -- (\xE+1.0,0);

% --- ticks + dots

\foreach \x/\t in {
  \xA/{$s=1$},
  \xB/{$s=\frac d2$},
  \xC/{$s=\frac d2+\frac12$},
  \xD/{$s=\frac d2+1$}
}{
  %\draw[tick] (\x,0.12) -- (\x,-0.12);
  %\node[pt] at (\x,0) {};
  \node[lab, below=6pt] at (\x,0) {\t};
}
\def\yB{-0.7}      % y-level of braces
\def\xStart{9.2}   % left boundary (e.g. s=d/2+1)
\def\xBraceEnd{12.8} % where the visible brace stops (for label placement)
\def\xAxisEnd{15.0}  % where the axis ends (arrow goes to here)

%\tickright{\xB}
%\node[pt] at (\xA,0) {};
\draw[tick] (\xA,0.12) -- (\xA,-0.12);
\node at (\xB,0) {$]$};
\node at (\xC,0) {$]$};
\node at (\xD,0) {$]$};

% --- regime labels (mirrored braces below axis)
\draw[brace] (\xA-1.0,-0.9) -- node[seglab, below=6pt]
  {kernel diverges at $0$} (\xB-0.1,-0.9);

\draw[brace] (\xB+0.1,-0.9) -- node[seglab, below=6pt]
  {continuous kernel \\ gradient diverges at $0$} (\xC-0.1,-0.9);

\draw[brace] (\xC+0.1,-0.9) -- node[seglab, below=6pt]
  {continuous gradient \\  Hessian diverges at $0$} (\xD-0.1,-0.9);

\draw[brace] (\xD+0.1,-0.9) -- node[seglab, below=6pt]
  {smooth kernel \\ (continuous Hessian)} (\xE+1,-0.9);

% --- arrows for special values
% Coulomb
\draw[-{Latex[length=3mm]}] (\xA,0.7) -- (\xA,0.18);
\node[ann, above=2pt] at (\xA,0.7) {Coulomb};

% Negative distance
\draw[-{Latex[length=3mm]}] (\xC,0.7) -- (\xC,0.18);
\node[ann, above=2pt] at (\xC,0.7)
  {Negative distance};

% ReLU neural network
\draw[tick] (\xE,0.12) -- (\xE,-0.12);
\draw[-{Latex[length=3mm]}] (\xE,0.7) -- (\xE,0.18);
\node[ann, above=2pt] at (\xE,0.7)
  {ReLU neural network};
\node[lab, below=6pt] at (\xE,0)
  {$s=\frac d2+\frac32$};

% --- title (lifted safely above everything)
%\node[lab, above=38pt] at ({(\xA+\xE)/2},0)
%  {Riesz kernel parameter $s$ on $\mathbb{R}^d$};

\end{tikzpicture}
\caption{Regularity regimes of Riesz kernels $K_s$ on the $d$-torus  (bracket orientation indicates inclusion/exclusion of the endpoint) and some values of particular interest (arrows). The negative distance and ReLU neural network cases are not strictly speaking Riesz kernels on the $d$-torus, but their regularity on the diagonal matches that of the indicated exponent. For $d=1$, the kernel is never singular for $s\geq 1$ (as then the Coulomb case coincides with the negative distance case).}\label{fig:riesz}
\end{figure}

\subsection{Well-posedness results}\label{subsec:well-posedness-intro}

Our convergence results below rest on a robust well-posedness theory for \eqref{eq:PDE-GF}. We therefore begin by proving existence, uniqueness, stability, and propagation of regularity in natural classes. These results are of independent interest and will be crucial for the quantitative long-time analysis. 

For every $s\ge 1$, we identify a natural weak class of solutions, the guiding principle being that solutions within the class should generate a (quasi)Lipschitz vector field. 
We denote by $\mathcal{M}(\T^d)$ the space of finite (signed) measures in $\T^d$ and by $L^{p,1}(\T^{d})$ the Lorentz space (see \eqref{eqn:Lorentz} below; alternatively, one may replace $L^{p,1}$ with $L^q$ for some $q>p$). We always identify absolutely continuous measures with their density with respect to Lebesgue. Finally, we set
    \begin{equation}\label{eq:def-yudovich-class}
        \mathscr{X}_{s}(\T^{d}):= \begin{cases}
            L^{\infty}(\T^{d})\qquad\qquad &\text{if $s=1$},\\
            L^{p,1}(\T^{d})\qquad \text{for\,\, $p=\frac{d}{2s-2}$}\qquad\qquad  &\text{if $s\in \left(1,\frac{d}{2}+1\right)$},\\
            \mathcal{M}(\T^{d})\qquad\qquad &\text{if $s\ge \frac{d}{2}+1$}.
        \end{cases}
    \end{equation}

\begin{prop}[Local well-posedness]\label{prop:local-well-posedness-intro}
    Let $s\ge 1$ and let $\mathscr{X}_{s}(\T^{d})$ be given by \eqref{eq:def-yudovich-class}.  Then, for every 
    $\bar\mu , \nu \in \mathscr{P}\cap \mathscr{X}_{s}(\T^{d})$ 
    there exist a maximal time of existence $T>0$ and a unique maximal solution $\mu \in L_{\loc}^{\infty}([0,T);\mathscr{X}_{s}(\T^{d}))$ of equation \eqref{eq:PDE-GF} in the sense of \Cref{def:maximal-solutions}. Moreover:
    \begin{itemize}
        \item If $s\ge \frac{d}{2}+1$, then $T=\infty$.
        \item  If $s\in \left[1,\frac{d}{2}+1\right)$,  we have  $T<\infty $ if and only if $\limsup_{t\to T^{-}}\lVert \mu_{t}\rVert_{L^{p}}=+\infty$, where $p=\frac{d}{2s-2}\in (1,\infty]$. 
    \end{itemize}

        \noindent Finally, solutions propagate H\"older and Sobolev regularity (see \Cref{prop:propagation-regularity} and \Cref{prop:propagation-sobolev}).
\end{prop}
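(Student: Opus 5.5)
The proof will follow the Yudovich strategy for $2d$-Euler: first prove that the velocity field $v=-\nabla K_s*(\mu-\nu)$ is log-Lipschitz whenever $\mu,\nu\in\mathscr X_s$; then use Lagrangian flows and a fixed point to get existence and uniqueness; finally characterize when the solution stops existing. In Fourier terms, $\nabla K_s$ is a multiplier of order $1-2s$. So $D v=-D^2K_s*(\mu-\nu)$ is a Calderón--Zygmund-type operator when $s=1$, and is smoothing of order $2s-2$ when $s>1$.

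\textbf{Step 1 (kernel estimates).} We use \Cref{lem:asymptotic-behavior-kernels} to bound the kernel near the origin: $|\nabla K_s(z)|\lesssim |z|^{2s-1-d}$ and $|D^2K_s(z)|\lesssim |z|^{2s-2-d}$, with logarithmic corrections at the critical exponents. From these we aim for
\[
|v(x)-v(y)|\le C\,\|\mu-\nu\|_{\mathscr X_s}\,|x-y|\bigl(1+\log^+\tfrac1{|x-y|}\bigr).
\]
The three cases go as follows.
\begin{itemize}
\item For $s=1$ this is the classical Yudovich estimate for an $L^\infty$ density.
\item For $1<s<\frac d2+1$, the function $|z|^{2s-2-d}$ lies in the Lorentz space $L^{p',\infty}$, where $p'$ is dual to $p=\frac d{2s-2}$. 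The duality between $L^{p,1}$ and $L^{p',\infty}$ then gives $D v\in L^\infty$, so $v$ is actually Lipschitz. This is exactly why the class uses $L^{p,1}$ rather than $L^p$.
\item For $s\ge \frac d2+1$, $D^2K_s$ is bounded away from a logarithmic singularity, or continuous, so $v$ is log-Lipschitz for any finite measure $\mu$.
\end{itemize}

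\textbf{Step 2 (local existence and uniqueness).} Given a log-Lipschitz field, we take the unique Osgood flow $X_t$ and define $\mu_t=(X_t)_\#\bar\mu$. Existence comes from an iteration scheme or from a Schauder fixed point on $C([0,T];\mathscr P)$ with the weak topology. We mollify $\bar\mu$, solve the smooth problem, and pass to the limit using uniform bounds. The key a priori bound is
\[
\partial_t\mu+\diver(\mu v)=0 \;\Rightarrow\; \mu_t=\frac{\bar\mu}{\det DX_t}\circ X_t^{-1},\qquad \frac{d}{dt}\log\det DX_t=\diver v .
\]
Here $\diver v=-\Delta K_s*(\mu-\nu)=(-\Delta)^{1-s}(\mu-\nu)$.
\begin{itemize}
\item For $s=1$, $\diver v=\mu-\nu$. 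This gives $\frac{d}{dt}\|\mu_t\|_\infty\le \|\mu_t\|_\infty\|\nu\|_\infty$, a Grönwall bound with no blow-up; it actually yields global $L^\infty$ control.
\item For $s>1$, we have $|\diver v|\lesssim \|\mu-\nu\|_{L^{p,1}}$. This gives a bound of Riccati type, and hence a local time of existence depending on $\|\bar\mu\|_{\mathscr X_s}$.
\end{itemize}
Uniqueness will follow Loeper's argument. We compare two solutions through the $W_2$ distance, or through the quantity $\int |X_t-Y_t|^2\,d\bar\mu$ for their flows. The log-Lipschitz bound together with a Loeper-type estimate $\|\nabla K_s*(\mu-\mu')\|_{L^2(\mu)}\lesssim W_2(\mu,\mu')$ gives an Osgood differential inequality, which forces the two solutions to agree.

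\textbf{Step 3 (maximal solutions and blow-up criterion).} Since the existence time depends only on the norm of the data, we can glue local solutions to obtain a maximal solution in the sense of \Cref{def:maximal-solutions}. The standard continuation argument then shows $T<\infty$ only if $\|\mu_t\|_{\mathscr X_s}$ is unbounded near $T$.

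The main obstacle is to improve this criterion from the $L^{p,1}$ norm to the $L^p$ norm. The plan is to show that $L^p$ control together with Lipschitz/log-Lipschitz interpolation bounds the Jacobian, and hence bounds $L^{p,1}$. We intend to use a Brezis--Gallouet/Beale--Kato--Majda type logarithmic inequality:
\[
\|Dv\|_\infty\lesssim \|\mu\|_{L^p}\bigl(1+\log(1+\|\mu\|_{C^\alpha})\bigr).
\]
We would combine this with propagation of Hölder regularity (\Cref{prop:propagation-regularity}), or work directly on the Lagrangian formula for $\mu_t$, to get a double-exponential bound that stays finite as long as $\|\mu_t\|_{L^p}$ stays bounded.

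For $s\ge\frac d2+1$, the mass is conserved and the class is all of $\mathcal M$, so no norm can blow up; hence $T=\infty$. Propagation of Hölder and Sobolev regularity then follows from commutator estimates along the flow, as in \Cref{prop:propagation-regularity} and \Cref{prop:propagation-sobolev}.
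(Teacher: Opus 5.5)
Your Steps 1--2 follow the paper's route: the modulus of continuity of \Cref{lem:modulus-continuity-vector-field}, a Lagrangian iteration, and a Loeper-type $W_2$ estimate fed into an Osgood inequality. The one thing to add there is that, before comparing flows, you must know that every weak solution in the class is Lagrangian. This follows from uniqueness for the linear continuity equation with a bounded Osgood field.

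The genuine gap is in Step 3, for $1<s<\frac d2+1$. The inequality $\lVert Dv\rVert_{L^\infty}\lesssim \lVert \mu\rVert_{L^p}(1+\log(1+\lVert\mu\rVert_{C^{0,\alpha}}))$ is useless for data that are only in $L^{p,1}$. Such $\mu_t$ is in general not even bounded, let alone H\"older, and \Cref{prop:propagation-regularity} requires H\"older data. Approximating by smooth data does not rescue the argument. The double-exponential bound you would obtain depends on $\log\lVert\bar\mu^n\rVert_{C^{0,\alpha}}$, which diverges. More structurally, the only quantity propagated from $L^{p,1}$ data that is stronger than $L^p$ is $L^{p,1}$ itself. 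It has the same scaling, so no logarithmic interpolation can close; this is why \Cref{lem:log-interp-BKM-criterion} needs $L^r$ with $r>p$.

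The paper avoids BKM entirely in \Cref{prop:maximal-solutions}. Since $|\nabla K_s|\lesssim |x|^{2s-1-d}\in L^{p'}$, one has $\lVert v_t\rVert_{L^\infty}\lesssim \lVert\mu_t\rVert_{L^p}+\lVert\nu\rVert_{L^p}$. So an $L^p$ bound makes $t\mapsto\mu_t$ Lipschitz in $W_2$, $\mu_t$ converges as $t\to T^-$, and local existence is restarted from the limit. This closes at once for $s=1$, since $L^\infty$ bounds pass to weak-$*$ limits. For $1<s<\frac d2+1$, however, restarting in $\mathscr X_s=L^{p,1}$ needs the limit to lie in $L^{p,1}$, which an $L^p$ bound alone does not give. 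So an extra input is needed there in any case.

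The natural input is the one-sided version of your own $s=1$ computation. For $s>1$ one has $\diver v_t=(-\Delta)^{1-s}(\mu_t-\nu)=K_{s-1}*(\mu_t-\nu)$. Since $K_{s-1}$ is bounded below on $\T^d$ (positive singularity at $0$, smooth elsewhere) and $\mu_t\ge 0$ has unit mass,
$\diver v_t\ge \min K_{s-1}-\lVert K_{s-1}*\nu\rVert_{L^\infty}=:-C_0$,
where $C_0$ depends only on $d$, $s$ and $\lVert\nu\rVert_{L^{p,1}}$. Hence $\det DX_t\ge e^{-C_0 t}$. The linear map $f\mapsto (f/\det DX_t)\circ X_t^{-1}$, which sends $\bar\mu$ to $\mu_t$, therefore has norm $1$ on $L^1$ and at most $e^{C_0t}$ on $L^\infty$. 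By real interpolation, $\lVert\mu_t\rVert_{L^{p,1}}\lesssim_{p} e^{C_0 t}\lVert\bar\mu\rVert_{L^{p,1}}$ on $[0,T)$. This excludes finite-time blow-up of the $\mathscr X_s$-norm altogether, so $T=\infty$ in this range too. The blow-up alternative is then vacuous, and no upgrade from $L^{p,1}$ to $L^p$ is needed.
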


  The result is inspired by Yudovich's theory for $L^{\infty}$-solutions of $2d$-Euler equations in vorticity form \cite{yudovich1963non}, although our proof follows the more recent approach developed in \cite{marchioro2012mathematical} and \cite{loeper2006uniqueness}. See also \cite{bertozzi2012aggregation, bertozzi2011lp} for the well-posedness of a related model without target measure in $\R^d$. 
 Even if the result we prove is for $M = \T^d$ and $K = K_s$ the Riesz kernel, the same methods apply to more general $d$-dimensional manifolds (e.g.~$M = \Sph^{d}$ or $M = \R^d$) and other kernels with comparable asymptotic behaviors.

  For $s>d/2+1$, $K_s$ is semiconvex, so global well-posedness of \eqref{eq:PDE-GF} in $\mathcal M$ follows from the general theory \cite{ambrosio2005gradient} (see also \cite{crippa2013existence}). At the endpoint $s=d/2+1$, $K_s$ is not semiconvex, but we still obtain global well-posedness of \eqref{eq:PDE-general-intro} in $\mathcal M$ using the log-Lipschitz regularity of the induced velocity field.

 We prove \Cref{prop:local-well-posedness-intro} in Section~\ref{sec:well-posedness} and we obtain there further properties, such as the quantitative stability of solutions in the Wasserstein metric with respect to variations of the initial and target measures $\bar\mu$ and $\nu$.
Moreover, consistently with the formal identification outlined above, we observe in \Cref{prop:gradient-flow-structure} that the solution $\mu$ of \eqref{eq:PDE-GF} given by \Cref{prop:local-well-posedness-intro} is a Wasserstein gradient flow for the Riesz discrepancy energy $\mathscr{E}^{\nu}_{s}$ (see \Cref{def:Wasserstein-gradient-flow-H-s}). In fact, $\mu\in \Lip_{\loc}\left([0,T);\left(\mathscr{P}(\T^{d}),W_{2}\right)\right)$ and the energy dissipation identity holds:
        \begin{equation}\label{eq:energy-dissipation-identity}
        \frac{d}{dt}\mathscr{E}^{\nu}_{s}(\mu_{t})=-\int_{\T^{d}}|\nabla K_{s}*(\mu_{t}-\nu)|^{2}d\mu_{t}\qquad  \forall t\in (0,T).
        \end{equation}

\subsection{Convergence results}\label{subsec:convergence-intro}

Next we state the main results: \Cref{thm:convergence-s=1} for $s=1$ and \Cref{thm:convergence-s>1} for $s>1$, addressing convergence to the target for solutions of \eqref{eq:PDE-GF}. The split reflects a qualitative change in the dynamics: the endpoint case $s=1$ enjoys additional structure (e.g.~a maximum principle), while the regime $s>1$ is technically more demanding and is the central case of this work.

\begin{thm}[Global convergence to the target: $s=1$]\label{thm:convergence-s=1}
Let $s=1$, $\bar\mu, \nu \in \mathscr{P}\cap L^{\infty}(\T^{d})$ and let $\mu\in L^{\infty}_{\loc}([0,T);L^{\infty}(\T^{d}))$ be the maximal solution of \eqref{eq:PDE-GF} given by \Cref{prop:local-well-posedness-intro}. Then, $T=\infty$ and it holds:
    \begin{gather}
            \min\{\inf \bar{\mu}, \inf \nu\}\le \inf\mu_{t}\le \sup \mu_{t}\le \max \{\sup \bar{\mu}, \sup \nu\}\qquad \forall t\in [0,\infty);\label{eq:max-princ-s=1-intro}\\
            \text{$\mu_{t}$ converges  weakly-$*$ in $L^{\infty}(\T^{d})$ to $\nu$ as $t\to \infty$}.\label{eq:weak*-conv-Linfty_coulomb}
    \end{gather}
 Suppose, moreover, that $\bar \mu, \nu \ge \alpha>0$ almost everywhere in $\T^{d}$. Then the following hold:
    \begin{itemize}
        \item [i)] (Exponential weak convergence in energy and $W_2$). It holds:
        \begin{equation}\label{eq:exponential-decay-H^-1-coulomb}
            \alpha^{1/2}W_{2}(\mu_{t},\nu)\le \lVert \mu_{t}-\nu\rVert_{\dot H^{-1}}\le \lVert \bar\mu -\nu\rVert_{\dot H^{-1}}e^{-\alpha t}\qquad \forall t\in [0,\infty).
        \end{equation}
        \item [ii)] (Uniform convergence). Suppose that $\nu \in C(\T^{d})$ has a Dini modulus of continuity\footnote{A modulus of continuity is a strictly increasing continuous and concave function $\omega:[0,\infty)\to [0,\infty)$ such that $\omega(0)=0$. We say that $f:\T^{d}\to \R$ has modulus of continuity $\omega$ if $|f(x) - f(y)|\le \omega(|x-y|)$ for all $x, y\in T^d$.         We say that $\omega$ is a Dini modulus of continuity if it holds
        $
            \int_{0}^{1}\frac{\omega(r)}{r}dr<\infty.
        $
        }. Then
        \begin{equation}\label{uniform-convergence-Dini-s=1}
            \lVert \mu_{t}-\nu\rVert_{L^{\infty}}\to 0 \qquad \text{as $t\to \infty$}.
        \end{equation}
        Moreover, the uniform convergence is exponential as soon as $\nu$ is H\"{o}lder continuous. 
        \item [iii)] (Smooth convergence). Let $\gamma>d/2$ and suppose that $\bar\mu \in H^{\gamma}(\T^{d})$, $\nu \in H^{\gamma+1}(\T^{d})$ (see \Cref{subsec:Sobolev-Riesz-Torus}). Then, there exists a constant $C>0$ depending only on $d,\alpha, \gamma$, $\lVert \bar \mu\rVert_{H^{\gamma}}$, and $\lVert \nu\rVert_{H^{\gamma+1}}$ such that 
        \begin{equation*}
            \lVert \mu_{t}-\nu\rVert_{\dot H^{\gamma}}\le \left(\lVert \bar\mu-\nu\rVert_{\dot H^{\gamma}}^{2}+C(1-e^{-t/C})\right)^{1/2}e^{-\alpha t + C(1-e^{-t/C})}\qquad \forall t\in [0,\infty).
        \end{equation*}
    \end{itemize}
\end{thm}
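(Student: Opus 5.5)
The whole argument runs along Lagrangian trajectories. For $s=1$ we have $-\Delta K_1=\delta_0-1$, so the velocity $v_t=-\nabla K_1*(\mu_t-\nu)$ satisfies
\[
\diver v_t=\Delta K_1*(\mu_t-\nu)=-(\mu_t-\nu).
\]
Writing the continuity equation in nonconservative form along the flow $X_t$ of $v_t$ (which exists and is unique by the log-Lipschitz bound from \Cref{prop:local-well-posedness-intro}) gives
\[
\frac{d}{dt}\mu_t(X_t)=-\mu_t\diver v_t=\mu_t(X_t)\bigl(\mu_t(X_t)-\nu(X_t)\bigr).
\]
This Riccati-type ODE immediately yields the maximum principle \eqref{eq:max-princ-s=1-intro}. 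If $\mu>\max\{\sup\bar\mu,\sup\nu\}$ the right-hand side is positive, which looks unfavourable, so the sign has to be checked carefully. Since the energy decreases, the correct gradient flow moves $\mu$ toward $\nu$, which suggests $\frac{d}{dt}\mu=-\mu(\mu-\nu)$. I would fix the sign convention of $K_1$ first, and with it the identity $\diver v=\mu-\nu$. Then $\mu$ at a maximum point exceeding $\sup\nu$ decreases, and symmetrically at a minimum point, which gives \eqref{eq:max-princ-s=1-intro}. I would justify this rigorously by a smooth approximation together with the stability result. The a priori $L^\infty$ bound combined with the blow-up criterion gives $T=\infty$.

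For i), differentiate the energy using \eqref{eq:energy-dissipation-identity}. With $\phi_t=K_1*(\mu_t-\nu)$ we have
\[
\frac{d}{dt}\tfrac12\lVert\mu_t-\nu\rVert_{\dot H^{-1}}^2=-\int|\nabla\phi_t|^2\,d\mu_t .
\]
The same ODE along trajectories, $\frac{d}{dt}\mu(X_t)=-\mu(\mu-\nu)$, shows that if $\mu>\alpha$ then the value stays above $\alpha$ whenever $\nu\ge\alpha$, so $\inf\mu_t\ge\alpha$ for all $t$. Hence
\[
\int|\nabla\phi_t|^2\,d\mu_t\ge\alpha\lVert\nabla\phi_t\rVert_{L^2}^2=\alpha\lVert\mu_t-\nu\rVert_{\dot H^{-1}}^2,
\]
and Gr\"onwall gives the factor $e^{-\alpha t}$. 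The inequality $W_2\le\alpha^{-1/2}\lVert\cdot\rVert_{\dot H^{-1}}$ for measures bounded below by $\alpha$ is the standard Peyre/Loeper estimate. For the weak-$*$ convergence \eqref{eq:weak*-conv-Linfty_coulomb} without the lower bound, the dissipation is integrable in time and $\mu_t$ is bounded in $L^\infty$. Any weak-$*$ limit point $\mu_\infty$ then satisfies $\int|\nabla\phi_\infty|^2\,d\mu_\infty=0$ (using the monotone energy and compactness of $\mu_{t+\cdot}$ in time). I expect that excluding stationary points with $\mu_\infty=0$ on the set where $\nabla\phi_\infty\ne0$ is the delicate point. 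I would use the Lagrangian ODE, whose solutions cannot reach zero density in finite time, together with an energy argument, or else exploit the explicit trajectory formula for $\mu_t(X_t)$.

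For ii), set $f_t=\mu_t-\nu$. Along trajectories
\[
\frac{d}{dt}f(X_t)=-\mu f-\nabla\nu\cdot v .
\]
To avoid using $\nabla\nu$, I would instead compare $\mu_t(X_t)$ with $\nu(X_t)$ through the ODE $\dot m=-m(m-\nu(X_t))$. Its solution tracks $\nu(X_t)$ with error controlled by the oscillation of $\nu$ along the trajectory, that is by $\omega(|v|\,\delta t)$. Since $\lVert v_t\rVert_{L^\infty}\to0$ (from the $\dot H^{-1}$ decay interpolated with the $L^\infty$ bound), this error vanishes. The Dini condition is exactly what makes $v$ controlled via $\lVert\nabla K_1*f\rVert_{L^\infty}\lesssim\int\omega_f(r)/r\,dr$. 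The Hölder case gives exponential rates via $\lVert v\rVert_\infty\lesssim\lVert f\rVert_{\dot H^{-1}}^{\theta}$.

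For iii), I would run an $\dot H^\gamma$ energy estimate on $f$ using Kato--Ponce commutator bounds. The linear term $-\mu f\approx-\alpha f$ gives the damping $e^{-\alpha t}$. The transport commutator is bounded by $\lVert\nabla v\rVert_{L^\infty}\lVert f\rVert_{\dot H^\gamma}^2$ plus lower-order terms involving $\lVert\nu\rVert_{H^{\gamma+1}}$. Since $\lVert\nabla v\rVert_\infty$ decays exponentially by interpolation with part i), it is integrable, which produces the factor $e^{C(1-e^{-t/C})}$. The main obstacle overall is making the maximum principle and Lagrangian ODE rigorous and handling the nonquantitative weak-$*$ convergence without a positive lower bound.
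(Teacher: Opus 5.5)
Your maximum principle argument (with the corrected sign $\diver v_t=\mu_t-\nu$), the deduction $T=\infty$, and part i) match the paper. Your trajectory-tracking argument for ii) also works, and in fact only needs uniform continuity of $\nu$ together with $\lVert v_t\rVert_{L^\infty}\to 0$. Note that $v_t$ is bounded for any bounded data by \Cref{lem:W2-Linfty}, so the Dini condition is not what controls $v$. The paper uses Dini only to make $\omega(\lVert X_t-X_\infty\rVert_{L^\infty})$ integrable in time.

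\textbf{Gap 1: weak-$*$ convergence without a lower bound.} The step you flag as delicate is left open, and the Lagrangian remedies you suggest do not reach it. Zero density is \emph{preserved} along characteristics, since $\mu_t(X_t(x))=\bar\mu(x)\exp\bigl(-\int_0^t(\mu_r-\nu)(X_r(x))\,dr\bigr)$. So holes of $\bar\mu$ are carried along, and positivity along trajectories cannot exclude limit states that vanish somewhere. What is needed is a static rigidity fact: if $\eta\in\mathscr P\cap L^\infty$ and $\int|\nabla K_1*(\eta-\nu)|^2\,d\eta=0$, then $\eta=\nu$.
\begin{itemize}
\item Set $u=K_1*(\eta-\nu)$. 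Then $u\in W^{2,p}$ for all $p<\infty$, and $\nabla u=0$ a.e.\ on $\{\eta>0\}$.
\item A Sobolev function has vanishing gradient a.e.\ on its zero set, so $D^2u=0$ a.e.\ on $\{\eta>0\}$. Since $-\Delta u=\eta-\nu$, this gives $\eta=\nu$ a.e.\ on $\{\eta>0\}$.
\item Hence $\eta\le\nu$ a.e., and equal total mass forces $\eta=\nu$.
\end{itemize}
So the bad stationary states you worry about do not exist. With this fact your compactness argument closes, using that the dissipation is continuous along weak-$*$ convergent sequences bounded in $L^\infty$. The paper packages the same idea as a uniform lower bound on the dissipation on $\{\lVert\eta-\nu\rVert_{\dot H^{-1}}\ge\varepsilon\}$, proved by contradiction.

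\textbf{Gap 2: part iii) is circular.} You claim that $\lVert\nabla v_t\rVert_{L^\infty}$ decays exponentially ``by interpolation with part i)''. But $\nabla v=-\nabla^2K_1*(\mu-\nu)$ is a Calder\'on--Zygmund operator, which is unbounded on $L^\infty$. Bounding it requires a norm above $\dot H^{d/2}$, so interpolating between $\dot H^{-1}$ and $\dot H^\gamma$ presupposes the uniform $\dot H^\gamma$ bound you are trying to prove.

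A bootstrap does not rescue this. With $Y=\lVert f\rVert_{\dot H^\gamma}^2$ you get $Y'\lesssim \lVert f\rVert_{\dot H^{-1}}^{1-\theta}Y^{1+\theta/2}+\dots$, which is superlinear. It yields a global bound only under a smallness condition like $Y(0)^{\theta/2}\int_0^\infty\lVert f_t\rVert_{\dot H^{-1}}^{1-\theta}dt\ll1$, and no such condition is assumed here.

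The paper breaks the circle as follows.
\begin{itemize}
\item It uses the logarithmic bound of \Cref{lem:log-interp-BKM-criterion}: $\lVert\nabla^2K_1*f\rVert_{L^\infty}\lesssim\lVert f\rVert_{L^\infty}\bigl(1+\log(1+\lVert f\rVert_{\dot H^\gamma}/\lVert f\rVert_{L^\infty})\bigr)$.
\item It feeds in the \emph{exponential} decay of $\lVert\mu_t-\nu\rVert_{L^\infty}$ from part ii), which is available because $\nu\in H^{\gamma+1}\subset C^1$.
\item It also uses $\log(1+\lVert f_t\rVert_{\dot H^\gamma})\lesssim 1+\int_0^t\lVert\nabla v_r\rVert_{L^\infty}dr$, which comes from the energy estimate.
\end{itemize}
Together these give a \emph{linear} Gr\"onwall inequality with an exponentially decaying coefficient. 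Hence $\int_0^\infty\lVert\nabla v_r\rVert_{L^\infty}dr<\infty$, and a uniform $\dot H^\gamma$ bound follows. Only then does interpolation give the exponential decay of $\lVert\nabla v_t\rVert_{L^\infty}$ and $\lVert f_t\rVert_{\dot H^{\gamma-1}}$, which must be reinserted into the energy estimate to reach the stated bound.
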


\begin{rmk}[Sharpness of the lower bounds]
\label{rmk:exponential-filling}
    We stress that the lower bound on the initial measure $\bar\mu$ is not essential to obtain  
    exponential convergence to the target $\nu$, as discussed in \Cref{subsubsec:relaxation-lowerbound-initial} (see \Cref{prop:relaxation-lower-bound-initial-measure}). Indeed, if $\nu\ge \alpha>0$, any region where $\bar\mu$ vanishes is “filled up” exponentially fast in time; we call this \emph{exponential filling of holes} (see \Cref{lem:exponential-filling-holes}). By contrast, numerical experiments in \Cref{ssec:numerical_experiments} show that a positive lower bound on $\nu$ is needed to ensure exponential convergence. \fr
\end{rmk}

In \cite{boufadeneVialard2023} the authors initiated the study of \eqref{eq:PDE-GF}  for $s = 1$, proving \eqref{eq:max-princ-s=1-intro} and the resulting exponential convergence \eqref{eq:exponential-decay-H^-1-coulomb} assuming the stronger assumption of H\"older continuity of the initial and target densities.

The results of the present paper are obtained by an independent, self-contained approach and both strengthen and complete those conclusions: 
\begin{itemize}
    \item[i)] First, we work in the more general setting of bounded densities (the most general space where we expect well-posedness), and start by establishing the maximum principle \eqref{eq:max-princ-s=1-intro}, hence deducing the exponential convergence under the corresponding positive lower-bound assumptions. 
    \item[ii)] We further prove unconditional global qualitative convergence of solutions to minimizers \eqref{eq:weak*-conv-Linfty_coulomb}, derive quantitative rates once a positive lower bound  is available, and extend the convergence of solutions to higher-regularity topologies under suitable regularity of the data. In particular, a Dini-continuous target implies uniform convergence, whereas no uniform convergence can be expected for discontinuous targets. 
    \item[iii)]Finally, we are also able to prove exponential convergence of the energy without requiring any lower bound on the initial measure (see \Cref{rmk:exponential-filling}), as illustrated by numerical simulations in \Cref{ssec:numerical_experiments}.
\end{itemize}

 \medskip 
Next, we state our quantitative local convergence result in the case $s>1$. We consider this to be the main result of the present work. 

\begin{thm}[Local convergence to the target: $s>1$]\label{thm:convergence-s>1}
Let $s>1$, $\gamma>d/2$, $\bar\mu \in \mathscr{P}\cap H^{\gamma}(\T^{d})$, $\nu \in \mathscr{P}\cap H^{\gamma+s}(\T^{d})$ such that $\bar\mu, \nu \ge \alpha>0$, and let $\mu \in L^{\infty}_{\loc}([0,T);H^{\gamma})$ be the unique maximal solution of \eqref{eq:PDE-GF} given by \Cref{prop:local-well-posedness-intro}. There exist constants $C,\delta>0$ depending only on $d,s,\gamma,\alpha, \lVert \bar\mu \rVert_{H^{\gamma}}$, and $\lVert \nu\rVert_{H^{\gamma+s}}$ such that, if $\lVert \bar\mu-\nu\rVert_{\dot H^{-s}}\le \delta$, then 
\begin{equation}\label{eq:conclusion-thm-conv-s>1}
    T=\infty,\qquad \lVert\mu_{t}-\nu\rVert_{\dot H^{-s}}\le \lVert \bar\mu -\nu\rVert_{\dot H^{-s}}(1+t/C)^{-\frac{\gamma+s}{2(s-1)}}\qquad \text{and}\qquad \lVert \mu_{t}\rVert_{H^{\gamma}}\le C\qquad \forall t\in [0,\infty).
\end{equation}
In particular, by Sobolev interpolation,
\begin{equation*}
    \lVert \mu_{t}-\nu\rVert_{\dot H^{\gamma'}}\le C^{\frac{\gamma'+s}{\gamma+s}}\lVert \bar\mu-\nu\rVert_{\dot H^{-s}}^{\frac{\gamma-\gamma'}{\gamma+s}}(1+t/C)^{-\frac{\gamma-\gamma'}{2(s-1)}}\qquad \forall t\in [0,\infty),\quad \forall \gamma'\in [-s,\gamma].
\end{equation*}
\end{thm}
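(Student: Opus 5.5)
We will run a continuity (bootstrap) argument that couples three quantities along the flow: the energy $E(t):=\lVert \mu_t-\nu\rVert_{\dot H^{-s}}^2$, the high norm $\lVert \mu_t\rVert_{H^{\gamma}}$, and the lower bound $\inf \mu_t$. Fix $\bar\mu$ as in the statement and let $T^*\le T$ be the supremum of times $t$ for which
\[
\lVert \mu_\tau\rVert_{H^\gamma}\le 2C_0 \quad\text{and}\quad \inf\mu_\tau\ge \alpha/2 \qquad \text{for all } \tau\in[0,t],
\]
where $C_0$ is a large constant depending on $\lVert\bar\mu\rVert_{H^\gamma}$ and $\lVert\nu\rVert_{H^{\gamma+s}}$. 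On $[0,T^*)$ the $H^\gamma$ bound with $\gamma>d/2$ controls $\lVert\mu_t\rVert_{L^\infty}$. Hence, by the blow-up criterion in \Cref{prop:local-well-posedness-intro}, it suffices to show $T^*=T=\infty$ by improving both bounds strictly.

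\textbf{Energy decay.} By \eqref{eq:energy-dissipation-identity} and $\inf\mu_t\ge\alpha/2$,
\[
\frac{d}{dt}E \le -\alpha\,\lVert \mu_t-\nu\rVert_{\dot H^{1-2s}}^2 .
\]
We interpolate $\dot H^{-s}$ between $\dot H^{1-2s}$ and $\dot H^{\gamma}$:
\[
\lVert f\rVert_{\dot H^{-s}}\le \lVert f\rVert_{\dot H^{1-2s}}^{\theta}\lVert f\rVert_{\dot H^\gamma}^{1-\theta}, \qquad \theta=\frac{\gamma+s}{\gamma+2s-1}.
\]
This gives
\[
\frac{d}{dt}E\le -c\,E^{1/\theta}\,(2C_0+\lVert\nu\rVert_{H^\gamma})^{-2(1-\theta)/\theta},
\qquad \frac1\theta-1=\frac{s-1}{\gamma+s}.
\]
Integrating the ODE yields $E(t)\le E(0)(1+t/C)^{-\frac{\gamma+s}{s-1}}$, which is the claimed rate after taking square roots. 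Note that $C$ can be taken independent of $E(0)$ only because the ODE is homogeneous in form; the constant actually depends on $E(0)^{(s-1)/(\gamma+s)}$. Since $E(0)\le\delta^2$ is small, this dependence is harmless and can be absorbed into the $(1+t/C)$ factor.

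\textbf{Lower bound.} Along characteristics, $\frac{d}{dt}\log\mu_t(X_t)=-\diver v_t=-(-\Delta)^{1-s}(\mu_t-\nu)$. By interpolation,
\[
\lVert(-\Delta)^{1-s}(\mu_t-\nu)\rVert_{L^\infty}\lesssim \lVert\mu_t-\nu\rVert_{\dot H^{\sigma}}, \qquad \sigma>d/2+2-2s,
\]
and this in turn is bounded by $E^{a/2}(2C_0)^{1-a}$ for some $a>0$. Then
\[
\int_0^\infty\lVert \diver v_t\rVert_{L^\infty}\,dt\lesssim \delta^{a}\int_0^\infty(1+t/C)^{-a\frac{\gamma+s}{2(s-1)}}\,dt .
\]
The question is whether this integral is finite. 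The exponent equals $\frac{\gamma-\sigma}{2(s-1)}$, which exceeds $1$ only if $\gamma-\sigma>2s-2$, i.e. $\gamma>d/2$; this holds after choosing $\sigma$ close to its threshold. So the integral is small, and for $\delta$ small we get $\inf\mu_t\ge \alpha e^{-O(\delta^a)}>\alpha/2$ (using $\inf\bar\mu\ge\alpha$).

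\textbf{The $H^\gamma$ estimate (main obstacle).} Write $\eta=\mu-\nu$. Then
\[
\partial_t\eta+\diver(\eta v)+\diver(\nu v)=0, \qquad v=-\nabla K_s*\eta .
\]
Pair this with $\Lambda^{2\gamma}\eta$, using Kato–Ponce commutator estimates (as in \Cref{prop:propagation-sobolev}). The linear term contributes $-\int \nu\,|\Lambda^{\gamma}\nabla K_s*\eta|^2$ plus commutators involving $\lVert\nu\rVert_{H^{\gamma+s}}$, which is why that regularity is assumed. The transport term is bounded by
\[
\big(\lVert\nabla v\rVert_{L^\infty}+\lVert \diver v\rVert_{L^\infty}\big)\lVert\eta\rVert_{\dot H^\gamma}^2
+\lVert v\rVert_{\dot H^{\gamma+1}}\lVert\eta\rVert_{L^\infty}\lVert\eta\rVert_{\dot H^\gamma},
\]
where $\lVert v\rVert_{\dot H^{\gamma+1}}\sim\lVert\eta\rVert_{\dot H^{\gamma+2-2s}}\le\lVert\eta\rVert_{\dot H^\gamma}$. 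All coefficients are controlled by positive powers of $E$ times powers of $C_0$, hence they are integrable in time by the previous step. The goal is
\[
\frac{d}{dt}\lVert\eta\rVert_{\dot H^\gamma}^2\le g(t)\,\lVert\eta\rVert_{\dot H^\gamma}^2+h(t), \qquad \int_0^\infty (g+h)\lesssim \delta^{a'}.
\]
The delicate part is ensuring that every forcing term from $\nu$ is either absorbed into the dissipation or multiplied by a decaying norm of $\eta$, with decay exponent above $1$. I would first try the splitting by frequency degree used above, checking that each exponent needed is at least $\frac{\gamma-\sigma}{2(s-1)}$ with $\sigma$ below $\gamma-2(s-1)$. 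Gronwall then yields $\lVert\mu_t\rVert_{H^\gamma}\le \lVert\bar\mu\rVert_{H^\gamma}+O(\delta^{a'})<2C_0$, which closes the bootstrap. The final interpolation inequality then follows directly from the two bounds in \eqref{eq:conclusion-thm-conv-s>1}.
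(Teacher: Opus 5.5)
Your architecture is the same as the paper's. You bootstrap on an $H^{\gamma}$ bound and on $\inf\mu_t\ge\alpha/2$, derive a \L{}ojasiewicz inequality from \eqref{eq:energy-dissipation-identity} by interpolating $\dot H^{-s}$ between $\dot H^{1-2s}$ and $\dot H^{\gamma}$, get the lower bound along characteristics from time-integrability of the velocity gradient (your integrability count there is correct), and close with an $\dot H^{\gamma}$ energy estimate. There are, however, two genuine problems.

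\textbf{The $\dot H^{\gamma}$ estimate is not carried out, and the mechanism you describe cannot close it.}
\begin{itemize}
\item \emph{Wrong dissipative term.} The term you call the dissipation, $-\int\nu\lvert\Lambda^{\gamma}\nabla K_s*\eta\rvert^{2}=-\int\nu\lvert\nabla\Lambda^{\gamma-2s}\eta\rvert^{2}$, sits at the level $\dot H^{\gamma+1-2s}$. It is also not what the pairing produces, since $-\int\Lambda^{\gamma}(\nu\nabla\Lambda^{-2s}\eta)\cdot\nabla\Lambda^{\gamma}\eta$ is not a square.
\item \emph{Correct splitting.} You must move $s$ derivatives across and write the term as $-\int\Lambda^{\gamma+s}(\nu\nabla\Lambda^{-2s}\eta)\cdot\nabla\Lambda^{\gamma-s}\eta$. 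Its principal part is $-\int\nu\lvert\nabla\Lambda^{\gamma-s}\eta\rvert^{2}\le-\alpha\lVert\eta\rVert_{\dot H^{\gamma-s+1}}^{2}$. The Kato--Ponce commutator is bounded by $C\lVert\nu\rVert_{\dot H^{\gamma+s}}\lVert\eta\rVert_{\dot H^{\gamma-s}}\lVert\eta\rVert_{\dot H^{\gamma-s+1}}$; this is where $\nu\in H^{\gamma+s}$ enters (see \eqref{eq:energy-estimate}).
\item \emph{The sign is indispensable.} Your fallback of putting $\nu$-terms into an integrable $h(t)$ fails for this term. By interpolation, $\lVert\eta\rVert_{\dot H^{\gamma+1-s}}^{2}\le E^{\frac{s-1}{\gamma+s}}\lVert\eta\rVert_{\dot H^{\gamma}}^{\frac{2(\gamma+1)}{\gamma+s}}$ decays only like $t^{-1}$. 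Already for the linearization at $\nu\equiv1$ one has $2\int_{0}^{\infty}\lVert\eta_t\rVert_{\dot H^{\gamma+1-s}}^{2}\,dt=\lVert\eta_0\rVert_{\dot H^{\gamma}}^{2}$, which is not small.
\item \emph{How the paper closes.} It interpolates $\dot H^{\gamma-s}$ between $\dot H^{-s}$ and $\dot H^{\gamma-s+1}$ and uses Young to absorb the commutator into the dissipation. This leaves $h=C\lVert\eta\rVert_{\dot H^{-s}}^{2}$, whose time integral is small because $\frac{\gamma+s}{s-1}>1$.
\item \emph{Transport term.} If you bound $\lVert v\rVert_{\dot H^{\gamma+1}}=\lVert\eta\rVert_{\dot H^{\gamma+2-2s}}$ by $\lVert\eta\rVert_{\dot H^{\gamma}}$, the remaining coefficient $\lVert\eta\rVert_{L^\infty}$ is integrable only when $\gamma>d/2+2(s-1)$. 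You must keep the (exactly $t^{-1}$) decay of that factor too, giving total exponent $1+\frac{\gamma-d/2-\eps}{2(s-1)}>1$. Alternatively, use \eqref{eq:kato-ponce-non-linear-term} as the paper does, so the coefficient is just $\lVert\nabla^{2}(-\Delta)^{-s}\eta\rVert_{L^{\infty}}$.
\end{itemize}

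\textbf{The absorption of the $E(0)$-dependence of the rate goes the wrong way.}
\begin{itemize}
\item \emph{What the ODE gives.} Integrating $E'\le-\kappa E^{1+1/\beta}$, with $\beta=\frac{\gamma+s}{s-1}$, gives $E(t)\le E(0)(1+Kt)^{-\beta}$ with $K=\kappa E(0)^{1/\beta}/\beta$. Since $K\to0$ as $E(0)\to0$, smallness makes the effective $C=1/K$ larger, not smaller. What the argument actually proves is $E(t)\le E(0)\bigl(1+cE(0)^{1/\beta}t\bigr)^{-\beta}\le\min\{E(0),(ct)^{-\beta}\}$. This is also all the paper's proof yields: its $K$ carries the factor $\lVert\bar\mu-\nu\rVert_{\dot H^{-s}}^{2/\beta}$.
\item \emph{A uniform $C$ is impossible.} Take $\nu\equiv1$ and $\bar\mu=1+c(2\pi n)^{-\gamma}\cos(2\pi nx_1)$ with $c$ small. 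All hypotheses hold uniformly in $n$, while $\lVert\bar\mu-\nu\rVert_{\dot H^{-s}}\to0$. By uniqueness, $\mu_t$ stays $\frac1n$-periodic in $x_1$ and independent of the other variables, so $\lVert\mu_t-\nu\rVert_{\dot H^{1-2s}}^{2}\le(2\pi n)^{2-2s}E(t)$. The dissipation identity together with $\sup\mu_t\lesssim\lVert\mu_t\rVert_{H^\gamma}\le C$ then gives $E(t)\ge E(0)e^{-C'(2\pi n)^{2-2s}t}$. This contradicts $E(t)\le E(0)(1+t/C)^{-\beta}$ at $t=(2\pi n)^{2s-2}$ for large $n$.
\item \emph{The rest survives.} With the correct $K$, a quantity bounded by $E^{a/2}$ satisfies $\int_0^\infty E(0)^{a/2}(1+Kt)^{-a\beta/2}\,dt\sim E(0)^{a/2-1/\beta}$. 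Integrability ($a\beta/2>1$) is exactly the positivity of this exponent, so smallness still follows. This is how the paper obtains the factor $\lVert\bar\mu-\nu\rVert_{\dot H^{-s}}^{(\gamma-d/2)/(2(\gamma+s))}$.
\end{itemize}

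On the positive side, your fixed threshold $2C_0$ is the better choice. With the paper's threshold $2\lVert\bar\mu-\nu\rVert_{\dot H^{\gamma}}^{2}$, closing the bootstrap requires the ratio $\lVert\bar\mu-\nu\rVert_{\dot H^{-s}}/\lVert\bar\mu-\nu\rVert_{\dot H^{\gamma}}$ to be small, which the hypotheses do not provide.
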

\begin{rmk}[On the locality assumption] In \cite{safran2018spurious} the authors construct examples of strict local minimizers for the population square-loss of discrete shallow neural networks with ReLU activation function, a system strictly related to our case $s=\frac{d+3}{2}$, as explained in \Cref{subsec:arccos-intro}.  
This suggests that the locality assumption $\lVert \bar \mu-\nu\rVert_{\dot H^{-s}}\le \delta$ might be necessary for a clean quantitative convergence result to hold in the case $s>1$. Note that, on the other hand, such discrete examples of local minimizers cannot be found for the Coulomb interaction ($s=1$), because of Earnshaw's theorem from electrostatics, according to which there are no stable stationary configurations of point charges for the Coulombian potential. \fr
\end{rmk}

\begin{rmk}[Sharpness of the polynomial decay rate] Under a control of the initial datum in $H^{\gamma}$, the exponent $\frac{\gamma+s}{2(s-1)}$ is sharp in the energy decay from \eqref{eq:conclusion-thm-conv-s>1}. To motivate this, let us take $\nu=1$ and look at the linearized equation for $\sigma_{t}:=\mu_{t}-1$:
\begin{equation*}
    \partial_{t}\sigma_{t}=-(-\Delta)^{1-s}\sigma_{t}.
\end{equation*}
Expanding  $\sigma_{t}$  in Fourier and solving for its coefficients $\hat{\sigma}_{k}(t)$ we find $\hat{\sigma}_{k}'(t)=-(2\pi|k|)^{2-2s}\hat{\sigma}_{k}(t)$, that gives
\begin{equation}\label{eq:decay-fourier-example-sharpness-polynomial}
    \hat{\sigma}_{k}(t)= \hat{\sigma}_{k}(0)\exp\left(-(2\pi|k|)^{2-2s}t\right)\qquad \forall t\ge 0,\quad \forall k\in \Z^{d}\setminus \{0\}.
\end{equation}
Now, for any integer $n\ge 1$, let us consider the initial datum $\sigma_{0}^{n}(x):= \sqrt{2}(2\pi n)^{-\gamma}\cos(2\pi n x_{1})$, for which we have $\lVert \sigma^{n}_{0}\rVert_{\dot H^{\gamma}}=1$. From \eqref{eq:decay-fourier-example-sharpness-polynomial} we find 
\begin{equation*}
    \lVert \sigma_{t}^{n}\rVert_{\dot H^{-s}}= \sqrt{2}(2\pi n)^{-\gamma-s}\exp \left(-(2\pi n)^{2-2s}t\right)\gtrsim t^{-\frac{\gamma+s}{2(s-1)}}\qquad \text{for $t=n^{\frac{1}{2s-2}}$}.
\end{equation*}
This shows that among all initial data with unit $\dot H^{\gamma}$-norm, we cannot hope for the $\dot H^{-s}$-norm at time $t$ to be less than $c_{s,\gamma}t^{-\frac{\gamma+s}{2(s-1)}}$, thus proving the optimality of the decay rate in \Cref{thm:convergence-s>1}. \fr 
\end{rmk}

 Hence, for $s>1$ we obtain a polynomial relaxation rate (in $\dot H^{-s}$, and by interpolation in intermediate Sobolev norms) under a natural small-discrepancy assumption, in a family of Riesz-type regimes that includes in particular the negative-distance (``energy distance'') kernel in the case $s = \tfrac{d}{2}+\tfrac12$, widely used in statistics and in recent flow-based methods for imaging and generative modeling \cite{szekely2013energy,hagemann2023posterior,hertrich2023generative}. 
  While global-in-time convergence for the  negative-distance kernel in dimensions $d\ge2$ is outside the reach of geodesic-convexity techniques \cite{boufadeneVialard2023,duong2024distancekernel} (due to the lack of uniform geodesic convexity of the functional), our analysis isolates a robust mechanism---a local {\L}ojasiewicz inequality propagated by higher-order energy estimates---that can furthermore be transported to other kernels and geometries; we illustrate this by treating the arccos/ReLU kernel on the sphere in \Cref{subsec:arccos-intro} below.
We refer to \Cref{subsec:idea-proof} for the ideas of the proof behind the convergence results in Theorems~\ref{thm:convergence-s=1} and \ref{thm:convergence-s>1}.

\subsection{Quantitative convergence for infinite-width shallow neural networks}\label{subsec:arccos-intro}

Consider an infinite-width ReLU Neural Network, that is, a function $f_\mu:\R^{d+1}\to \R$ parameterized by a probability measure $\mu \in \mathscr{P}(\R^{d+2})$ via the expression
\begin{equation}
\label{eq:fmu_intro}
    f_{\mu}(x):=\int_{\R^{d+2}}\Phi(w, x)d\mu(w),\qquad w=(a,b)\in \R\times \R^{d+1},\quad \Phi(w,x)=a(b\cdot x)_{+}.
\end{equation}
Notice that $f_\mu$ is a positively 1-homogeneous function of $x$, so we might as well restrict its inputs to the unit sphere $\Sph^d$. For a given $\nu \in \mathcal{P}(\R^{d+2})$, we consider the mean square energy
\begin{gather*}
    \mathscr{E}^{\nu}_{\Sp^{d}}(\mu):=\frac{1}{2}\int_{\Sp^{d}}|f_{\mu}-f_{\nu}|^{2}dx=\frac{1}{2}\int_{\R^{d+2}}\int_{\R^{d+2}}\hat K(w,w')d(\mu-\nu)(w)d(\mu-\nu)(w'),\\
    \hat K(w,w')=\int_{\Sp^{d}}aa'(b\cdot x)_{+}(b'\cdot x)_{+}dx,\qquad w=(a,b), w'=(a',b') \in \R\times\R^{d+1}.
\end{gather*}
With an initialization $\bar \mu \in \mathscr{P}(\R^{d+2})$, the Wasserstein gradient flow of $\mathscr{E}^{\nu}_{\Sp^{d}}$ is given by the equation
\begin{equation}\label{eq:WGF-arccos}
    \left\{
    \begin{array}{rclll}
        \partial_{t}\mu_{t}+\diver\left(\mu_{t}v_{t}\right)&=&0\quad &\text{in $(0,T)\times \R^{d+2}$},\\
           v_{t}&=&-\nabla \hat{\mathcal{K}}(\mu_{t}-\nu)\quad &\forall t\in (0,T),\\
           \mu_{0}&=&\bar\mu,
    \end{array}
    \right.
\end{equation}
where 
\begin{equation*}
     \hat{\mathcal{K}}(\eta)(w):=\int_{\R^{d+2}}\hat K(w,w')d\eta(w')\qquad \forall \eta\in \mathcal{M}(\R^{d+2}).
\end{equation*}
This dynamics represents the evolution of the parameters of a shallow ReLU Neural Network trained with (stochastic) gradient descent on the population square loss, with initial weights independently drawn from $\bar \mu$, with input data uniform on the sphere and with Bayes predictor $f_\nu$, in the small learning rate and infinite width limit (see~\cite{mei2019mean, wojtowytsch2020convergence, chizat2020implicit} for details on this link). 

Exploiting the structure of $\Phi(w, x)$ (in particular, the 1-homogeneity separately in $a$ and $b$), we may further reduce to expressing the output along the evolution as
\[
f_{\mu_{t}}(x) = \int_{\Sph^d} (b\cdot x)_+ d\mu_{t}(b)
\]
for some (signed) even measure on $\Sph^d$, which we still denote $\mu_{t}$ as an abuse of notation. Moreover, up to an additive constant depending only on the regularity of $f_{\bar \mu}$ and $f_\nu$ (initial and target outputs), we may further assume that $\mu_{t}$ is a nonnegative measure along the training dynamics, which becomes non-conservative and is given by
\begin{equation}\label{eq:PDE-Wasserstein-Fisher-Rao-sphere10}
    \left\{
    \begin{array}{rclll}
        \partial_{t}\mu_{t}+\diver_{\Sp^{d}}\left(\mu_{t}v_{t}\right)&=&-4\mathcal{K}(\mu_{t}-\nu)\mu_{t}\qquad &\text{in $(0,T)\times \Sp^{d}$},\\
           v_{t}&=&-\nabla_{\Sp^{d}} \mathcal{K}(\mu_{t}-\nu)\qquad &\forall t\in (0,T),\\
           \mu_{0}&=&\bar\mu,
    \end{array}
    \right.
\end{equation}
where we denote
\begin{equation}
\label{eq:def-arccos-operator+kernel}
    \mathcal{K}(\eta)(x)=\int_{\Sp^{d}}K(x,y)d\eta(y)\quad \forall \eta\in \mathcal{M}(\Sp^{d}),\qquad K(x,y)=\int_{\Sp^{d}}(x\cdot \xi)_{+}(y\cdot \xi)_{+}d\xi,\quad x,y \in \Sp^{d}.
\end{equation}
By doing so, we are slightly limiting the expressivity of the network to outputs that are even and sufficiently regular. We refer the reader to \Cref{sec:WFR} where we give further details on this reduction.

In what follows, we therefore consider data $\bar\mu,\nu\in \mathcal{M}_{+}(\Sp^{d})$\footnote{The space of (nonnegative) finite measures on $\Sp^{d}$.}, even on $\Sp^{d}$, and we study solutions $t\mapsto \mu_{t}\in \mathcal{M}_{+}(\Sp^{d})$ of equation \eqref{eq:PDE-Wasserstein-Fisher-Rao-sphere10}. We prove that a unique global solution exists in the class of even nonnegative measures, weakly-$*$ continuous in time, and it propagates H\"older and Sobolev regularity of the data (see \Cref{prop:well-posedness-WFR} and \Cref{cor:arccos-propagation-sobolev}). A slightly different approach is needed here with respect to the well-posedness theory for \eqref{eq:PDE-GF}, since we have to deal with the additional non-conservative term $-4\mathcal{K}(\mu_{t}-\nu)\mu_{t}$ in the right-hand side. 

The dynamics of \eqref{eq:PDE-Wasserstein-Fisher-Rao-sphere10} can be interpreted as a Wasserstein--Fisher--Rao gradient flow (see e.g.~\cite{gallouet2017jko, liero2023fine, chizat2022sparse}) for the energy $\mathscr{E}^{\nu}_{\Sp^{d}}:\mathcal{M}_{+}(\Sp^{d})\to [0,\infty)$ given by
\begin{equation}\label{eq:def-arccos-energy-intro}
    \mathscr{E}^{\nu}_{\Sp^{d}}(\mu)=\frac{1}{2}\int_{\Sp^{d}}\int_{\Sp^{d}}K(x,y)d(\mu-\nu)(x)d(\mu-\nu)(y)=\frac{1}{2}\int_{\Sp^{d}}\mathcal{K}(\mu-\nu)(x)d(\mu-\nu)(x),
\end{equation}
whose corresponding dissipation identity reads as
\begin{equation}\label{eq:energy-dissipation-Fisher-Rao}
    \frac{d}{dt}\mathscr{E}^{\nu}_{\Sp^{d}}(\mu_{t})=-\int_{\Sp^{d}}\left(|\nabla_{\Sp^{d}} \mathcal{K}(\mu_{t}-\nu)|^{2}+4|\mathcal{K}(\mu_{t}-\nu)|^{2}\right)d\mu_{t}\qquad \forall t\in (0,\infty).
\end{equation}
Analyzing the spectral behavior of the operator $\mathcal{K}$ (see \Cref{subsec:analysis-sphere-arccos}) we show that 
\begin{equation}\label{eq:comparability-energy-arccos-H^-s}
    \mathscr{E}^{\nu}_{\Sp^{d}}(\mu)\approx_{d} (\mu(\Sp^{d})-\nu(\Sp^{d}))^{2}+\lVert \mu-\nu\rVert_{\dot H^{-s}(\Sp^{d})}^{2}\qquad \forall \mu,\nu \in \mathcal{M}_{+}(\Sp^{d})\,\,\text{even},\qquad s=\frac{d+3}{2}.
\end{equation}
Then, adapting the arguments of \Cref{thm:convergence-s>1} to this setting, we obtain the following local polynomial convergence guarantees (see \Cref{subsec:quantitative-convergence-WFR}). Although there are some local guarantees for shallow neural networks in the mean-field limit when $\nu$ is a sparse measure~\cite{akiyama2021learnability, chizat2022sparse, li2020learning, zhou2021local, zhou2024does}, our result is, to the best of our knowledge, the first convergence result that applies in the case where $\nu$ has a density and belongs to a truly infinite dimensional space.

\begin{thm}[Convergence for neural networks]\label{thm:convergence-arccos}
Let $d\in \N$ be odd and $s:=\frac{d+3}{2}\in \N$. Let $\gamma\in \N$, $\gamma\ge s-1$ and $\bar\mu\in H^{\gamma}(\Sp^{d})$, $\nu\in  H^{\gamma+1}(\Sp^{d})$ be even in $\Sp^{d}$ and such that $\bar\mu, \nu \ge \alpha>0$. There exist constants $C,\delta>0$ depending only on $d,\gamma, \alpha, \lVert \bar\mu \rVert_{H^{\gamma}}$, and $\lVert \nu\rVert_{H^{\gamma+1}}$ such that, if $\mathscr{E}^{\nu}_{\Sp^{d}}(\bar \mu)\le \delta$, then the solution $\mu$ of \eqref{eq:PDE-Wasserstein-Fisher-Rao-sphere10} satisfies 
\begin{equation}\label{eq:conclusion-convergence-arccos}
  \mathscr{E}^{\nu}_{\Sp^{d}}(\mu_{t})\le \mathscr{E}^{\nu}_{\Sp^{d}}(\bar\mu)(1+t/C)^{-\frac{\gamma+s}{s-1}}\qquad \text{and}\qquad \lVert \mu_{t}\rVert_{H^{\gamma}}\le C\qquad \forall t\in [0,\infty).
\end{equation}
\end{thm}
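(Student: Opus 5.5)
The plan mirrors the scheme behind \Cref{thm:convergence-s>1}: a local {\L}ojasiewicz-type inequality linking dissipation and energy, closed by a uniform-in-time higher regularity bound propagated through a bootstrap. Throughout, set $\sigma_t:=\mu_t-\nu$ (even, signed), $u_t:=\mathcal{K}(\sigma_t)$, $E(t):=\mathscr{E}^{\nu}_{\Sp^{d}}(\mu_t)$, and use the spectral comparability \eqref{eq:comparability-energy-arccos-H^-s}. Since $\mathcal{K}$ acts on even spherical harmonics of degree $\ell$ with eigenvalues $\lambda_\ell\approx \ell^{-2s}$ (plus a nonzero eigenvalue on constants), we get $E\approx \|\sigma\|_{H^{-s}}^2$, where the inhomogeneous norm also accounts for the mass defect.

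\textbf{Step 1 ({\L}ojasiewicz inequality).} From the dissipation identity \eqref{eq:energy-dissipation-Fisher-Rao}, as long as $\mu_t\ge \alpha/2$ we have
\[
-E'(t)\ge \tfrac{\alpha}{2}\big(\|\nabla u_t\|_{L^2}^2+4\|u_t\|_{L^2}^2\big)\approx \alpha\,\|\sigma_t\|_{H^{1-2s}}^2 .
\]
Interpolating $H^{-s}$ between $H^{1-2s}$ and $H^{\gamma}$ with $\theta=\frac{\gamma+s}{\gamma+2s-1}$ gives
\[
\|\sigma\|_{H^{-s}}\le \|\sigma\|_{H^{1-2s}}^{\theta}\|\sigma\|_{H^{\gamma}}^{1-\theta}.
\]
If $\|\sigma_t\|_{H^\gamma}\le 2C_0$, this yields $-E'\ge c\,E^{1+\frac{s-1}{\gamma+s}}$. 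Integrating gives $E(t)\le E(0)\,(1+t/C)^{-\frac{\gamma+s}{s-1}}$, which is the claimed rate.

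\textbf{Step 2 (lower bound on $\mu_t$).} Write $\mu_t$ along characteristics $X_t$ of $v_t$:
\[
\frac{d}{dt}\log\mu_t(X_t)=-\Delta_{\Sp^d}u_t(X_t)-4u_t(X_t).
\]
So it suffices to bound $\int_0^\infty \big(\|\Delta u_t\|_{L^\infty}+\|u_t\|_{L^\infty}\big)\,dt$ by a small quantity. Since $u=\mathcal{K}\sigma$ gains $2s=d+3$ derivatives, Sobolev embedding gives
\[
\|\Delta u\|_{L^\infty}\lesssim \|\sigma\|_{H^{2-2s+d/2+\epsilon}}.
\]
Interpolating between $H^{-s}$ (decaying like $E^{1/2}$) and $H^\gamma$ (bounded) gives a power $E^{\beta/2}(1+t)^{-\beta\frac{\gamma+s}{2(s-1)}}$. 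Here $\gamma\ge s-1$ should be exactly what makes this integrable in time, and smallness of $\delta$ makes the integral small. The same estimate also controls the mass and keeps $\mu_t\ge \alpha/2$.

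\textbf{Step 3 (propagation of $H^\gamma$; the main obstacle).} I expect this to be the hardest step. I will compute $\frac{d}{dt}\|\nabla^\gamma\sigma_t\|_{L^2}^2$ using integer $\gamma$ and covariant derivatives on $\Sp^d$, following \Cref{cor:arccos-propagation-sobolev}. The key term is $\mu\,\Delta u$; its top order part $\nu\,\mathcal{K}$-type contribution is dissipative or at least lower order, because $\mathcal{K}$ is smoothing. The commutator and transport terms are bounded by $\|\sigma\|_{H^\gamma}^2$ times $\|\nabla u\|_{C^1}+\|u\|_{C^1}$, plus terms involving $\|\nu\|_{H^{\gamma+1}}$ times $\|u\|_{H^{\gamma+2}}$-type norms of lower order.

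The goal is an inequality of the form
\[
\frac{d}{dt}\|\sigma\|_{H^\gamma}^2\le A(t)\,\|\sigma\|_{H^\gamma}^2+B(t),
\qquad A,B\lesssim \|\sigma_t\|_{H^{r}}\ \text{for some low } r,
\]
together with time integrability of $A,B$, using the decay from Step 1 and the same interpolation as in Step 2. Gronwall then gives $\|\sigma_t\|_{H^\gamma}\le C_0+C\delta^{\beta}$. A continuity argument closes the bootstrap: the set of times on which $\|\sigma\|_{H^\gamma}\le 2C_0$ and $\mu\ge\alpha/2$ is open, closed and nonempty, hence all of $[0,\infty)$. This also gives $\|\mu_t\|_{H^\gamma}\le C$.

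Global existence is already provided by the well-posedness result \Cref{prop:well-posedness-WFR}. The delicate point is checking that $\gamma\ge s-1$ (with $\nu\in H^{\gamma+1}$) gives enough integrability when the interpolation exponents are balanced. If it falls short, I would interpolate against $\|\sigma\|_{H^{1-2s}}$ and use $\int_0^\infty -E'\,dt\le E(0)$ directly, together with Cauchy--Schwarz in time.
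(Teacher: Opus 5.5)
Your overall architecture matches the paper's. It consists of:
\begin{itemize}
\item the \L{}ojasiewicz inequality obtained from \eqref{eq:energy-dissipation-Fisher-Rao} by interpolating $\dot H^{-s}$ between $\dot H^{1-2s}$ and $H^{\gamma}$;
\item the Lagrangian lower bound on $\mu_t$, which comes from smallness of $\int_0^\infty\lVert\mathcal{K}(\sigma_t)\rVert_{C^2}\,dt$;
\item a bootstrap on $\{\lVert\sigma_t\rVert_{H^\gamma}\le 2C_0,\ \mu_t\ge\alpha/2\}$.
\end{itemize}
The gap is in Step~3. This is where the real work lies (\Cref{lem:energy-estimate-arccos}), and as sketched it would not close. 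The top-order linear term $-\sum_{|\beta|=\gamma}\int\nu\,\nabla_{\Sp^{d}}\nabla_\beta\mathcal{K}(\sigma_t)\cdot\nabla_{\Sp^{d}}\nabla_\beta\sigma_t$ is of size $\lVert\sigma_t\rVert^2_{\dot H^{\gamma+1-s}}$, and this is exactly scaling-critical. Under the bootstrap,
\[
\lVert\sigma_t\rVert^2_{\dot H^{\gamma+1-s}}\le\lVert\sigma_t\rVert_{\dot H^{-s}}^{\frac{2(s-1)}{\gamma+s}}\lVert\sigma_t\rVert_{\dot H^{\gamma}}^{\frac{2(\gamma+1)}{\gamma+s}}\lesssim E(0)^{\frac{s-1}{\gamma+s}}(1+Kt)^{-1},\qquad K\approx E(0)^{\frac{s-1}{\gamma+s}},
\]
so its time integral is $\approx\log(1+KT)$, with no small prefactor. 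Treating it as ``lower order because $\mathcal{K}$ is smoothing'' is therefore not possible: the smoothing of $\mathcal{K}$ is fully used up at this scaling. Gr\"onwall would then only give polynomial growth of $\lVert\sigma_t\rVert_{H^\gamma}$, and the bootstrap fails.

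The same problem affects any term of total order $2\gamma+2-2s$. An example is $\lVert\nu\rVert_{H^{\gamma+1}}\lVert u_t\rVert_{H^{\gamma+2}}\lVert\sigma_t\rVert_{H^\gamma}$, which is of the type you list among acceptable errors. Only terms at least one derivative below critical have small time integrals, such as $\lVert\sigma_t\rVert_{\dot H^{\gamma-s}}\lVert\sigma_t\rVert_{\dot H^{\gamma-s+1}}$ or $\lVert\sigma_t\rVert_{\dot H^{\gamma+1-2s}}$. Your fallback does not rescue this either. It is true that $\int_0^\infty\lVert\sigma_t\rVert^2_{H^{1-2s}}dt\lesssim E(0)/\alpha$. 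But the critical term contains $\lVert\sigma_t\rVert_{H^{1-2s}}$ only to a power $<2$, so H\"older in time leaves a growing power of $T$.

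What is missing is a coercivity estimate for this principal part. With variable $\nu$ the term is not sign-definite as written. The comparability $\lambda_k\approx k^{-2s}$ alone does not give a sign either: for a positive zeroth-order multiplier whose symbol is merely bounded, the commutator with multiplication by $\nu$ need not be lower order. The paper obtains the sign in three moves.
\begin{itemize}
\item It works with angular derivatives $\nabla_{(i,j)}$, which commute with $\mathcal{K}$ and $(-\Delta_{\Sp^d})^{-s}$. Covariant derivatives would generate commutators with $\mathcal{K}$.
\item It moves $s$ derivatives across, so that both factors sit at level $\dot H^{\gamma+1-s}$.
\item It replaces $\mathcal{K}$ by $\bar c(-\Delta_{\Sp^d})^{-s}$ using \Cref{lem:spectral-behavior-arccos-operator}~iii). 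This rests on the finer asymptotics $\mu_k^{s}\lambda_k=\bar c+O(k^{-1})$, not just comparability.
\end{itemize}
The result is $-\frac1C(\min\nu)\lVert\sigma_t\rVert^2_{\dot H^{\gamma-s+1}}$ plus an error $\lVert\nu\rVert_{L^\infty}\lVert\sigma_t\rVert_{\dot H^{\gamma-s}}\lVert\sigma_t\rVert_{\dot H^{\gamma-s+1}}$, which is one derivative below critical. Every term in which derivatives fall on $\nu$ must likewise be made subcritical, by integrating by parts and using $\nu\in H^{\gamma+1}$ (compare the last two lines of \eqref{eq:energy-estimate-arccos}). Once this is in place, your Gr\"onwall and bootstrap argument closes as in the paper.

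There is also a smaller point in Steps 1--2. Integrating $-E'\ge cE^{1+a}$ with $a=\frac{s-1}{\gamma+s}$ gives $E(t)\le E(0)(1+Kt)^{-1/a}$ with $K\approx E(0)^{a}$. This is also what the paper's proof produces, with $K=C\mathscr{E}^{\nu}_{\Sp^{d}}(\bar\mu)^{\frac{s-1}{\gamma+s}}$. It is not $(1+t/C)$ with $C$ independent of $E(0)$. Consequently $\int_0^\infty(1+Kt)^{-q}\,dt\sim K^{-1}$ blows up as $\delta\to0$. The claim ``smallness of $\delta$ makes the integral small'' therefore requires the net power of $E(0)$ to be positive. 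By scaling this is equivalent to $q>1$. For instance it gives $\int_0^\infty\lVert\mathcal{K}(\sigma_t)\rVert_{C^2}dt\lesssim E(0)^{\frac{\gamma-d/2}{4(\gamma+s)}}$, which is one place where $\gamma>d/2$ enters (for integer $\gamma$ this is the same as $\gamma\ge s-1$).
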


\noindent Some remarks are in order:
\begin{rmk}
    Without the restriction to nonnegative measures, the neural network can represent any regular enough target function (without the need to add a constant $S$ as in \Cref{cor:fnu} below); but we then have instead a two-species evolution system (see \eqref{eq:WFR-two-species} in \Cref{sec:WFR}). Obtaining a convergence theorem in this case  is more challenging for various reasons. For instance, there is an inherent ambiguity in the expected limit at infinite time for a pair $(\mu_{t}^{+},\mu_{t}^{-})$, given that each $(\hat{\nu}^{+},\hat{\nu}^{-})$ satisfying $\hat\nu^{+}-\hat\nu^{-}=\nu^{+}-\nu^{-}$ attains the minimal energy for the system. Furthermore, guaranteeing higher Sobolev regularity by means of energy estimates is made more difficult by the nonlocal interaction between the two terms $\mu_{t}^{+}$ and $\mu_{t}^{-}$. \fr
\end{rmk}
\begin{rmk}
    Our assumption that the dimension $d$ is odd is only made for convenience in order to deal with integer derivatives, but we do not expect it to be relevant in practice. \fr
\end{rmk}

For concreteness, let us state the assumptions of \Cref{thm:convergence-arccos} directly in terms of the regularity of the target function $f_\nu$. As previously mentioned, restricting ourselves to dynamics over nonnegative measures implies that we can represent all regular enough even functions only up to a constant $S$. 
    
\begin{cor}\label{cor:fnu}
    Let $d\in \N$ be odd, $s=\frac{d+3}{2} \in \N$, and $\beta\in \N$ be such that $\beta\ge 2s$. Let $g\in H^{\beta}(\Sp^{d})$ be an even function and let $M>0$. There are $K>0$ depending only on $d, \beta$, and $C,\delta>0$ depending on $d,\beta, M$ and $\lVert g\rVert_{H^{\beta}}$ such that the following holds. 

    Let $S=K (\lVert g\rVert_{H^{\beta}}+1)$ and 
    consider the gradient flow $\mu_{t}$ in \eqref{eq:PDE-Wasserstein-Fisher-Rao-sphere10} with even initialization $\bar \mu\in \mathcal{M}_+(\mathbb{S}^d)$ and target $f=f_{\nu} = g+S$.
    If $\Vert \bar \mu\Vert_{H^{\beta-s-1}}\le M$, and $\lVert f_{\bar \mu}-f\rVert_{L^{2}}^{2}\le \delta$, then 
    $$\lVert f_{\mu_t}-f\rVert_{L^{2}}^{2}\le \lVert f_{\bar \mu}-f\rVert_{L^{2}}^{2}(1+t/C)^{-\frac{\beta-1}{s-1}}\qquad \text{and}\qquad \lVert f_{\mu_t}\rVert_{H^{\beta-1}}\le C\qquad \forall t\in [0,\infty).$$
\end{cor}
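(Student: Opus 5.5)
The plan is to reduce \Cref{cor:fnu} to \Cref{thm:convergence-arccos} with $\gamma:=\beta-s-1$, by translating statements about $f_\mu$ into statements about $\mu$. The key object is the operator $\mu\mapsto f_\mu(x)=\int_{\Sp^d}(b\cdot x)_+\,d\mu(b)$ acting on even measures. Through the Funk--Hecke formula, its action on the even spherical harmonics of degree $k$ is multiplication by eigenvalues $\lambda_k$. I will use $\lambda_k\approx_d (1+k)^{-(d+3)/2}=(1+k)^{-s}$ for even $k$; these are the square roots of the eigenvalues of $\mathcal{K}$, consistent with \eqref{eq:comparability-energy-arccos-H^-s}. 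It follows that $f:\mu\mapsto f_\mu$ is an isomorphism from even $H^{\gamma}(\Sp^d)$ onto even $H^{\gamma+s}(\Sp^d)$, with constants depending only on $d,\gamma$. Moreover, $\lVert f_\mu-f_\nu\rVert_{L^2}^2=2\mathscr{E}^\nu_{\Sp^d}(\mu)$ by the very definition \eqref{eq:def-arccos-energy-intro} of the kernel $K$.

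\emph{Step 1: constructing $\nu$.} Set $\nu:=f^{-1}(g+S)=f^{-1}(g)+S\,c_d^{-1}$, where $c_d=\int_{\Sp^d}(b\cdot x)_+\,db$. Indeed, $f$ maps constants to constants, since the degree-zero harmonic is an eigenfunction. By the isomorphism, $\lVert f^{-1}(g)\rVert_{H^{\beta-s}}\le C_{d,\beta}\lVert g\rVert_{H^{\beta}}$. Because $\beta-s\ge s>d/2$, Sobolev embedding gives $\lVert f^{-1}(g)\rVert_{L^\infty}\le C'_{d,\beta}\lVert g\rVert_{H^\beta}$. Choosing $K:=2c_dC'_{d,\beta}$ then yields $\nu\ge c_d^{-1}K(\lVert g\rVert+1)-C'\lVert g\rVert\ge C'$. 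So $\nu$ is even, satisfies $\nu\ge \alpha:=C'>0$, and has $\nu\in H^{\beta-s}=H^{\gamma+1}$ with norm controlled by $d,\beta,\lVert g\rVert_{H^\beta}$.

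\emph{Step 2: the initial datum.} By assumption $\bar\mu\in H^{\gamma}$ with norm at most $M$, and $\gamma=\beta-s-1\ge s-1$ precisely because $\beta\ge 2s$. The main obstacle is that \Cref{thm:convergence-arccos} also requires $\bar\mu\ge\alpha$, which \Cref{cor:fnu} does not assume. I would obtain it from smallness. Interpolating between $\lVert\bar\mu-\nu\rVert_{\dot H^{-s}}\lesssim\delta^{1/2}$ (which follows from \eqref{eq:comparability-energy-arccos-H^-s}) and the bound $\lVert\bar\mu-\nu\rVert_{H^\gamma}\le M+C\lVert g\rVert$ gives control in $H^{\gamma'}$ for any $\gamma'\in(d/2,\gamma)$. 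This requires $\gamma>d/2$, which holds since $\gamma\ge s-1=(d+1)/2$. Hence $\lVert\bar\mu-\nu\rVert_{L^\infty}\lesssim \delta^{\theta}(M+\lVert g\rVert)^{1-\theta}$. Shrinking $\delta$ (depending on $d,\beta,M,\lVert g\rVert$) makes this at most $\alpha/2$, so $\bar\mu\ge\alpha/2$. Also shrinking $\delta$ below the threshold of \Cref{thm:convergence-arccos} applied with $\alpha/2$, its hypotheses are satisfied.

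\emph{Step 3: conclusion.} \Cref{thm:convergence-arccos} gives $\mathscr{E}^\nu_{\Sp^d}(\mu_t)\le\mathscr{E}^\nu_{\Sp^d}(\bar\mu)(1+t/C)^{-(\gamma+s)/(s-1)}$. Since $\gamma+s=\beta-1$, multiplying by $2$ yields the claimed decay for $\lVert f_{\mu_t}-f\rVert_{L^2}^2$. The uniform bound $\lVert\mu_t\rVert_{H^\gamma}\le C$ transfers through the isomorphism to $\lVert f_{\mu_t}\rVert_{H^{\gamma+s}}=\lVert f_{\mu_t}\rVert_{H^{\beta-1}}\le C$. All constants depend only on $d,\beta,M,\lVert g\rVert_{H^\beta}$, as required.
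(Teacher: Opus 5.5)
Your proposal is correct and is essentially the paper's own argument: you set $\nu=\mathcal{H}^{-1}(g)+c(d)S$, pick $K$ through the Sobolev embedding $H^{\beta-s}\hookrightarrow L^{\infty}$ so that $\nu$ is bounded below by a constant depending only on $d,\beta$, get the lower bound on $\bar\mu$ by interpolating the small energy against the $H^{\gamma}$ bound, and then apply \Cref{thm:convergence-arccos} with $\gamma=\beta-s-1$ and transfer back through $\mathcal{H}$. Your normalization $\lVert f_\mu-f_\nu\rVert_{L^2}^2=2\mathscr{E}^{\nu}_{\Sp^{d}}(\mu)$ is the correct one (the paper drops the factor $2$, harmlessly), and the zero mode of $\bar\mu-\nu$, which $\dot H^{-s}$ does not see, is controlled by the mass term in \eqref{eq:comparability-energy-arccos-H^-s}.
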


\subsection{Numerical illustrations}
\label{ssec:numerical_experiments}
We perform numerical experiments with $d=1$ in three settings: (A) Riesz interaction with $s=1$, (B) Riesz interaction with $s=2$, and (C) shallow ReLU neural networks.  These three settings illustrate \Cref{thm:convergence-s=1}, \Cref{thm:convergence-s>1} and \Cref{thm:convergence-arccos}, respectively\footnote{The code to reproduce the experiments can be found at \url{https://github.com/lchizat/2026-WGF-KMD}.}.

In settings (A) and (B), we use a finite volume discretization with upwind scheme and variable time-step. Given a discretized velocity field $v_t$ (computed by applying the appropriate multiplier in Fourier domain), this scheme builds a globally neutral transfer of mass between neighbouring cells consistent with $v_t$, thereby ensuring exact conservation of mass and of nonnegativity. In setting (C), we use particle discretization with arccos kernel interaction, which is equivalent to  gradient descent on the population loss with an approximation of the target $f_\nu$.

We generate random densities $\nu$ and $\bar \mu$ of the desired Sobolev regularity as follows. First we build an unnormalized density by sampling  its Fourier coefficients with centered normal weights with the appropriate variance decay, in order to control the value of the largest $\gamma$ such that $\nu\in \dot{H}^\gamma$. Then we shift and scale this function to obtain a probability density of minimum value $0$. Finally, we take a mixture with the uniform distribution, in order to have control on the minimum value of the density. In Setting (C) where $\nu$ needs to be discretized, we use a discrete measure with equal Dirac masses located at equi-spaced quantiles.

The results for setting (A) are shown in \Cref{fig:s=1}. All observations are consistent with the conclusion of \Cref{thm:convergence-s=1}. In particular, the convergence rate bound (shown in dotted lines) appears to be rather tight. In case $\nu$ has areas of $0$ densities and $\bar \mu$ has not (not shown), we have observed that $\min \mu_t = \Theta(1/t)$ and that the exponential convergence rate in energy is lost, suggesting that the lower bound assumption on $\nu$ is necessary for exponential convergence (see \Cref{rmk:exponential-filling}). 

The results for setting (B) are shown in \Cref{fig:s=2}. Although our guarantees are local, we observed global convergence in all our experiments, suggesting that counter-examples might be hard to find. The rate of convergence in $\dot{H}^{-s}$ distance $O \Big(t^{-\frac{\min(\gamma_0,\gamma_\nu-s)+s}{2(s-1)}}\Big)$ from \Cref{thm:convergence-s>1} appears to be tight at least in the regime $\gamma_\nu -s \geq \gamma_{0}$.  

The results for setting (C) are shown in \Cref{fig:relu}. Here again we observe global convergence while our guarantee is only local, and the rate of convergence in energy $O \Big(t^{-\frac{\min(\gamma_0,\gamma_\nu-1)+s}{(s-1)}}\Big)$ with $s=(d+3)/2$ from \Cref{thm:convergence-arccos} appears to be very slightly conservative (still, it is asymptotically sharp for $d$ large, where the sharp rate is the one in  \Cref{rmk:improved-regularity-target-convergence-arccos} for all $d$). 
We also observe that the Wasserstein (W) and Wasserstein--Fisher--Rao (WFR) gradient flows have comparable behaviors; this is possible here because $\nu$ is chosen to be a probability measure. If $\nu$ was instead a nonnegative measure of mass different from $1$, then (W) cannot converge to a minimizer while (WFR) can (and locally does).  

\begin{figure}[t]
  \centering
  \begin{subfigure}{0.32\textwidth}
    \centering
    \includegraphics[width=\linewidth]{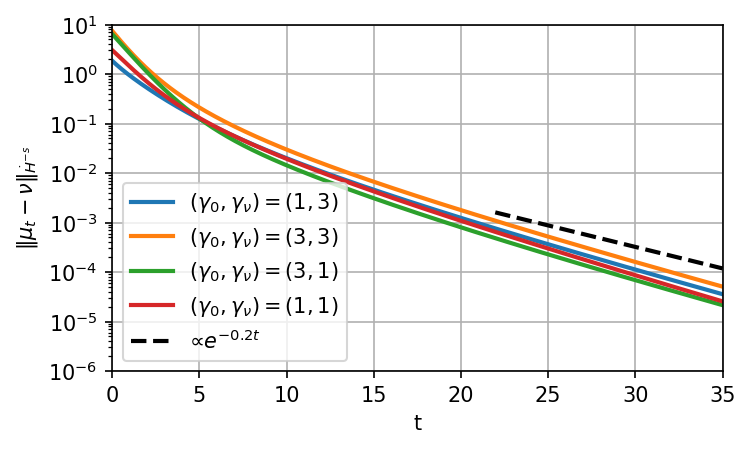}
    \caption{Varying source \& target regularity}
  \end{subfigure}\hfill
  \begin{subfigure}{0.32\textwidth}
    \centering
    \includegraphics[width=\linewidth]{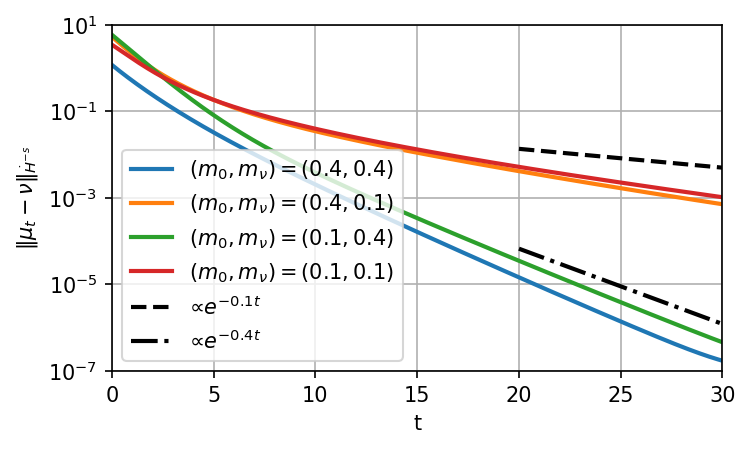}
    \caption{Varying source \& target minimum}
  \end{subfigure}\hfill
  \begin{subfigure}{0.32\textwidth}
    \centering
    \includegraphics[width=\linewidth]{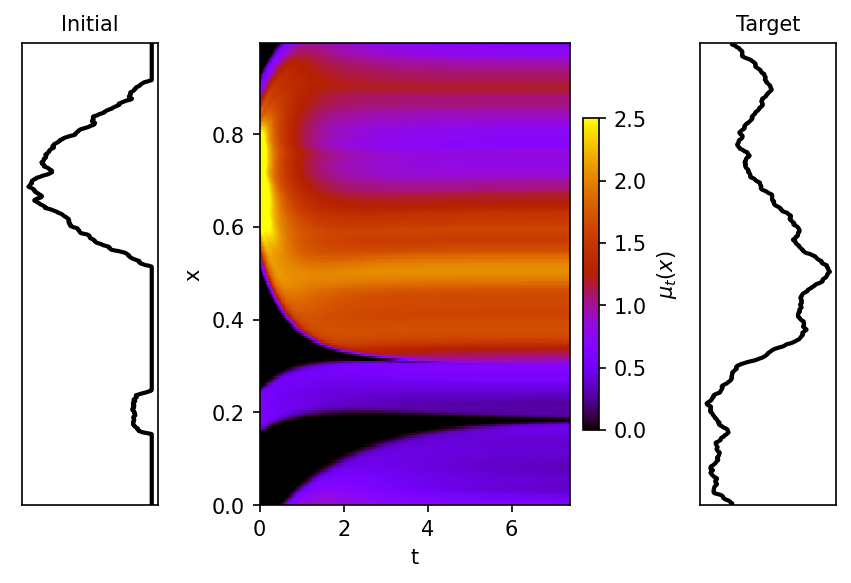}
    \caption{Exponential filling of holes}
  \end{subfigure}
  \caption{The case $s=1$ and $d=1$ integrated with finite volume discretization with upwind scheme. \textbf{(a)} For $s=1$, the regularity of $\bar \mu$ and $\nu$ do not impact the exponential convergence rate. Here $\gamma_0$ (resp. $\gamma_\nu$) is the largest scalar such that $\bar \mu \in \dot{H}^{\gamma_0}$ (resp. $\nu \in \dot{H}^{\gamma_\nu}$) and the densities are lower-bounded by $0.2$ so our theoretical rate is $O(e^{-0.2t})$. \textbf{(b)} As theory predicts, the convergence rate is upper-bounded by the minimum density of $\nu$, and independent of the minimum density of $\bar \mu$. Here $m_0$ (resp.~$m_\nu$) indicates the minimum of $\bar \mu$ (resp.~$\nu$). In this experiment, $(\gamma_0,\gamma_\nu)=(1,1)$. \textbf{(c)} When $\nu$ has a positive lower bound, the zero-density areas of $\bar \mu$ (in black) shrink exponentially fast, as theory predicts (here $(\gamma_0,\gamma_\nu)=(1,1)$); see \Cref{rmk:exponential-filling} and \Cref{lem:exponential-filling-holes}. } 
  \label{fig:s=1}
\end{figure}

\begin{figure}[t]
  \centering
  \begin{subfigure}{0.49\textwidth}
    \centering
    \includegraphics[width=\linewidth]{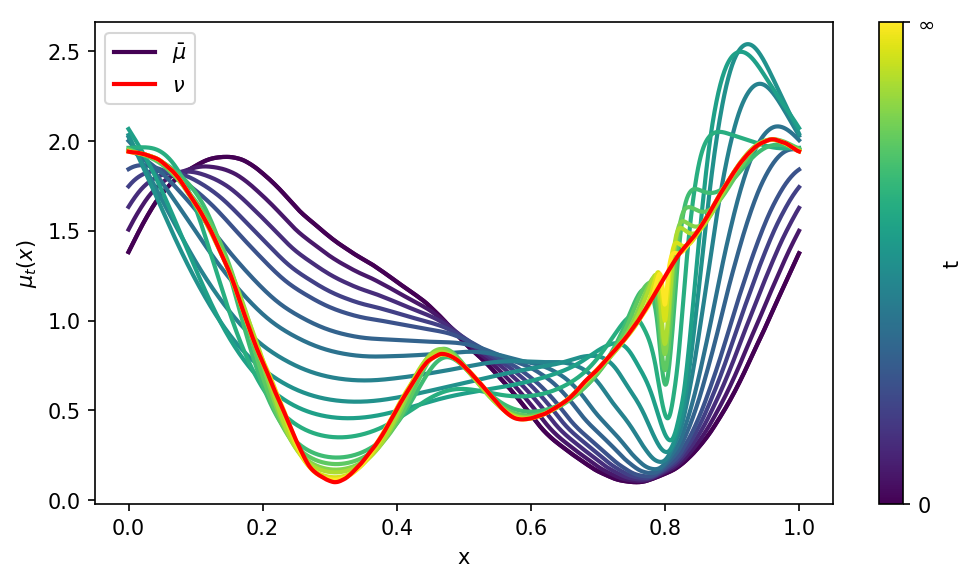}
    \caption{Evolution of the density}
  \end{subfigure}\hfill
  \begin{subfigure}{0.49\textwidth}
    \centering
    \includegraphics[width=\linewidth]{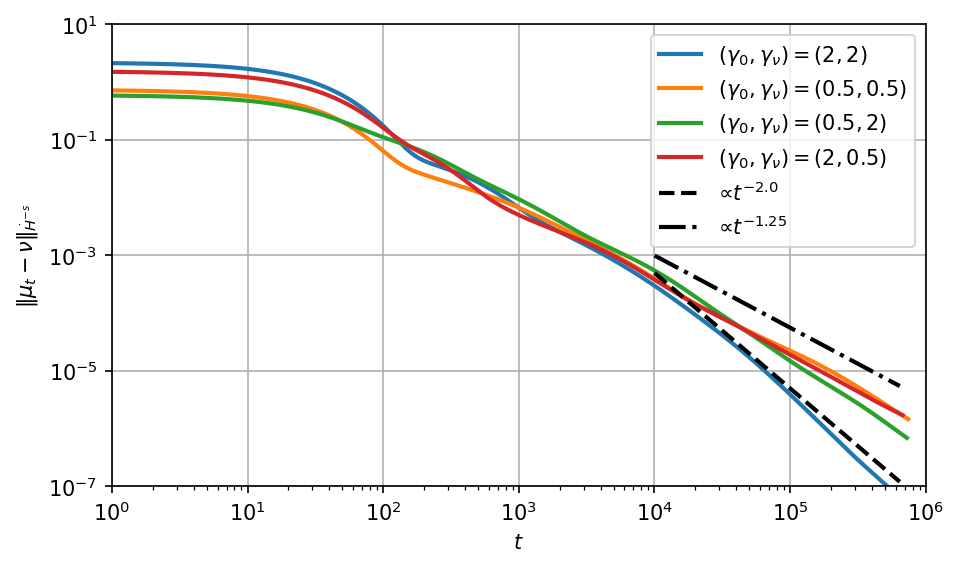}
    \caption{Varying source and target lower bound}
  \end{subfigure}
  \caption{The case $s=2$ and $d=1$ integrated with finite volume discretization with upwind scheme. We write $\gamma_0$ (resp. $\gamma_\nu$) for the largest scalar such that $\bar \mu \in \dot{H}^{\gamma_0}$ (resp. $\nu \in \dot{H}^{\gamma_\nu}$) and the densities are lower-bounded by $0.2$. (a) Snapshots of the density of $\mu_t$ along the evolution (here $(\gamma_0,\gamma_\nu)=(2,2)$). We can observe the absence of a maximum principle (cf.~$x=0.9$), and the appearance of high frequency components (cf.~$x=0.8$). Both phenomena are absent from the case $s=1$. (b) Convergence rate in $\dot{H}^{-2}$ norm. The dashed lines show approximate fits of the asymptotic rates.
  }
  \label{fig:s=2}
\end{figure}

\begin{figure}
    \centering
    \includegraphics[width=0.5\linewidth]{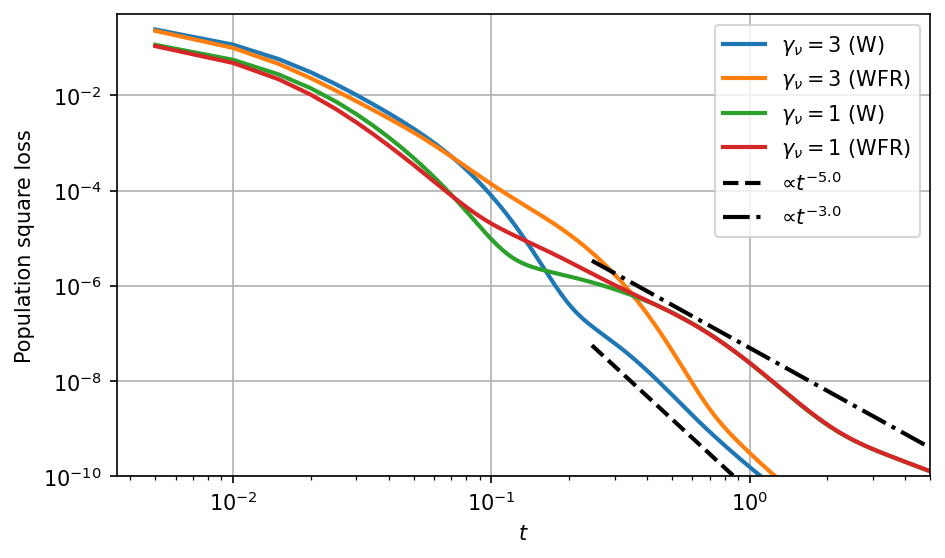}
  \caption{The case of shallow ReLU Neural Network with $d=1$, implemented via gradient descent on the population loss with a ``student'' and ``teacher'' neural network of width $800$ that discretize the measure $\mu_t$ and $\nu$ respectively (in other words, this is an interacting particle system approximation of the PDE with $800$ particles for each measure). We initialize $\bar \mu$ with a uniform density ($\gamma_0=+\infty$) and vary the target regularity (we indicate the largest $\gamma_\nu$ such that $\nu \in \dot{H}^{\gamma_\nu}$). We compare the Wasserstein (W) and Wasserstein--Fisher--Rao (WFR) dynamics. Although (WFR)'s energy decay is slightly faster at initialization (thanks to the extra term in the energy dissipation formula~\eqref{eq:energy-dissipation-Fisher-Rao}), it does not converge faster in general. 
  The dashed lines show approximate fits of the asymptotic rates.
  }
  \label{fig:relu}
\end{figure}

\subsection{Ideas of the proof}\label{subsec:idea-proof}

Let us present the main ideas behind the proofs of our main results, Theorems~\ref{thm:convergence-s=1} and \ref{thm:convergence-s>1}. 

In order to obtain a quantitative energy decay rate we search for a local \L{}ojasiewicz gradient inequality along the evolution $\mu_{t}$, namely
    \begin{equation}\label{eq:Polyak-Lojasiewicz-idea-proof-intro}
        \int_{\T^{d}}|\nabla K_{s}*(\mu_{t}-\nu)|^{2}\mu_{t}\ge c\lVert \mu_{t}-\nu \rVert_{\dot H^{-s}}^{2\beta}\qquad \forall t\ge 0,\quad c>0,\,\beta\ge 1.
    \end{equation}
    In fact, \eqref{eq:energy-dissipation-identity} and \eqref{eq:Polyak-Lojasiewicz-idea-proof-intro} together yield for $\lVert \mu_{t}-\nu\rVert_{\dot H^{-s}}^{2}$ either exponential decay $\sim e^{-ct}$ if $\beta=1$, or polynomial decay $\sim (1+Ct)^{-1/(\beta-1)}$ if $\beta>1$. We point out that such \L{}ojasiewicz gradient inequalities do not hold globally in the space of measures for our energy functionals, but only, as we will see, inside some proper subregion of it. The main difficulty of the proof consists in showing that under suitable assumptions on initial and target measures, the gradient flow remains trapped in such good region. This requires a non-trivial interplay between the energy dissipation identity \eqref{eq:energy-dissipation-identity} and some fine energy estimates for higher order derivatives (see \Cref{lem:energy-estimates}). 

    When $s=1$, \eqref{eq:Polyak-Lojasiewicz-idea-proof-intro} holds with $\beta=1$ and $c=\alpha>0$, thanks to the maximum principle $\inf \mu_{t}\ge \alpha$. In the case $s>1$, however, the same simple estimate does not work for two reasons. First, the maximum principle does not hold for $s>1$; second, even if we had $\inf \mu_{t}\ge c>0$, the resulting quantity in the right-hand side of \eqref{eq:Polyak-Lojasiewicz-idea-proof-intro} would rather involve the lower order norm $\lVert \mu_{t}-\nu\rVert_{\dot H^{1-2s}}^{2}$. These obstructions make the analysis of the case $s>1$ substantially different from the case $s=1$. 
  
    If we search for an inequality like \eqref{eq:Polyak-Lojasiewicz-idea-proof-intro} along the flow, we are thus forced to assume some higher $\dot H^{\gamma}$-regularity on $\mu_{t}-\nu$ for some sufficiently large $\gamma$, and interpolate homogeneous norms as follows:
    \begin{equation}\label{eq:interp-homogeneous-norms-idea-proof-intro}
        \lVert \mu_{t}-\nu\rVert_{\dot H^{-s}}\le \lVert \mu_{t}-\nu\rVert_{\dot H^{1-2s}}^{1-\theta}\lVert \mu_{t}-\nu\rVert_{\dot H^{\gamma}}^{\theta},\qquad \theta= \frac{s-1}{\gamma+2s-1}.
    \end{equation}
    From \eqref{eq:interp-homogeneous-norms-idea-proof-intro} we obtain
    \begin{equation*}
        \int_{\T^{d}}|\nabla K_{s}*(\mu_{t}-\nu)|^{2}\mu_{t}\ge \inf \mu_{t}\left(\lVert \mu_{t}-\nu\rVert_{\dot H^{\gamma}}^{2}\right)^{1-\beta}\left(\lVert \mu_{t}-\nu \rVert_{\dot H^{-s}}^{2} \right)^{\beta},\qquad \beta= 1+\frac{s-1}{\gamma+s}.
    \end{equation*}
    Therefore, in order to get \eqref{eq:Polyak-Lojasiewicz-idea-proof-intro}, and consequently the polynomial decay of $\lVert \mu_{t}-\nu\rVert_{\dot H^{-s}}^{2}$ with exponent $-(\gamma+s)/(s-1)$, we need to make sure that $\inf \mu_{t}$ and $\lVert \mu_{t}-\nu\rVert_{\dot H^{\gamma}}$ remain bounded from below and above, respectively, for all $t\ge 0$. This is the point where energy estimates for the higher norm $\lVert \mu_{t}-\nu\rVert_{\dot H^{\gamma}}^{2}$ come crucially into play, together with the smallness assumption on the initial discrepancy. 

   For simplicity of exposition we illustrate the main mechanism of propagation in time for the \L{}ojasiewicz gradient inequality looking at the linearized equation $\partial_{t}\sigma_{t}=\diver\left(\nu \nabla (-\Delta)^{-s}\sigma_{t}\right)$ for the perturbation $\sigma_{t}:= \mu_{t}-\nu$. The evolution of $\lVert \sigma_{t}\rVert_{\dot H^{\gamma}}^{2}$ is governed by the formula
   \begin{equation*}
       \frac{d}{dt}\lVert \sigma_{t}\rVert_{\dot H^{\gamma}}^{2}=-2\int_{\T^{d}}\nu \nabla (-\Delta)^{-s}\sigma_{t}\cdot \nabla (-\Delta)^{\gamma}\sigma_{t}.
   \end{equation*}
   In the easiest case $\nu =1$, we may simply integrate by parts a power $(\gamma+s)/2$ of the Laplacian, to see that the right-hand side equals $-2\lVert \sigma_{t}\rVert_{\dot H^{\gamma-s+1}}^{2}$.
   We argue similarly in the case when $\nu$ is not constant, but we have to deal with the error terms resulting from derivatives falling on $\nu$. Thanks to Kato--Ponce commutator estimates (that we extend to the periodic setting in \Cref{subsec:appendix-katoponce}), it turns out that these error terms can be controlled by lower order norms of $\sigma_{t}$, provided that some higher regularity on $\nu$ is assumed:
   \begin{equation*}
       \frac{d}{dt}\lVert \sigma_{t}\rVert_{\dot H^{\gamma}}^{2}\le -2\big(\min_{\T^{d}}\nu\big)\lVert \sigma_{t}\rVert_{\dot H^{\gamma-s+1}}^{2}+C\lVert \nu\rVert_{H^{\gamma+s}}\lVert \sigma_{t}\rVert_{\dot H^{\gamma-s+1}}\lVert \sigma_{t}\rVert_{\dot H^{\gamma-s}}.
   \end{equation*}
   Assuming $\nu>0$, interpolating among homogeneous norms, and using Young's inequality, we can absorb the dangerous part of the error term in the right-hand side. Considering also the dissipation identity for $\lVert \sigma_{t}\rVert_{\dot H^{-s}}^{2}$, we end up with the following system of differential inequalities: 
   \begin{equation}\label{eq:system-differential-inequalities-intro}
       \frac{d}{dt}\lVert \sigma_{t}\rVert_{\dot H^{\gamma}}^{2}\le C\lVert \sigma_{t}\rVert_{\dot H^{-s}}^{2},\qquad 
       \frac{d}{dt}\lVert \sigma_{t}\rVert_{\dot H^{-s}}^{2}\le -2\min_{\T^{d}}\nu\left(\lVert \sigma_{t}\rVert_{\dot H^{\gamma}}^{2}\right)^{1-\beta}\left(\lVert \sigma_{t}\rVert_{\dot H^{-s}}^{2}\right)^{\beta},\qquad \beta= 1+\frac{s-1}{\gamma+s}.
   \end{equation}
   When $\gamma$ is sufficiently large, \eqref{eq:system-differential-inequalities-intro} gives uniform boundedness in time for $\lVert \sigma_{t}\rVert_{\dot H^{\gamma}}^{2}$ and polynomial decay with exponent $-(\gamma+s)/(s-1)$ for $\lVert \sigma_{t}\rVert_{\dot H^{-s}}^{2}$, thus concluding the desired convergence result in the linearized setting. Finally, in order to keep nonlinear effects under control,  we need to assume some smallness of the initial perturbation $\sigma_{0}=\bar\mu-\nu$, leading to the locality resulting in \Cref{thm:convergence-s>1}.

\section{Well-posedness theory}\label{sec:well-posedness}
This section is devoted to the proof of \Cref{prop:local-well-posedness-intro}. After fixing notation and introducing the notion of solution to \eqref{eq:PDE-GF} in \Cref{subsec:preliminaries}, we establish in \Cref{subsec:local-well-posedness-weak-class} existence and uniqueness of a maximal solution to \eqref{eq:PDE-GF} in the space $\mathscr{X}_{s}(\T^{d})$, together with a continuation criterion. Finally, in \Cref{subsec:propagation-regularity} we prove propagation of H\"{o}lder and Sobolev regularity from the data throughout the maximal interval of existence.
\subsection{Preliminaries}\label{subsec:preliminaries}
In this section we introduce the notation and basic definitions used throughout the paper in connection with solutions to \eqref{eq:PDE-GF}. In \Cref{subsec:Sobolev-Riesz-Torus} we recall homogeneous Sobolev spaces and Riesz kernels on the torus. In \Cref{subsec:Gradient-flows-Riesz} we define the Riesz kernel mean discrepancy functional $\mathscr{E}^{\nu}_{s}$ and the associated notion of Wasserstein gradient flow on the space of probability measures. Finally, in \Cref{subsec:active-scalar-Riesz} we introduce weak solutions to the active-scalar equation \eqref{eq:PDE-GF} and show that, whenever they belong to the local well-posedness class $\mathscr{X}_{s}(\T^{d})$, they coincide with Wasserstein gradient flows of $\mathscr{E}^{\nu}_{s}$.

\subsubsection{Sobolev spaces and Riesz kernels on $\T^{d}$}\label{subsec:Sobolev-Riesz-Torus}
Let $d\ge 1$ be an integer, and let $\T^d \cong \R^d / \Z^d$ denote the $d$-dimensional torus. 
We consider the standard Fourier orthonormal basis of $L^{2}(\T^{d})$ given by $\{e^{2\pi i k\cdot x}\}_{k\in \Z^{d}}$.
For any periodic distribution $f\in \mathscr{D}'(\T^{d})$ and any $k\in \Z^{d}$, we denote by $\hat{f}_{k}=\langle f,e^{2\pi i k\cdot x}\rangle\in \mathbb{C}$ the $k$-th Fourier coefficient of $f$,
where $\langle \cdot, \cdot\rangle$ is the duality pairing, and we use the notation $\sum_{k\in \Z^{d}}\hat{f}_{k}e^{2\pi i k\cdot x}$ to express $f$ in terms of its Fourier expansion. 

For $\gamma \in \R$, the homogeneous $\gamma$-Sobolev seminorm of $f$ is defined by
\begin{equation*}
    \lVert f\rVert_{\dot H^{\gamma}(\T^{d})}:= \left(\sum_{k\in \Z^{d}\setminus \{0\}}(2\pi |k|)^{2\gamma}|\hat{f}_{k}|^{2}\right)^{1/2}\qquad \forall f\in \mathscr{D}'(\T^{d}),\quad \forall \gamma\in \R.
\end{equation*}
 The homogeneous Sobolev space $ \dot H^{\gamma}(\T^{d})$ consists of zero-mean distributions for which $\lVert \cdot \rVert_{\dot H^{\gamma}}$ is finite.
When $\gamma\ge 0$, we also consider the (inhomogeneous) Sobolev space $H^{\gamma}(\T^{d})\subseteq L^{2}(\T^{d})$ defined by
\begin{equation*}
    H^{\gamma}(\T^{d}):= \left\{f\in L^{2}(\T^{d}):  \lVert f\rVert_{H^{\gamma}(\T^{d})}:=\lVert f\rVert_{L^{2}(\T^{d})}+\lVert f\rVert_{\dot H^{\gamma}(\T^{d})}<\infty\right\}\qquad \forall \gamma\ge 0.
\end{equation*}

For $s>0$, the fractional Laplacian $(-\Delta)^{s}$ is the Fourier multiplier defined on distributions by
\begin{equation*}
    (-\Delta)^{s}f:= \sum_{k\in \Z^{d}}(2\pi |k|)^{2s}\hat{f}_{k}e^{2\pi i k\cdot x}\qquad \forall f\in \mathscr{D}'(\T^{d}),\quad \forall s>0. 
\end{equation*}
The fundamental solution of $(-\Delta)^{s}$ with zero average (i.e.~the distributional solution of $(-\Delta)^{s}K_{s}=\delta_{0}-1$ in $\T^{d}$, where $\delta_{0}$ denotes the Dirac delta at $0$) is the Riesz kernel $K_{s}\in \mathscr{D}'(\T^{d})$, defined by the Fourier expansion
\begin{equation}\label{eq:def-Riesz-kernel}
    K_{s}(x):= \sum_{k\in \Z^{d}\setminus \{0\}}(2\pi |k|)^{-2s}e^{2\pi i k\cdot x}\qquad \forall s>0.
\end{equation}
Consequently, for every $f\in \dot H^{-s}(\T^{d})$, the zero-mean solution of $(-\Delta)^{s}u=f$ in $\T^{d}$ is given by $u=(-\Delta)^{-s}f:= K_{s}*f \in \dot H^{s}(\T^{d})$, where the convolution $*$ is understood in the sense of distributions. In particular, the homogeneous Sobolev norm $\lVert \cdot \rVert_{\dot H^{-s}(\T^{d})}$ can be written as
\begin{equation} \label{eq:seminormKs}
    \|f\|_{\dot{H}^{-s}(\T^{d})}^{2}= \langle f, K_{s}*f\rangle \qquad \forall f\in \dot H^{-s}(\T^{d}).
\end{equation}
We refer to \Cref{subsec-appendix-kernels} for precise regularity properties on $K_{s}$. 

\subsubsection{Gradient flows of Riesz energies in the Wasserstein space of probability measures}\label{subsec:Gradient-flows-Riesz}
Let us recall some notions from the theory of analysis in the space of probability measures and introduce the precise definition of gradient flow for the kernel mean discrepancy (KMD) associated to negative Sobolev norms. We refer the reader to \cite{ambrosio2005gradient} for further details. 

We equip the space $\mathscr{P}(\T^{d})$ of probability measures in the torus with the $2$-Wasserstein distance 
\begin{equation*}
    W_{2}(\mu_{1}, \mu_{2}):= \left(\min_{\gamma \in \Gamma(\mu_{1},\mu_{2})}\int_{\T^{d}\times \T^{d}}|x-y|^{2}d\gamma(x,y)\right)^{1/2},\qquad \mu_{1},\mu_{2}\in \mathscr{P}(\T^{d}),
\end{equation*}
where $\Gamma(\mu_{1},\mu_{2})\subset \mathscr{P}(\T^{d}\times \T^{d})$ is the set of couplings between $\mu_{1}$ and $\mu_{2}$. It is well-known that $(\mathscr{P}(\T^{d}), W_{2})$ is a compact metric space whose notion of convergence coincides with the weak-$*$ convergence of measures. 

Given an interval $I\subseteq \R$, we say that a curve of probability measures $\mu :I\to \mathscr{P}(\T^{d})$ is absolutely continuous, and we write $\mu \in AC(I;\mathscr{P}(\T^{d}))$, if there exists some $g\in L^{1}(I)$ such that 
    \begin{equation}\label{eq:condition-AC-curves-Wasserstein}
        W_{2}(\mu_{b},\mu_{a})\le \int_{a}^{b}g(t)dt\qquad \forall a,b\in I,\, a<b.
    \end{equation}
    In this case, the metric derivative is well-defined
    \begin{equation*}
        |\mu_{t}'|:= \lim_{r\to t}\frac{W_{2}(\mu_{r},\mu_{t})}{|r-t|}<\infty\qquad \text{for a.e.~$t\in I$},
    \end{equation*}
    and $g(t)=|\mu'_{t}|$ is the minimal possible choice for the condition \eqref{eq:condition-AC-curves-Wasserstein} above to hold. 
    
    The following characterization of absolutely continuous curves holds: $\mu:I\to \mathscr{P}(\T^{d})$ is absolutely continuous if and only if it is continuous for the weak-$*$ topology and there exists a Borel vector field $v_{t}(x):I\times \T^{d}\to \R^{d}$ such that $v_{t}\in L^{2}(\T^{d},\mu_{t};\R^{d})$ for a.e.~$t\in I$ and the continuity equation
    \begin{equation*}
        \partial_{t}\mu+\diver(\mu v)=0\qquad \text{in $I\times \T^{d}$}
    \end{equation*}
    is solved in the distributional sense, equivalently
    \begin{equation*}
    \int_{\T^{d}}\varphi d\mu_{b}= \int_{\T^{d}}\varphi d\mu_{a} +\int_{a}^{b}\int_{\T^{d}}\nabla \varphi \cdot v_{r}d\mu_{r}dr\qquad \forall a,b\in I,\, a<b,\qquad  \forall \varphi \in C^{\infty}(\T^{d}).
    \end{equation*}
    In this case, there exists a unique choice of $v$, called the tangent vector field of $\mu$, such that $$\lVert v_{t}\rVert_{L^{2}(\T^{d},\mu_{t};\R^{d})}=|\mu'_{t}|\qquad \text{for a.e.~$t\in I$}.$$

    Let us now introduce the class of energy functionals on $\mathscr{P}(\T^{d})$ considered in this paper. For every $s\ge 1$ and every given probability measure $\nu \in \mathscr{P}(\T^{d})$, we define
 $\mathscr{E}_{s}^{\nu}:\mathscr{P}(\T^{d})\to [0,\infty]$ as 

\begin{equation*}
    \mathscr{E}_{s}^{\nu}(\mu):= 
        \frac{1}{2}\lVert \mu-\nu \rVert_{\dot H^{-s}}^{2}
        =\frac{1}{2}\Bigg(\sup_{\substack{f\in C^{\infty}(\T^{d})\\ \lVert f\rVert_{\dot H^{s}}\le 1}}\int_{\T^{d}}f(d\mu-d\nu)\Bigg)^{2},
\end{equation*}
where the last equality follows by the duality $\dot H^{s}-\dot H^{-s}$ and the density of smooth functions in $\dot H^{s}$.
The functional $\mathscr{E}^{\nu}_{s}$ is proper ($\mathscr{E}^{\nu}_{s}(\nu)=0$), and lower semicontinuous with respect to the Wasserstein metric. Indeed, as seen above, it can be written in terms of the supremum of a family of continuous functions.

For every $\mu \in \{\mathscr{E}^{\nu}_{s}<\infty\}$, the subdifferential of $\mathscr{E}^{\nu}_{s}$ at $\mu$ is the set $\partial \mathscr{E}^{\nu}_{s}(\mu)$ of all vector fields $\xi \in L^{2}(\T^{d}, \mu;\R^{d})$ such that 
    \begin{equation}\label{eq:def-subdifferential-Es}
        \mathscr{E}^{\nu}_{s}(\eta)-\mathscr{E}^{\nu}_{s}(\mu)\ge \inf_{\gamma\in \Gamma_{\opt}(\mu,\eta)}\int_{\T^{d}\times \T^{d}}\xi(x)\cdot (y-x)d\gamma(x,y)+o\left(W_{2}(\mu, \eta)\right)\qquad \forall \eta\in \mathscr{P}(\T^{d}),
    \end{equation}
    where $\Gamma_{\opt}(\mu, \eta)\subset \mathscr{P}(\T^{d}\times \T^{d})$ is the set of optimal couplings between $\mu$ and $\eta$. This corresponds to the notion of reduced subdifferential from \cite[Equation 10.3.12]{ambrosio2005gradient}. 
Next, we give the definition of gradient flow for the energy $\mathscr{E}^{\nu}_{s}$:
\begin{defn}\label{def:Wasserstein-gradient-flow-H-s}
    We say that $\mu \in AC([0,T);\mathscr{P}(\T^{d}))$ is a gradient flow of $\mathscr{E}^{\nu}_{s}$ starting at $\bar\mu\in \mathscr{P}(\T^{d})$ if $\mu_{0}=\bar\mu$ and the tangent vector field $v$ of $\mu$ satisfies
    \begin{equation*}
        -v_{t}\in \partial \mathscr{E}^{\nu}_{s}(\mu_{t})\qquad \text{for a.e.~$t\in (0,T)$}.
    \end{equation*}
\end{defn}
\noindent This notion corresponds precisely to \cite[Definition 11.1.1]{ambrosio2005gradient} applied to the functional $\mathscr{E}^{\nu}_{s}$.

\subsubsection{The active-scalar equation}\label{subsec:active-scalar-Riesz}

Let us introduce the notion of weak solution to \eqref{eq:PDE-GF} that we consider in this paper. For any interval $I\subseteq \R$ and any curve of probability measures $I\ni t\mapsto \mu_{t}\in \mathscr{P}(\T^{d})$, we write $\mu \in C_{w^{*}}(I; \mathscr{P}(\T^{d}))$ if $\mu$ is continuous in time with respect to the weak-$*$ topology.  

\begin{defn}\label{def:solution-active-scalar-equation}
    Let $\bar\mu, \nu \in \mathscr{P}(\T^{d})$ and $s\in [1,\infty)$.
We say that a curve of probability measures $\mu \in C_{w^{*}}([0,T); \mathscr{P}(\T^{d}))$ solves the active-scalar equation \eqref{eq:PDE-GF}
if $\mu_{0}=\bar\mu$, $v_{t}:= -\nabla K_{s}*(\mu_{t}-\nu)$ satisfies $v\in L^{1}_{\loc}((0,T)\times \T^{d},\mu;\R^{d})$, and the continuity equation $\partial_{t}\mu+\diver(\mu v)=0$ holds in the distributional sense, equivalently
\begin{equation*}
    \int_{\T^{d}}\varphi d\mu_{t}= \int_{\T^{d}}\varphi d\bar\mu +\int_{0}^{t}\int_{\T^{d}}\nabla \varphi \cdot v_{r}d\mu_{r}dr\qquad \forall t\in (0,T),\quad \forall \varphi \in C^{\infty}(\T^{d}).
\end{equation*}
\end{defn}

For every $s\ge 1$, the function space in which the local well-posedness of \eqref{eq:PDE-GF} takes place appears in \eqref{eq:def-yudovich-class}.  
We recall that for every $p\in [1,\infty)$ and $q\in [1,\infty]$, the Lorentz space $L^{p,q}(\T^{d})$ is defined as the set of functions $f:\T^{d}\to \R$ such that 
            \begin{equation}\label{eqn:Lorentz}
                \lVert f\rVert_{L^{p,q}(\T^{d})}:= p^{1/q}\lVert \lambda\mathscr{L}^{d}(\left\{|f|>\lambda\right\})^{1/p}\rVert_{L^{q}((0,\infty),d\lambda/\lambda)}<\infty.
            \end{equation}
             The following are some known properties of Lorentz spaces: $L^{p,p}=L^{p}$ and $\lVert f\rVert_{L^{p,p}}=\lVert f\rVert_{L^{p}}$; $L^{p,q}\subset L^{p,r}$ if $q\le r$ and $\lVert f\rVert_{L^{p,r}}\lesssim_{p,q,r}\lVert f\rVert_{L^{p,q}}$; $L^{r}\subset L^{p,q}$ for every $r>p$.
In the sequel, we will consider solutions of \eqref{eq:PDE-GF} according to \Cref{def:solution-active-scalar-equation} that are locally bounded in time in the space $\mathscr{X}_{s}(\T^{d})$, namely $\mu\in L^{\infty}_{\loc}([0,T);\mathscr{X}_{s}(\T^{d}))$. This does not impose any further condition when $s\ge d/2+1$, as by definition $\mu_{t}$ is a probability measure at each time. We will typically deal with solutions extended up to the maximal time of existence:

\begin{defn}\label{def:maximal-solutions}
    Let $s\in [1,\infty)$, $\bar\mu, \nu \in \mathscr{P}\cap\mathscr{X}_{s}(\T^{d})$, and $\mu\in L^{\infty}_{\loc}([0,T);\mathscr{X}_{s}(\T^{d}))$ be a solution of \eqref{eq:PDE-GF} according to \Cref{def:solution-active-scalar-equation}. We say that $\mu$ is a maximal solution of \eqref{eq:PDE-GF} in the space $\mathscr{X}_{s}(\T^{d})$ if the following holds: let $\tilde{T}\ge T$ and $\tilde{\mu}\in L^{\infty}_{\loc}([0,\tilde{T});\mathscr{X}_{s}(\T^{d}))$ be a solution of \eqref{eq:PDE-GF} such that $\tilde{\mu}_{t}=\mu_{t}$ for all $t\in [0,T)$. Then $\tilde{T}=T$ and $\tilde{\mu}=\mu$.
\end{defn}

We now show that solutions of \eqref{eq:PDE-GF} in the class $L^{\infty}_{\loc}([0,T);\mathscr{X}_{s}(\T^{d}))$ are Wasserstein gradient flows of $\mathscr{E}^{\nu}_{s}$ according to \Cref{def:Wasserstein-gradient-flow-H-s}. 
We start with the following lemma, which ensures that the vector field $v_{t}$ prescribed by the active-scalar equation \eqref{eq:PDE-GF} is in the subdifferential of $\mathscr{E}^{\nu}_{s}$ at $\mu_{t}$, as long as $\mu_{t}\in \mathscr{X}_{s}(\T^{d})$. 
\begin{lem}\label{lem:nabla-Ks-in-subdifferential}
    Let $s\ge 1$ and $\mu,\nu \in \mathscr{P}\cap \mathscr{X}_{s}(\T^{d})$. Then, $\nabla K_{s}*(\mu-\nu)\in \partial \mathscr{E}^{\nu}_{s}(\mu)$.
\end{lem}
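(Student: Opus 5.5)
Write $u:=K_s*(\mu-\nu)$ and $\xi:=\nabla u$. The plan is to verify the subdifferential inequality \eqref{eq:def-subdifferential-Es} directly, using the fact that $\mathscr{E}^{\nu}_{s}$ is quadratic. For $\eta\in\mathscr{P}(\T^{d})$, expanding the square via \eqref{eq:seminormKs} gives
\begin{equation*}
\mathscr{E}^{\nu}_{s}(\eta)-\mathscr{E}^{\nu}_{s}(\mu)=\int_{\T^{d}} u\,d(\eta-\mu)+\tfrac12\lVert \eta-\mu\rVert_{\dot H^{-s}}^{2}\ge \int_{\T^{d}} u\,d(\eta-\mu).
\end{equation*}
If $\mathscr{E}^{\nu}_{s}(\eta)=\infty$ the inequality is trivial. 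Otherwise the pairing is justified by duality $\dot H^{s}$--$\dot H^{-s}$, since $\mu-\nu\in\dot H^{-s}$ (because $\mu,\nu\in\mathscr{X}_s\subset \dot H^{-s}$, as $\mathscr{X}_s$ embeds in $\dot H^{-s}$ by Sobolev/Lorentz embeddings for $s<d/2+1$, and measures are in $\dot H^{-s}$ for $s>d/2$). We also use that $u$ is a continuous function, so it integrates against $\eta$. For this I will rely on the regularity of $K_s$ from \Cref{subsec-appendix-kernels}: for $\mu-\nu\in\mathscr{X}_s$, the field $\nabla u$ is bounded and log-Lipschitz. When $s=1$, $L^\infty$ data gives $\nabla u$ log-Lipschitz. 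When $1<s<d/2+1$, $L^{p,1}$ data with $p=d/(2s-2)$ gives $D^2 u\in L^{d/(2s-2)\cdot}$-type bounds, and the endpoint Lorentz space ensures $\nabla u$ is log-Lipschitz (indeed the kernel $\nabla^2 K_s\sim|x|^{2s-2-d}$ lies in $L^{p',\infty}$, so Lorentz duality yields boundedness of the convolution of $D^2K_s$ away from the singular-integral part; the critical term gives the log-Lipschitz modulus). When $s\ge d/2+1$ and the data are measures, $\nabla K_s$ is log-Lipschitz at $s=d/2+1$ and Lipschitz beyond.

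Next, fix an optimal coupling $\gamma\in\Gamma_{\opt}(\mu,\eta)$ and write
\begin{equation*}
\int u\,d(\eta-\mu)=\int\big(u(y)-u(x)\big)d\gamma=\int \xi(x)\cdot(y-x)\,d\gamma+\int R(x,y)\,d\gamma,
\end{equation*}
where $R(x,y)=\int_0^1(\xi(x+\tau(y-x))-\xi(x))\cdot(y-x)\,d\tau$, with $y-x$ taken as the shortest representative on the torus. The modulus of $\xi$ gives $|R|\le |y-x|\,\omega(|y-x|)$ with $\omega(r)=Cr(1+|\log r|)$ (or $\omega(r)=Cr$). Since the torus is bounded, $|y-x|\le \sqrt d/2$, and by concavity/Jensen we get $\int|y-x|^2(1+|\log|y-x||)\,d\gamma\lesssim W_2^2(1+|\log W_2|)=o(W_2)$. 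Taking the infimum over $\gamma$ then yields \eqref{eq:def-subdifferential-Es}, with the sign convention of the statement. Finally, $\xi\in L^\infty\subset L^2(\mu)$ holds trivially.

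The main obstacle is the regularity step, namely showing that $\nabla K_s*(\mu-\nu)$ is uniformly log-Lipschitz for data in $\mathscr{X}_s$, in particular in the Lorentz endpoint case. I would split the kernel into near and far parts at scale $r=|x-x'|$, in the Yudovich fashion. The near part is estimated by $\int_{B_{2r}}|\nabla K_s|\,|f|$ using Hölder in Lorentz spaces, since $|\nabla K_s|\sim|z|^{2s-1-d}\in L^{p',\infty}$ locally. The far part uses $|D^2K_s|\lesssim|z|^{2s-2-d}$ integrated over $r<|z|<1$, which produces the $\log$ for the critical exponent. A secondary point is the Jensen step for the function $r^2|\log r|$: it is not concave near $0$ in the right variable, so I would instead split the coupling into the region $|y-x|\le\sqrt{W_2}$ and its complement, using Chebyshev on the latter.
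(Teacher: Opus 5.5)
Your argument is correct and is essentially the paper's proof: expand the quadratic energy, drop $\tfrac12\lVert\eta-\mu\rVert_{\dot H^{-s}}^{2}$, integrate $u(y)-u(x)$ against an optimal coupling, control the Taylor remainder with the (log-)Lipschitz modulus of $\xi$ from \Cref{lem:modulus-continuity-vector-field}, and finish with Jensen. The splitting in your last remark is unnecessary: with the equivalent modulus $r\log(2+r^{-1})$, the map $t\mapsto t\log(2+t^{-1/2})$ is concave on $(0,\infty)$, which is exactly what the paper uses. Moreover, a split at $\sqrt{W_2}$ combined with Chebyshev, as you sketched it, would only give $O(W_2)$; the threshold that works is $W_2$ itself.
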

\begin{proof}
    We denote $\xi:= \nabla K_{s}*(\mu-\nu)$. By \Cref{lem:modulus-continuity-vector-field} and \Cref{lem:W2-Linfty}, $\xi\in C(\T^{d};\R^{d})\subset L^{2}(\T^{d},\mu_{t};\R^{d})$ has modulus of continuity 
    \begin{equation*}
        |\xi(x)-\xi(y)|\lesssim_{d,s,M}|x-y|\log(2+|x-y|^{-1}),\qquad M:=\max\{\lVert \mu\rVert_{\mathscr{X}_{s}},\lVert \nu\rVert_{\mathscr{X}_{s}}\}.
    \end{equation*}
    In particular, by the mean value theorem, we have
    \begin{equation}\label{eq:taylor-expansion-subgradient}
        |K_{s}*(\mu-\nu)(y)-K_{s}*(\mu-\nu)(x)-\xi(x)\cdot (y-x)|\lesssim_{d,s,M}|x-y|^{2}\log(2+|x-y|^{-1})\qquad \forall x,y \in \T^{d}.  
    \end{equation}
    Let us check that condition \eqref{eq:def-subdifferential-Es} holds. Take $\eta\in \mathscr{P}(\T^{d})$ such that $\mathscr{E}^{\nu}_{s}(\eta)<\infty$ and $\gamma \in \Gamma_{\opt}(\mu,\eta)$. Then, using \eqref{eq:seminormKs} we find
    \begin{align*}
        \mathscr{E}^{\nu}_{s}(\eta)-\mathscr{E}^{\nu}_{s}(\mu)&= \frac{1}{2}\lVert \eta-\nu\rVert_{\dot H^{-s}}^{2}-\frac{1}{2}\lVert \mu-\nu\rVert_{\dot H^{-s}}^{2}=\langle K_{s}*(\mu-\nu), \eta-\mu\rangle+\frac{1}{2}\lVert \eta-\mu\rVert_{\dot H^{-s}}^{2}\\
        &\ge \langle K_{s}*(\mu-\nu), \eta-\mu\rangle= \int_{\T^{d}\times \T^{d}}\left(K_{s}*(\mu-\nu)(y)-K_{s}*(\mu-\nu)(x)\right)d\gamma(x,y)\\
        &\ge \int_{\T^{d}\times \T^{d}}\xi(x)\cdot (y-x)d\gamma(x,y)-C(d,s,M)\int_{\T^{d}\times \T^{d}}|x-y|^{2}\log(2+|x-y|^{-1})d\gamma(x,y)\\
        &\ge \int_{\T^{d}\times \T^{d}}\xi(x)\cdot (y-x)d\gamma(x,y)-C(d,s, M)W_{2}^{2}(\mu,\eta)\log\left(2+W_{2}^{-1}(\mu,\eta)\right),
    \end{align*}
    where in the last two steps we used \eqref{eq:taylor-expansion-subgradient} and Jensen's inequality for the concave function $t\mapsto t \log(2+t^{-1/2})$. By the arbitrariness of $\gamma \in \Gamma_{\opt}(\mu,\eta)$, we obtain \eqref{eq:def-subdifferential-Es}.
\end{proof}

\begin{prop}\label{prop:gradient-flow-structure}
    Let $s\ge 1$, $\bar\mu, \nu \in \mathscr{P}\cap \mathscr{X}_{s}(\T^{d})$, and $\mu \in L^{\infty}_{\loc}([0,T);\mathscr{X}_{s}(\T^{d}))$ be a solution of \eqref{eq:PDE-GF} according to \Cref{def:solution-active-scalar-equation}. Then, $\mu$ is a gradient flow of $\mathscr{E}^{\nu}_{s}$ with initial datum $\bar\mu$ according to \Cref{def:Wasserstein-gradient-flow-H-s}. Moreover, $\mu \in \Lip_{\loc}\left([0,T); \left(\mathscr{P}(\T^{d}),W_{2}\right)\right)$ and the energy dissipation identity \eqref{eq:energy-dissipation-identity} holds.
\end{prop}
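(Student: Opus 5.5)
The plan is to exploit the regularity of the velocity field that the class $\mathscr{X}_{s}$ provides, and then to apply the general machinery for absolutely continuous curves in $(\mathscr{P}(\T^{d}),W_{2})$ recalled in \Cref{subsec:Gradient-flows-Riesz}.

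\emph{Step 1: bounds on the velocity field and Lipschitz regularity of the curve.} Fix $T'<T$ and set $M:=\max\{\sup_{t\le T'}\lVert \mu_{t}\rVert_{\mathscr{X}_{s}},\lVert \nu\rVert_{\mathscr{X}_{s}}\}<\infty$. By \Cref{lem:modulus-continuity-vector-field} (already used in the proof of \Cref{lem:nabla-Ks-in-subdifferential}), the field $v_{t}=-\nabla K_{s}*(\mu_{t}-\nu)$ is continuous, with a log-Lipschitz modulus, and its sup norm is bounded by a constant $C(d,s,M)$ uniformly in $t\in[0,T']$. In particular $\lVert v_{t}\rVert_{L^{2}(\mu_{t})}\le C$. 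Since $\mu\in C_{w^{*}}$ solves the continuity equation with this field, the characterization of absolutely continuous curves gives $\mu\in AC([0,T'];\mathscr{P})$ with $|\mu'_{t}|\le \lVert v_{t}\rVert_{L^{2}(\mu_{t})}\le C$. This yields $W_{2}(\mu_{a},\mu_{b})\le C|b-a|$, that is, $\mu\in\Lip_{\loc}$.

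\emph{Step 2: the gradient flow property.} A priori $v_{t}$ is only an admissible field, not necessarily the tangent field. The tangent field is the $L^{2}(\mu_{t})$-projection of $v_{t}$ onto $\overline{\{\nabla\varphi\}}^{L^{2}(\mu_{t})}$. However, $v_{t}=-\nabla(K_{s}*(\mu_{t}-\nu))$ is the gradient of a $C^{1}$ function, and it can be approximated in $C^{0}$, hence in $L^{2}(\mu_{t})$, by gradients of smooth functions obtained through mollification. So $v_{t}$ already lies in the tangent space and coincides with the tangent field. By \Cref{lem:nabla-Ks-in-subdifferential} applied to $\mu_{t}\in\mathscr{X}_{s}$ (valid for a.e.\ $t$), we get $-v_{t}\in\partial\mathscr{E}^{\nu}_{s}(\mu_{t})$, which is exactly \Cref{def:Wasserstein-gradient-flow-H-s}.

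\emph{Step 3: energy dissipation.} I would use the chain rule along AC curves from \cite{ambrosio2005gradient}. The cleanest self-contained route, which I would try first, is a direct computation. Write
\[
\mathscr{E}^{\nu}_{s}(\mu_{t})-\mathscr{E}^{\nu}_{s}(\mu_{r})=\langle K_{s}*(\mu_{r}-\nu),\mu_{t}-\mu_{r}\rangle+\tfrac12\lVert\mu_{t}-\mu_{r}\rVert_{\dot H^{-s}}^{2}.
\]
The first term is handled by the weak formulation with the test function $K_{s}*(\mu_{r}-\nu)$. This function is only $C^{1,\log}$, so I would mollify it and pass to the limit using the uniform convergence of gradients. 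The result is $-\int_{r}^{t}\int\nabla K_{s}*(\mu_{r}-\nu)\cdot\nabla K_{s}*(\mu_{\tau}-\nu)\,d\mu_{\tau}d\tau$. For the quadratic remainder, duality gives $\lVert\mu_{t}-\mu_{r}\rVert_{\dot H^{-s}}^{2}=\langle K_{s}*(\mu_{t}-\mu_{r}),\mu_{t}-\mu_{r}\rangle$, which is $O(|t-r|^{2})$ by the same weak-formulation argument combined with the uniform bound on $\nabla K_{s}*(\mu_{t}-\mu_{r})$. Dividing by $t-r$ and using the weak-$*$ continuity of $\mu$ together with the equicontinuity and uniform convergence of $v_{\tau}\to v_{r}$ (which follow from the uniform log-Lipschitz bounds) gives the derivative $-\int|v_{r}|^{2}d\mu_{r}$ for every $r$, i.e.\ \eqref{eq:energy-dissipation-identity}. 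Continuity of $r\mapsto\int|v_{r}|^{2}d\mu_{r}$ upgrades this to a genuine derivative at every time.

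\emph{Main obstacle.} The delicate points are the low-regularity cases $s\in[1,\tfrac d2+1)$ and the endpoint $s=\tfrac d2+1$, where $\nabla K_{s}*(\mu-\nu)$ is only log-Lipschitz and not $C^{1}$. There one must make sure that the mollified test functions converge in the right topology. Here the estimates of \Cref{lem:modulus-continuity-vector-field}, which are uniform in $M$, do the work.
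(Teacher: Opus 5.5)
Your proposal is correct and follows essentially the same route as the paper. You get Lipschitz continuity from the uniform sup bound on $v_t$, the gradient-flow property from \Cref{lem:nabla-Ks-in-subdifferential}, and the dissipation identity by splitting $\mathscr{E}^{\nu}_{s}(\mu_{t})-\mathscr{E}^{\nu}_{s}(\mu_{r})$ into the linear pairing plus $\frac12\lVert \mu_t-\mu_r\rVert_{\dot H^{-s}}^{2}$ and passing to the limit via uniform convergence of the velocity fields; your checks that $v_t$ is the tangent field and that the $C^1$ test function may be mollified fill in points the paper passes over. One small correction: the quadratic remainder is in general only $o(|t-r|)$, not $O(|t-r|^{2})$ (at $s=\frac d2+1$ a moving Dirac mass gives order $|t-r|^{2}\log(1/|t-r|)$). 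Still, $o(|t-r|)$ is all you need, and it follows from $\lVert \nabla K_{s}*(\mu_{t}-\mu_{r})\rVert_{L^{\infty}}\to 0$, which comes from \Cref{lem:W2-Linfty} and your Step 1, exactly as in the paper.
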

\begin{proof}
    Let $v_{t}=-\nabla K_{s}*(\mu_{t}-\nu)$ be the vector field generated by the solution $\mu$. By \Cref{lem:nabla-Ks-in-subdifferential}, $v_{t}\in -\partial \mathscr{E}^{\nu}_{s}(\mu_{t})\subset L^{2}(\T^{d},\mu_{t};\R^{d})$ for all $t\in [0,T)$, thus $\mu\in AC([0,T);\mathscr{P}(\T^{d}))$ and it is a gradient flow of $\mathscr{E}^{\nu}_{s}$ starting at $\bar\mu$ according to \Cref{def:Wasserstein-gradient-flow-H-s}. Moreover, since $\mu \in L^{\infty}_{\loc}([0,T);\mathscr{X}_{s}(\T^{d}))$, by \Cref{lem:W2-Linfty} $v$ is locally bounded in $[0,T)\times \T^{d}$, which implies $\mu \in \Lip_{\loc}\left([0,T); \left(\mathscr{P}(\T^{d}),W_{2}\right)\right)$.
    To prove \eqref{eq:energy-dissipation-identity}, we pick $t\in (0,T)$ and compute, using \eqref{eq:seminormKs} and equation \eqref{eq:PDE-GF}:
    \begin{align*}
        \frac{\mathscr{E}^{\nu}_{s}(\mu_{t+h})-\mathscr{E}^{\nu}_{s}(\mu_{t})}{h}&=\left\langle K_{s}*(\mu_{t}-\nu)+\frac{1}{2}K_{s}*(\mu_{t+h}-\mu_{t}),\frac{\mu_{t+h}-\mu_{t}}{h}\right\rangle\\
        &= \fint_{t}^{t+h}\left\langle K_{s}*(\mu_{t}-\nu)+\frac{1}{2}K_{s}*(\mu_{t+h}-\mu_{t}),\partial_{r}\mu_{r}\right\rangle dr\\
        &= \fint_{t}^{t+h}\left\langle K_{s}*(\mu_{t}-\nu)+\frac{1}{2}K_{s}*(\mu_{t+h}-\mu_{t}),\diver(\mu_{r}\nabla K_{s}*(\mu_{r}-\nu))\right\rangle dr\\
        &= -\fint_{t}^{t+h}\left\langle \nabla K_{s}*(\mu_{t}-\nu)+\frac{1}{2}\nabla K_{s}*(\mu_{t+h}-\mu_{t}),\mu_{r}\nabla K_{s}*(\mu_{r}-\nu)\right\rangle dr.
    \end{align*}
    Now, by \Cref{lem:W2-Linfty}, since $\mu$ is weakly-$*$ continuous in time, $\nabla K_{s}*(\mu_{r}-\nu)\rightarrow \nabla K_{s}*(\mu_{t}-\nu)$ uniformly as $r\to t$ and $\nabla K_{s}*(\mu_{t+h}-\mu_{t})\rightarrow 0$ uniformly as $h\to 0$. Therefore, we find
    \begin{equation*}
         \frac{\mathscr{E}^{\nu}_{s}(\mu_{t+h})-\mathscr{E}^{\nu}_{s}(\mu_{t})}{h}=-\int_{\T^{d}}|\nabla K_{s}*(\mu_{t}-\nu)|^{2}d\mu_{t}+o(h)\qquad \text{as $h\to 0$},
    \end{equation*}
    from which \eqref{eq:energy-dissipation-identity} follows. 
\end{proof}

\subsection{Local well-posedness in the weak class} \label{subsec:local-well-posedness-weak-class}
In this section we prove existence and uniqueness of (maximal) solutions to \eqref{eq:PDE-GF} in the class $\mathscr{X}_{s}(\T^{d})$. Specifically, in \Cref{subsec:uniqueness-riesz} we establish a quantitative stability estimate for solutions in $\mathscr{X}_{s}(\T^{d})$, which in particular yields uniqueness. Existence is then proved in \Cref{subsec:existence-riesz}. Finally, \Cref{subsec:maximal-sol-riesz} is devoted to the construction of maximal solutions and to the proof of the continuation criterion.
\subsubsection{Uniqueness and stability}\label{subsec:uniqueness-riesz}

In the following proposition, we show that solutions are uniformly stable in Wasserstein distance on bounded intervals of time with respect to variations of the data. The precise detachment rate of the stability inequality, exponential or double-exponential according to the choice of $s$, is described in \Cref{rmk:rate-stability}.     

\begin{prop}(Uniqueness and stability)\label{prop:stability-yudovich}
Let $s\ge 1$ and let $\mu^{1}, \mu^{2}\in  L^{\infty}([0,T);\mathscr{X}_{s}(\T^{d}))$ be two solutions of \eqref{eq:PDE-GF} with initial and target measures $\bar\mu^{1}, \bar\mu^{2},\nu^{1},\nu^{2}\in \mathscr{P}\cap \mathscr{X}_{s}(\T^{d})$, respectively. Let
\begin{equation*}
    M:= \max \left\{\sup_{t\in [0,T)}\lVert \mu_{t}^{1}\rVert_{\mathscr{X}_{s}},\sup_{t\in [0,T)}\lVert \mu_{t}^{2}\rVert_{\mathscr{X}_{s}}, \lVert \nu^{1}\rVert_{\mathscr{X}_{s}}, \lVert \nu^{2}\rVert_{\mathscr{X}_{s}}\right\}.
\end{equation*}
Then, the following facts hold:
\begin{itemize}
    \item [i)] (\textit{Uniqueness}). If $\bar\mu^{1}=\bar\mu^{2}$ and $\nu^{1}=\nu^{2}$, then $\mu^{1}_{t}=\mu^{2}_{t}$ for all $t\in [0,T)$.
    \item [ii)] (\textit{Stability}). Otherwise, the following bound holds:
\begin{equation*}
    W_{2}(\mu_{t}^{1},\mu_{t}^{2})\le \Omega(t)\qquad \forall t\in [0,T),
\end{equation*}
where $\Omega=\Omega\left(d,s,M,W_{2}(\bar\mu^{1},\bar\mu^{2}),W_{2}(\nu^{1},\nu^{2})):[0,\infty)\to [0,\infty\right)$ is a continuous increasing function  such that $\Omega(0)= W_{2}(\bar\mu^{1},\bar\mu^{2})$. Moreover, $\Omega$ converges uniformly to zero in bounded intervals as $W_{2}(\bar\mu^{1},\bar\mu^{2})$ and $W_{2}(\nu^{1},\nu^{2})$ approach zero. 
\end{itemize}
 \end{prop}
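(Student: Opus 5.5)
We will follow the Loeper / Marchioro–Pulvirenti approach and couple the two solutions through their Lagrangian flows. By \Cref{lem:W2-Linfty} and \Cref{lem:modulus-continuity-vector-field}, each velocity field $v^{i}_{t}=-\nabla K_{s}*(\mu^{i}_{t}-\nu^{i})$ is bounded, with constants depending on $d,s,M$. It also has the log-Lipschitz modulus
\begin{equation*}
|v^{i}_{t}(x)-v^{i}_{t}(y)|\le C|x-y|\log(2+|x-y|^{-1}).
\end{equation*}
By Osgood's criterion, each $v^{i}$ generates a unique flow $X^{i}_{t}$. Since the continuity equation with such a field has a unique weakly continuous solution (via the superposition principle or a duality argument), we get $\mu^{i}_{t}=(X^{i}_{t})_{\#}\bar\mu^{i}$. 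Fix an optimal coupling $\bar\gamma\in\Gamma_{\opt}(\bar\mu^{1},\bar\mu^{2})$ and set
\begin{equation*}
Q(t):=\int|X^{1}_{t}(x)-X^{2}_{t}(y)|^{2}\,d\bar\gamma(x,y)\ge W_{2}^{2}(\mu^{1}_{t},\mu^{2}_{t}),
\end{equation*}
so that $Q(0)=W_{2}^{2}(\bar\mu^{1},\bar\mu^{2})$.

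Differentiating gives
\begin{equation*}
Q'(t)=2\int (X^{1}-X^{2})\cdot\bigl(v^{1}_{t}(X^{1})-v^{2}_{t}(X^{2})\bigr)\,d\bar\gamma .
\end{equation*}
We split $v^{1}_{t}(X^{1})-v^{2}_{t}(X^{2})$ into three terms:
\begin{equation*}
\bigl[v^{1}_{t}(X^{1})-v^{1}_{t}(X^{2})\bigr]+\bigl[\nabla K_{s}*(\mu^{2}_{t}-\mu^{1}_{t})\bigr](X^{2})+\bigl[\nabla K_{s}*(\nu^{1}-\nu^{2})\bigr](X^{2}).
\end{equation*}
The first term is handled by the log-Lipschitz modulus and Jensen's inequality for the concave function $r\mapsto r\log(2+r^{-1/2})$. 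It gives a contribution $\lesssim Q\log(2+Q^{-1})$.

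The second and third terms are the heart of the argument. We need a Loeper-type estimate, which we will prove as part of the argument:
\begin{equation*}
\|\nabla K_{s}*(\rho^{1}-\rho^{2})\|_{L^{2}}\lesssim_{d,s,M} W_{2}(\rho^{1},\rho^{2})\quad\text{for }\rho^{i}\in\mathscr X_{s},
\end{equation*}
possibly with a logarithmic loss in the endpoint classes. For $s=1$, this is Loeper's inequality $\|\rho^{1}-\rho^{2}\|_{\dot H^{-1}}\le \max\|\rho^{i}\|_{L^{\infty}}^{1/2}W_{2}$, proved by interpolating along the displacement geodesic and using $L^{\infty}$ bounds on the interpolants. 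For $s>1$, we use $\|\nabla K_{s}*f\|_{L^{2}}=\|f\|_{\dot H^{1-2s}}$. Since $1-2s\le -1$, this is bounded by $\|f\|_{\dot H^{-1}}$ on zero-mean distributions, and the same geodesic argument controls it. The difficulty is that the interpolants are now only in $L^{p,1}$ or $\mathcal M$, not in $L^{\infty}$.

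To deal with this, we will write $\langle \nabla K_{s}*f,\cdot\rangle$ dually and transport along the geodesic $\rho_{\theta}$ with velocity $w_{\theta}$, so that
\begin{equation*}
\int\varphi\, d(\rho^{1}-\rho^{2})=\int_{0}^{1}\int\nabla\varphi\cdot w_{\theta}\,d\rho_{\theta}\,d\theta .
\end{equation*}
Choosing $\varphi=K_{s}*\psi$ then requires bounding $\nabla K_{s}*\psi$ in $L^{\infty}$, or in the dual Lorentz space, by $\|\psi\|_{L^{2}}$. For $s>1$ this follows from \Cref{lem:W2-Linfty}-type kernel bounds in \Cref{subsec-appendix-kernels}, because $\nabla K_{s}\in L^{2}$ when $2s-1>d/2$. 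Otherwise we pair the $L^{p,1}$ bound of $\rho_{\theta}$ against the weak-$L^{p'}$ integrability of $\nabla K_{s}$. Displacement interpolation preserves $L^{p}$ bounds (displacement convexity of $\int\rho^{p}$), which is what makes the class $\mathscr X_{s}$ work. With this estimate, and after Cauchy–Schwarz, the second term is $\lesssim\sqrt{Q}\,W_{2}(\mu^{1}_{t},\mu^{2}_{t})\le Q$ and the third is $\lesssim\sqrt{Q}\,W_{2}(\nu^{1},\nu^{2})$.

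Putting the three terms together, we obtain the Osgood-type differential inequality
\begin{equation*}
Q'\le C\bigl(Q\log(2+Q^{-1})+\sqrt{Q}\,W_{2}(\nu^{1},\nu^{2})\bigr).
\end{equation*}
Comparing with the solution of the corresponding ODE gives a continuous increasing $\Omega$ with $\Omega(0)=W_{2}(\bar\mu^{1},\bar\mu^{2})$, and the ODE solution grows double-exponentially in general. Since $\int_{0}dr/(r\log(1/r))=\infty$, $\Omega$ tends to $0$ uniformly on bounded intervals as both data distances vanish. This gives ii), and i) is the special case of zero data distances. For $s=1$ or $s>d/2+1$, where the fields are Lipschitz, the logarithm disappears and the growth is only exponential.

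We expect the main obstacle to be the Loeper-type bound in the endpoint classes $L^{p,1}$ and $\mathcal M$ at $s=d/2+1$. There the estimate may only hold with a logarithmic loss, of the form $W_{2}\log(2+W_{2}^{-1})$, but that loss is still compatible with the Osgood argument.
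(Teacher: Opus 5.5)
\textbf{The gap: the key estimate is in the wrong norm.} Your Lagrangian skeleton is the same as the paper's: flows $X^i$, an optimal $\bar\gamma$, a differential inequality for $\int|X^1_t(x)-X^2_t(y)|^2\,d\bar\gamma$, Jensen for the log-Lipschitz term, and Osgood at the end. The problem is the Loeper-type bound. After Cauchy--Schwarz in $L^2(\bar\gamma)$, your second and third terms are bounded by $\sqrt{Q}$ times
\[
\Big(\int_{\T^d}|\nabla K_s*(\mu^1_t-\mu^2_t)|^2\,d\mu^2_t\Big)^{1/2}
\quad\text{and}\quad
\Big(\int_{\T^d}|\nabla K_s*(\nu^1-\nu^2)|^2\,d\mu^2_t\Big)^{1/2}.
\]
These are $L^2$ norms \emph{weighted by the solution} $\mu^2_t$, not the Lebesgue norm $\lVert\nabla K_s*(\rho^1-\rho^2)\rVert_{L^2}$ that you propose to bound. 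The two are comparable only when $\mu^2_t\in L^\infty$, i.e.\ for $s=1$.
\begin{itemize}
\item For $s\in(1,d/2+1)$, $\mu^2_t$ is only in $L^{p,1}$ with $p=d/(2s-2)<\infty$. An unweighted $L^2$ bound gives no control of the weighted quantity. Upgrading it by interpolation against the sup bound of \Cref{lem:W2-Linfty}\,i) only yields a power $W_2^{a}$ with $a<1$. That is not Osgood, and uniqueness is lost.
\item For $s\ge d/2+1$, $\mu^2_t$ may be singular (e.g.\ a Dirac mass), and then an $L^2(dx)$ bound says nothing at all.
\end{itemize}
So the step ``the second term is $\lesssim\sqrt{Q}\,W_2(\mu^1_t,\mu^2_t)$'' does not follow from the estimate you set out to prove, except at $s=1$.

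\textbf{How to fix it.} Match the norm to the integrability of the weight.
\begin{itemize}
\item For $s\in[1,d/2+1)$, H\"older gives $\int|f|^2\,d\mu^2_t\le\lVert\mu^2_t\rVert_{L^p}\lVert f\rVert_{L^{2p'}}^2$. You therefore need the stronger estimate of \Cref{lem:W2-dual}:
\[
\lVert\nabla K_s*(\eta_1-\eta_2)\rVert_{L^{2p'}}\lesssim\max_i\lVert\eta_i\rVert_{L^p}^{1/2}\,W_2(\eta_1,\eta_2).
\]
It follows from your geodesic idea, but in Lebesgue rather than dual form. Write $\nabla K_s*(\eta_1-\eta_2)=\int_1^2\nabla^2K_s*(\eta_tw_t)\,dt$. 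Then $\lVert\eta_tw_t\rVert_{L^\beta}\le\lVert\eta_t\rVert_{L^p}^{1/2}W_2$ with $1/\beta=1/(2p)+1/2$, and $\nabla^2K_s*$ maps $L^\beta\to L^{2p'}$ by Calder\'on--Zygmund plus Sobolev. Displacement interpolation preserves the $L^p$ bound, as you note.
\item For $s\ge d/2+1$, use the sup-norm bounds of \Cref{lem:W2-Linfty}\,ii)--iii): $\lesssim W_2$, with a $\log$ loss at $s=d/2+1$. These pair with any probability measure.
\end{itemize}

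\textbf{A smaller correction.} The Lipschitz cases are $s\notin\{1,d/2+1\}$, not ``$s=1$ or $s>d/2+1$''. At $s=1$ the field is only log-Lipschitz, as in Yudovich's setting. This is why the paper gets double-exponential, not exponential, detachment there.
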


\begin{proof}
Let $v_{t}^{1}=-\nabla K_{s}*(\mu_{t}^{1}-\nu^1)$ and $v_{t}^{2}=-\nabla K_{s}*(\mu_{t}^{2}-\nu^{2})$ be the vector fields generated by the two solutions $\mu^{1}$ and $\mu^{2}$, respectively. By \Cref{lem:modulus-continuity-vector-field},
\begin{align}\label{eq:mod-cont-vect-field-proof-stability}
    |v_{t}^{i}(x)-v_{t}^{i}(y)|\lesssim_{d,s} M\omega_{s}(|x-y|) \qquad \forall x,y\in \T^{d},\quad \forall t\in [0,T),
\end{align}
where the modulus of continuity $\omega_{s}$ from \eqref{eq:def:mod-continuity-vect-field} is Lipschitz or log-Lipschitz according to the choice of $s$. Being in particular $\omega_{s}$ of Osgood-type\footnote{A modulus of continuity $\omega:[0,\infty)\to [0,\infty)$ is said to be of Osgood-type if $\int_{0}^{1}\frac{1}{\omega(r)}dr=\infty.$},
by the results in \cite{ambrosio2008uniqueness}, the two solutions are necessarily Lagrangian, that is, the flow maps $X^{1},X^{2}:[0,T)\times \T^{d}\to \T^{d}$ associated to the vector fields $v^{1},v^{2}$ are well-defined and provide the representation
$$\mu_{t}^{i}=(X_{t}^{i})_{\#}\bar\mu^{i}\qquad \forall t\in [0,T), \quad  i=1,2.$$

Let $\bar \gamma\in \mathscr{P}(\T^{d}\times \T^{d})$ be an optimal coupling for the optimal transport problem with quadratic cost between the initial measures $\bar\mu^{1}$ and  $\bar\mu^{2}$. We consider the coupling $\gamma_{t}:= (X_{t}^{1}, X_{t}^{2})_{\#}\bar\gamma$ between $\mu^{1}_{t}$ and $\mu^{t}_{2}$. We have
$$W_{2}(\mu_{t}^{1},\mu_{t}^{2})\le \left(\int_{\T^{d}}\int_{\T^{d}}|x-y|^{2}d\gamma_{t}(x,y)\right)^{1/2}= \left(\int_{\T^{d}}\int_{\T^{d}}|X_{t}^{1}(x)-X_{t}^{2}(y)|^{2}d\bar\gamma(x, y)\right)^{1/2}=:\rho(t).$$
We will derive a suitable differential inequality for the quantity $\rho(t)$ in the time interval $[0,T)$, which will give the desired control of $W_{2}(\mu_{t}^{1},\mu_{t}^{2})$ after integration. By Cauchy--Schwarz and triangle inequality in $L^{2}(\T^{d}\times \T^{d}, \bar\gamma;\R^{d})$, we get
\begin{align*}
    \rho'(t)&\le \left(\int_{\T^{d}}\int_{\T^{d}}|v_{t}^{1}(X_{t}^{1}(x))-v_{t}^{2}(X_{t}^{2}(y))|^{2}d\bar\gamma(x, y)\right)^{1/2} \\
    &\le \underbrace{\left(\int_{\T^{d}}\int_{\T^{d}}|v_{t}^{1}(X_{t}^{1}(x))-v_{t}^{2}(X_{t}^{1}(x))|^{2}d\bar\gamma(x, y)\right)^{1/2}}_{(I)}+ \underbrace{\left(\int_{\T^{d}}\int_{\T^{d}}|v_{t}^{2}(X_{t}^{1}(x))-v_{t}^{2}(X_{t}^{2}(y))|^{2}d\bar\gamma(x, y)\right)^{1/2}}_{(II)}.
\end{align*}
In order to bound $(I)$, we first use the marginal condition on $\bar \gamma$, change of variables, and the triangle inequality in $L^{2}(\T^{d}, \mu_{t};\R^{d})$ to get
\begin{align*}
    (I)&= \left(\int_{\T^{d}}|v_{t}^{1}-v_{t}^{2}|^{2}d\mu_{t}^{1}\right)^{1/2}
    \le \left(\int_{\T^{d}}|\nabla K_{s}*(\mu_{t}^{1}-\mu_{t}^{2})|^{2}d\mu_{t}^{1}\right)^{1/2}+\left(\int_{\T^{d}}|\nabla K_{s}*(\nu^{1}-\nu^{2})|^{2}d\mu_{t}^{1}\right)^{1/2}.
\end{align*}
Now, in the case $s\in \left[1,d/2+1\right)$ we use H\"{o}lder inequality with exponents $p=d/(2s-2)\in (1,\infty]$ and $q=p'=d/(d-2s+2)\in [1,\infty)$ along with the bound from \Cref{lem:W2-dual} and get
\begin{equation}\label{eq:bound-(I)-s-small-stability}
    \begin{aligned}
    (I)&\le \lVert \mu_{t}^{1}\rVert_{L^{p}}^{1/2}\lVert \nabla K_{s}*(\mu_{t}^{1}-\mu_{t}^{2})\rVert_{L^{2q}}+\lVert \mu_{t}^{1}\rVert_{L^{p}}^{1/2}\lVert \nabla K_{s}*(\nu^{1}-\nu^{2})\rVert_{L^{2q}}\\
    &\lesssim_{d,s} \lVert \mu_{t}^{1}\rVert_{L^{p}}^{1/2}\max\{\lVert \mu_{t}^{1}\rVert_{L^{p}}, \lVert \mu_{t}^{2}\rVert_{L^{p}}\}^{1/2}W_{2}(\mu_{t}^{1},\mu_{t}^{2})+\lVert \mu_{t}^{1}\rVert_{L^{p}}^{1/2}\max\{\lVert \nu^{1}\rVert_{L^{p}}, \lVert \nu^{2}\rVert_{L^{p}}\}^{1/2}W_{2}(\nu^{1},\nu^{2})\\
    &\lesssim MW_{2}(\mu_{t}^{1},\mu_{t}^{2})+MW_{2}(\nu^{1},\nu^{2}).
\end{aligned}
\end{equation}
In the case $s\ge d/2+1$ we use the uniform bound from \Cref{lem:W2-Linfty} together with the unit mass condition on $\mu_{t}^{1}$, and we obtain
\begin{equation}\label{eq:bound-(I)-s-big-stability}
    (I)\lesssim_{d,s}\begin{cases}
        W_{2}(\mu_{t}^{1},\mu_{t}^{2})+W_{2}(\nu^{1},\nu^{2})\quad &\text{if $s> \frac{d}{2}+1$},\\
         W_{2}(\mu_{t}^{1},\mu_{t}^{2})\log\left(2+W_{2}(\mu_{t}^{1},\mu_{t}^{2})^{-1}\right)+W_{2}(\nu^{1},\nu^{2})\log\left(2+W_{2}(\nu^{1},\nu^{2})^{-1}\right)\quad &\text{if $s=\frac{d}{2}+1$}.
    \end{cases}
\end{equation}
To bound $(II)$, instead, we use the modulus of continuity of the vector field $v^{2}_{t}$ from \Cref{lem:modulus-continuity-vector-field} and Jensen's inequality:
\begin{align}\label{eq:bound-(II)-stability}
    (II)\lesssim_{d,s}M\left(\int_{\T^{d}}\int_{\T^{d}}\omega_{s}^{2}(|X_{t}^{1}(x)-X_{t}^{2}(y)|)d\bar\gamma(x,y)\right)^{1/2}
    \le \begin{cases}
        M\rho(t)\quad &\text{if $s\not\in \{1,d/2+1\}$},\\
        M\rho(t)\log\left(2+\rho(t)^{-1}\right)\quad &\text{if $s\in \{1,d/2+1\}$}.
    \end{cases}
\end{align}
Putting together \eqref{eq:bound-(I)-s-small-stability}, \eqref{eq:bound-(I)-s-big-stability} and \eqref{eq:bound-(II)-stability}, and recalling that $W_{2}(\mu_{t}^{1},\mu_{t}^{2})\le \rho(t)$, we finally obtain
\begin{equation}\label{eq:differential-inequality-final-stability}
    \rho'(t)\lesssim_{d,s} \begin{cases}
        M\rho(t)+MW_{2}(\nu^{1},\nu^{2})\quad &\text{if $s\notin \{1,d/2+1\}$},\\
        M\rho(t)\log\left(2+\rho(t)^{-1}\right)+MW_{2}(\nu^{1},\nu^{2})\log\left(2+W_{2}(\nu^{1},\nu^{2})^{-1}\right)\quad &\text{if $s\in \{1,d/2+1\}$}.
    \end{cases}
\end{equation}
The desired conclusion is obtained by integrating this differential inequality  (see \Cref{rmk:rate-stability} for the precise expression of the modulus $\Omega$). 
\end{proof}
\begin{rmk}\label{rmk:rate-stability}
    An explicit expression for the function $\Omega$ from \Cref{prop:stability-yudovich} is obtained by integrating \eqref{eq:differential-inequality-final-stability}. When $s\notin \{1,d/2+1\}$, by Gr\"onwall's inequality, we  get
    \begin{equation*}
        W_{2}(\mu_{t}^{1},\mu_{t}^{2})\le \rho(t)\le 
            W_{2}(\bar\mu^{1},\bar\mu^{2})e^{c(d,s)Mt}+W_{2}(\nu^{1},\nu^{2})\left(e^{c(d,s)Mt}-1\right).
    \end{equation*}
    Notice that the closeness between solutions is lost at most exponentially fast in this case, consistently to the fact that the vector field is uniformly Lipschitz. In the critical cases $s\in \{1,d/2+1\}$, integrating \eqref{eq:differential-inequality-final-stability} exactly is more complicated. However, we can analyze the behavior of $\Omega$ in bounded intervals when $W_{2}(\bar\mu^{1},\bar\mu^{2})$ and $W_{2}(\nu^{1}, \nu^{2})$ are small, noting that, for $\rho(t)+W_{2}(\nu^{1},\nu^{2})\le 1/2$,   
    \begin{align*}
    \left(\rho(t)+W_{2}(\nu^{1},\nu^{2})\right)'&\lesssim_{d,s}
        M\rho(t)\log\left(2+\rho(t)^{-1}\right)+MW_{2}(\nu^{1},\nu^{2})\log\left(2+W_{2}(\nu^{1},\nu^{2})^{-1}\right)\\
        &\lesssim M(\rho(t)+W_{2}(\nu^{1},\nu^{2}))\log\left((\rho(t)+W_{2}(\nu^{1},\nu^{2}))^{-1}\right).
    \end{align*}
    Therefore, 
    \begin{equation*}
        W_{2}(\mu_{t}^{1}, \mu_{t}^{2})\le \rho(t)\le \left(W_{2}(\bar\mu^{1},\bar\mu^{2})+W_{2}(\nu^{1},\nu^{2})\right)^{\exp\left(-c(d,s)Mt\right)}-W_{2}(\nu^{1},\nu^{2})
    \end{equation*}
    for all times $t\ge 0$ for which the right-hand side is bounded by $1/2$. In this case, the detachment is at most double-exponential, in accordance with the log-Lipschitz regularity of the vector field. \fr
\end{rmk}

\subsubsection{Existence of solutions}\label{subsec:existence-riesz}
 Next, we construct solutions of \eqref{eq:PDE-GF} in the space $\mathscr{X}_{s}(\T^{d})$ for short intervals of time. The proof is done by means of a Picard iteration at the Lagrangian flow level.
\begin{prop}[Existence]\label{prop:existence-yudovich}
    Let $s\ge 1$ and $\bar{\mu}, \nu \in \mathscr{P}\cap \mathscr{X}_{s}(\T^{d})$. Then, there exists $T>0$ and a solution $\mu \in L^{\infty}([0,T);\mathscr{X}_{s}(\T^{d}))$ of equation \eqref{eq:PDE-GF}.
\end{prop}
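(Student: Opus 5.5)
We construct the solution by a Picard iteration at the level of Lagrangian flows, following \cite{marchioro2012mathematical}. Set $\mu^{0}_{t}:=\bar\mu$ for all $t$. Given $\mu^{n}\in L^{\infty}([0,T);\mathscr{X}_{s})$, define $v^{n}_{t}:=-\nabla K_{s}*(\mu^{n}_{t}-\nu)$. By \Cref{lem:modulus-continuity-vector-field} and \Cref{lem:W2-Linfty}, $v^{n}$ is bounded and has modulus of continuity $\omega_{s}$, which is Lipschitz or log-Lipschitz and hence Osgood. So it generates a unique flow $X^{n}$, and we set $\mu^{n+1}_{t}:=(X^{n}_{t})_{\#}\bar\mu$, which is weakly-$*$ continuous in time. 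The proof then has three steps: (a) uniform bounds of $\mu^{n}$ in $\mathscr{X}_{s}$ on a short interval $[0,T)$; (b) convergence of $X^{n}$ or $\mu^{n}$ in $W_{2}$; (c) passage to the limit in the weak formulation.

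\textbf{Step (a): uniform bounds.} For $s\ge d/2+1$ there is nothing to prove, since $\mathscr{X}_{s}=\mathcal{M}$ and every $\mu^{n}_{t}$ is a probability measure; here $T$ can be taken arbitrary. For $s<d/2+1$, the continuity equation gives $\mu^{n+1}_{t}(X^{n}_{t}(x))=\bar\mu(x)\,\exp\!\big(-\int_{0}^{t}\diver v^{n}_{r}(X^{n}_{r}(x))dr\big)$. Moreover $\diver v^{n}=(-\Delta)^{1-s}(\mu^{n}-\nu)$. For $s=1$ this equals $\mu^{n}-\nu$, which is bounded in $L^\infty$. For $s>1$, $(-\Delta)^{1-s}$ maps $L^{p,1}$ into $L^\infty$ when $p=d/(2s-2)$; this is the endpoint Sobolev/Lorentz embedding, and it is the reason the Lorentz space appears in the definition of $\mathscr{X}_{s}$. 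Hence $\|\diver v^{n}_{t}\|_{L^\infty}\le C(\|\mu^{n}_{t}\|_{\mathscr{X}_{s}}+\|\nu\|_{\mathscr{X}_{s}})$. Since the flow is measure preserving up to the Jacobian factor $e^{\pm\int\|\diver v\|_\infty}$, the distribution function of $\mu^{n+1}_{t}$ is controlled by that of $\bar\mu$ up to multiplicative factors $e^{Ct(M+\cdot)}$. This gives
\[
\|\mu^{n+1}_{t}\|_{\mathscr{X}_{s}}\le \|\bar\mu\|_{\mathscr{X}_{s}}\exp\!\Big(C\int_{0}^{t}\big(\|\mu^{n}_{r}\|_{\mathscr{X}_{s}}+\|\nu\|_{\mathscr{X}_{s}}\big)dr\Big).
\]
Choosing $M:=2\max\{\|\bar\mu\|_{\mathscr{X}_{s}},\|\nu\|_{\mathscr{X}_{s}}\}$ and $T$ small with $e^{2CMT}\le 2$, induction gives $\sup_{n,t}\|\mu^{n}_{t}\|_{\mathscr{X}_{s}}\le M$.

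\textbf{Step (b): contraction.} This mimics the proof of \Cref{prop:stability-yudovich}, with the identity coupling of $\bar\mu$ with itself. Set $\rho_{n}(t):=\|X^{n}_{t}-X^{n-1}_{t}\|_{L^{2}(\bar\mu)}$, which satisfies $W_{2}(\mu^{n+1}_{t},\mu^{n}_{t})\le\rho_{n}(t)$. Splitting $v^{n}(X^{n})-v^{n-1}(X^{n-1})$ into $(v^{n}-v^{n-1})(X^{n})$ plus $v^{n-1}(X^{n})-v^{n-1}(X^{n-1})$, and estimating the first term via \Cref{lem:W2-dual} or \Cref{lem:W2-Linfty} and the second via the modulus $\omega_{s}$, yields
\[
\rho_{n}'\lesssim M\,\omega_{s}(\rho_{n})+M\,\omega_{s}(\rho_{n-1}),
\]
where $\omega_{s}(r)=r$ or $r\log(2+1/r)$ after Jensen. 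In the Lipschitz case, Grönwall gives $\sup_{[0,T]}\rho_{n}\le \tfrac12\sup_{[0,T]}\rho_{n-1}$ for $T$ small, so $X^{n}$ is Cauchy in $C([0,T];L^{2}(\bar\mu))$. The log-Lipschitz cases $s\in\{1,d/2+1\}$ are the main obstacle, because one loses a factor $\log$ at each step. There I would use the standard Yudovich/Marchioro--Pulvirenti argument: show that $\sup_{t\le T}\rho_{n}(t)\le (CT)^{n}$-type bounds of the form $\rho_{n}\le \epsilon_{n}$ with $\epsilon_{n}\le C T\epsilon_{n-1}\log(1/\epsilon_{n-1})$. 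Using $\epsilon\log(1/\epsilon)\le \epsilon^{1-\delta}/\delta$, this gives $\epsilon_{n}\le (CT/\delta)\,\epsilon_{n-1}^{1-\delta}$, and for $T$ small the sequence tends to $0$ super-geometrically. A more robust alternative is compactness: $\{\mu^{n}\}$ is equi-Lipschitz in $W_{2}$ (because $|v^{n}|\le C(M)$), so Arzelà--Ascoli extracts a subsequence converging in $C([0,T];\mathscr{P})$, and uniqueness (\Cref{prop:stability-yudovich}) is not even needed at this point.

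\textbf{Step (c): passage to the limit.} Let $\mu$ be the limit. Weak-$*$ lower semicontinuity of the $L^\infty$ and $L^{p,1}$ norms gives $\mu\in L^{\infty}([0,T);\mathscr{X}_{s})$ with bound $M$, and $\mu$ is weakly-$*$ continuous in time. By \Cref{lem:W2-Linfty}, $v^{n}_{t}\to v_{t}=-\nabla K_{s}*(\mu_{t}-\nu)$ uniformly, and $\mu^{n+1}_{t}\to\mu_{t}$ weakly-$*$ uniformly in $t$, since $W_{2}(\mu^{n+1}_{t},\mu^{n}_{t})\to0$ along the sequence. Therefore we can pass to the limit in $\int\varphi\,d\mu^{n+1}_{t}=\int\varphi\,d\bar\mu+\int_{0}^{t}\int\nabla\varphi\cdot v^{n}_{r}\,d\mu^{n+1}_{r}\,dr$, which is exactly \Cref{def:solution-active-scalar-equation}.
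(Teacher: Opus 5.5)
Your Steps (a) and (c), and Step (b) in the Lipschitz regime $s\notin\{1,d/2+1\}$, are correct and match the paper. The paper likewise uses a Picard iteration on Lagrangian flows, $\mathscr{X}_s$ bounds via the Jacobian or Lipschitz constant of the flow, a Gr\"onwall contraction, and passage to the limit in the weak formulation.

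The gap is Step (b) in the log-Lipschitz cases $s\in\{1,d/2+1\}$, which include the Coulomb case. Even granting your recursion $\varepsilon_n\le CT\,\varepsilon_{n-1}\log(1/\varepsilon_{n-1})$, it does not force $\varepsilon_n\to0$. It has the positive fixed point $\varepsilon^*=e^{-1/(CT)}$, and the bound stalls there. Your reformulation $\varepsilon_n\le (CT/\delta)\,\varepsilon_{n-1}^{1-\delta}$ has the same defect. Since the exponent $1-\delta$ is less than $1$, iterating gives only $\limsup_n\varepsilon_n\le (CT/\delta)^{1/\delta}>0$; super-geometric decay would need an exponent larger than $1$.

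Moreover, the recursion itself is not derived. The self-interaction term $M\omega_s(\rho_n)$ in $\rho_n'\lesssim M\omega_s(\rho_n)+M\omega_s(\rho_{n-1})$ prevents a bound of $\sup_t\rho_n$ that is linear in $T\,\omega_s(\sup_t\rho_{n-1})$. An Osgood comparison instead loses a power $e^{-CMT}$ at each step.

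The missing idea is the Marchioro--Pulvirenti iteration used in the paper. Set $\tilde\rho^{n}(t):=\sup_{k\ge n}\rho_k(t)$. This is nonincreasing in $n$, so it absorbs the self-term and gives $\tilde\rho^{n}(t)\le C\int_0^t\omega_s(\tilde\rho^{n-1}(r))\,dr$. Then bound $\omega_s(r)\le\varepsilon+K\log(1/\varepsilon)\,r$ and iterate $n$ times \emph{keeping the time integrals}, which yields
\[
\sup_{t<T}\tilde\rho^{n}(t)\le \varepsilon CT\,e^{CKT\log(1/\varepsilon)}+\bar C\,\frac{\bigl(CKT\log(1/\varepsilon)\bigr)^{n}}{n!}.
\]
With $\varepsilon=e^{-n}$ and $n^n/n!\le e^n$, this is $\lesssim e^{-n/2}$ for $T$ small. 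The factor $T^n/n!$ coming from the iterated integrals is exactly what beats the loss $(\log 1/\varepsilon)^n=n^n$. Collapsing to $\sup_t$ at every step, as you do, throws this factor away.

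Your compactness fallback does not close the gap either. Arzel\`a--Ascoli gives $\mu^{n_k}\to\mu$ along a subsequence. To pass to the limit in $\int\varphi\,d\mu^{n+1}_t=\int\varphi\,d\bar\mu+\int_0^t\int\nabla\varphi\cdot v^n_r\,d\mu^{n+1}_r\,dr$, however, you need $\mu^{n_k}$ and $\mu^{n_k+1}$ to converge to the same limit. Compactness alone only produces a pair $(\mu,\tilde\mu)$ with $\tilde\mu=\Phi(\mu)$, where $\Phi$ is the Picard map; it does not produce a fixed point. Your Step (c) in fact invokes $W_2(\mu^{n+1}_t,\mu^n_t)\to0$, which is precisely what Step (b) failed to deliver in the critical cases. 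A compactness route could instead be run through Schauder's fixed point theorem for $\Phi$, on the convex compact set of $W_2$-Lipschitz curves with $\sup_t\lVert\mu_t\rVert_{\mathscr{X}_s}\le M$. That is a different argument from the one you wrote.
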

\begin{proof} 
Let $T>0$ be a positive number to be chosen sufficiently small later. We construct recursively a sequence of approximate solutions $\mu^{n}\in L^{\infty}([0,T);\mathscr{X}_{s}(\T^{d}))$ as follows. We set
\begin{equation*}
    \mu^{0}_{t}=\bar\mu\qquad \forall t\in [0,T).
\end{equation*}
Then, for every $n\ge 0$, calling $v^{n}_{t}=-\nabla K_{s}*(\mu_{t}^{n}-\nu)$ and $X^{n}$, respectively, the vector field and the flow map associated to the $n$-th approximate solution $\mu^{n}$, we define
\begin{equation*}
    \mu^{n+1}_{t}:=(X^{n}_{t})_{\#}\bar\mu\qquad \forall t\in [0,T).
\end{equation*}
We will prove that up to choosing $T$ small enough, $\{\mu^{n}\}_{n\ge 0}$ is uniformly bounded in $L^{\infty}([0,T);\mathscr{X}_{s}(\T^{d}))$, pre-compact in $C_{w^{*}}([0,T);\mathscr{P}(\T^{d}))$, and that any weak limit of the sequence is a solution of \eqref{eq:PDE-GF}.

\smallskip
\noindent \textbf{Step 1:}
We first show by induction that
\begin{equation}\label{eq:uniform-bound-existence-wellposedness}
    \lVert \mu^{n}\rVert_{L^{\infty}([0,T); \mathscr{X}_{s})}\le 2 \lVert \bar\mu\rVert_{\mathscr{X}_{s}}\qquad \forall n\ge 0,
\end{equation}
if $T$ is sufficiently small. Observe that this is automatically satisfied when $s\ge d/2+1$, because the total mass is conserved by the continuity equation. Let us address the case $s\in \left[1,d/2+1\right)$. The inequality \eqref{eq:uniform-bound-existence-wellposedness} is trivial for $n=0$. Suppose that it holds for some $n\ge 0$, and let us show it holds for $n+1$. 

If $s=1$, we have $\diver v_{t}^{n}= \mu_{t}^{n}-\nu$, therefore the following bound holds for all $t\in [0,T)$:
\begin{equation*}
    \lVert \mu_{t}^{n+1}\rVert_{L^{\infty}}\le \lVert \bar\mu \rVert_{L^{\infty}}\exp \left(\int_{0}^{t}\lVert \diver v_{r}^{n}\rVert_{L^{\infty}}dr\right)\le \lVert \bar\mu \rVert_{L^{\infty}}\exp \left(T(2\lVert \bar\mu\rVert_{L^{\infty}}+\lVert \nu\rVert_{L^{\infty}})\right)\le 2\lVert \bar\mu \rVert_{L^{\infty}},
\end{equation*}
if $T$ is sufficiently small, where we used the induction hypothesis. In the case $s\in (1,d/2+1)$, instead, by \Cref{lem:modulus-continuity-vector-field}, $v_{t}^{n}$ has a Lipschitz modulus of continuity, which rewrites as
\begin{equation*}
    \lVert \nabla v_{t}^{n}\rVert_{L^{\infty}}\lesssim_{d,s}\lVert \mu_{t}^{n}-\nu \rVert_{L^{p,1}}\qquad \forall t\in [0,T).
\end{equation*}
In particular, for all $t\in [0,T)$, the Lipschitz constants of the flow map $X_{t}^{n}$ and its inverse $(X_{t}^{n})^{-1}$ can be bounded by
\begin{equation*}
    \max\{\lVert \nabla (X_{t}^{n})^{-1}\rVert_{L^{\infty}},\lVert \nabla X_{t}^{n}\rVert_{L^{\infty}}\} \le \exp \left(\int_{0}^{t}\lVert \nabla v_{r}^{n}\rVert_{L^{\infty}}dr\right)\le \exp\left(C(d,s)T(\lVert \bar\mu \rVert_{L^{p,1}}+\lVert \nu\rVert_{L^{p,1}})\right)=:L,
\end{equation*}
Consequently, from the representation formula $\mu^{n+1}_{t}= (\bar\mu \det \nabla X^n_{t})\circ (X^n_{t})^{-1}$,  for all $t\in [0,T)$ we derive
\begin{align*}
    \lVert \mu_{t}^{n+1}\rVert_{L^{p,1}}&=\left\lVert \left(\bar\mu \det \nabla X^n_{t}\right)\circ (X_{t}^{n})^{-1}\right\rVert_{L^{p,1}} \le L^{d}\lVert \bar\mu \circ \left(X_{t}^{n}\right)^{-1}\rVert_{L^{p,1}}\le L^{d+1+1/p}\lVert \bar\mu \rVert_{L^{p,1}}\le 2\lVert \bar\mu \rVert_{L^{p,1}},
\end{align*}
provided that $T$ is chosen sufficiently small, where we used the definition of $L^{p,1}$-quasi-norm \eqref{eqn:Lorentz} together with the fact that 
\begin{align*}
    \left|\{|\bar\mu\circ (X_{t}^{n})^{-1}|>\lambda\}\right|=\left|(X_{t}^{n})^{-1}\left(\{|\bar\mu|>\lambda\}\right)\right|\le L|\{|\bar\mu|>\lambda\}|\qquad \forall \lambda>0,
\end{align*}
which follows from the Lipschitz bound on $(X_{t}^{n})^{-1}$.

\smallskip
\noindent \textbf{Step 2:} Next, by making $T$ even smaller if necessary, we prove that
\begin{equation}\label{eq:convergence-subsequent-approximate-solutions}
    \lim_{n\to \infty }\,\sup_{k\ge n}\, \sup_{t\in [0,T)}\,W_{2}(\mu_{t}^{k},\mu_{t}^{k+1})=0.
\end{equation}
Let $M:= \max\{\lVert\bar\mu\rVert_{\mathscr{X}_{s}}, \lVert \nu\rVert_{\mathscr{X}_{s}}\}$. Then, defining the quantity
\begin{equation*}
    \rho^{n}(t):=\left(\int_{\T^{d}}|X_{t}^{n+1}-X_{t}^{n}|^{2}d\bar \mu\right)^{1/2}\ge W_{2}(\mu_{t}^{n}, \mu_{t}^{n+1}),
\end{equation*}
arguing exactly as in the proof of \Cref{prop:stability-yudovich}, and using the uniform bound from \eqref{eq:uniform-bound-existence-wellposedness}, we find
\begin{equation*}
    \frac{d}{dt}\rho^{n}(t)\lesssim_{d,s} \begin{cases}
        M\rho^{n}(t)+M\rho^{n-1}(t)\quad &\text{if $s\notin \{1,d/2+1\}$},\\
        M\rho^{n}(t)\log\left(2+\frac{1}{\rho^{n}(t)}\right)+M\rho^{n-1}(t)\log\left(2+\frac{1}{\rho^{n-1}(t)}\right)\quad &\text{if $s\in \{1,d/2+1\}$}.
    \end{cases}
\end{equation*}
As a consequence, setting
\begin{equation*}
    \tilde{\rho}^{n}(t):= \sup_{k\ge n} \rho^{k}(t),\qquad \omega_{s}(r):= \begin{cases}
        r\qquad &\text{if $s\not\in \{1,d/2+1\}$},\\
        r\log(2+r^{-1})\qquad &\text{if $s\in \{1,d/2+1\}$},
    \end{cases}
\end{equation*}
we derive
\begin{equation}\label{eq:interative-estimate-MarchioroPulvirenti}
    \tilde{\rho}^{n+1}(t)\le C\int_{0}^{t}\omega_{s}(\tilde{\rho}^{n}(r))dr,\quad C=C(d,s,M)\qquad \forall t\in [0,T),\quad \forall n\ge 0.
\end{equation}
From \eqref{eq:interative-estimate-MarchioroPulvirenti} we will conclude that $\sup_{t\in [0,T)}\tilde{\rho}^{n}(t)\to 0$ as $n\to \infty$, as soon as $T$ is chosen sufficiently small. First, observe that there exists a constant $K>0$ such that
\begin{equation}\label{eq:bound-loglip-linear-MarchioroPulvirenti}
    \omega_{s}(r)\lesssim r\log(2+r^{-1})\le \eps +L_{\eps}r,\qquad L_{\eps}:=K\log(\eps^{-1}),\qquad \forall \eps \in (0,1/2),\quad \forall r\in(0,\infty).
\end{equation}
Given $n\ge 1$ and $\eps \in (0,1/2)$, we can use iteratively \eqref{eq:interative-estimate-MarchioroPulvirenti} combined with \eqref{eq:bound-loglip-linear-MarchioroPulvirenti}, to get, for every $t\in [0,T)$,
\begin{align*}
    \tilde{\rho}^{n}(t)&\le C\int_{0}^{t}\omega_{s}(\tilde{\rho}^{n}(r))dr \le C\int_{0}^{T}(\eps+L_{\eps}\tilde{\rho}^{n}(r))dr\\
    &\le \eps C T+ C^{2}L_{\eps}\int_{0}^{T}\tilde{\rho}^{n-1}(r)dr\\
    &\le \dots \le \eps C T+\eps \frac{C^{2}L_{\eps}T^{2}}{2}+\dots +\eps \frac{C^{n}L_{\eps}^{n-1}T^{n}}{n!}+C^{n}L_{\eps}^{n}\int_{0}^{T}\int_{0}^{t_1}\dots \int_{0}^{t_{n-1}}\tilde{\rho}^{0}(t_{n})dt_{n}dt_{n-1}\dots d_{t_{1}}\\
    &\le \eps CT\sum_{k=0}^{n-1}\frac{(CL_{\eps}T)^{k}}{k!}+\bar{C}\frac{(CL_{\eps}T)^{n}}{n!}\le \eps CT \exp(CKT\log(\eps^{-1}))+\bar{C}\frac{(CKT\log(\eps^{-1}))^{n}}{n!},
\end{align*}
where in the penultimate step we used the uniform bound $\tilde{\rho}^{0}\le \bar C=\bar C(d)$ coming from the boundedness of $\T^{d}$. Choosing $\eps=e^{-n}$ we find
\begin{equation*}
    \sup_{t\in[0,T)}\tilde{\rho}(t) \le CT\exp\left(-n(1-CKT)\right)+\bar{C}\frac{(CKTn)^{n}}{n!}.
\end{equation*}
Then, exploiting the Stirling's inequality $\log(n^{n}/n!) \lesssim n$, we can find $T$ sufficiently small, independent of $n$, such that 
\begin{equation*}
    \sup_{t\in[0,T)}\tilde{\rho}(t) \le (C+\bar{C})e^{-n/2}\qquad \forall n\ge 0,
\end{equation*}
which concludes the proof of this step. 

\smallskip
\noindent \textbf{Step 3:} In this final step we show that $\{\mu^{n}\}_{n\ge 0}$ converges to a solution $\mu$ of \eqref{eq:PDE-GF} in $C_{w^{*}}([0,T);\mathscr{P}(\T^{d}))$, up to subsequences. First, thanks to Step 1, \Cref{lem:modulus-continuity-vector-field}, and \Cref{lem:W2-Linfty}, the vector fields $v^{n}_{t}$ are equi-bounded and equi-continuous with respect to the same log-Lipschitz modulus of continuity. Therefore, the flow maps $\{X^{n}\}_{n\ge 0}$ are equi-continuous in $[0,T)\times\T^{d}$, and by Arzelà-Ascoli, there exist a subsequence $n_{k}\to \infty$ and a continuous map $X:[0,T)\times \T^{d}\to \T^{d}$ such that
\begin{equation*}
   \lim_{k\to \infty}\lVert X^{n_{k}}-X\rVert_{C([0,T)\times \T^{d})}=0.
\end{equation*}
Defining $\mu_{t}:= (X_{t})_{\#}\bar\mu$ we deduce that
\begin{equation}\label{eq:weak-convergence-approximate-solutions-existence}
    \mu_{t}^{n_{k}}\stackrel{*}{\rightharpoonup} \mu_{t}\qquad \forall t\in [0,T),\qquad \mu \in C_{w^{*}}([0,T);\mathscr{P}(\T^{d}))\cap L^{\infty}([0,T);\mathscr{X}_{s}(\T^{d})).
\end{equation}
Then, from Step 2 we also infer
\begin{equation*}
    \mu_{t}^{n_{k}-1}\stackrel{*}{\rightharpoonup} \mu_{t}\qquad \forall t\in [0,T).
\end{equation*}
Finally, setting $v_{t}:=-\nabla K_{s}*(\mu_{t}-\nu)$, \Cref{lem:W2-Linfty} implies that 
\begin{equation}\label{eq:strong-convergence-vector-fields-existence}
     \lVert v_{t}^{n_{k}-1}-v_{t}\rVert_{C(\T^{d})}\to 0\qquad \forall t\in [0,T).
\end{equation}
Taking into account \eqref{eq:weak-convergence-approximate-solutions-existence}, \eqref{eq:strong-convergence-vector-fields-existence}, and the uniform boundedness of $v^{n}, v$, we may use the dominated convergence theorem to pass to the limit in both sides of the distributional formulation
\begin{equation*}
    \int_{\T^{d}}\varphi \mu_{t}^{n_{k}}=\int_{\T^{d}}\varphi \bar\mu+\int_{0}^{t}\int_{\T^{d}}\nabla \varphi \cdot v_{r}^{n_{k}-1}\mu_{r}^{n_{k}}dr\qquad \forall t\in [0,T),\quad \forall \varphi \in C^{\infty}(\T^{d}).
\end{equation*}
This shows that $\mu$ is a solution of \eqref{eq:PDE-GF} and concludes the proof.  
\end{proof}

\subsubsection{Maximal solutions and the continuation criterion}\label{subsec:maximal-sol-riesz}
In the following proposition we show that the local solution constructed above can be extended up to some maximal time of existence, and we provide a continuation criterion.
\begin{prop}\label{prop:maximal-solutions}
    Let $s\ge 1$ and $\bar\mu, \nu \in \mathscr{P}\cap\mathscr{X}_{s}(\T^{d})$. Then, there exists a unique maximal solution $u\in L^{\infty}_{\loc}([0,T);\mathscr{X}_{s}(\T^{d}))$ of \eqref{eq:PDE-GF} according to \Cref{def:maximal-solutions}. Moreover, if $s\ge d/2+1$, we have $T=\infty$. If, instead, $s\in \left[1,d/2+1\right)$, then $T$ is finite if and only if $\limsup_{t\to T^{-}}\lVert \mu_{t}\rVert_{L^{p}}=+\infty$, where $p=d/(2s-2)\in (1,\infty]$.
\end{prop}
\begin{proof}
    Consider the family $\mathscr{S}_{s}(\bar\mu, \nu)$ of all solutions of \eqref{eq:PDE-GF} according to \Cref{def:solution-active-scalar-equation} and define $T$ as the supremum of the existence times among all elements in $\mathscr{S}_{s}(\bar\mu, \nu)$. By \Cref{prop:stability-yudovich}, if $\mu^{1}\in L^{\infty}_{\loc}([0,T^{1}))$ and $\mu^{2}\in L^{\infty}_{\loc}([0,T^{2}))$ are two solutions of \eqref{eq:PDE-GF}, then $\mu^{1}_{t}=\mu^{2}_{t}$ for all $t\in [0,T^{1}\wedge T^{2})$. Therefore, one can define the curve $\mu:[0,T)\to \mathscr{P}(\T^{d})$ such that, for every $t\in [0,T)$, $\mu_{t}= \hat{\mu}_{t}$, where $\hat{\mu}\in \mathscr{S}_{s}(\bar\mu, \nu)$ is any solution up to some time $\hat{T}>t$ (which exists by the definition of $T$). Notice that $\mu \in L^{\infty}_{\loc}([0,T);\mathscr{X}_{s}(\T^{d}))$ solves \eqref{eq:PDE-GF}, because it coincides locally in time with elements from $\mathscr{S}_{s}(\bar\mu, \nu)$. Moreover, by construction, $\mu$ is a maximal solution according to \Cref{def:maximal-solutions}. Uniqueness of maximal solutions now follows directly from \Cref{prop:stability-yudovich}. 
    
    To prove the last part of the statement, we first observe the following: if $T<\infty$ and 
    $[0,T)\ni t\mapsto \mu_{t} \in (\mathscr{P}(\T^{d}), W_2)$ is Lipschitz continuous, then there exists a limit measure $\mu_{*}\in \mathscr{P}\cap \mathscr{X}_{s}(\T^d)$ such that $\mu_{t}\stackrel{*}{\rightharpoonup}\mu_{*}$ as $t\to T^{-}$. Applying \Cref{prop:existence-yudovich} to the initial condition $\mu_{*}$ starting at time $T$, we may find a solution $\tilde{\mu}$ in $[T, T+\eps)$, for some $\eps>0$. Gluing $\mu$ and $\tilde{\mu}$ provides a non-trivial extension of $\mu$, contradicting its maximality.  
    On the other hand, by \Cref{prop:gradient-flow-structure} we know that $\mu$ satisfies the energy dissipation identity \eqref{eq:energy-dissipation-identity}. In particular, thanks to \Cref{lem:W2-Linfty}, for every $t\in (0,T)$, the metric derivative $|\mu'_{t}|$ can be bounded by 
    \begin{align*}
        |\mu'_{t}|&= \left(\int_{\T^{d}}|\nabla K_{s}*(\mu_{t}-\nu)|^{2}d\mu_{t}\right)^{1/2}\\
        &\le \lVert \nabla K_{s}*(\mu_{t}-\nu)\rVert_{L^{\infty}}
        \lesssim_{d,s}\begin{cases}
            \lVert \mu\rVert_{L^{p}}+\lVert \nu\rVert_{L^{p}}\quad \text{for \, $p=\frac{d}{2s-2}$}\qquad &\text{if $s\in [1,\frac{d}{2}+1)$},\\
            1\qquad &\text{if $s\ge \frac{d}{2}+1$}.
        \end{cases}
    \end{align*}
    As a consequence, when $s\ge d/2+1$, we always have $\mu \in \Lip([0,T);\left( \mathscr{P}(\T^{d}), W_2\right))$, while for $s\in [1,d/2+1)$ the same holds provided that $\mu \in L^{\infty}([0,T);L^{p})$. This concludes the proof.
\end{proof}

\subsection{Propagation of regularity}\label{subsec:propagation-regularity}
In this section we prove that H\"{o}lder and Sobolev regularity are propagated from the data to solutions of \eqref{eq:PDE-GF} up to their maximal time of existence. The H\"{o}lder case is considered in \Cref{subsec:propagation-holder}. We deduce in particular that smooth data give rise to smooth solutions of \eqref{eq:PDE-GF}. This, combined with a priori estimates from \Cref{lem:energy-estimates}, will allow us to conclude the propagation of Sobolev regularity in \Cref{subsec:propagation-sobolev}.

\subsubsection{Propagation of H\"{o}lder regularity}\label{subsec:propagation-holder}

In the following proposition we prove propagation of H\"{o}lder regularity. Similar strategies can be adopted to prove that any regularity which is \lq\lq better than $\mathscr{X}_{s}(\T^{d})$'' is propagated up to the maximal existence time of $\mathscr{X}_{s}(\T^{d})$-solutions (see \Cref{rmk:propagation-regularity}). 

\begin{prop}\label{prop:propagation-regularity}
    Let $s\ge 1$, $\bar\mu, \nu \in \mathscr{P}\cap \mathscr{X}_{s}(\T^{d})$ and $\mu\in L^{\infty}_{\loc}([0,T); \mathscr{X}_{s}(\T^{d}))$ be the unique maximal solution of \eqref{eq:PDE-GF} given by \Cref{prop:maximal-solutions}. Let $k\in \N\cup \{0\}$ and $\alpha\in (0,1)$. If $\bar\mu, \nu \in C^{k,\alpha}(\T^{d})$, then $\mu \in L^{\infty}_{\loc}([0,T);C^{k,\alpha}(\T^{d}))$. 
\end{prop}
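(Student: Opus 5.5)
The plan is to work in Lagrangian coordinates and close a Grönwall-type estimate on the $C^{k,\alpha}$ norm of $\mu_t$ on any compact interval $[0,T']\subset[0,T)$. On such an interval we have $M:=\sup_{t\le T'}\lVert\mu_t\rVert_{\mathscr{X}_s}<\infty$. By \Cref{lem:modulus-continuity-vector-field} and \Cref{lem:W2-Linfty}, the field $v_t=-\nabla K_s*(\mu_t-\nu)$ is then uniformly bounded and (log-)Lipschitz with constants depending only on $d,s,M$. The representation $\mu_t=(X_t)_\#\bar\mu$ from the proof of \Cref{prop:stability-yudovich} is the starting point.

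\textbf{Step 1: the case $k=0$.} First I obtain a uniform $L^\infty$ bound on $\nabla v_t$, which is the key step and the main obstacle when $s\in\{1,d/2+1\}$. Once $\mu_t\in C^{0,\alpha}$, Schauder estimates for $(-\Delta)^{-s}$ give $\nabla v_t=-\nabla^2K_s*(\mu_t-\nu)$ bounded by the log-interpolation bound
\[
\lVert \nabla v_t\rVert_{L^\infty}\lesssim 1+\lVert\mu_t-\nu\rVert_{L^\infty}\log\bigl(2+\lVert\mu_t-\nu\rVert_{C^{0,\alpha}}\bigr).
\]
For $s>1$ the kernel $\nabla^2K_s$ is integrable away from the critical scales, so the bound is even simpler. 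Along characteristics the density satisfies $\frac{d}{dt}\log\mu_t(X_t)=-\diver v_t(X_t)$, so $\mu_t=(\bar\mu\circ X_t^{-1})\exp\bigl(-\int_0^t\diver v_r\circ X_r\circ X_t^{-1}dr\bigr)$ (suitably composed). The Hölder seminorm of $\mu_t$ is then controlled by $[\bar\mu]_\alpha\,\mathrm{Lip}(X_t^{-1})^\alpha$ plus the Hölder seminorm of $\diver v$ along the flow. In turn, $[\diver v_r]_\alpha\lesssim[\mu_r]_\alpha+[\nu]_\alpha$ for $s=1$, where $\diver v=\mu-\nu$; for $s>1$ it is smoother, of order $2s-2$.

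Setting $Y(t)=\lVert\mu_t\rVert_{C^{0,\alpha}}$, these bounds give
\[
Y(t)\le C\exp\Bigl(C\int_0^t\log(2+Y)\Bigr)\Bigl(1+\int_0^tY\Bigr).
\]
Taking logarithms yields a differential inequality of the form $(\log Y)'\lesssim\log Y$, whose solution grows at most double-exponentially and hence stays finite on $[0,T']$. For $s>1$ the $\mathscr{X}_s$ bound already makes $v$ Lipschitz, so the inequality is linear. To justify this rigorously I would either run the estimate on mollified data, whose smooth solutions exist and converge by \Cref{prop:stability-yudovich}, and use lower semicontinuity of Hölder norms, or do a continuity argument.

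\textbf{Step 2: induction on $k$.} For $k\ge1$ I differentiate the equation: $\partial_t\partial^\beta\mu+v\cdot\nabla\partial^\beta\mu=-\sum$ (commutator terms involving $\partial^{\beta'}v$ and $\partial^{\beta''}\mu$ with $|\beta''|\le k$, plus $\partial^\beta(\mu\diver v)$). Schauder gives $\lVert v_t\rVert_{C^{k+1,\alpha}}\lesssim\lVert\mu_t-\nu\rVert_{C^{k+2-2s,\alpha}}$, which is at most of order $k,\alpha$ since $s\ge1$. All terms are therefore linear in the top-order quantity $\lVert\mu_t\rVert_{C^{k,\alpha}}$, with coefficients controlled by the $C^{k-1,\alpha}$ bound from the inductive hypothesis and by $\lVert\nabla v\rVert_{L^\infty}$ from Step 1. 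A transport estimate in $C^{0,\alpha}$ along the now Lipschitz flow, followed by linear Grönwall, closes the bound on $[0,T']$, and since $T'<T$ was arbitrary this gives $\mu\in L^\infty_{\loc}([0,T);C^{k,\alpha})$.
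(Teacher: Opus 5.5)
Your scheme works for $s>1$, $s\neq d/2+1$: there $\mathscr{X}_s$ alone makes $v$ uniformly Lipschitz on $[0,T']$ and every inequality is linear. But there is a genuine gap in the two critical cases $s\in\{1,d/2+1\}$, which is where the proposition has real content.

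For $s=1$, the inequality
\[
Y(t)\le C\exp\Bigl(C\int_0^t\log(2+Y)\Bigr)\Bigl(1+\int_0^tY\Bigr)
\]
does not imply $(\log Y)'\lesssim\log Y$. Set $\Phi=\int_0^t\log(2+Y)$ and $\Psi=1+\int_0^tY$. The logarithmic derivative of the right-hand side is $C\log(2+Y)+Y/\Psi$, and $Y/\Psi$ is only bounded by $Ce^{C\Phi}$. In the equality case (with $C\ge1$) you get $\Phi'\ge C\Phi+\log\Psi$ and $(\log\Psi)'=Ce^{C\Phi}$, hence $\Phi''\ge Ce^{C\Phi}$, which blows up in finite time. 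So your inequality only bounds $Y$ on a short interval depending on $\lVert\bar\mu\rVert_{C^{0,\alpha}}$, not on all of $[0,T']$.

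The loss comes from the step $[\diver v_r\circ X_r]_\alpha\le([\mu_r]_\alpha+[\nu]_\alpha)\Lip(X_r)^\alpha$ with $[\mu_r]_\alpha\le Y(r)$. Passing from Lagrangian to Eulerian and back costs a factor $(\Lip(X_r)\Lip(X_r^{-1}))^\alpha$. That factor is exponential in $\int\log Y$ and feeds $Y$ back superlinearly.

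The fix, which is the paper's route, is to stay Lagrangian. For $\tilde\mu_t=\mu_t\circ X_t$ one has exactly $\diver v_t\circ X_t=\tilde\mu_t-\nu\circ X_t$. Since $\lVert\tilde\mu_t\rVert_{L^\infty}\le M$, this gives
\[
\frac{d}{dt}\lVert\tilde\mu_t\rVert_{C^{0,\alpha}}\lesssim_M\lVert\tilde\mu_t\rVert_{C^{0,\alpha}}+\lVert\nu\rVert_{C^{0,\alpha}}\Lip(X_t)^\alpha ,
\]
hence $\lVert\mu_t\rVert_{C^{0,\alpha}}\lesssim\Lip(X_t^{-1})^\alpha\bigl(1+\int_0^t\Lip(X_r)^\alpha dr\bigr)$, with no $Y$ on the right. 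Its logarithm is $\lesssim 1+\int_0^t\lVert\nabla v_r\rVert_{L^\infty}dr$. Then \Cref{lem:log-interp-BKM-criterion} gives $\lVert\nabla v_t\rVert_{L^\infty}\lesssim 1+\int_0^t\lVert\nabla v_r\rVert_{L^\infty}dr$, which closes by linear Gr\"onwall.

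For $s=d/2+1$, your claim that ``for $s>1$ the $\mathscr{X}_s$ bound already makes $v$ Lipschitz'' is false. Here $\mathscr{X}_s=\mathcal{M}$, and \Cref{lem:modulus-continuity-vector-field} only gives a log-Lipschitz modulus. Your displayed log-interpolation does not help either: its prefactor $\lVert\mu_t-\nu\rVert_{L^\infty}$ is not controlled a priori. It is bounded only by $Y$, which again creates superlinear feedback.

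The missing step is an intermediate $L^\infty$ bound. Apply \Cref{lem:log-interp-BKM-criterion}(ii) with $p=1$, $r=\infty$, together with $\lVert\mu_t-\nu\rVert_{L^1}\le 2$, to get $\lVert\nabla v_t\rVert_{L^\infty}\lesssim 1+\log(2+\lVert\nu\rVert_{L^\infty}+\lVert\tilde\mu_t\rVert_{L^\infty})$. Combined with $\frac{d}{dt}\lVert\tilde\mu_t\rVert_{L^\infty}\le\lVert\tilde\mu_t\rVert_{L^\infty}\lVert\nabla v_t\rVert_{L^\infty}$, this gives double-exponential growth. This is where an inequality $(\log Y)'\lesssim\log Y$ legitimately appears, but for $Y=\lVert\tilde\mu_t\rVert_{L^\infty}$, not for the H\"older norm. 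After that, $v$ is Lipschitz on $[0,T']$, and both the H\"older estimate and your induction on $k$ are linear and go through.

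On rigor, use the continuity argument (local $C^{0,\alpha}$ existence by iteration, uniqueness, then continuation) rather than mollification. For mollified data with $s<d/2+1$ you have no uniform $\mathscr{X}_s$ bounds on $[0,T']$ at this stage. For $s=1$ in particular, the maximum principle is proved later using this very proposition.
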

\begin{proof}
    Clearly, it suffices to prove that $\mu \in L^{\infty}([0,\tau);C^{k,\alpha}(\T^{d}))$ for every given $\tau \in (0,T)$. Let $v_{t}:= -\nabla K_{s}*(\mu_{t}-\nu)$ and $X$ be, respectively, the vector field and the flow map associated to the solution $\mu$. We divide the proof in some steps.

    \smallskip
    \noindent \textbf{Step 1:} We first prove that $\mu \in L^{\infty}([0,\tau'); C^{0,\alpha})$ for some small $0<\tau'\le \tau$. We argue as in the proof of \Cref{prop:existence-yudovich} by building a sequence of approximate solutions $\mu^{n}\in L^{\infty}([0,\tau');C^{0,\alpha})$ such that
    $$\mu^{0}_{t}=\bar\mu,\qquad \mu^{n+1}_{t}= (X_{t}^{n})_{\#}\bar \mu\qquad \forall t\in [0,\tau'),\quad \forall n\ge 0.$$
    We claim that there exists $\tau' \in (0,\tau]$ such that 
    \begin{equation}\label{eq:claim-prop-holder-reg}
        \lVert \mu^{n}\rVert_{L^{\infty}([0,\tau');C^{0,\alpha})}\le 2\lVert \bar\mu\rVert_{C^{0,\alpha}}\qquad \forall n\ge 0. 
    \end{equation}
    Once \eqref{eq:claim-prop-holder-reg} is proved, by sending $n\to \infty$ we deduce $\mu \in L^{\infty}([0,\tau'); C^{0,\alpha})$, as desired. 
    Note that \eqref{eq:claim-prop-holder-reg} is trivial when $n=0$. Suppose it holds for some $n\ge 0$ and let us prove it holds for $n+1$. By the continuity of the operator $\nabla (-\Delta)^{-s}:C^{0,\alpha}\to C^{1,\alpha}$ we find 
    \begin{equation*}
        \lVert v^{n}_{t}\rVert_{C^{1,\alpha}}\le C(d,s,\alpha)\lVert \mu_{t}-\nu\rVert_{C^{0,\alpha}}\le C(d,s,\alpha)\left(\lVert \nu\rVert_{C^{0,\alpha}}+2\lVert \bar\mu\rVert_{C^{0,\alpha}}\right)=:L\qquad \forall t\in [0,\tau').
    \end{equation*}
    Therefore, taking norms in the identity $\frac{d}{dt}\nabla X^{n}_{t}=\nabla v^{n}_{t}\nabla X^{n}_{t}$ and integrating in time, we obtain 
    \begin{equation*}
      \max\{ \lVert \nabla X^{n}_{t}\rVert_{C^{0,\alpha}},\lVert \nabla (X^{n}_{t})^{-1}\rVert_{C^{0,\alpha}}\}\le \exp\left(\int_{0}^{t}\lVert \nabla v_{r}\rVert_{C^{0,\alpha}}dr\right)\le \exp\left(L\tau'\right)\qquad \forall t \in [0,\tau').
    \end{equation*}
    In particular, exploiting the representation formula $\mu^{n+1}_{t}=\left(\bar\mu \det \nabla X_{t}^{n}\right)\circ (X_{t}^{n})^{-1}$, we conclude
    \begin{equation*}
        \lVert \mu^{n+1}_{t}\rVert_{C^{0,\alpha}}\le \left(1+\lVert \nabla (X_{t}^{n})^{-1}\rVert_{C^{0,\alpha}}^{\alpha}\right)\lVert \nabla X_{t}^{n}\rVert_{C^{0,\alpha}}^{d}\lVert \bar\mu \rVert_{C^{0,\alpha}}\le \exp\left((\alpha+d)L\tau'\right)\lVert \bar\mu \rVert_{C^{0,\alpha}}\le 2\lVert \bar\mu \rVert_{C^{0,\alpha}},
    \end{equation*}
    provided that we choose $\tau'>0$ sufficiently small. 
    
    \smallskip
    \noindent \textbf{Step 2:} Let $\tau''\in [\tau',\tau]$ be the maximal time for which the solution lives in $C^{0,\alpha}$, i.e.
    \begin{equation*}
        \tau'':= \sup \{t\in [\tau',\tau]: \mu \in L^{\infty}([0,t);C^{0,\alpha})\}.
    \end{equation*}
    In this step, we show that $\tau''=\tau$. Arguing by continuation as in \Cref{prop:maximal-solutions}, we may reduce to prove $\mu \in L^{\infty}([0,\tau'');C^{0,\alpha})$. Let 
    \begin{equation*}
        M:= \lVert \nu\rVert_{\mathscr{X}_{s}}+\sup_{t\in [0,\tau)}\lVert \mu_{t}\rVert_{\mathscr{X}_{s}}.
    \end{equation*}
    Since $\mu \in L_{\loc}^{\infty}([0,\tau'');C^{0,\alpha})$, we have $v\in L^{\infty}_{\loc}([0,\tau'');C^{1,\alpha})$. Let $\tilde{\mu}_{t}:= \mu_{t}\circ X_{t}$. Since $\mu$ solves \eqref{eq:PDE-GF} and the vector field is regular, we find
    \begin{equation}\label{eq:evolution-characteristics-propagation-regularity}
        \frac{d}{dt}\tilde{\mu}_{t}=-\tilde{\mu}_{t}\diver v_{t}\circ X_{t}\qquad \forall t\in (0,\tau'').
    \end{equation}
    We first show that there exists some finite constant $K>0$ such that
    \begin{equation}\label{eq:uniform-bound-characteristics-Linfty-propagation}
       \lVert \mu_{t}\rVert_{L^{\infty}}= \lVert \tilde{\mu}_{t}\rVert_{L^{\infty}}\le K\qquad \forall t\in [0,\tau'').
    \end{equation}
    When $s=1$ we already know by assumption that $\lVert\mu_{t}\rVert_{L^{\infty}}$ (thus $\lVert\tilde{\mu}_{t}\rVert_{L^{\infty}}$) is uniformly bounded in $[0,\tau)\supseteq[0,\tau'')$. When $s\in \left(1,d/2+1\right]$, by \eqref{eq:evolution-characteristics-propagation-regularity} and \Cref{lem:log-interp-BKM-criterion}, setting $p=d/(2s-2)\in [1,\infty)$ we obtain
    \begin{align*}
        \frac{d}{dt}\lVert \tilde{\mu}_{t}\rVert_{L^{\infty}}&\le \lVert \tilde{\mu}_{t}\rVert_{L^{\infty}}\lVert \nabla^{2}K_{s}*(\mu_{t}-\nu)\rVert_{L^{\infty}}\\
        &\lesssim_{d,s} \lVert \tilde{\mu}_{t}\rVert_{L^{\infty}}\lVert \mu_{t}-\nu\rVert_{L^{p}}\log\left(1+\frac{\lVert \mu_{t}-\nu\rVert_{L^{\infty}}}{\lVert \mu_{t}-\nu\rVert_{L^{p}}}\right)\lesssim_{d,s,M} \lVert \tilde{\mu}_{t}\rVert_{L^{\infty}}\log\left(2+\lVert \nu\rVert_{L^{\infty}}+\lVert \tilde{\mu}\rVert_{L^{\infty}}\right).
    \end{align*}
    Hence $\lVert \tilde{\mu}\rVert_{L^{\infty}}$ grows at most as a double-exponential in time, and is uniformly bounded on $[0,\tau'')$. Finally, the same holds when $s>d/2+1$ because of the uniform bound on $\nabla ^{2}K_{s}*(\mu_{t}-\nu)$ from \Cref{lem:modulus-continuity-vector-field} and Gr\"onwall's inequality applied to \eqref{eq:evolution-characteristics-propagation-regularity}. 

    To prove $\mu \in L^{\infty}([0,\tau'');C^{0,\alpha})$, we distinguish the two cases $s>1$, and $s=1$. If $s>1$, since $\nabla v_{t}=\nabla^{2}K_{s}*(\mu_{t}-\nu)$, Schauder estimates give 
    \begin{equation}\label{eq:calderon-zygmund-propagation-holder}
        \begin{gathered}
        \lVert \nabla v_{t}\rVert_{L^{\infty}}\lesssim_{d,s} \lVert \mu_{t}\rVert_{L^{\infty}}+\lVert \nu\rVert_{L^{\infty}},\\
        \lVert \nabla v_{t}\rVert_{C^{0,\alpha}}\lesssim_{d,s,\alpha} \lVert \mu_{t}\rVert_{C^{0,\alpha}}+\lVert \nu\rVert_{C^{0,\alpha}}.
    \end{gathered}\qquad \forall t\in [0,\tau'').
    \end{equation}
    Therefore, in view of \eqref{eq:uniform-bound-characteristics-Linfty-propagation} we find 
    \begin{equation}\label{eq:Lip-bound-vector-field-propagation-holder-s>1}
        \lVert \nabla v_{t}\rVert_{L^{\infty}},\, \Lip(X_{t}),\, \Lip (X_{t}^{-1})\le \tilde{K}\qquad \lVert \mu_{t}\rVert_{C^{0,\alpha}}\le \lVert \tilde{\mu}_{t}\rVert_{C^{0,\alpha}}(1+\tilde{K}^{\alpha})\qquad  \forall t\in [0,\tau''),
    \end{equation}
    for some finite constant $\tilde{K}>0$.
    By \eqref{eq:evolution-characteristics-propagation-regularity}, using \eqref{eq:uniform-bound-characteristics-Linfty-propagation}, \eqref{eq:calderon-zygmund-propagation-holder} and \eqref{eq:Lip-bound-vector-field-propagation-holder-s>1}, for every $t\in (0,\tau'')$ we find
    \begin{align*}
        \frac{d}{dt}\lVert \tilde{\mu}_{t}\rVert_{C^{0,\alpha}}&\le \lVert \tilde\mu_{t}\nabla v_{t}\circ X_{t}\rVert_{C^{0,\alpha}}\\
        &\le \lVert \tilde{\mu}_{t}\rVert_{C^{0,\alpha}}\lVert\nabla v_{t}\rVert_{L^{\infty}}+\lVert \tilde{\mu}_{t}\rVert_{L^{\infty}}\left(\lVert \nabla v_{t}\rVert_{L^{\infty}}+\lVert \nabla v_{t}\rVert_{C^{0,\alpha}}\Lip(X_{t})^{\alpha}\right)\\
        & \lesssim_{d,s,\alpha,K,\tilde K}  \lVert \tilde{\mu}_{t}\rVert_{C^{0,\alpha}}+1.
    \end{align*}
    Hence, using Gr\"onwall's inequality, we deduce $\tilde{\mu}\in L^{\infty}([0,\tau'');C^{0,\alpha})$, and by \eqref{eq:Lip-bound-vector-field-propagation-holder-s>1}, $\mu\in L^{\infty}([0,\tau'');C^{0,\alpha})$. 

    In the case $s=1$ we have $-\diver v_{t}=\mu_{t}-\nu$, therefore, by \eqref{eq:evolution-characteristics-propagation-regularity}, for all $t\in (0,\tau'')$,
    \begin{equation*}
        \frac{d}{dt}\lVert \tilde \mu_{t}\rVert_{C^{0,\alpha}}\le \lVert \tilde{\mu}_{t}(\tilde{\mu}_{t}-\nu\circ X_{t})\rVert_{C^{0,\alpha}}\lesssim_{M}\lVert \tilde{\mu}_{t}\rVert_{C^{0,\alpha}}+ \lVert \nu\rVert_{C^{0,\alpha}}\Lip(X_{t})^{\alpha}.
    \end{equation*}
    By Gr\"onwall's inequality this yields, for all $t\in [0,\tau'')$,
    \begin{equation}\label{eq:Calpha-bound-propagation-holder-s=1}
       \lVert \mu_{t}\rVert_{C^{0,\alpha}}\le  \Lip(X_{t}^{-1})^{\alpha}\lVert \tilde{\mu}_{t}\rVert_{C^{0,\alpha}}\lesssim_{M,\tau}\Lip(X_{t}^{-1})^{\alpha}\left(1+\lVert \nu\rVert_{C^{0,\alpha}}\int_{0}^{t}\Lip(X_{r})^{\alpha}dr\right).
    \end{equation}
    On the other hand, we know that
    \begin{equation}\label{eq:Lip-bound-flow-propagation-holder-s=1}
        \Lip(X_{t})^{\alpha}, \Lip(X_{t}^{-1})^{\alpha}\le \exp \left(\alpha\int_{0}^{t}\lVert \nabla v_{r}\rVert_{L^{\infty}}dr\right).
    \end{equation}
    In particular, applying \Cref{lem:log-interp-BKM-criterion} along with \eqref{eq:Calpha-bound-propagation-holder-s=1} and \eqref{eq:Lip-bound-flow-propagation-holder-s=1}, for all $t\in [0,\tau'')$ we find
    \begin{align*}
        \lVert \nabla v_{t}\rVert_{L^{\infty}}&\lesssim_{d,\alpha} \lVert \mu_{t}-\nu\rVert_{L^{\infty}}\left(1+\log\left(\frac{\lVert \mu_{t}-\nu\rVert_{C^{0,\alpha}}}{\lVert \mu_{t}-\nu\rVert_{L^{\infty}}}\right)\right)\\
        &\lesssim_{M}1+\log\left(2+\lVert \nu\rVert_{C^{0,\alpha}} +\lVert \mu_{t}\rVert_{C^{0,\alpha}}\right)\lesssim_{M, \alpha, \tau, \lVert \nu\rVert_{C^{0,\alpha}}} 1+ \int_{0}^{t}\lVert \nabla v_{r}\rVert_{L^{\infty}}dr.
    \end{align*} 
    By one more application of Gr\"onwall's inequality this gives $v \in L^{\infty}([0,\tau'');C^{0,1})$, from which we eventually deduce that $\mu \in L^{\infty}([0,\tau'');C^{0,\alpha})$ thanks to \eqref{eq:Calpha-bound-propagation-holder-s=1} and \eqref{eq:Lip-bound-flow-propagation-holder-s=1}.

    \smallskip
    \noindent \textbf{Step 3:} Finally, by a bootstrap argument we show that $\mu \in L^{\infty}([0,\tau);C^{k,\alpha})$.
    Suppose we know $\mu \in L^{\infty}([0,\tau);C^{k',\alpha})$ for some $0\le k'<k$. Then, by the continuity of $\nabla (-\Delta)^{-s}:C^{k',\alpha}\to C^{k'+1,\alpha}$ we deduce that $v \in L^{\infty}([0,\tau);C^{k'+1,\alpha})$ and consequently $X, X^{-1} \in L^{\infty}([0,\tau);C^{k'+1,\alpha})$. The function $\tilde{\mu}_{t}=\mu_{t}\circ X_{t}$ solves $\partial_{t}\tilde\mu_{t}=-\tilde{\mu}_{t}[(-\Delta)^{1-s}(\mu_{t}-\nu)]\circ X_{t}$. Therefore, by composition and product rules in H\"{o}lder spaces along with uniform bounds of $\tilde{\mu}$ in $C^{k',\alpha}$ and of $X_{t}$ in $C^{k'+1,\alpha}$, we get
    \begin{equation*}
        \frac{d}{dt}\lVert \tilde{\mu}_{t}\rVert_{C^{k'+1,\alpha}}\le \lVert \tilde{\mu}_{t}[(-\Delta)^{1-s}(\mu_{t}-\nu)]\circ X_{t}\rVert_{C^{k'+1,\alpha}}\lesssim \lVert \tilde{\mu}_{t}\rVert_{C^{k'+1,\alpha}}+\lVert \nu\rVert_{C^{k'+1,\alpha}}.
    \end{equation*}
    From Gronwall's inequality we deduce $\tilde{\mu}\in L^{\infty}([0,\tau);C^{k'+1,\alpha})$, and composing with the inverse flow that $\mu \in L^{\infty}([0,\tau);C^{k'+1,\alpha})$. This concludes the bootstrap, and with it the proof.  
\end{proof}

\begin{rmk}[Propagation of general regularities]\label{rmk:propagation-regularity}
The main point behind the proof of \Cref{prop:propagation-regularity} is that $\mu_{t}$ generates a Lipschitz vector field $v_{t}$ as soon as it is slightly more regular than $\mathscr{X}_{s}(\T^{d})$. This is the key condition required to propagate (H\"{o}lder) regularity thanks to Gr\"onwall's inequality. 
The same arguments apply to propagation of other types of regularity. For instance, one could use a similar strategy to prove that if $s\in \left(1,d/2+1\right]$, $p=d/(2s-2)$, $r>p$, and $\bar\mu, \nu \in L^{r}(\T^{d})$, then $\mu \in L^{\infty}_{\loc}([0,T);L^{r})$, where $T$ is the maximal existence time of $\mathscr{X}_{s}$-solutions. The same would hold for all $r\ge 1$ when $s>d/2+1$. Moreover, with the same technique we can propagate $C^{k,1}$-regularity, for any $k\in \N\cup \{0\}$, provided that $s>1$.  \fr
\end{rmk}
\begin{rmk}[Regularity in time]\label{rmk:regularity-in-time} For a maximal solution $\mu\in L^{\infty}([0,T);\mathscr{X}_{s}(\T^{d}))$ of \eqref{eq:PDE-GF}, the space regularity (uniform in time) provided by \Cref{prop:propagation-regularity} can be used to prove regularity in time. For example, denoting by $v$ and $X$ the velocity field and the flow map generated by $\mu$, respectively, the following holds: 
\begin{equation*}
    \bar\mu, \nu \in C^{\infty}\implies \mu \in C^{\infty}([0,T)\times \T^{d}),\, v\in C^{\infty}([0,T)\times \T^{d};\R^{d}),\, X\in C^{\infty}([0,T)\times \T^{d};\T^{d}).
\end{equation*}
In fact, in this case \Cref{prop:propagation-regularity} gives $\mu \in L^{\infty}_{\loc}([0,T);C^{k})$ for all $k\in \N$, where $T>0$ is the maximal existence time of the solution $\mu$. This in turn implies that $v\in L^{\infty}_{\loc}([0,T);C^{k})$ for all $k\in \N$. From here, differentiating $\frac{d}{dt}X=v\circ X$ in time and space, and arguing by induction on the order of derivation, one deduces that $X,X^{-1}\in C^{\infty}([0,T)\times \T^{d};\T^{d})$, and finally, from the representation formula $\mu_{t}=\left(\bar\mu \det \nabla X_{t}\right)\circ X_{t}^{-1}$, that $\mu \in C^{\infty}([0,T)\times \T^{d})$ and $v\in C^{\infty}([0,T)\times \T^{d};\R^{d})$. \fr
\end{rmk}

\subsubsection{Propagation of Sobolev regularity}\label{subsec:propagation-sobolev}

 In the next lemma, we derive energy estimates in homogeneous Sobolev spaces for solutions of \eqref{eq:PDE-GF}. As an application, we establish the propagation of Sobolev regularity. These estimates will play a fundamental role in the proof of our smooth convergence results in \Cref{thm:convergence-s=1} and \Cref{thm:convergence-s>1}.

 \begin{lem}\label{lem:energy-estimates}
     Let $s\ge 1$, $\gamma>d/2$, and $\bar\mu, \nu \in \mathscr{P}\cap C^{\infty}(\T^{d})$. Let $\mu\in C^{\infty}([0,T)\times \T^{d})$ be the maximal solution of \eqref{eq:PDE-GF} given by \Cref{prop:maximal-solutions} and \Cref{rmk:regularity-in-time}. Then, the following energy estimates hold for some $C=C(d,s,\gamma)>0$ and all $t\in (0,T)$:
     \begin{align}
        \frac{d}{dt}\lVert \mu_{t}\rVert_{\dot H^{\gamma}}^{2}&\le C\left(\lVert \nabla^{2}(-\Delta)^{-s}(\mu_{t}-\nu)\rVert_{L^{\infty}}+\lVert \nu\rVert_{H^{\gamma}}\right)\left(\lVert \mu_{t}\rVert_{\dot H^{\gamma}}^{2}+\lVert \nu\rVert_{\dot H^{\gamma}}^{2}\right),\label{eq:energy-estimate-propagation}\\[5pt]
        \frac{d}{dt}\lVert \mu_{t}-\nu\rVert_{\dot H^{\gamma}}^{2}&\le C\lVert \nabla^{2}(-\Delta)^{-s}(\mu_{t}-\nu)\rVert_{L^{\infty}}\lVert \mu_{t}-\nu\rVert_{\dot H^{\gamma}}^{2}\label{eq:energy-estimate}\\
        &\quad\,-2\big(\min_{\T^{d}}\nu\big)\lVert \mu_{t}-\nu\rVert_{\dot H^{\gamma-s+1}}^{2}+ C\lVert \nu\rVert_{\dot H^{\gamma+s}}\lVert \mu_{t}-\nu\rVert_{\dot H^{\gamma-s}}\lVert \mu_{t}-\nu\rVert_{\dot H^{\gamma-s+1}}.\notag
    \end{align}
 \end{lem}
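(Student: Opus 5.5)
The plan is to work at the level of the smooth solution and differentiate the Sobolev seminorms directly, using the Fourier-multiplier structure. Write $\Lambda:=(-\Delta)^{1/2}$, $\sigma_t:=\mu_t-\nu$, $u_t:=(-\Delta)^{-s}\sigma_t$, so that $v_t=-\nabla u_t$ and the equation reads $\partial_t\mu_t=\diver(\mu_t\nabla u_t)=\nabla\mu_t\cdot\nabla u_t+\mu_t\Delta u_t$. Since $\nu$ is time independent, $\partial_t\sigma_t=\partial_t\mu_t$. All manipulations below are justified because $\mu\in C^\infty([0,T)\times\T^d)$ by \Cref{rmk:regularity-in-time}.

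For \eqref{eq:energy-estimate-propagation}, I compute
\[
\tfrac{1}{2}\tfrac{d}{dt}\lVert\mu_t\rVert_{\dot H^\gamma}^2=\langle\Lambda^\gamma\mu_t,\Lambda^\gamma(\nabla\mu_t\cdot\nabla u_t)\rangle+\langle\Lambda^\gamma\mu_t,\Lambda^\gamma(\mu_t\Delta u_t)\rangle .
\]
\begin{itemize}
\item \emph{Transport term.} I write $\Lambda^\gamma(\nabla u\cdot\nabla\mu)=\nabla u\cdot\nabla\Lambda^\gamma\mu+[\Lambda^\gamma,\nabla u\cdot]\nabla\mu$. The first piece integrates by parts to $-\tfrac12\int\Delta u\,|\Lambda^\gamma\mu|^2$, bounded by $\lVert\nabla^2u\rVert_{L^\infty}\lVert\mu\rVert_{\dot H^\gamma}^2$. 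The commutator is handled by the periodic Kato--Ponce estimate (\Cref{subsec:appendix-katoponce}):
\[
\lVert[\Lambda^\gamma,f]\nabla g\rVert_{L^2}\lesssim\lVert\nabla f\rVert_{L^\infty}\lVert g\rVert_{\dot H^\gamma}+\lVert f\rVert_{\dot H^\gamma}\lVert\nabla g\rVert_{L^\infty},\qquad f=\nabla u,\ g=\mu .
\]
\item \emph{Controlling the cross term.} This produces $\lVert\nabla u\rVert_{\dot H^\gamma}\lVert\nabla\mu\rVert_{L^\infty}$. I would rather put it in the form $\lVert\nabla u\rVert_{\dot H^{\gamma+1}}\lVert\mu\rVert_{L^\infty}$, which is the natural Kato--Ponce splitting for the full divergence-form product $\Lambda^\gamma\diver(\mu\nabla u)$. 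Then I use $\lVert\nabla u\rVert_{\dot H^{\gamma+1}}\le\lVert\sigma\rVert_{\dot H^{\gamma+2-2s}}\le\lVert\sigma\rVert_{\dot H^\gamma}$ (valid since $s\ge1$), and $\lVert\mu\rVert_{L^\infty}\lesssim 1+\lVert\mu\rVert_{\dot H^\gamma}$ by Sobolev embedding ($\gamma>d/2$, mass one).
\item \emph{Resulting shape.} This gives terms like $\lVert\mu\rVert_{\dot H^\gamma}\lVert\sigma\rVert_{\dot H^\gamma}(1+\lVert\mu\rVert_{\dot H^\gamma})$, which are cubic. That is a problem: the target estimate is linear in the bracket $\lVert\mu\rVert^2+\lVert\nu\rVert^2$, with coefficient $\lVert\nabla^2u\rVert_{L^\infty}+\lVert\nu\rVert_{H^\gamma}$.
\end{itemize}

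The fix I would try is to split the velocity rather than the density: $\nabla u=\nabla(-\Delta)^{-s}\mu-\nabla(-\Delta)^{-s}\nu$.
\begin{itemize}
\item For the $\nu$-part, all derivatives can fall on $\nabla(-\Delta)^{-s}\nu$, which is bounded in $W^{\gamma+1,\infty}$-type norms by $\lVert\nu\rVert_{H^\gamma}$ since $2s-1\ge1$ and $\gamma>d/2$. These contributions are therefore $\lesssim\lVert\nu\rVert_{H^\gamma}\lVert\mu\rVert_{\dot H^\gamma}(\lVert\mu\rVert_{\dot H^\gamma}+1)$.
\item For the $\mu$-part, the commutator pairing uses $\lVert\nabla(-\Delta)^{-s}\mu\rVert_{\dot H^{\gamma+1}}\lVert\mu\rVert_{L^\infty}$. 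Here $\lVert\mu\rVert_{L^\infty}$ should instead be traded against $\lVert\nabla^2 u\rVert_{L^\infty}$ plus $\nu$-terms. I expect the cleanest route to be: keep only $\lVert\nabla^2u\rVert_{L^\infty}$ and $\lVert\nabla\mu\rVert_{L^\infty}$-free quantities, by writing the product term $\mu\Delta u$ via the Kato--Ponce product estimate $\lVert fg\rVert_{\dot H^\gamma}\lesssim\lVert f\rVert_{L^\infty}\lVert g\rVert_{\dot H^\gamma}+\lVert f\rVert_{\dot H^\gamma}\lVert g\rVert_{L^\infty}$ with $f=\Delta u$. This gives $\lVert\nabla^2u\rVert_{L^\infty}\lVert\mu\rVert_{\dot H^\gamma}+\lVert\mu\rVert_{L^\infty}\lVert\sigma\rVert_{\dot H^{\gamma+2-2s}}$.
\end{itemize}
The last term is where the main obstacle lies. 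For $s>1$ it gains $2s-2$ derivatives, so by interpolation and Young it should be absorbable. For $s=1$, the remaining term $\int\Lambda^\gamma\mu\,\mu\,\Lambda^\gamma(\mu-\nu)$ is genuinely present; I would handle it by writing $\mu\Lambda^\gamma\mu=\mu\Lambda^\gamma\sigma+\mu\Lambda^\gamma\nu$ and pairing symmetrically. I would expect to need to check carefully that the constant structure claimed (with the $\lVert\nu\rVert_{H^\gamma}$ coefficient) is achievable; this bookkeeping is the hardest step.

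For \eqref{eq:energy-estimate}, I compute
\[
\tfrac12\tfrac d{dt}\lVert\sigma\rVert_{\dot H^\gamma}^2=\langle\Lambda^\gamma\sigma,\Lambda^\gamma\diver(\sigma\nabla u)\rangle+\langle\Lambda^\gamma\sigma,\Lambda^\gamma\diver(\nu\nabla u)\rangle .
\]
\begin{itemize}
\item \emph{Nonlinear part.} This is the same transport/commutator computation with $\sigma$ in place of $\mu$. It yields $\lVert\nabla^2u\rVert_{L^\infty}\lVert\sigma\rVert_{\dot H^\gamma}^2$, using that $\nabla u$ is itself built from $\sigma$, so that $\lVert\nabla u\rVert_{\dot H^{\gamma+1}}\lesssim\lVert\sigma\rVert_{\dot H^\gamma}$ and the $L^\infty$ factor is $\lVert\nabla^2u\rVert_{L^\infty}$ or $\lVert\nabla u\rVert_{L^\infty}$.
\item \emph{Linear part, main term.} I move $\Lambda^{\gamma+s}$-weights symmetrically: $\diver(\nu\nabla u)$ with $u=\Lambda^{-2s}\sigma$. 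Setting $w=\Lambda^{\gamma-s}\sigma$, I get
\[
-\int\nu|\nabla\Lambda^{\gamma-s}\sigma|^2=-\int\nu|\nabla w|^2\le-\min\nu\,\lVert\sigma\rVert_{\dot H^{\gamma-s+1}}^2 .
\]
\item \emph{Linear part, commutator.} What remains is the commutator $[\Lambda^{\gamma+s},\nu]\nabla u$, paired with $\nabla\Lambda^{\gamma-s}\sigma$. Kato--Ponce bounds it by $\lVert\nu\rVert_{\dot H^{\gamma+s}}\lVert\nabla u\rVert_{L^\infty}+\lVert\nabla\nu\rVert_{L^\infty}\lVert\nabla u\rVert_{\dot H^{\gamma+s-1}}$.
\end{itemize}
Since $\lVert\nabla u\rVert_{\dot H^{\gamma+s-1}}=\lVert\sigma\rVert_{\dot H^{\gamma-s}}$, this becomes $\lesssim\lVert\nu\rVert_{H^{\gamma+s}}\lVert\sigma\rVert_{\dot H^{\gamma-s}}\lVert\sigma\rVert_{\dot H^{\gamma-s+1}}$. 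I control $\lVert\nabla u\rVert_{L^\infty}$ by $\lVert\sigma\rVert_{\dot H^{\gamma-s}}$ via embedding, using $\gamma>d/2$ to absorb the Sobolev loss. This yields exactly the stated cross term.
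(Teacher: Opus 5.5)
\textbf{Verdict: there is a genuine gap, in every nonlinear term.}

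Your linear part of \eqref{eq:energy-estimate} is correct and is the paper's argument: you move $\Lambda^{\gamma+s}$ onto $\nu\nabla u$, extract $-\int\nu|\nabla\Lambda^{\gamma-s}\sigma|^2$, and bound $[\Lambda^{\gamma+s},\nu]\nabla u$ using \eqref{eq:kato-ponce}, $\lVert\nabla u\rVert_{L^\infty}\lesssim\lVert\sigma\rVert_{\dot H^{\gamma-s}}$ and $\lVert\nabla\nu\rVert_{L^\infty}\lesssim\lVert\nu\rVert_{\dot H^{\gamma+s}}$. The gap covers all of \eqref{eq:energy-estimate-propagation} and the $\diver(\sigma\nabla u)$ part of \eqref{eq:energy-estimate}.

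After the pure transport piece is removed, the remaining commutator has $\gamma+2$ derivatives on the potential $u$ and the density in a Lebesgue norm. An example is your $\lVert\mu\rVert_{L^\infty}\lVert\sigma\rVert_{\dot H^{\gamma+2-2s}}\lVert\mu\rVert_{\dot H^\gamma}$. None of your fixes closes it:
\begin{itemize}
\item Sobolev-embedding $\lVert\mu\rVert_{L^\infty}$ gives the cubic terms you noticed.
\item ``Absorbing by interpolation and Young'' is not available. The term is cubic and of top order, and \eqref{eq:energy-estimate-propagation} has no dissipative term.
\item Splitting the \emph{velocity} into $\mu$- and $\nu$-parts reproduces the same structure for the $\mu$-part.
\item For the nonlinear part of \eqref{eq:energy-estimate}, you claim ``the $L^\infty$ factor is $\lVert\nabla^2u\rVert_{L^\infty}$.'' Kato--Ponce does not give this: the $L^\infty$ norm lands on $\sigma$. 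For $s>1$, $\lVert\sigma\rVert_{L^\infty}$ is not controlled by $\lVert\nabla^2(-\Delta)^{-s}\sigma\rVert_{L^\infty}$; high-frequency data show a loss of $2s-2$ derivatives.
\end{itemize}

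\textbf{The missing idea: density and velocity come from the same potential.} Set $\phi:=(-\Delta)^{-s}\sigma$. Then $\sigma=D^{2s}\phi$ and $v=-\nabla\phi$, so the bad term is $\lVert D^{\gamma+1}\nabla\phi\rVert_{L^{p}}\lVert D^{2s}\phi\rVert_{L^{q}}$. Take $p=\frac{2(\gamma+2s-2)}{\gamma}$ and $q=\frac{2(\gamma+2s-2)}{2s-2}$, so that $\frac1p+\frac1q=\frac12$. Gagliardo--Nirenberg then interpolates each factor between $\lVert\nabla^2\phi\rVert_{L^\infty}$ and $\lVert D^{\gamma+2s}\phi\rVert_{L^2}=\lVert\sigma\rVert_{\dot H^\gamma}$. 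The powers of $\lVert\sigma\rVert_{\dot H^\gamma}$ are $\frac{\gamma}{\gamma+2s-2}$ and $\frac{2s-2}{\gamma+2s-2}$, which sum to one. The result is exactly $\lVert\nabla^2\phi\rVert_{L^\infty}\lVert\sigma\rVert_{\dot H^\gamma}$; this is \eqref{eq:kato-ponce-non-linear-term}. The $2s-2$ derivatives you saw being saved on $\lVert\sigma\rVert_{\dot H^{\gamma+2-2s}}$ are precisely what pays for the $2s-2$ extra derivatives of $\sigma$ relative to $\nabla^2\phi$. No absorption is involved.

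\textbf{How the estimate then closes.} Split the \emph{density}, $\mu=\sigma+\nu$, and keep the velocity intact.
\begin{itemize}
\item The $\sigma$-part is handled by the interpolation above.
\item The $\nu$-part costs $\lVert\nu\rVert_{\dot H^\gamma}\lVert\nabla^2\phi\rVert_{L^\infty}+\lVert\nu\rVert_{L^\infty}\lVert\sigma\rVert_{\dot H^{\gamma+2-2s}}$. This is where the coefficient $\lVert\nu\rVert_{H^\gamma}$ in \eqref{eq:energy-estimate-propagation} comes from.
\item To avoid the $\lVert\nabla\mu\rVert_{L^\infty}$ produced by your transport commutator, peel off only a first-order commutator, controlled by \eqref{eq:kato-ponce-gamma1}, as the paper does.
\end{itemize}

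Finally, you have the hard case backwards. For $s=1$ the term is harmless, since $\sigma=-\Delta\phi$ gives $\lVert\sigma\rVert_{L^\infty}\le\sqrt d\,\lVert\nabla^2\phi\rVert_{L^\infty}$; this is the case $q=\infty$ above. It is for $s>1$ that the interpolation step is indispensable.
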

 \begin{proof}
    In the following, we use the notation $D^{\beta}:=(-\Delta)^{\beta/2}$ for all $\beta\in \R$, and we consider the zero-mean perturbation $\sigma_{t}:= \mu_{t}-\nu$,
     which solves the equation
    \begin{equation*}
        \partial_{t}\sigma_{t}=\diver(\nu \nabla (-\Delta)^{-s}\sigma_{t})+\diver(\sigma_{t}\nabla (-\Delta)^{-s}\sigma_{t})\qquad \forall t\in (0,T).
    \end{equation*}

    \smallskip
    \noindent \textbf{Step 1:} We first prove \eqref{eq:energy-estimate-propagation}. Using equation \eqref{eq:PDE-GF}, integrating by parts, and distributing fractional derivatives, we get three terms: 
    \begin{align*}
        \frac{d}{dt}\frac{\lVert \mu_{t}\rVert_{\dot H^{\gamma}}^{2}}{2}&=\int_{\T^{d}}D^{\gamma}\diver \left(\mu_{t}\nabla D^{-2s}\sigma_{t}\right)D^{\gamma}\mu_{t}\\
        &=-\int_{T^{d}}D^{\gamma}\mu_{t}\nabla D^{-2s}\sigma_{t} \cdot \nabla D^{\gamma}\mu_{t}\\
        &\quad\,+ \int_{\T^{d}}\big[D(D^{\gamma}\mu_{t} \nabla D^{-2s}\sigma_{t})-D^{\gamma+1}\mu_{t}\nabla D^{-2s}\sigma_{t}\big]\cdot \nabla D^{\gamma-1}\mu_{t}\\
        &\quad\, -\int_{\T^{d}}\big[D^{\gamma+1}(\mu_{t}\nabla D^{-2s}\sigma_{t})-D^{\gamma+1}\mu_{t}\nabla D^{-2s}\sigma_{t}\big]\cdot \nabla D^{\gamma-1}\mu_{t}=:I+II+III.
    \end{align*}
    For $I$, we use the identity $2D^{\gamma}\mu_{t}\nabla D^{\gamma}\mu_{t}=\nabla (D^{\gamma}\mu_{t})^{2}$, and integration by parts:
    \begin{equation}\label{eq:ee-riesz-propagation-I}
        I= \frac{1}{2}\int_{\T^{d}}\Delta D^{-2s}\sigma_{t}(D^{\gamma}\mu_{t})^{2} \le C \lVert \nabla^{2}D^{-2s}\sigma_{t}\rVert_{L^{\infty}}\lVert \mu_{t}\rVert_{\dot H^{\gamma}}^{2}.
    \end{equation}
    For $II$, we use Cauchy--Schwarz inequality and \eqref{eq:kato-ponce-gamma1}:
    \begin{equation}\label{eq:ee-riesz-propagation-II}
    \begin{aligned}
        II&\le \lVert D(D^{\gamma}\mu_{t} \nabla D^{-2s}\sigma_{t})-D^{\gamma+1}\mu_{t}\nabla D^{-2s}\sigma_{t} \rVert_{L^{2}}\lVert \nabla D^{\gamma-1}\mu_{t}\rVert_{L^{2}}\\
        &\lesssim_{d,\gamma}\lVert \nabla^{2}D^{-2s}\sigma_{t}\rVert_{L^{\infty}}\lVert D^{\gamma}\mu_{t}\rVert_{L^{2}}\lVert \nabla D^{\gamma-1}\mu_{t}\rVert_{L^{2}}\lesssim_{d,\gamma} \lVert \nabla^{2}D^{-2s}\sigma_{t}\rVert_{L^{\infty}}\lVert \mu_{t}\rVert_{\dot H^{\gamma}}^{2}.
    \end{aligned}
    \end{equation}
    We then write $\mu_{t}=\sigma_{t}+\nu$, and further divide $III$ into two terms: 
    \begin{align*}
        III&= -\int_{\T^{d}}\big[D^{\gamma+1}(\sigma_{t}\nabla D^{-2s}\sigma_{t})-D^{\gamma+1}\sigma_{t}\nabla D^{-2s}\sigma_{t}\big]\cdot \nabla D^{\gamma-1}\mu_{t} \\
        &\quad\, -\int_{\T^{d}}\big[D^{\gamma+1}(\nu\nabla D^{-2s}\sigma_{t})-D^{\gamma+1}\nu\nabla D^{-2s}\sigma_{t}\big]\cdot \nabla D^{\gamma-1}\mu_{t}=:III_{1}+III_{2}.
    \end{align*}
    To bound $III_{1}$, we use Cauchy--Schwarz inequality, and then apply \eqref{eq:kato-ponce-non-linear-term} to the function $\phi_{t}=D^{-2s}\sigma_{t}$:
    \begin{equation}\label{eq:ee-riesz-propagation-III1}
        \begin{aligned}
        III_{1}&\le \lVert D^{\gamma+1}(D^{2s}\phi_{t}\nabla \phi_{t})-D^{\gamma+1+2s}\phi_{t}\nabla \phi_{t}\rVert_{L^{2}}\lVert \nabla D^{\gamma-1}\mu_{t}\rVert_{L^{2}}\\
        &\lesssim_{d,s,\gamma}\lVert \nabla^{2}\phi_{t}\rVert_{L^{\infty}}\lVert D^{\gamma+2s}\phi_{t}\rVert_{L^{2}}\lVert \mu_{t}\rVert_{\dot H^{\gamma}}=\lVert \nabla^{2}D^{-2s}\sigma_{t}\rVert_{L^{\infty}}\lVert \sigma_{t}\rVert_{\dot H^{\gamma}}\lVert \mu_{t}\rVert_{\dot H^{\gamma}}.
    \end{aligned}
    \end{equation}
    To bound $III_{2}$, we use Cauchy--Schwarz inequality, and \eqref{eq:kato-ponce} along with the Sobolev embedding $H^{\gamma}\hookrightarrow L^{\infty}$:
    \begin{equation}\label{eq:ee-riesz-propagation-III2}
    \begin{aligned}
        III_{2}&\le \lVert D^{\gamma+1}(\nu\nabla D^{-2s}\sigma_{t})-D^{\gamma+1}\nu\nabla D^{-2s}\sigma_{t}\rVert_{L^{2}}\lVert \nabla D^{\gamma-1}\mu_{t}\rVert_{L^{2}}\\
        &\lesssim_{d,s,\gamma} \left(\lVert D^{\gamma}\nu\rVert_{L^{2}}\lVert \nabla^{2}D^{-2s}\sigma_{t}\rVert_{L^{\infty}}+\lVert \nu\rVert_{L^{\infty}}\lVert \nabla D^{\gamma+1-2s}\sigma_{t}\rVert_{L^{2}}\right)\lVert \mu_{t}\rVert_{\dot H^{\gamma}}\\
        &\lesssim_{d}\lVert \nu\rVert_{\dot H^{\gamma}}\lVert \nabla^{2}D^{-2s}\sigma_{t}\rVert_{L^{\infty}}\lVert \mu_{t}\rVert_{\dot H^{\gamma}}+\lVert \nu\rVert_{H^{\gamma}}\lVert \sigma_{t}\rVert_{\dot H^{\gamma}}\lVert \mu_{t}\rVert_{\dot H^{\gamma}}.
    \end{aligned}
    \end{equation}
    Gathering \eqref{eq:ee-riesz-propagation-I}, \eqref{eq:ee-riesz-propagation-II}, \eqref{eq:ee-riesz-propagation-III1}, and \eqref{eq:ee-riesz-propagation-III2}, we obtain \eqref{eq:energy-estimate-propagation}.
    
    \smallskip
    \noindent \textbf{Step 2:} Next, we prove \eqref{eq:energy-estimate}.
     Differentiating $\lVert \sigma_{t}\rVert_{\dot H^{\gamma}}^{2}$ in time we get the sum of two terms:
    \begin{equation*}
        \frac{d}{dt}\frac{\lVert \sigma_{t}\rVert_{\dot H^{\gamma}}^{2}}{2}=\underbrace{\int_{\T^{d}}D^{\gamma}\diver(\nu \nabla D^{-2s}\sigma_{t})D^{\gamma}\sigma_{t}}_{({\rm Lin})}+\underbrace{\int_{\T^{d}}D^{\gamma}\diver(\sigma_{t} \nabla D^{-2s}\sigma_{t})D^{\gamma}\sigma_{t}}_{({\rm NonLin})}.
    \end{equation*}
    Let us first treat the term $({\rm Lin})$, arising from the linearized equation (see \Cref{subsec:idea-proof}). Integrating by parts, and then distributing fractional derivatives between the two factors in the integrand we get
    \begin{align*}
        ({\rm Lin})&=-\int_{\T^{d}}D^{\gamma}\left(\nu\nabla D^{-2s}\sigma_{t}\right) \cdot \nabla D^{\gamma}\sigma_{t}\\
        &= -\int_{\T^{d}}D^{\gamma+s}\left(\nu\nabla D^{-2s}\sigma_{t}\right)\cdot \nabla D^{\gamma-s}\sigma_{t}\\
        &= \underbrace{-\int_{\T^{d}}\nu |\nabla D^{\gamma-s}\sigma_{t}|^{2}}_{({\rm Lin})_{{\rm main}}}+\underbrace{(-1)\int_{\T^{d}}\Big[D^{\gamma+s}\left(\nu\nabla D^{-2s}\sigma_{t}\right)-\nu D^{\gamma+s}\nabla D^{-2s}\sigma_{t}\Big]\cdot \nabla D^{\gamma-s}\sigma_{t}}_{({\rm Lin})_{{\rm err}}}.
    \end{align*}
    Now, for the main term $({\rm Lin})_{{\rm main}}$, we simply bound $\nu$ below with its minimum value, and get
    \begin{equation}\label{eq:bound-Lin-Main-eenergy-estimate}
        ({\rm Lin})_{{\rm main}}\le -\big(\min_{\T^{d}}\nu\big)\int_{\T^{d}}|\nabla D^{\gamma-s}\sigma_{t}|^{2}=-\big(\min_{\T^{d}}\nu\big)\lVert \sigma_{t}\rVert_{\dot H^{\gamma-s+1}}^{2}.
    \end{equation}
    For the error term $({\rm Lin})_{{\rm err}}$, using the Cauchy--Schwarz inequality and the Kato--Ponce commutator estimate \eqref{eq:kato-ponce}, we derive
    \begin{equation}\label{eq:bound-Lin-err-eenergy-estimate}
        \begin{aligned}
            ({\rm Lin})_{{\rm err}}&\le \left\lVert D^{\gamma+s}\left(\nu\nabla D^{-2s}\sigma_{t}\right)-\nu D^{\gamma+s}\nabla D^{-2s}\sigma_{t}\right\rVert_{L^{2}} \lVert \nabla D^{\gamma-s}\sigma_{t}\rVert_{L^{2}}\\
        &\lesssim_{d,s}\left(\lVert \nabla \nu\rVert_{L^{\infty}}\lVert D^{\gamma+s-1}\nabla D^{-2s}\sigma_{t}\rVert_{L^{2}}+\lVert D^{\gamma+s}\nu\rVert_{L^{2}}\lVert \nabla D^{-2s}\sigma_{t}\rVert_{L^{\infty}}\right)\lVert \sigma_{t}\rVert_{\dot H^{\gamma-s+1}}\\
        &\lesssim_{d,s,\gamma}\lVert \nu\rVert_{\dot H^{\gamma+s}}\lVert \sigma_{t}\rVert_{\dot H^{\gamma-s}}\lVert \sigma_{t}\rVert_{\dot H^{\gamma-s+1}},
        \end{aligned}
    \end{equation}
    where in the last step we also used the embedding $\dot H^{\gamma+s-1}\hookrightarrow C^{0}$.

    The term $({\rm NonLin})$ can be treated similarly to Step 1. After splitting
    \begin{align*}
        ({\rm NonLin})&=-\int_{T^{d}}D^{\gamma}\sigma_{t}\nabla D^{-2s}\sigma_{t} \cdot \nabla D^{\gamma}\sigma_{t}\\
        &\quad\,+ \int_{\T^{d}}\big[D(D^{\gamma}\sigma_{t} \nabla D^{-2s}\sigma_{t})-D^{\gamma+1}\sigma_{t}\nabla D^{-2s}\sigma_{t}\big]\cdot \nabla D^{\gamma-1}\sigma_{t}\\
        &\quad\, -\int_{\T^{d}}\big[D^{\gamma+1}(\sigma_{t}\nabla D^{-2s}\sigma_{t})-D^{\gamma+1}\sigma_{t}\nabla D^{-2s}\sigma_{t}\big]\cdot \nabla D^{\gamma-1}\sigma_{t}\\
        &=:({\rm NonLin})_{1}+({\rm NonLin})_{2}+({\rm NonLin})_{3},
    \end{align*}
    we may bound $({\rm NonLin})_{1}$, $({\rm NonLin})_{2}$, and $({\rm NonLin})_{3}$ as $I, II$, and $III_{1}$ from Step 1, respectively, and obtain
    \begin{equation}\label{eq:bound-NonLin-energy-estimate}
        ({\rm NonLin})\lesssim_{d,s,\gamma}\lVert \nabla^{2}D^{-2s}\sigma_{t}\rVert_{L^{\infty}}\lVert \sigma_{t}\rVert_{\dot H^{\gamma}}^{2}.
    \end{equation}
    Gathering \eqref{eq:bound-Lin-Main-eenergy-estimate}, \eqref{eq:bound-Lin-err-eenergy-estimate}, and \eqref{eq:bound-NonLin-energy-estimate}, we get \eqref{eq:energy-estimate}.
 \end{proof}
 \begin{prop}\label{prop:propagation-sobolev}
     Let $s\ge 1$, $\gamma>d/2$, $\bar\mu, \nu\in \mathscr{P}\cap H^{\gamma}(\T^{d})$, and let $\mu\in L^{\infty}_{\loc}([0,T);\mathscr{X}_{s}(\T^{d}))$ be the unique maximal solution of \eqref{eq:PDE-GF} given by \Cref{prop:maximal-solutions}. Then, $\mu\in L^{\infty}_{\loc}([0,T);H^{\gamma}(\T^{d}))$.
 \end{prop}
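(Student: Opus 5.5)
We argue by smooth approximation combined with the a priori estimate \eqref{eq:energy-estimate-propagation} of \Cref{lem:energy-estimates}, and then pass to the limit using the stability of \Cref{prop:stability-yudovich}. Since $\gamma>d/2$, we have $H^{\gamma}\hookrightarrow C^{0,\beta}\cap L^\infty$ for some $\beta>0$, so $\bar\mu,\nu\in\mathscr{X}_s$ and the maximal solution is well defined. Fix $\tau\in(0,T)$ and set
\[
M:=\sup_{[0,\tau]}\lVert\mu_t\rVert_{\mathscr{X}_s}+\lVert\nu\rVert_{\mathscr{X}_s}<\infty.
\]
Take mollifications $\bar\mu^\eps=\bar\mu*\rho_\eps$ and $\nu^\eps=\nu*\rho_\eps$. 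These are smooth probability densities with $\lVert\cdot\rVert_{H^\gamma}$ and $\lVert\cdot\rVert_{\mathscr{X}_s}$ bounded uniformly in $\eps$, and they converge in $H^\gamma$ and in $W_2$. Let $\mu^\eps$ be the corresponding smooth maximal solutions on $[0,T^\eps)$, given by \Cref{rmk:regularity-in-time}.

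The key step is a bound on $\lVert\nabla^2(-\Delta)^{-s}(\mu^\eps_t-\nu^\eps)\rVert_{L^\infty}$ that lets us close a Gr\"onwall argument uniformly in $\eps$. We use the logarithmic interpolation of \Cref{lem:log-interp-BKM-criterion}, as in Step~2 of \Cref{prop:propagation-regularity}. For $s\in[1,d/2+1]$ it gives
\[
\lVert\nabla^2 K_s*\sigma\rVert_{L^\infty}\lesssim \lVert\sigma\rVert_{L^p}\log\Bigl(2+\frac{\lVert\sigma\rVert_{C^{0,\beta}}}{\lVert\sigma\rVert_{L^p}}\Bigr)\lesssim_M 1+\log\bigl(2+\lVert\mu_t\rVert_{H^\gamma}+\lVert\nu\rVert_{H^\gamma}\bigr),
\]
using $H^\gamma\hookrightarrow C^{0,\beta}$. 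For $s>d/2+1$ the bound is simply $\lesssim 1$ by \Cref{lem:modulus-continuity-vector-field}. Write $y_\eps(t):=\lVert\mu^\eps_t\rVert_{\dot H^\gamma}^2+\lVert\nu^\eps\rVert_{\dot H^\gamma}^2+2$. Then \eqref{eq:energy-estimate-propagation} yields
\[
y_\eps'\le C(1+\log y_\eps)\,y_\eps,
\]
as long as the $\mathscr{X}_s$-norm of $\mu^\eps$ stays below $2M$. Integrating gives a double-exponential bound on $y_\eps$ that is uniform in $\eps$ on $[0,\tau]$.

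The main obstacle is that the constant in this bound depends on the $\mathscr{X}_s$-norm of $\mu^\eps$, which we control only for the limit $\mu$, not a priori for the approximations. We handle this with a bootstrap/continuation argument. By \Cref{prop:stability-yudovich} (applied with $M$ replaced by a bound valid on a short interval), $\mu^\eps_t\to\mu_t$ in $W_2$ uniformly on compact intervals where both solutions stay bounded in $\mathscr{X}_s$. Alternatively, the $H^\gamma$ bound above itself implies an $L^\infty$, hence $\mathscr{X}_s$, bound. So we define $\tau_\eps$ as the largest time up to $\tau$ on which $y_\eps\le Y$, where $Y$ is the double-exponential bound computed with $\mathscr{X}_s$-norm controlled by $\sup y$. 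The differential inequality $y'\le C(1+\log y)y$ with $C$ depending only on $\lVert\nu\rVert_{H^\gamma}$ and the $L^\infty$ bound derived from $y$ itself is still Osgood type, $y'\lesssim y\log y\cdot\log y$ at worst. To avoid this worse growth, it is cleaner to first transfer $L^\infty$ control: \Cref{rmk:propagation-regularity} gives that $L^r$ bounds propagate, and stability gives $\lVert\mu^\eps_t\rVert_{\mathscr{X}_s}\le 2M$ for $\eps$ small on $[0,\tau]$. I would try the $L^\infty$ route, using the characteristic equation \eqref{eq:evolution-characteristics-propagation-regularity} with bounds uniform in $\eps$, as the first attempt.

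Finally, the uniform bound $\sup_{[0,\tau]}\lVert\mu^\eps_t\rVert_{H^\gamma}\le C$ together with $\mu^\eps_t\rightharpoonup\mu_t$ (from stability) and the lower semicontinuity of the $H^\gamma$ norm under weak convergence gives $\sup_{[0,\tau]}\lVert\mu_t\rVert_{H^\gamma}\le C$. Since $\tau<T$ is arbitrary, this proves the claim.
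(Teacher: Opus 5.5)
Your overall plan (mollify, use \eqref{eq:energy-estimate-propagation} with the logarithmic bound of \Cref{lem:log-interp-BKM-criterion}, then pass to the limit by stability and weak lower semicontinuity) has a genuine gap, exactly at the point you flag. You never get a bound on $\lVert\mu^\eps_t\rVert_{\mathscr{X}_s}$ that is uniform in $\eps$ on all of $[0,\tau]$. Without it, the constant in $y_\eps'\le C(1+\log y_\eps)y_\eps$ is not uniform, and none of your suggested fixes supplies the bound:
\begin{itemize}
\item \Cref{prop:stability-yudovich} presupposes an $\mathscr{X}_s$ bound on both solutions, so it cannot produce one. Moreover, $W_2$-closeness of $\mu^\eps_t$ to $\mu_t$ never bounds $\lVert\mu^\eps_t\rVert_{L^p}$ from above, since norms are only lower semicontinuous under weak convergence.
\item \Cref{rmk:propagation-regularity} concerns a single solution, with constants depending on that solution's own $\mathscr{X}_s$ bound, so it is not uniform in $\eps$.
\item The characteristic estimate via \eqref{eq:evolution-characteristics-propagation-regularity} is circular in the same way: controlling $\lVert\nabla^2K_s*\sigma^\eps\rVert_{L^\infty}$ already requires $\lVert\sigma^\eps\rVert_{L^p}$.
\item Bounding the $\mathscr{X}_s$ norm through $y_\eps$ itself only reproduces the superlinear inequality $y_\eps'\lesssim y_\eps^{3/2}$, which gives control on a short interval only. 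Even your version $y'\lesssim y\log^2y$ is not of Osgood type, because $\int^\infty dy/(y\log^2 y)<\infty$.
\end{itemize}
For $s=1$ the maximum principle of \Cref{prop:max-princ-Coulomb} would give the uniform bound, and for $s\ge d/2+1$ conservation of mass does. The gap is real in the range $s\in(1,d/2+1)$.

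The missing idea is to use the approximations only for a short time, and then argue on the limit solution $\mu$, whose $\mathscr{X}_s$ norm \emph{is} bounded on $[0,\tau']$ for every $\tau'<T$. This goes in three steps.
\begin{itemize}
\item The crude bound $\lVert\nabla^2(-\Delta)^{-s}\sigma\rVert_{L^\infty}\lesssim\lVert\sigma\rVert_{\dot H^\gamma}$ turns \eqref{eq:energy-estimate-propagation} into $y'\le Cy^{3/2}$ with $C=C(d,s,\gamma)$. This gives existence and $H^\gamma$ bounds for the approximations, uniformly in $n$, on an interval whose length depends only on $\lVert\bar\mu\rVert_{\dot H^\gamma}$ and $\lVert\nu\rVert_{H^\gamma}$. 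These $H^\gamma$ bounds also give the $\mathscr{X}_s$ bounds that stability needs on that interval.
\item On that interval, pass to the limit in the integral form of \eqref{eq:energy-estimate-propagation}, using strong convergence in $\dot H^{\gamma'}$ for $d/2<\gamma'<\gamma$. This yields \eqref{eq:simplified-energy-estimate-integral-form} for $\mu$ itself.
\item Run a continuation argument on $\mu$. While $\lVert\mu_t\rVert_{H^\gamma}$ stays bounded the short-time step can be restarted, so if the maximal time $\tau'$ of validity were $<T$, the $H^\gamma$ norm would have to blow up at $\tau'$. But $\lVert\mu_t\rVert_{\mathscr{X}_s}\le K$ on $[0,\tau')$ together with \Cref{lem:log-interp-BKM-criterion} gives $\lVert\nabla^2(-\Delta)^{-s}\sigma_t\rVert_{L^\infty}\lesssim 1+\log(2+\lVert\sigma_t\rVert_{\dot H^\gamma})$. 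Inserting this into \eqref{eq:simplified-energy-estimate-integral-form} and applying Gr\"onwall rules out the blow-up.
\end{itemize}
Your log-Gr\"onwall computation is the right tool, but it has to be applied to $\mu$, not to the approximations $\mu^\eps$.
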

 \begin{proof}
 We proceed by approximation with smooth solutions, using the a priori energy estimate \eqref{eq:energy-estimate-propagation} from \Cref{lem:energy-estimates}.
     Let $\bar\mu^{n}, \nu^{n}\in \mathscr{P}\cap C^{\infty}(\T^{d})$ be such that 
    \begin{equation}\label{eq:convergence-initial-and-target-sobolev}
        \lVert \bar\mu^{n}-\bar\mu\rVert_{H^{\gamma}}\rightarrow 0,\quad \lVert \nu^{n}-\nu\rVert_{H^{\gamma}}\rightarrow 0\qquad \text{as $n\to \infty$},
    \end{equation}
    and let $\mu^{n}$ be the maximal solutions of \eqref{eq:PDE-GF} with initial and target measures $\bar\mu^{n}$ and $\nu^{n}$, respectively, and maximal time of existence $T^{n}$. We divide the proof into three steps. 

    \smallskip
    \noindent \textbf{Step 1:} In this step we derive a uniform bound from below for $T^{n}$, and uniform Sobolev bounds for $\mu^{n}_{t}$.
    By the continuity of $\nabla^{2}(-\Delta)^{-s}:\dot H^{\gamma}\to C^{0}$, from \eqref{eq:energy-estimate-propagation} we deduce  
    \begin{equation}\label{eq:simplified-energy-estimate-differential-form}
    \begin{aligned}
        \frac{d}{dt}\left(\lVert \mu_{t}^{n}\rVert_{\dot H^{\gamma}}^{2}+\lVert \nu^{n}\rVert_{H^{\gamma}}^{2}\right)&\le C\left(\lVert \mu_{t}^{n}\rVert_{\dot H^{\gamma}}^{2}+\lVert \nu^{n}\rVert_{H^{\gamma}}^{2}\right)^{\frac{3}{2}}
    \end{aligned}\qquad \forall t\in (0,T^{n}).
    \end{equation}
    Integrating this differential inequality, and taking into account the continuation criterion from \Cref{prop:maximal-solutions}, we deduce the following bounds:
    \begin{gather}
        T^{n}\ge T_{\min}^{n}:= C^{-1}\left(\lVert \bar\mu^{n}\rVert_{\dot H^{\gamma}}^{2}+\lVert \nu^{n}\rVert_{H^{\gamma}}^{2}\right)^{-1/2},\label{eq:minimal-time-apriori}\\ 
        \lVert \mu^{n}_{t}\rVert_{\dot H^{\gamma}}\le \frac{\left(\lVert \bar\mu^{n}\rVert_{\dot H^{\gamma}}^{2}+\lVert \nu^{n}\rVert_{H^{\gamma}}^{2}\right)^{1/2}}{1-C\left(\lVert \bar\mu^{n}\rVert_{\dot H^{\gamma}}^{2}+\lVert \nu^{n}\rVert_{H^{\gamma}}^{2}\right)^{1/2}t}\qquad \forall t\in [0,T_{\min}^{n}).\label{eq:uniform-bound-Hgamma-apriori}
    \end{gather}

    \smallskip
    \noindent \textbf{Step 2:} Next, we prove that $\mu \in L^{\infty}([0,\tau);H^{\gamma}(\T^{d}))$ for some small $\tau\in (0,T)$. 
    By \eqref{eq:convergence-initial-and-target-sobolev} and \eqref{eq:minimal-time-apriori} we get 
    \begin{equation*}
        \liminf_{n\to \infty} T^{n}\ge \liminf_{n\to \infty}T^{n}_{\min}=C^{-1}\left(\lVert \bar\mu\rVert_{\dot H^{\gamma}}^{2}+\lVert \nu\rVert_{H^{\gamma}}^{2}\right)^{-1/2}=: T_{\min}>0.
    \end{equation*}
    Defining $\tau:= (T_{\min}/2)\wedge T$, the stability result from \Cref{prop:stability-yudovich} ensures the weak-$*$ convergence of measures $\mu_{t}^{n}\stackrel{*}{\rightharpoonup}\mu_{t}$ as $n\to \infty$ for all $t\in [0,\tau)$.
   Therefore, since by \eqref{eq:uniform-bound-Hgamma-apriori} the approximating solutions $\mu^{n}$ are equi-bounded in $L^{\infty}\left([0,\tau); H^{\gamma}(\T^{d})\right)$, we get $\mu\in L^{\infty}([0,\tau);H^{\gamma}(\T^{d}))$ and 
    \begin{equation}\label{eq:convergence-approximation-propagation-Sobolev}
        \mu^{n}_{t} \rightharpoonup \mu_{t}\quad \text{in $H^{\gamma}(\T^{d})$},\qquad \mu^{n}_{t}-\nu^{n}\to \mu_{t}-\nu \quad \text{in $\dot H^{\gamma'}(\T^{d})$\quad $\forall \gamma'<\gamma$}\qquad \forall t\in [0,\tau).
    \end{equation}
    By \eqref{eq:convergence-approximation-propagation-Sobolev} and the continuity of $\nabla^{2}(-\Delta)^{-s}:\dot H^{\gamma'}\to C^{0}$, for $\gamma' \in \left(\frac{d}{2},\gamma\right)$, we may pass to the limit in the integral version of \eqref{eq:energy-estimate-propagation} and get, for all $t\in [0,\tau)$,
    \begin{equation}\label{eq:simplified-energy-estimate-integral-form}
        \left(\lVert \mu_{t}\rVert_{\dot H^{\gamma}}^{2}+\lVert \nu\rVert_{H^{\gamma}}^{2}\right)^{1/2}\le \left(\lVert \bar\mu\rVert_{\dot H^{\gamma}}^{2}+\lVert \nu\rVert_{H^{\gamma}}^{2}\right)^{1/2}\exp \left(C\lVert \nu\rVert_{ H^{\gamma}}t+\int_{0}^{t}C\lVert \nabla^{2}(-\Delta)^{-s}(\mu_{r}-\nu)\rVert_{L^{\infty}}dr\right).
    \end{equation}

\smallskip
\noindent \textbf{Step 3:} Let $\tau'\in [\tau,T]$ be the largest time for which $\mu \in L^{\infty}_{\loc}([0,\tau');H^{\gamma}(\T^{d}))$ and \eqref{eq:simplified-energy-estimate-integral-form} holds in $[0,\tau')$. In this final step we show that $\tau'=T$, thus concluding the proof. 
    
    We may assume that $\tau'<\infty$, otherwise there is nothing to prove. We must necessarily have 
    \begin{equation}\label{eq:Hgamma-must-explode-maximal-time}
        \limsup_{t\to (\tau')^{-}}\,\lVert \mu_{t}\rVert_{H^{\gamma}}=\infty.
    \end{equation}
    In fact, if we had $\mu \in L^{\infty}([0,\tau');H^{\gamma})$, arguing as in the proof of \Cref{prop:maximal-solutions}, we would find that $\mu_{t}$ has a weak limit in $H^{\gamma}$ as $t\to (\tau')^{-}$, and repeating Step 2 starting from time $\tau'$, we could continue the solution in $H^{\gamma}$ and get \eqref{eq:simplified-energy-estimate-integral-form} past the maximal time $\tau'$.
    
    Suppose by contradiction that $\tau'<T$. By the definition of $T$, there exists some constant $K>0$ such that 
    \begin{equation}\label{eq:unif-bound-Xs-propagation-sobolev-regularity}
        \lVert \mu_{t}\rVert_{\mathscr{X}_{s}}\le K\qquad \forall t\in [0,\tau').
    \end{equation}
    As a consequence of \Cref{lem:log-interp-BKM-criterion} and \eqref{eq:unif-bound-Xs-propagation-sobolev-regularity}, we get
    \begin{equation*}
        \lVert \nabla^{2}(-\Delta)^{-s}(\mu_{t}-\nu)\rVert_{L^{\infty}}\lesssim_{d,s,\gamma,K} 1+\log\left(2+\lVert \mu_{t}-\nu\rVert_{\dot H^{\gamma}}\right). 
    \end{equation*}
    This, combined with \eqref{eq:simplified-energy-estimate-integral-form} gives
    \begin{equation*}
        \lVert \nabla^{2}(-\Delta)^{-s}(\mu_{t}-\nu)\rVert_{L^{\infty}}\lesssim_{d,s,\gamma,K,\lVert \bar\mu\rVert_{H^{\gamma}}, \lVert \nu\rVert_{H^{\gamma}}, \tau'} 1+ \int_{0}^{t} \lVert \nabla^{2}(-\Delta)^{-s}(\mu_{r}-\nu)\rVert_{L^{\infty}}dr\qquad \forall t\in [0,\tau').
    \end{equation*}
    Thus, by Gr\"onwall's inequality, $\lVert \nabla^{2}(-\Delta)^{-s}(\mu_{t}-\nu)\rVert_{L^{\infty}}$ is uniformly bounded in $[0,\tau')$, and in particular $\mu \in L^{\infty}([0,\tau');H^{\gamma})$ thanks to \eqref{eq:simplified-energy-estimate-integral-form}. This contradicts \eqref{eq:Hgamma-must-explode-maximal-time} and concludes the proof.
 \end{proof}

\begin{proof}[Proof of \Cref{prop:local-well-posedness-intro}]
    Existence and uniqueness of a maximal solution in $\mathscr{X}_{s}(\T^{d})$ according to \Cref{def:maximal-solutions} is proved in \Cref{prop:maximal-solutions}, along with the blow-up criterion. Propagation of H\"{o}lder and Sobolev regularity from the data to all times in the maximal existence interval is obtained in \Cref{prop:propagation-regularity} and \Cref{prop:propagation-sobolev}, respectively. 
\end{proof}

\section{Quantitative convergence results}\label{sec:convergence}
In this section, we prove our quantitative convergence results for Riesz kernel mean discrepancies. \Cref{thm:convergence-s=1} (the Coulomb case $s=1$) is proved in \Cref{subsec:convergence-s=1}, while \Cref{thm:convergence-s>1} (the case $s>1$) is proved in \Cref{subsec:convergence-s>1}.

\subsection{The case $s=1$}\label{subsec:convergence-s=1}
We begin with the case $s=1$, which corresponds to the Coulomb interaction energy. In \Cref{subsec:max-principle-coulomb} we first establish a maximum principle, a strong structural feature of Coulomb's dynamics that fails for $s>1$. The maximum principle, the dissipation identity \eqref{eq:energy-dissipation-identity}, and the higher order energy estimate \eqref{eq:energy-estimate} are the main ingredients in the proof of \Cref{thm:convergence-s=1}, which is presented in \Cref{subsec:proof-convergence-s=1}. Finally, in \Cref{subsubsec:relaxation-lowerbound-initial}, we show that the lower bound on the initial measure is not necessary to obtain some instance of exponential convergence to the target (see \Cref{rmk:exponential-filling}). 

\subsubsection{The maximum principle}\label{subsec:max-principle-coulomb}

\begin{prop}\label{prop:max-princ-Coulomb}
       Let $s=1$, $\bar\mu, \nu \in \mathscr{P}\cap L^{\infty}(\T^{d})$, and $\mu\in L^{\infty}_{\loc}([0,T);L^{\infty}(\T^{d}))$ be the maximal solution of \eqref{eq:PDE-GF} given by \Cref{prop:maximal-solutions}. Then, $T=\infty$ and 
    \begin{equation}\label{eq:maximum-principle-s=1}
            \min\{\inf \bar{\mu}, \inf \nu\}\le \inf\mu_{t}\le \sup \mu_{t}\le \max \{\sup \bar{\mu}, \sup \nu\}\qquad \forall t\in [0,\infty).
    \end{equation}
\end{prop}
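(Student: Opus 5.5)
We first prove \eqref{eq:maximum-principle-s=1} along characteristics for smooth data, then pass to general bounded data by approximation, and finally deduce $T=\infty$ from the continuation criterion of \Cref{prop:maximal-solutions}.

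\textbf{Step 1 (smooth data).} Assume first $\bar\mu,\nu\in\mathscr{P}\cap C^{\infty}(\T^{d})$. By \Cref{rmk:regularity-in-time} the solution, velocity and flow are smooth on $[0,T)\times\T^{d}$. For $s=1$ we have $-\diver v_{t}=\mu_{t}-\nu$. Hence $\tilde\mu_{t}:=\mu_{t}\circ X_{t}$ satisfies, by \eqref{eq:evolution-characteristics-propagation-regularity},
\begin{equation*}
    \frac{d}{dt}\tilde\mu_{t}(x)=-\tilde\mu_{t}(x)\bigl(\tilde\mu_{t}(x)-\nu(X_{t}(x))\bigr).
\end{equation*}
Set $m:=\min\{\inf\bar\mu,\inf\nu\}\ge 0$ and $M:=\max\{\sup\bar\mu,\sup\nu\}$. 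Along each characteristic this is a scalar Riccati-type ODE $y'=-y(y-g(t))$ with $g(t)\in[m,M]$. At any time where $y>M$ we get $y'<0$, and where $0\le y<m$ we get $y'\ge 0$. Since $y(0)\in[m,M]$, a standard comparison (barrier) argument keeps $y(t)\in[m,M]$ for all $t$. As $X_{t}$ is a diffeomorphism, this gives \eqref{eq:maximum-principle-s=1}. In particular $\lVert\mu_{t}\rVert_{L^{\infty}}\le M$ on $[0,T)$, so $T=\infty$ by \Cref{prop:maximal-solutions} (here $p=\infty$).

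\textbf{Step 2 (approximation).} For general $\bar\mu,\nu\in\mathscr{P}\cap L^{\infty}$, mollify on the torus to get $\bar\mu^{n}=\bar\mu*\rho_{n}$ and $\nu^{n}=\nu*\rho_{n}$. These are smooth probability densities, they converge in $W_{2}$, and they satisfy $\inf\bar\mu^{n}\ge\inf\bar\mu$, $\sup\bar\mu^{n}\le\sup\bar\mu$, and likewise for $\nu$. By Step 1, the smooth solutions $\mu^{n}$ are global and satisfy the bounds with the same $m,M$, uniformly in $n$ and $t$.

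Fix $\tau<T$. On $[0,\tau)$ both $\mu$ and $\mu^{n}$ are bounded in $L^{\infty}$ by a constant independent of $n$. \Cref{prop:stability-yudovich} (with its critical log-Lipschitz modulus, \Cref{rmk:rate-stability}) therefore yields $W_{2}(\mu^{n}_{t},\mu_{t})\to0$, hence $\mu^{n}_{t}\stackrel{*}{\rightharpoonup}\mu_{t}$. The two-sided bounds $m\le\mu^{n}_{t}\le M$ are preserved under weak-$*$ limits. So \eqref{eq:maximum-principle-s=1} holds on $[0,T)$, and the continuation criterion again gives $T=\infty$.

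\textbf{Main obstacle.} The delicate point is the approximation step. The stability estimate requires a common bound $M$ on $\sup_{t}\lVert\mu_{t}\rVert_{L^{\infty}}$ for both solutions on the interval considered. For $\mu$ this is only available locally on $[0,\tau]$ with $\tau<T$. We therefore argue on each such interval, where the bound for $\mu$ comes from $\mu\in L^{\infty}_{\loc}$ and the bound for $\mu^{n}$ comes from Step 1. The resulting $L^{\infty}$ bound is independent of $\tau$, which then rules out blow-up.

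A secondary check is the comparison argument in Step 1 when $m=0$. In that case we only need $y\ge0$, which follows from the multiplicative structure of the ODE: $y(t)=y(0)\exp(\cdots)$.
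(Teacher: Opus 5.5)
Your proposal is correct and follows the paper's proof. Both reduce to smooth data via mollification and the stability estimate of \Cref{prop:stability-yudovich}, both run the Riccati-type ODE $\frac{d}{dt}\tilde\mu_t=-\tilde\mu_t(\tilde\mu_t-\nu\circ X_t)$ along characteristics with a last-crossing barrier argument, and both conclude $T=\infty$ from the $L^\infty$ continuation criterion. Your extra care about applying stability on $[0,\tau]$ with $\tau<T$, and about the bounds surviving weak-$*$ limits, only makes explicit what the paper leaves implicit.
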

\begin{proof}
   By \Cref{prop:local-well-posedness-intro}, $T$ is finite if and only if $\limsup_{t\to T^{-}}\lVert \mu_{t}\rVert_{L^{\infty}}=\infty$. Therefore it suffices to prove the maximum principle \eqref{eq:maximum-principle-s=1} for every time in the interval of existence.  
   By a mollification argument on the initial and target measures, and the stability result from \Cref{prop:stability-yudovich}, we can restrict ourselves to prove the inequalities for $\bar\mu, \nu \in C^{\infty}$. In fact, the essential upper and lower bounds are preserved in the limit by weak convergence. In this case, $\mu$ along with the associated vector field $v$ and flow map $X$ are smooth in space-time by  \Cref{rmk:regularity-in-time}. We use the method of characteristics: it suffices to prove the upper and lower bounds for $\tilde{\mu}_{t}:=\mu_{t}\circ X_{t}$. By $\partial_{t}\mu_{t}+\diver(\mu_{t}v_{t})=0$, and $v_{t}=-\nabla (-\Delta)^{-1}(\mu_{t}-\nu)$, we deduce that $\tilde{\mu}$ solves
    \begin{equation}\label{eqn:maxpri}
        \frac{d}{dt}\tilde{\mu}_{t}(x)=-\left(\mu_{t}\diver v_{t}\right)\circ X_{t}(x)=-\tilde{\mu}_{t}(x)(\tilde{\mu}_{t}(x)-\nu\circ X_{t}(x))\qquad \forall x\in \T^{d},\, \forall t\in (0,T).
    \end{equation}
   We only prove the upper bound in \eqref{eq:maximum-principle-s=1}, the lower one being similar. Suppose by contradiction that there are $x_{0}\in \T^{d}$ and $t_{0}\in (0,T)$ such that
   $\tilde{\mu}_{t_{0}}(x_{0})> \max\{\max \bar\mu, \max \nu\}.$
 Let $t_{1}$ be the largest $t\in [0,t_{0})$ for which $\tilde{\mu}_{t}(x_{0})= \max\{\max \bar\mu, \max \nu\}$. Then, $\tilde{\mu}_{t}(x_{0})>\max \nu$ for every $t\in (t_{1},t_{0})$ and by \eqref{eqn:maxpri} is a monotonically decreasing function in $[t_1,t_0)$, a contradiction. 
\end{proof}

\subsubsection{Proof of \Cref{thm:convergence-s=1}}\label{subsec:proof-convergence-s=1}
 
\begin{proof}[Proof of \Cref{thm:convergence-s=1}]
In \Cref{prop:max-princ-Coulomb} we already proved that the solution $\mu$ is global in time and that the maximum principle \eqref{eq:max-princ-s=1-intro} holds. In particular, the solution is uniformly bounded:
    $$\lVert \mu_{t}\rVert_{L^{\infty}}\le \max\{\sup \bar\mu, \sup \nu\}=:M\qquad \forall t\in [0,\infty).$$ 
    We divide the proof of the remaining statements into four steps.

    \smallskip
    \noindent \textbf{Step 1:} In this step we prove that $\lVert\mu_{t}-\nu\rVert_{\dot H^{-1}}\to 0$ as $t\to \infty$. Together with the uniform $L^{\infty}$-bound this implies the weak-$*$ convergence to the target in $L^{\infty}$ and completes the proof of point i). Let us call $$\mathscr{P}_{M}(\T^{d}):= \{\eta \in \mathscr{P}\cap L^{\infty}(\T^{d}):\lVert \eta\rVert_{L^{\infty}}\le M\}.$$
    We first claim that 
    \begin{equation}\label{eq:claim-unconditioned-convergence-s=1}
        \forall \eps>0\quad \exists \delta>0:\qquad \lVert \eta-\nu\rVert_{\dot H^{-1}}\ge \eps \implies \int_{\T^{d}}|\nabla K_{1}*(\eta-\nu)|^{2}\eta\ge \delta\qquad \forall \eta\in \mathscr{P}_{M}(\T^{d}).
    \end{equation}
    Suppose by contradiction that there exist $\eps>0$ and $\{\eta_{j}\}_{j\ge 0}\subset \mathscr{P}_{M}(\T^{d})$ such that $$\lVert \eta_{j}-\nu\rVert_{\dot H^{-1}}\ge \eps\quad \forall j\ge 0\qquad \text{and} \qquad \int_{\T^{d}}|\nabla K_{1}*(\eta_{j}-\nu)|^{2}\eta_{j}\to 0\quad \text{as $j\to \infty$}.$$ Up to extracting a subsequence, we have $W_{2}(\eta_{j},\eta_{\infty})\to 0$ as $j\to \infty$ for some $\eta_{\infty}\in \mathscr{P}_{M}(\T^{d})$. In particular, thanks to \Cref{lem:W2-dual} and the uniform bound $\lVert \eta_{j}\rVert_{L^{\infty}}\le M$, we get $\lVert\eta_{j}-\eta_{\infty}\rVert_{\dot H^{-1}}\to 0$. We now prove that
    \begin{equation}\label{eq:zero-energy-dissipation-unconditioned-convergence-s=1}
        \int_{\T^{d}}|\nabla K_{1}*(\eta_{\infty}-\nu)|^{2}\eta_{\infty}=0.
    \end{equation}
    Using $(a+b)^{2}\le 2a^{2}+2b^{2}$ we can bound
\begin{align*}
    \int_{\T^{d}}|\nabla K_{1}*(\eta_{\infty}-\nu)|^{2}\eta_{\infty}&\le 
    \underbrace{\int_{\T^{d}}|\nabla K_{1}*(\eta_{\infty}-\nu)|^{2}(\eta_{\infty}-\eta_{j})}_{I^{1}_{j}}\\
    &\quad\,+\underbrace{2\int_{\T^{d}}|\nabla K_{1}*(\eta_{\infty}-\eta_{j})|^{2}\eta_{j}}_{I^{2}_{j}}+\underbrace{2\int_{\T^{d}}|\nabla K_{1}*(\eta_{j}-\nu)|^{2}\eta_{j}}_{I^{3}_{j}}.
\end{align*}
We have $I^{1}_{j}\to 0$ because $|\nabla K_{1}*(\eta_{\infty}-\nu)|^{2}\in L^{\infty}$ by \Cref{lem:W2-Linfty} and $\eta_{j}\rightharpoonup \eta_{\infty}$ in $L^{2}$. On the other hand, $I^{2}_{j}\le 2M\lVert \eta_{j}-\eta_{\infty}\rVert_{\dot H^{-1}}^{2}\to 0$ and $I^{3}_{j}\to 0$ as assumed. Therefore, we have \eqref{eq:zero-energy-dissipation-unconditioned-convergence-s=1}, and in particular, $\nabla K_{1}*(\eta_{\infty}-\nu)(x)=0$ for $\eta_{\infty}$-a.e.~$x\in \T^{d}$. Hence, by Sobolev regularity, 
    \begin{equation*}
        \nu(x)-\eta_{\infty}(x)=\diver \nabla K_{1}*(\eta_{\infty}-\nu)(x)=0\qquad \text{for $\eta_{\infty}$-a.e.~$x\in \T^{d}$},
    \end{equation*}
    which in turn implies that $\eta_{\infty}$ and $\nu$ actually coincide, since they have the same total mass and $\eta_\infty \ll \mathscr{L}^{d}$. This is a contradiction because $\lVert \eta_{\infty}-\nu\rVert_{\dot H^{-1}}=\lim_{j\to \infty}\lVert \eta_{j}-\nu\rVert_{\dot H^{-1}}\ge \eps>0$, and claim \eqref{eq:claim-unconditioned-convergence-s=1} is proved. 

    Now observe that $\mu_{t}\in \mathscr{P}_{M}(\T^{d})$ and $\lVert \mu_{t}-\nu\rVert_{\dot H^{-1}}$ is decreasing in time by \Cref{prop:gradient-flow-structure}. If we had
    $$\eps:= \lim_{t\to \infty}\lVert \mu_{t}-\nu\rVert_{\dot H^{-1}}^{2}>0,$$
    by the energy dissipation identity \eqref{eq:energy-dissipation-identity} we would get
    \begin{equation*}
        \frac{d}{dt}\lVert \mu_{t}-\nu\rVert_{\dot H^{-1}}^{2}=-2\int_{\T^{d}}|\nabla K_{1}(\mu_{t}-\nu)|^{2}\mu_{t}\le -2\delta\qquad \text{for a.e.~$t\in (0,\infty)$},
    \end{equation*}
    where $\delta>0$ is given by claim \eqref{eq:claim-unconditioned-convergence-s=1}. This is the desired contradiction. 

    \smallskip
    \noindent \textbf{Step 2:} From now on we assume that $\bar\mu, \nu\ge \alpha>0$. In this step we prove point i). By the maximum principle from \eqref{eq:max-princ-s=1-intro}, we know that 
    $$\inf \mu_{t}\ge \alpha\qquad \forall t\in [0,\infty).$$
    In particular, the first inequality in \eqref{eq:exponential-decay-H^-1-coulomb} follows from the following general fact, which can be derived from the Benamou-Brenier formula (see for instance \cite[Exercise A.16, p. 137]{figalli2021invitation}):
    $$W_{2}(\eta_{1},\eta_{2})\le \alpha^{-1/2}\lVert \eta_{1}-\eta_{2}\rVert_{\dot H^{-1}}\qquad \forall \eta_{1},\eta_{2}\in \mathscr{P}(\T^{d}), \quad 0<\alpha\le \eta_{1},\eta_{2}\ll \mathscr{L}^{d}.$$
    
    Moreover, using the energy dissipation identity \eqref{eq:energy-dissipation-identity} we obtain
    \begin{equation*}
        \frac{d}{dt}\lVert \mu_{t}-\nu\rVert_{\dot H^{-1}}^{2}=-2\int_{\T^{d}}|\nabla K_{1}*(\mu_{t}-\nu)|^{2}\mu_{t}\le -2\alpha \lVert \mu_{t}-\nu\rVert_{\dot H^{-1}}^{2}\qquad \text{$\forall t\in (0,\infty)$}.
    \end{equation*}
    Integrating this differential inequality, we get precisely the exponential decay in equation \eqref{eq:exponential-decay-H^-1-coulomb}. 

    \smallskip
    \noindent \textbf{Step 3:} Next we prove point ii). By an approximation argument based on \Cref{prop:stability-yudovich}, similar to the one used, for instance, in the proof of \Cref{prop:max-princ-Coulomb}, it is sufficient to work in the case $\bar\mu,\nu \in C^{\infty}$, in which $\mu\in C^{\infty}([0,\infty)\times \T^{d})$ (see \Cref{rmk:regularity-in-time}). Let $v_{t}=-\nabla K_{1}*(\mu_{t}-\nu)$ be the vector field generated by the solution $\mu$. Using \Cref{lem:W2-Linfty} and \eqref{eq:exponential-decay-H^-1-coulomb} we find the following uniform bound on $v_{t}$:
    \begin{equation}\label{eq:bound-Linfty-vectorfield-s=1}
        \lVert v_{t}\rVert_{L^{\infty}}\le C_{d}M^{\frac{d}{d+1}}W_{2}(\mu_{t},\nu)^{\frac{1}{d+1}}\le C\left(d,\alpha,M, \lVert \bar\mu-\nu\rVert_{\dot H^{-1}}\right)e^{-\frac{\alpha}{d+1}t}\qquad \forall t\in [0,\infty).
    \end{equation}
    Let $X:[0,\infty)\times \T^{d}\to \T^{d}$ be the flow map associated to the vector field $v$. Since $\lVert X_{t}-X_{r}\rVert_{L^{\infty}}\le \int_{r}^{t}\lVert v_{u}\rVert_{L^{\infty}}du$, we deduce from \eqref{eq:bound-Linfty-vectorfield-s=1} that $(X_{t})_{t\ge 0}$ is Cauchy in uniform norm as $t\to \infty$, thus $X_{t}$ converges uniformly as $t\to \infty$ towards some continuous map $X_{\infty}:\T^{d}\to \T^{d}$. More precisely,
    \begin{equation}\label{eq:uniform-convergence-flow-map-s=1}
        \lVert X_{t}-X_{\infty}\rVert_{L^{\infty}}\le \int_{t}^{\infty}\lVert v_{r}\rVert_{L^{\infty}}dr \le C\left(d,\alpha,M, \lVert \bar\mu-\nu\rVert_{\dot H^{-1}}\right)e^{-\frac{\alpha}{d+1}t}\qquad \forall t\in [0,\infty).
    \end{equation}
    We first assume $\nu\in C^{0,\beta}$ for some $\beta\in (0,1)$ and prove that $\mu_{t}$ converges to $\nu$ in uniform norm exponentially fast in time. Indeed, having control over the $C^{0,\beta}$-norm of $\nu$, from \eqref{eq:uniform-convergence-flow-map-s=1} we  deduce 
    \begin{equation}\label{eq:uniform-detachment-target-flow-map-s=1}
        \lVert \nu\circ X_{t}-\nu\circ X_{\infty}\rVert_{L^{\infty}}\le \lVert\nu\rVert_{C^{0,\beta}}\lVert X_{t}-X_{\infty}\rVert_{L^{\infty}}^{\beta}\le   C\left(d,\alpha,\beta, M,\lVert \bar\mu-\nu\rVert_{\dot H^{-1}},\lVert\nu\rVert_{C^{0,\beta}}\right)e^{-\kappa t},
    \end{equation}
   where $\kappa=\alpha\beta/(d+1)$ . We now show that $\tilde{\mu}_{t}:= \mu_{t}\circ X_{t}$ converges uniformly to $\nu\circ X_{\infty}$ exponentially fast in time. In fact, we have
   \begin{align*}
       \frac{d}{dt}(\tilde{\mu}_{t}-\nu\circ X_{\infty})&=-(\mu_{t}\diver v_{t})\circ X_{t}\\
       &=-\tilde{\mu}_{t}(\tilde{\mu}_{t}-\nu\circ X_{t})
       =-\tilde{\mu}_{t}(\tilde{\mu}_{t}-\nu\circ X_{\infty})+\tilde{\mu_{t}}(\nu\circ X_{t}-\nu\circ X_{\infty}).
   \end{align*}
   Integration gives
   \begin{equation*}
       \tilde{\mu}_{t}-\nu\circ X_{\infty}=(\bar\mu-\nu\circ X_{\infty})\exp\left(-\int_{0}^{t}\tilde{\mu}_{r}dr\right)+\int_{0}^{t}\tilde{\mu_{r}}(\nu\circ X_{r}-\nu\circ X_{\infty})\exp\left(-\int_{r}^{t}\tilde{\mu}_{u}du\right) \, dr.
   \end{equation*}
   Therefore, since $\alpha \le \tilde{\mu}_{t}\le M$ for all times, using also \eqref{eq:uniform-detachment-target-flow-map-s=1} we get
   \begin{equation}\label{eq:bound-lagrangian-Linfty-s=1}
       \lVert \tilde{\mu}_{t}-\nu\circ X_{\infty}\rVert_{L^{\infty}}\le C\left(d,\alpha,\beta, M,\lVert \bar\mu-\nu\rVert_{\dot H^{-1}},\|\nu\|_{C^{0,\beta}}\right)e^{-\kappa t}.
   \end{equation}
   Finally, combining \eqref{eq:uniform-detachment-target-flow-map-s=1} and \eqref{eq:bound-lagrangian-Linfty-s=1}, we derive
   \begin{align*}
       \lVert \mu_{t}-\nu\rVert_{L^{\infty}}&=\lVert \tilde{\mu}_{t}-\nu\circ X_{t}\rVert_{L^{\infty}}\\
       &\le \lVert \tilde{\mu_{t}}-\nu\circ X_{\infty}\rVert_{L^{\infty}}+ \lVert \nu\circ X_{t}-\nu\circ X_{\infty}\rVert_{L^{\infty}}\le C\left(d,\alpha,\beta, M,\lVert \bar\mu-\nu\rVert_{\dot H^{-1}},\|\nu\|_{C^{0,\beta}}\right)e^{-\kappa t},
   \end{align*}
    as desired.

    Suppose now that $\nu \in C(\T^{d})$ has only a Dini modulus of continuity, i.e.~there is a continuous concave nondecreasing function $\omega:[0,\infty)\to [0,\infty)$, such that
    \begin{equation*}
        \omega(0)=0,\quad \int_{0}^{1}\frac{\omega(r)}{r}dr<\infty,\qquad \text{and}\qquad |\nu(x)-\nu(y)|\le \omega(|x-y|)\qquad \forall x,y \in \T^{d}.
    \end{equation*}
    Then, by \eqref{eq:uniform-convergence-flow-map-s=1} and a change of variables, we get
    \begin{equation}\label{eq:integrability-time-dini-modulus-flow}
    \begin{aligned}
        \int_{0}^{\infty}\lVert \nu\circ X_{t}-\nu\circ X_{\infty}\rVert_{L^{\infty}}dt&\le \int_{0}^{\infty}\omega\left(\lVert X_{t}-X_{\infty}\rVert_{L^{\infty}}\right)dt\\
        &\le \int_{0}^{\infty}\omega\left(C\left(d,\alpha,M\right)e^{-\frac{\alpha}{d+1}t}\right)dt \\
        &\lesssim_{d,\alpha,M}\int_{0}^{C(d,\alpha,M)}\frac{\omega(r)}{r}dr\lesssim_{d,s,M,\omega}1.
    \end{aligned}
    \end{equation}
    From this, repeating the same exact steps as in case $\nu \in C^{0,\beta}$ above, we obtain $\lVert \mu_{t}-\nu\rVert_{L^{\infty}}\to 0$ as $t\to \infty$.

   \smallskip
   \noindent \textbf{Step 4:} In this final step, we prove point iii). 
   Let $\gamma>d/2$. By the usual approximation argument (see, for instance, the proof of \Cref{prop:propagation-sobolev}) we may reduce to prove the result assuming $\bar\mu, \nu \in C^{\infty}(\T^{d})$, and so $\mu \in C^{\infty}([0,\infty)\times \T^{d})$. Let us call $\sigma_{t}=\mu_{t}-\nu$, and consider the energy estimate \eqref{eq:energy-estimate}.  Interpolating
   $\lVert \sigma_{t}\rVert_{\dot H^{\gamma-1}}$ between $\lVert \sigma_{t}\rVert_{\dot H^{-1}}$ and $\lVert \sigma_{t}\rVert_{\dot H^{\gamma}}$, using Young's inequality, and the lower bound $\nu\ge \alpha>0$, the second line in the right-hand side of \eqref{eq:energy-estimate} can be estimated by
   \begin{align*}
    -2\big(\min_{\T^{d}}\nu\big)\lVert \sigma_{t}\rVert_{\dot H^{\gamma}}^{2}+ C\lVert \nu\rVert_{\dot H^{\gamma+1}}\lVert \sigma_{t}\rVert_{\dot H^{\gamma-1}}\lVert \sigma_{t}\rVert_{\dot H^{\gamma}}&\le C(d,\gamma,\alpha, \lVert \nu\rVert_{\dot H^{\gamma+1}})\lVert \sigma_{t}\rVert_{\dot H^{-1}}^{2}.
\end{align*}
Therefore, recalling the exponential decay of $\lVert \sigma_{t}\rVert_{\dot H^{-1}}$ from \eqref{eq:exponential-decay-H^-1-coulomb}, the energy estimate \eqref{eq:energy-estimate} simplifies to
\begin{equation*}
         \frac{d}{dt}\lVert \sigma_{t}\rVert_{\dot H^{\gamma}}^{2}\le C\lVert \nabla^{2}(-\Delta)^{-1}\sigma_{t}\rVert_{L^{\infty}}   \lVert \sigma_{t}\rVert_{\dot H^{\gamma}}^{2}+ Ce^{-2\alpha t},     
\end{equation*}
which in turn, integrated, yields
\begin{equation}\label{eq:energy-estimate-simplified-s=1}
    \begin{aligned}
         \lVert \sigma_{t}\rVert_{\dot H^{\gamma}}^{2}&\le \lVert \sigma_{0}\rVert_{\dot H^{\gamma}}^{2}\exp\left(C\int_{0}^{t}\lVert\nabla^{2}(-\Delta)^{-1}\sigma_{r}\rVert_{L^{\infty}}dr\right)+C\int_{0}^{t}e^{-2\alpha r}\exp\left(C\int_{r}^{t}\lVert \nabla^{2}(-\Delta)^{-1}\sigma_{u}\rVert_{L^{\infty}}du\right)\\
       &\le \left(\lVert \sigma_{0}\rVert_{\dot H^{\gamma}}^{2}+ C(1-e^{-2\alpha t})\right)\exp\left(C\int_{0}^{t}\lVert\nabla^{2}(-\Delta)^{-1}\sigma_{r}\rVert_{L^{\infty}}dr\right).        
    \end{aligned}
\end{equation}
 Further, note that $\nu \in C^{1}(\T^{d})$ by Sobolev embedding. Therefore, we may use point ii) to get exponential decay of the $L^{\infty}$-norm of $\sigma_{t}$: 
   \begin{equation}\label{eq:Linfty-bound-last-step-convergence-s=1}
       \lVert \sigma_{t}\rVert_{L^{\infty}}\le C(d,\gamma,\alpha,\lVert \bar\mu\rVert_{H^{\gamma}}, \lVert \nu\rVert_{H^{\gamma+1}})e^{-\frac{\alpha}{d+1}t}\qquad \forall t\in [0,\infty).
   \end{equation}
   As a consequence of \Cref{lem:log-interp-BKM-criterion}, \eqref{eq:energy-estimate-simplified-s=1}, and \eqref{eq:Linfty-bound-last-step-convergence-s=1}, the $L^{\infty}$-norm of the gradient of the vector field can be bounded as follows:
   \begin{equation}\label{eq:bound-Lip-gradient-convergence-s=1}
       \begin{aligned}
       \lVert \nabla^{2} (-\Delta)^{-1}\sigma_{t}\rVert_{L^{\infty}}&\le C(d,\gamma)\lVert \sigma_{t}\rVert_{L^{\infty}}\left(1+\log\left(1+\frac{\lVert \sigma_{t}\rVert_{\dot H^{\gamma}}}{\lVert \sigma_{t}\rVert_{L^{\infty}}}\right)\right)\\
       &\le C(d,\gamma)\lVert \sigma_{t}\rVert_{L^{\infty}}\left(1+\log(1+\lVert \sigma_{t}\rVert_{L^{\infty}}^{-1})+\log(1+\lVert \sigma_{t}\rVert_{\dot H^{\gamma}})\right)\\
       &\le C(d,\gamma,\alpha,\lVert \bar\mu\rVert_{H^{\gamma}}, \lVert \nu\rVert_{\dot H^{\gamma+1}})e^{-\frac{\alpha}{d+1} t}\left(1+t+ \int_{0}^{t}\lVert\nabla^{2}(-\Delta)^{-1}\sigma_{r}\rVert_{L^{\infty}}dr\right).
   \end{aligned}
   \end{equation}
Gr\"onwall's inequality then gives
   \begin{equation*}
       \int_{0}^{\infty}\lVert\nabla^{2}(-\Delta)^{-1}\sigma_{r}\rVert_{L^{\infty}}dr\le C(d,\gamma,\alpha,\lVert \bar\mu\rVert_{H^{\gamma}}, \lVert \nu\rVert_{\dot H^{\gamma+1}}),
   \end{equation*}
   and, in particular, by \eqref{eq:energy-estimate-simplified-s=1},
   
   \begin{equation}\label{eq:uniform-bound-Hgamma-convergence-s=1}
       \lVert \sigma_{t}\rVert_{\dot H^{\gamma}}\le C(d,\gamma,\alpha,\lVert \bar\mu\rVert_{H^{\gamma}}, \lVert \nu\rVert_{H^{\gamma+1}}). 
   \end{equation}
   At this point, combining the uniform bound of $\lVert \sigma_{t}\rVert_{\dot H^{\gamma}}$ in \eqref{eq:uniform-bound-Hgamma-convergence-s=1} with the exponential decay of $\lVert \sigma_{t}\rVert_{\dot H^{-s}}$ from \eqref{eq:exponential-decay-H^-1-coulomb}, by Sobolev interpolation we deduce that
   \begin{equation}\label{eq:exponential-decay-lower-sobolev-norms-s=1}
       \lVert \nabla^{2}(-\Delta)^{-1}\sigma_{t}\rVert_{L^{\infty}},\, \lVert \sigma_{t}\rVert_{\dot H^{\gamma-1}}\le  Ce^{-t/C}.
   \end{equation}
   Plugging \eqref{eq:exponential-decay-lower-sobolev-norms-s=1} and \eqref{eq:uniform-bound-Hgamma-convergence-s=1} inside the energy estimate \eqref{eq:energy-estimate} we find
   \begin{equation*}
       \frac{d}{dt}\lVert \sigma_{t}\rVert_{\dot H^{\gamma}}^{2}\le (Ce^{-t/C}-2\alpha)\lVert \sigma_{t}\rVert_{\dot H^{\gamma}}^{2}+Ce^{-t/C},
   \end{equation*}
   and finally, integrating this differential inequality, we obtain
   \begin{align*}
       \lVert \sigma_{t}\rVert_{\dot H^{\gamma}}^{2}&\le \lVert \sigma_{0}\rVert_{\dot H^{\gamma}}^{2}\exp\left(\int_{0}^{t}\left(Ce^{-r/C}-2\alpha\right)dr\right)+\int_{0}^{t}Ce^{-r/C}\exp\left(\int_{r}^{t}\left(Ce^{-u/C}-2\alpha\right)du\right)\\
       &\le \left(\lVert \sigma_{0}\rVert_{\dot H^{\gamma}}^{2}+C(1-e^{-t/C})\right)\exp\left(-2\alpha t+ C(1-e^{-t/C})\right).
   \end{align*}
   Taking the square root of the above we get the desired decay for $\lVert \sigma_{t}\rVert_{\dot H^{\gamma}}$, thus concluding the proof. 
   \end{proof}

\subsubsection{Exponential convergence without lower bound on the initial measure}\label{subsubsec:relaxation-lowerbound-initial}
In this section, we show that the assumption $\bar\mu\ge \alpha>0$ can be relaxed while still obtaining exponential weak convergence to the target. The key ingredient is the lemma below, which asserts that \lq\lq holes'' in the support of a smooth solution are \lq\lq filled-up'' at an exponential rate, provided the target is uniformly bounded from below.
\begin{lem}[Exponential filling of holes]\label{lem:exponential-filling-holes}
    Let $\bar \mu, \nu \in \mathscr{P}\cap C^{\infty}(\T^{d})$, and suppose that $0<\lambda\le \nu\le \Lambda<\infty$ in $\T^{d}$. Let $\mu\in C^{\infty}([0,\infty)\times \T^{d})$ be the corresponding solution of \eqref{eq:PDE-GF}. Then, the following hold:
    $$|\{\mu_{t}\le a\}|\le |\{\bar \mu\le a\}|e^{-(\lambda-a)t}\quad \forall a\in [0,\lambda];\qquad |\{\mu_{t}\ge b\}|\le |\{\bar\mu \ge b\}|e^{-(b-\Lambda)t}\quad \forall b\in [\Lambda,+\infty).$$
\end{lem}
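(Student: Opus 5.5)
We will argue along characteristics, exactly as in the proof of \Cref{prop:max-princ-Coulomb}. Let $v_t=-\nabla K_1*(\mu_t-\nu)$ and let $X_t$ be its smooth flow map (\Cref{rmk:regularity-in-time}). Set $\tilde\mu_t=\mu_t\circ X_t$. By \eqref{eqn:maxpri},
\[
\frac{d}{dt}\tilde\mu_t(x)=-\tilde\mu_t(x)\bigl(\tilde\mu_t(x)-\nu(X_t(x))\bigr).
\]
We also need the evolution of the Jacobian $J_t(x):=\det\nabla X_t(x)$. It satisfies
\[
\frac{d}{dt}J_t=(\diver v_t)\circ X_t\,J_t=(\nu\circ X_t-\tilde\mu_t)J_t,
\]
using $\diver v_t=\nu-\mu_t$ for $s=1$. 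The plan is to write $|\{\mu_t\le a\}|=|X_t(\{\tilde\mu_t\le a\})|=\int_{\{\tilde\mu_t\le a\}}J_t(x)\,dx$ and to control both the set and the Jacobian.

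\textbf{Step 1: the sublevel set shrinks in Lagrangian coordinates.} Fix $a\in[0,\lambda]$. We first show $\{\tilde\mu_t\le a\}\subseteq\{\bar\mu\le a\}$. Suppose $\tilde\mu_t(x)\le a$ but $\bar\mu(x)>a$. Take the last time $t_1<t$ at which $\tilde\mu_{t_1}(x)=a$, or at which $\tilde\mu$ crosses level $a$ from above. Whenever $0<\tilde\mu\le a\le\lambda\le\nu\circ X$, the ODE gives $\frac{d}{dt}\tilde\mu\ge 0$, and this is strict if $\tilde\mu<\lambda$. So trajectories can never cross downward through level $a$ when $a<\lambda$. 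The borderline case $a=\lambda$ follows by a limit $a\uparrow\lambda$, or by ODE uniqueness, since a constant solution at level $\lambda$ requires $\nu\circ X=\lambda$. The value $0$ is invariant.

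\textbf{Step 2: Jacobian decay on the set.} For $x\in\{\tilde\mu_t\le a\}$, Step 1 applied at every intermediate time $r\le t$ gives $\tilde\mu_r(x)\le a$ for all $r\in[0,t]$. Indeed, once above $a$, a trajectory cannot come back below $a$. Hence $\frac{d}{dr}\log J_r(x)=\nu(X_r(x))-\tilde\mu_r(x)\ge\lambda-a$. This has the wrong sign: it gives growth, not decay. So we should instead use the inverse flow, i.e.\ track $|\{\mu_t\le a\}|$ directly in Eulerian form.

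\textbf{Corrected plan.} Write $A_t=\{\mu_t\le a\}=X_t(B_t)$, with $B_t\subseteq\{\bar\mu\le a\}$ decreasing in $t$. Then
\[
|A_t|=\int_{B_t}J_t\,dx,\qquad \mu_t(A_t)=\int_{B_t}\bar\mu\,dx.
\]
Since $\mu_t$ is transported, the mass $\int_{B_t}\bar\mu$ is nonincreasing. We therefore compute $\frac{d}{dt}|A_t|$ using the Eulerian velocity at the boundary. Since $B_t$ is decreasing, $\frac{d}{dt}|A_t|\le\int_{B_t}\frac{d}{dt}J_t\,dx$, and this is the wrong sign again. To get the right sign one must use the direction of the flow correctly: with $v_t=-\nabla K*(\mu_t-\nu)$, the continuity equation reads $\partial_t\mu+\diver(\mu v)=0$. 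Then $\diver v_t=-(\mu_t-\nu)=\nu-\mu_t\ge\lambda-a$ on $A_t$, so the flow expands there, and the Lagrangian set $B_t$ shrinks. The real content of the lemma is that $B_t$ shrinks fast enough to beat the expansion.

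We propose the cleaner route of a co-area/Eulerian computation. Let $f(t)=|\{\mu_t\le a\}|$ for regular values $a$. By Reynolds transport,
\[
f'(t)=-\int_{\{\mu_t=a\}}\frac{\partial_t\mu_t}{|\nabla\mu_t|}\,d\mathcal H^{d-1}.
\]
Write $\partial_t\mu=-v\cdot\nabla\mu-\mu\diver v$ and use $\mu=a$ on the level set. The $\mu\diver v$ term contributes $\int\frac{a(\nu-a)}{|\nabla\mu|}$, and the $v\cdot\nabla\mu$ term gives, after the divergence theorem, $\int_{\{\mu\le a\}}\diver v$. Both combine to $f'(t)\le-\int_{\{\mu_t\le a\}}(\nu-\mu_t)\,dx+(\text{level-set term with sign})\le-(\lambda-a)f(t)$. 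Grönwall then yields the claim, and the upper-level estimate follows symmetrically with $b-\Lambda$.

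\textbf{Main obstacle.} The main difficulty is getting the signs of the transport term and the level-set term right. The level-set term must be shown to be nonpositive, which should follow from $\mu\,\diver v\ge 0$ on $\{\mu=a\}$. Non-regular levels are handled by Sard's theorem and monotone approximation in $a$.
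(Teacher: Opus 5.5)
There is a genuine gap, and it comes from one sign error that sends the whole proposal off course. Since $-\Delta K_1=\delta_0-1$ and $\mu_t-\nu$ has zero mean, you get $\diver v_t=-\Delta K_1*(\mu_t-\nu)=\mu_t-\nu$, not $\nu-\mu_t$. The ODE you quote from \eqref{eqn:maxpri} already forces this sign, since it is $\frac{d}{dt}\tilde\mu_t=-\tilde\mu_t\,(\diver v_t)\circ X_t$; your Jacobian equation contradicts it. The correct equation is $\frac{d}{dt}\log J_t=\tilde\mu_t-\nu\circ X_t$. With it, the argument you abandoned in Step 2 is exactly the paper's proof, and it already closes. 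By Step 1, every $x\in\{\tilde\mu_t\le a\}$ satisfies $\tilde\mu_r(x)\le a$ for all $r\in[0,t]$. Hence $J_t(x)\le e^{-(\lambda-a)t}$, and
\[
|\{\mu_t\le a\}|=\int_{\{\tilde\mu_t\le a\}}J_t\,dx\le |\{\bar\mu\le a\}|\,e^{-(\lambda-a)t}.
\]
Where $\mu_t<\nu$ the flow compresses, which is exactly why the density rises there. So there is no competition between a shrinking Lagrangian set and an expanding flow.

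Your Eulerian fallback does not close as written, because it uses both signs at once. The bulk term $\int_{\{\mu_t\le a\}}\diver v_t$ is replaced by $-\int_{\{\mu_t\le a\}}(\nu-\mu_t)$, which silently uses the correct sign. The level-set term is computed with the wrong sign, as $\int a(\nu-a)/|\nabla\mu_t|\ge0$. Your justification, ``$\mu\,\diver v\ge 0$ on $\{\mu=a\}$'', is the opposite of what is true and needed. With the correct sign the boundary term is $\int_{\{\mu_t=a\}}a(a-\nu)\,|\nabla\mu_t|^{-1}\,d\mathcal H^{d-1}\le0$. Signed consistently, the Reynolds computation does give $f'\le-(\lambda-a)f$. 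A fixed level need not be regular for all $t$; that is better handled by differentiating $\int\phi(\mu_t)\,dx$ for smooth monotone $\phi$. Even so, this route is only a detour around the Lagrangian argument.

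The superlevel bound does not follow ``symmetrically''. On $\{\mu_t\ge b\}$ with $b\ge\Lambda$ one has $\diver v_t\ge b-\Lambda\ge0$. The Lagrangian inclusion still holds, but now $J_t\ge e^{(b-\Lambda)t}$, which points the wrong way. In fact the inequality fails near $b=\Lambda$. Take $\nu\equiv1$, so $\tilde\mu_t(x)=F_t(\bar\mu(x))$ with $F_t$ the flow of $\dot y=-y(y-1)$. Then $\{\tilde\mu_t\ge1\}=\{\bar\mu\ge1\}$ and $J_t=\bar\mu/\tilde\mu_t>1$ on $\{\bar\mu>1\}$. This gives $|\{\mu_t\ge1\}|>|\{\bar\mu\ge1\}|$ for every $t>0$ whenever $|\{\bar\mu=1\}|=0$, for instance $\bar\mu=1+\tfrac12\cos(2\pi x_1)$. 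The paper's one-line ``similar argument'' for superlevel sets is open to the same objection. Only the sublevel estimate is used later, in \Cref{prop:relaxation-lower-bound-initial-measure}. What does hold for $b>\Lambda$ is that $\{\mu_t\ge b\}$ is empty after a finite time, by comparison with $\dot y=-y(y-\Lambda)$.
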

\begin{proof}
    We use the method of characteristics. 
    Recall that $\tilde{\mu}_{t}:= \mu_{t}\circ X_{t}$ solves $$\frac{d}{dt}\tilde{\mu}_{t}(x)= \tilde{\mu}_{t}(x)(\nu\circ X_{t}(x)-\tilde{\mu}_{t}(x)).$$
    From this equation, taking into account that $\lambda\le \nu\le \Lambda$, we deduce 
    \begin{equation}\label{eq:inclusion-levels-exponential-filling}
        \{\tilde{\mu}_{t}\le a\}\subseteq \{\tilde{\mu}_{r}\le a\}\quad \forall a\le \lambda,\, \forall r\le t;\qquad \{\tilde{\mu}_{t}\ge b\}\subseteq \{\tilde{\mu}_{r}\ge b\}\quad \forall b\ge \Lambda,\, \forall r\le t.
    \end{equation}
    For a given $a\in [0,\lambda]$ we have
    $\{\mu_{t}\le a\}=X_{t}\left(\{\tilde{\mu}_{t}\le a\}\right)$. 
    In particular, by the change of variables formula, we find
    $$|\{\mu_{t}\le a\}|= |X_{t}\left(\{\tilde{\mu}_{t}\le a\}\right)|=\int_{\{\tilde{\mu}_{t}\le a\}}\det \nabla X_{t}(x).$$
    Now, let $x\in \{\tilde{\mu}_{t}\le a\}$. By \eqref{eq:inclusion-levels-exponential-filling} we have $\tilde{\mu}_{r}(x)\le a$ for all $r\in [0,t]$. Therefore, 
    $$\det \nabla X_{t}(x)= \exp\left(\int_{0}^{t}\diver v_{r}\circ X_{r}(x)dr\right)= \exp \left(\int_{0}^{t}(\tilde{\mu}_{r}(x)-\nu\circ X_{r}(x))dr\right)\le e^{-(\lambda-a)t}.$$
    Using \eqref{eq:inclusion-levels-exponential-filling} again we finally obtain
    $$|\{\mu_{t}\le a\}|\le |\{\tilde{\mu}_{t}\le a\}|e^{-(\lambda-a)t}\le |\{\tilde{\mu}_{0}\le a\}|e^{-(\lambda-a)t}= |\{\bar\mu\le a\}|e^{-(\lambda-a)t},$$
    as desired. A similar argument applies to superlevel sets $\{\mu_{t}\ge b\}$ when $b\ge \Lambda$.
\end{proof}
    
    \Cref{lem:exponential-filling-holes} implies the exponential weak convergence to equilibrium only assuming a lower bound on the target measure.
    \begin{prop}\label{prop:relaxation-lower-bound-initial-measure}
        Let $\bar\mu,\nu \in \mathscr{P}\cap L^{\infty}(\T^d)$, and suppose that $\nu \ge \alpha>0$. Let $\mu\in L^{\infty}([0,\infty);L^{\infty}(\T^{d}))$ be the corresponding solution of \eqref{eq:PDE-GF}. Then
        $$\lVert \mu_{t}-\nu\rVert_{\dot H^{-1}}\le e^{-\frac{\alpha t}{2}}\left(\lVert \bar\mu-\nu\rVert_{\dot H^{-s}}+C(d)\left(\lVert \bar\mu\rVert_{L^{\infty}}+\lVert \nu\rVert_{L^{\infty}}\right)\big(e^{\frac{\alpha t}{4}}-1\big)\right)\qquad \forall t\in [0,\infty).$$
    \end{prop}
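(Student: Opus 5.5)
Reduce first to $\bar\mu,\nu\in C^\infty$ by mollification. The maximum principle (\Cref{prop:max-princ-Coulomb}) and the stability result (\Cref{prop:stability-yudovich}) let us pass to the limit, because $\dot H^{-1}$ norms of uniformly bounded densities are continuous under $W_2$ convergence (\Cref{lem:W2-dual}). Note that $\lVert\bar\mu-\nu\rVert_{\dot H^{-s}}$ in the statement means $s=1$. Write $M:=\max\{\sup\bar\mu,\sup\nu\}$, which bounds $\mu_t$ for all times by \Cref{prop:max-princ-Coulomb}.

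The starting point is the dissipation identity \eqref{eq:energy-dissipation-identity}. Split the integral over $\{\mu_t\ge \alpha/2\}$ and its complement:
\begin{equation*}
\int_{\T^d}|\nabla K_1*(\mu_t-\nu)|^2\mu_t\ge \frac{\alpha}{2}\lVert \mu_t-\nu\rVert_{\dot H^{-1}}^2-\frac{\alpha}{2}\int_{\{\mu_t<\alpha/2\}}|\nabla K_1*(\mu_t-\nu)|^2 .
\end{equation*}
By \Cref{lem:W2-Linfty}, $\lVert\nabla K_1*(\mu_t-\nu)\rVert_{L^\infty}\le C(d)M$. \Cref{lem:exponential-filling-holes} with $a=\alpha/2$, $\lambda=\alpha$ gives $|\{\mu_t<\alpha/2\}|\le e^{-\alpha t/2}$; here we use the smooth reduction and, if needed, approximate the strict sublevel set by $\{\mu_t\le a\}$. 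Writing $E(t):=\lVert\mu_t-\nu\rVert_{\dot H^{-1}}^2$, this yields
\begin{equation*}
E'(t)\le -\alpha E(t)+C(d)\,\alpha M^2 e^{-\alpha t/2}.
\end{equation*}

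Integrating with the factor $e^{\alpha t}$ gives
\begin{equation*}
E(t)\le e^{-\alpha t}\Big(E(0)+C M^2\alpha\int_0^t e^{\alpha r/2}dr\Big)=e^{-\alpha t}\big(E(0)+2CM^2(e^{\alpha t/2}-1)\big).
\end{equation*}
Taking square roots and using $\sqrt{a+b}\le\sqrt a+\sqrt b$ gives
\begin{equation*}
\lVert\mu_t-\nu\rVert_{\dot H^{-1}}\le e^{-\alpha t/2}\Big(\lVert\bar\mu-\nu\rVert_{\dot H^{-1}}+C M(e^{\alpha t/2}-1)^{1/2}\Big).
\end{equation*}
Since $(e^{x}-1)^{1/2}\le C(e^{x/2}-1)$ fails only for small $x$, use instead $(e^{x}-1)^{1/2}\lesssim e^{x/2}-1$ when $x\ge1$. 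For small $x$, bound directly: $E$ is nonincreasing and the energy bound $E(t)\le E(0)$ suffices after adjusting constants. Alternatively, and more cleanly, apply the Gronwall argument to $\sqrt{E}$ directly; this produces exactly the $e^{\alpha t/4}$ form in the statement. The algebraic matching of the final form (the exponent $\alpha t/4$) is the only delicate point. The main obstacle is really controlling the hole contribution, which the filling lemma handles. Finally, replace $M$ by $\lVert\bar\mu\rVert_{L^\infty}+\lVert\nu\rVert_{L^\infty}$.
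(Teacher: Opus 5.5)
Your argument is the paper's proof. It uses the same reduction to smooth data, splits the dissipation along $\{\mu_t\le\alpha/2\}$, bounds $\nabla K_1*(\mu_t-\nu)$ in $L^\infty$ via the maximum principle, and applies \Cref{lem:exponential-filling-holes} to reach $E'\le-\alpha E+C\alpha M^2e^{-\alpha t/2}$. The paper stops at this inequality and says it ``integrated, yields the desired decay'', so it does not address the $\sqrt{e^{\alpha t/2}-1}$ versus $e^{\alpha t/4}-1$ mismatch you noticed either.

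On that final step, which you rightly call the delicate point, there are two issues with what you wrote.

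\textbf{The case split.} It is valid, but the small-time case silently needs $\lVert\bar\mu-\nu\rVert_{\dot H^{-1}}\le C(d)M$. By monotonicity of $E$, what you must check for $\alpha t\le 2$ is $\lVert\bar\mu-\nu\rVert_{\dot H^{-1}}(e^{\alpha t/4}+1)\le CM$. The needed bound does hold: $\lVert f\rVert_{\dot H^{-1}}\le(2\pi)^{-1}\lVert f\rVert_{L^2}$, and $|\bar\mu-\nu|\le M$ pointwise. Add this line.

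\textbf{The alternative.} Running Gronwall on $\sqrt E$ directly does not work from the inequality you derived. Dividing $C\alpha M^2e^{-\alpha t/2}$ by $2\sqrt E$ leaves a term that blows up as $E\to0$. What produces exactly the $e^{\alpha t/4}$ form is to estimate the hole term by Cauchy--Schwarz instead:
\[
\int_{\{\mu_t\le\alpha/2\}}|\nabla K_1*(\mu_t-\nu)|^2\le \lVert\nabla K_1*(\mu_t-\nu)\rVert_{L^\infty}\sqrt{E}\,|\{\mu_t\le\alpha/2\}|^{1/2}\le C(d)M\sqrt{E}\,e^{-\alpha t/4}.
\]
This gives $(\sqrt E)'\le-\frac\alpha2\sqrt E+\frac{C\alpha M}{2}e^{-\alpha t/4}$, and integrating yields $\sqrt{E(t)}\le e^{-\alpha t/2}\big(\sqrt{E(0)}+2CM(e^{\alpha t/4}-1)\big)$. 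That is precisely the stated bound.
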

    \begin{proof}
    By the usual approximation argument on initial and target measures, we can assume that $\bar\mu, \nu \in \mathscr{P}\cap C^{\infty}(\T^{d})$. 
    Using the energy dissipation identity \eqref{eq:energy-dissipation-identity} we can bound
        \begin{align*}
            \frac{d}{dt}\lVert \mu_{t}-\nu\rVert_{\dot H^{-1}}^{2}&=-2\int_{\T^{d}}|\nabla K_{1}*(\mu_{t}-\nu)|^{2}\mu_{t}\le -\alpha\int_{\{\mu_{t}>\alpha/2\}}|\nabla K_{1}*(\mu_{t}-\nu)|^{2}\\
            &=-\alpha\int_{\T^{d}}|\nabla K_{1}*(\mu_{t}-\nu)|^{2}+\alpha\int_{\{\mu_{t}\le \alpha/2\}}|\nabla K_{1}*(\mu_{t}-\nu)|^{2}\\
            &\le -\alpha\lVert \mu_{t}-\nu\rVert_{H^{-1}}^{2}+\alpha\lVert \nabla K_{1}*(\mu_{t}-\nu)\rVert_{L^{\infty}}^{2}|\{\mu_{t}\le \alpha/2\}|.
        \end{align*}
    By \Cref{lem:W2-Linfty} and the maximum principle \eqref{eq:max-princ-s=1-intro}, $\lVert \nabla K_{1}*(\mu_{t}-\nu)\rVert_{L^{\infty}}\lesssim_{d}\lVert \bar\mu\rVert_{L^{\infty}}+\lVert \nu\rVert_{L^{\infty}}$. Moreover, \Cref{lem:exponential-filling-holes} ensures that $|\{\mu_{t}\le \alpha/2\}|\le e^{-\alpha t/2}$. Thus, we get
    \begin{equation*}
        \frac{d}{dt}\lVert \mu_{t}-\nu\rVert_{\dot H^{-1}}^{2}\le -\alpha\lVert \mu_{t}-\nu\rVert_{H^{-1}}^{2}+\alpha C(d)\left(\lVert \bar\mu\rVert_{L^{\infty}}+\lVert \nu\rVert_{L^{\infty}}\right)^{2}e^{-\frac{\alpha t}{2}},
    \end{equation*}
    which, integrated, yields the desired decay. 
    \end{proof}

\subsection{The case $s>1$.}\label{subsec:convergence-s>1}
This section is devoted to the proof of \Cref{thm:convergence-s>1}. As explained in \Cref{subsec:idea-proof}, the argument relies on three main ingredients: the $\dot H^{-s}$ energy dissipation identity \eqref{eq:energy-dissipation-identity}, the $\dot H^{\gamma}$ energy estimate in \Cref{lem:energy-estimates}, and classical Sobolev interpolation. Combining these elements, we are able to enforce a suitable \L{}ojasiewicz gradient inequality along the dynamics, under a small discrepancy assumption on the initial data. 

\begin{proof}[Proof of \Cref{thm:convergence-s>1}] 
First we note that it suffices to prove the estimates in \eqref{eq:conclusion-thm-conv-s>1} for all times in the maximal existence interval $[0,T)$: this would automatically imply that $T=\infty$, thanks to the continuation criterion from \Cref{prop:local-well-posedness-intro}.
Secondly, by the usual approximation argument (see for instance the proof of \Cref{prop:propagation-sobolev}), we may reduce to prove \eqref{eq:conclusion-thm-conv-s>1} for smooth data $\bar\mu, \nu \in C^{\infty}(\T^{d})$, in which case $\mu\in C^{\infty}([0,T)\times \T^{d})$ by \Cref{rmk:regularity-in-time}.

    Let us call $\sigma_{t}:=\mu_{t}-\nu$, and consider the energy estimate from \eqref{eq:energy-estimate}. Similarly to Step 4 in the proof of \Cref{thm:convergence-s=1}, we can interpolate $\lVert \sigma_{t}\rVert_{\dot H^{\gamma-s}}$ between $\lVert \sigma_{t}\rVert_{\dot H^{-s}}$ and $\lVert \sigma_{t}\rVert_{\dot H^{\gamma-s+1}}$, and use Young's inequality along with the lower bound $\nu\ge \alpha>0$ to get
    \begin{equation*}
        -2\big(\min_{\T^{d}}\nu\big)\lVert \sigma_{t}\rVert_{\dot H^{\gamma-s+1}}^{2}+ C\lVert \nu\rVert_{\dot H^{\gamma+s}}\lVert \sigma_{t}\rVert_{\dot H^{\gamma-s}}\lVert \sigma_{t}\rVert_{\dot H^{\gamma-s+1}}\le C(d,s,\gamma,\alpha,\lVert \nu\rVert_{\dot H^{\gamma+s}})\lVert \sigma_{t}\rVert_{\dot H^{-s}}^{2}.
    \end{equation*}
    Then, \eqref{eq:energy-estimate} simplifies to
    \begin{equation*}
         \frac{d}{dt}\lVert \sigma_{t}\rVert_{\dot H^{\gamma}}^{2}\le C\lVert \nabla^{2}(-\Delta)^{-s}\sigma_{t}\rVert_{L^{\infty}}   \lVert \sigma_{t}\rVert_{\dot H^{\gamma}}^{2}+ C\lVert \sigma_{t}\rVert_{\dot H^{-s}}^{2},     
\end{equation*}
which, integrated, yields
    \begin{equation}\label{eq:energy-estimate-simplified-s>1}
        \begin{aligned}
         \lVert \sigma_{t}\rVert_{\dot H^{\gamma}}^{2}&\le \lVert \sigma_{0}\rVert_{\dot H^{\gamma}}^{2}\exp\left(C\int_{0}^{t}\lVert\nabla^{2}(-\Delta)^{-s}\sigma_{r}\rVert_{L^{\infty}}dr\right)\\
         &\quad\,+C\int_{0}^{t}\lVert \sigma_{r}\rVert_{\dot H^{-s}}^{2}\exp\left(C\int_{r}^{t}\lVert \nabla^{2}(-\Delta)^{-s}\sigma_{u}\rVert_{L^{\infty}}du\right)dr.       
    \end{aligned}
    \end{equation}
    On the other hand, the energy dissipation identity \eqref{eq:energy-dissipation-identity}, combined with Sobolev interpolation of $\lVert \sigma_{t}\rVert_{\dot H^{-s}}$ between $\lVert \sigma_{t}\rVert_{\dot H^{1-2s}}$ and $\lVert \sigma_{t}\rVert_{\dot H^{\gamma}}$ gives
    \begin{equation}\label{eq:energy-dissipation-s>1-interpolated}
    \begin{aligned}
         \frac{d}{dt}\lVert \sigma_{t}\rVert_{\dot H^{-s}}^{2}&= -2\int_{\T^{d}}|\nabla (-\Delta)^{-s}\sigma_{t}|^{2}\mu_{t}\\
         &\le -2\big(\inf_{\T^{d}}\mu_{t}\big)\lVert \sigma_{t}\rVert_{\dot H^{1-2s}}^{2}\le -2\big(\inf_{\T^{d}}\mu_{t}\big)\lVert \sigma_{t}\rVert_{\dot H^{\gamma}}^{-\frac{2s-2}{\gamma+s}}\lVert \sigma_{t}\rVert_{\dot H^{-s}}^{2+\frac{2s-2}{\gamma+s}}.
    \end{aligned}
    \end{equation}
   At this point, we observe the following. On the one hand, a uniform upper bound on $\lVert\sigma_t\rVert_{\dot H^{\gamma}}$, together with a uniform lower bound on $\inf \mu_t$ for all $t\in[0,T)$, yields the desired polynomial decay of $\lVert\sigma_t\rVert_{\dot H^{-s}}$ by integrating \eqref{eq:energy-dissipation-s>1-interpolated}. On the other hand, sufficiently fast (integrable in time) decay of $\lVert \sigma_t\rVert_{\dot H^{-s}}$ implies a uniform bound on $\lVert \sigma_t\rVert_{\dot H^{\gamma}}$ through \eqref{eq:energy-estimate-simplified-s>1}.
In the remainder of the proof we show that these bounds indeed hold uniformly on $[0,T)$, provided $\lVert\sigma_{0}\rVert_{\dot H^{-s}}$ is chosen sufficiently small. This yields \eqref{eq:conclusion-thm-conv-s>1}. 
    
    Calling $M:= \lVert \sigma_{0}\rVert_{\dot H^{\gamma}}^{2}$, from the heuristics above we are led to consider  
    \begin{equation*}
        \tau:= \sup\left\{t>0: \lVert\sigma_{r}\rVert_{\dot H^{\gamma}}^{2}\le 2M \text{\,\, and \,\,} \inf_{\T^{d}} \mu_{r}\ge \alpha/2,\,\,  \forall r\in [0,t)\right\}\in (0,T].
    \end{equation*}
    Our goal is to prove that if $\lVert \sigma_{0}\rVert_{\dot H^{-s}}$ is sufficiently small, then $\tau=T$. 
    By \eqref{eq:energy-dissipation-s>1-interpolated} and the definition of $\tau$, we have 
    \begin{equation*}
        \frac{d}{dt}\lVert \sigma_{t}\rVert_{\dot H^{-s}}^{2}\le -\kappa\lVert \sigma_{t}\rVert_{\dot H^{-s}}^{2+\frac{2s-2}{\gamma+s}}\qquad \forall t\in [0,\tau),
    \end{equation*}
    where we set for convenience $\beta=(\gamma+s)/(s-1)$ and $\kappa:= \alpha (2M)^{-1/\beta}$.
    After integration we get
    \begin{equation}\label{eq:polynomial-decay-H^-s-s>1}
        \lVert \sigma_{t}\rVert_{\dot H^{-s}}^{2}\le   \lVert \sigma_{0}\rVert_{\dot H^{-s}}^{2}\left(1+K t\right)^{-\frac{\gamma+s}{s-1}}\qquad \forall t\in [0,\tau),
    \end{equation}
    where $K:=\alpha\beta^{-1}(\lVert \sigma_{0}\rVert_{\dot H^{-s}}^{2}/2\lVert \sigma_{0}\rVert_{\dot H^{\gamma}}^{2})^{1/\beta}$.
    Moreover, calling $\eta:= \gamma-d/2>0$ and $\hat{\gamma}:= d/2+2-2s+\eta/2$, the boundedness of $\nabla^{2}(-\Delta)^{-s}: \dot H^{\hat{\gamma}}\to L^{\infty}$, combined with Sobolev interpolation, gives
    \begin{align*}
        \lVert \nabla^{2}(-\Delta)^{-s}\sigma_{t}\rVert_{L^{\infty}}\le C\lVert \sigma_{t}\rVert_{\dot H^{\hat{\gamma}}}\le C\lVert \sigma_{t}\rVert_{\dot H^{-s}}^{1-\theta}\lVert \sigma_{t}\rVert_{\dot H^{\gamma}}^{\theta},\qquad \theta= 1-\frac{2(s-1)+\eta/2}{\gamma+s}.
    \end{align*}
    Hence, from \eqref{eq:polynomial-decay-H^-s-s>1} we get
    \begin{equation*}
        \lVert \nabla^{2}(-\Delta)^{-s}\sigma_{t}\rVert_{L^{\infty}}\le C(2M)^{\theta}\lVert \sigma_{0}\rVert_{\dot H^{-s}}^{1-\theta}(1+Kt)^{-1-\frac{\eta}{4(s-1)}}\qquad \forall t\in [0,\tau),
    \end{equation*}
    and integrating in time between $0$ and $\tau$ we obtain
    \begin{equation}\label{eq:integral-bound-Linfty-vector-field-s>1}
        \begin{aligned}
            \int_{0}^{\tau}\lVert \nabla^{2}(-\Delta)^{-s}\sigma_{r}\rVert_{L^{\infty}}dr&\le C(2M)^{\theta}\lVert \sigma_{0}\rVert_{\dot H^{-s}}^{1-\theta}\int_{0}^{\infty}(1+Kr)^{-1-\frac{\eta}{4(\gamma+s)}}dr\\
        &\le  C(2M)^{\theta}\frac{\lVert \sigma_{0}\rVert_{\dot H^{-s}}^{1-\theta}}{K}\le C(d,s,\gamma,M,\alpha)\lVert \sigma_{0}\rVert_{\dot H^{-s}}^{\frac{\eta}{2(\gamma+s)}}.
        \end{aligned}
    \end{equation}
    From here, we deduce a uniform lower bound on $\mu_{t}$. Indeed, since $\inf \bar\mu \ge \alpha$, and the gradient of the vector field advecting $\mu_{t}$ is precisely $-\nabla^{2}(-\Delta)^{-s}\sigma_{t}$,
    \begin{equation}\label{eq:uniform-lower-bound-local-convergence-s>1}
    \begin{aligned}
        \inf_{\T^{d}} \mu_{t}&\ge \big(\inf_{\T^{d}} \bar \mu\big)\, \exp\left(-C\int_{0}^{t}\lVert \nabla^{2}(-\Delta)^{-s}\sigma_{r}\rVert_{L^{\infty}}dr\right)\\
        &\ge \alpha \exp \left(-C(d,s,\gamma,M,\alpha)\lVert \sigma_{0}\rVert_{\dot H^{-s}}^{\frac{\eta}{2(\gamma+s)}}\right)
    \end{aligned}\qquad \forall t\in [0,\tau).
    \end{equation}
    
    We now conclude the proof by showing that if $\lVert \sigma_{0}\rVert_{\dot H^{-s}}^{2}\le \delta$ and $\delta>0$ is sufficiently small, then $\tau =T$. This, combined with \eqref{eq:polynomial-decay-H^-s-s>1}, proves that the estimates from \eqref{eq:conclusion-thm-conv-s>1} hold for all times $t\in [0,T)$, which in turn implies $T=\infty$, as already pointed out at the beginning of the proof. Suppose by contradiction that $\tau\in (0,T)$. Thanks to \eqref{eq:uniform-lower-bound-local-convergence-s>1}, by choosing $\delta$ small enough, we can make sure that $\inf_{\T^{d}} \mu_{t}\ge 3\alpha/4$ for every $t\in [0,\tau)$, so that necessarily $\lVert \sigma_{\tau}\rVert_{\dot H^{\gamma}}^{2}=2M$. However, \eqref{eq:energy-estimate-simplified-s>1}, together with \eqref{eq:polynomial-decay-H^-s-s>1} and \eqref{eq:integral-bound-Linfty-vector-field-s>1}, gives
    \begin{align*}
        \lVert \sigma_{\tau}\rVert_{\dot H^{\gamma}}^{2}&\le M \exp \left(C\int_{0}^{\tau}\lVert \nabla^{2}(-\Delta)^{-s}\sigma_{t}\rVert_{L^{\infty}}dt\right)\\&\quad\,+C\int_{0}^{\tau}\lVert \sigma_{t}\rVert_{\dot H^{-s}}^{2}\exp\left(\int_{t}^{\tau}C\lVert \nabla^{2}(-\Delta)^{-s}\sigma_{r}\rVert_{L^{\infty}}dr\right)\\
        &\le M\exp \left(C\lVert \sigma_{0}\rVert_{\dot H^{-s}}^{\frac{\eta}{2(\gamma+s)}}\right)\left(1+\frac{\lVert \sigma_{0}\rVert_{\dot H^{-s}}^{2}}{M}\int_{0}^{\infty}(1+Kt)^{-\frac{\gamma+s}{s-1}}dt\right)\le \frac{3}{2}M,
    \end{align*}
    provided that $\lVert \sigma_{0}\rVert_{\dot H^{-s}}\le \delta$ is chosen sufficiently small. This is a contradiction and concludes the proof. 
\end{proof}

\begin{rmk}\label{rmk:improved-regularity-target-convergence}
    The assumption $\nu \in H^{\gamma+s}(\T^d)$ in \Cref{thm:convergence-s>1} is merely technical and relies on the specific version of the Kato-Ponce commutator estimates that we have in \Cref{subsec:appendix-katoponce}. For instance, if we worked only with integer derivatives (i.e., $s, \gamma\in \N$, so that we could apply Leibniz product rule), we could weaken this assumption to $\nu \in H^{\gamma+1}(\T^d)$ (cf. the proof of \Cref{thm:convergence-arccos} in \Cref{subsec:quantitative-convergence-WFR} below). Furthermore, if we had a full Kato-Ponce estimate in the torus with fractional derivatives in the spirit of \cite[Theorem 1.2]{li2019kato}, one could further push the argument to assume only $\nu \in H^{\gamma+\eps}(\T^d)$ for some $\eps > 0$. \fr
\end{rmk}

\section{Convergence for continuous shallow neural networks}\label{sec:arccos}

In this section, we adapt the theory developed so far for Riesz KMD flows to the case of ReLU neural networks, within the framework of \Cref{subsec:arccos-intro}. We first explain how the Wasserstein gradient flow \eqref{eq:WGF-arccos} can be reduced, from the general formulation of \Cref{subsec:arccos-intro}, to the Wasserstein--Fisher--Rao dynamics \eqref{eq:PDE-Wasserstein-Fisher-Rao-sphere10} on the sphere. Next, in \Cref{subsec:analysis-sphere-arccos} we study the spectral properties of the positive semidefinite operator associated with the arccos kernel. In \Cref{subsec:well-posedness-WFR} we establish global well-posedness in the class of measures for \eqref{eq:PDE-Wasserstein-Fisher-Rao-sphere10}. Finally, in \Cref{subsec:quantitative-convergence-WFR} we prove the local quantitative convergence statement of \Cref{thm:convergence-arccos}.

\subsection{Reduction to a Wasserstein--Fisher--Rao flow}
\label{sec:WFR}

Let us show how to exploit the symmetries of the problem, up to a small loss of expressivity power, to  reduce \eqref{eq:WGF-arccos} to equivalent dynamics on $\Sp^{d}$ sharing many analogies with \eqref{eq:PDE-GF}. We explain this reduction in a few steps (continuing the discussion initiated in \Cref{subsec:arccos-intro}, and using the notation introduced there):

\smallskip
\noindent \textbf{From $\R^{d+2}$ to $\sqrt{2}\Sp^{d+1}$.} Leveraging the $2$-homogeneity of $\Phi(w,x)$ in the variable $w$, we may equivalently represent the function $f_{\mu}$ in \eqref{eq:fmu_intro} in terms of the measure $\Tilde{\mu}\in \mathcal{M}_{+}(\sqrt{2}\Sp^{d+1})$ obtained by projecting $\mu$ on $\sqrt{2}\Sp^{d+1}$ with quadratic weight on the radial variable: 
\begin{equation*}
\begin{gathered}
    f_{\mu}(x)=\int_{\R^{d+2}}\Phi(w, x)d\mu(w)=\int_{\sqrt{2}\Sp^{d+1}}\Phi(\theta,x)d\Tilde{\mu}(\theta)=:f_{\tilde{\mu}}(x),\\
    \int_{\sqrt{2}\Sp^{d+1}}\psi d\Tilde{\mu}:=\int_{\R^{d+2}}\frac{|w|^{2}}{2}\psi\left(\sqrt{2}\frac{w}{|w|}\right)d\mu\qquad \forall \psi\in C(\sqrt{2}\Sp^{d}).
\end{gathered}
\end{equation*}
Replacing $\mu_{t}, \nu$, and $\bar\mu$ with the respective projections on $\sqrt{2}\Sp^{d+1}$, \eqref{eq:WGF-arccos} reads as
\begin{equation}\label{eq:WFR-arccos}
    \left\{
    \begin{array}{rclll}
        \partial_{t}\mu_{t}+\diver_{\sqrt{2}\Sp^{d+1}}\left(\mu_{t} v_{t}\right)&=&-4\hat{\mathcal{K}}(\mu_{t}-\nu)\mu_{t}\quad &\text{in $(0,T)\times \sqrt{2}\Sp^{d+1}$},\\
           v_{t}&=&-\nabla_{\sqrt{2}\Sp^{d+1}} \hat{\mathcal{K}}(\mu_{t}-\nu)\quad &\forall t\in (0,T),\\
           \mu_{0}&=&\bar\mu,
    \end{array}
    \right.
\end{equation}

\smallskip
\noindent \textbf{From $\sqrt 2 \Sp^{d+1}$ to $\{\pm 1\}\times \Sp^{d}$.} In order to make a further reduction, we observe that any function $f_{\mu}$, $\mu\in \mathcal{M}_{+}(\sqrt{2}\Sp^{d+1})$ can be represented equivalently as $f_{\mu'}$ for some $\mu'\in \mathcal{M}_{+}(\sqrt{2}\Sp^{d+1})$ with $\supp \mu' \subset \{w=(a,b)\in \R\times \R^{d+1}:|a|=|b|\}$. This is a consequence of the $1$-homogeneity of $\Phi(w,x)$, separately in the two variables $a\in \R$ and $b\in \R^{d+1}$. Furthermore, the dynamics \eqref{eq:WFR-arccos} is closed in the set of measures supported in the cone $\{|a|=|b|\}$, as can be checked by showing that the vector field $v_{t}$ is tangential to $\{|a|=|b|\}$ whenever $\mu_{t}$ and $\nu$ are supported there.
For these reasons, we may reduce to the case in which $\bar\mu, \nu \in \mathcal{M}_{+}(\sqrt{2}\Sp^{d+1})$ are nonnegative finite measures supported in $\{|a|=|b|\}$, so that the solution $\mu_{t}\in \mathcal{M}_{+}(\sqrt{2}\Sp^{d+1})$ of \eqref{eq:WFR-arccos} will also be supported in $\{|a|=|b|\}$. Observe that the intersection of $\sqrt{2}\Sp^{d+1}$ with the cone $\{|a|=|b|\}$ consists of two disjoint copies of $\Sp^{d}$:
\begin{equation*}
    \sqrt{2}\Sp^{d+1}\cap \{|a|=|b|\}=\{\pm 1\}\times \Sp^{d}.
\end{equation*}
Identifying $\mu_{t}$ with a couple of measures $(\mu_{t}^{+},\mu_{t}^{-})\in \mathcal{M}_{+}(\Sp^{d})^{2}$, and similarly for $\bar\mu$ and $\nu$, we may write \eqref{eq:WFR-arccos} as a nonlocal forced continuity equation of two species on the sphere $\Sp^{d}$:
\begin{equation}\label{eq:WFR-two-species}
    \left\{
    \begin{array}{rclll}
        \partial_{t}\mu_{t}^{+}+\diver_{\Sp^{d}}\left(\mu_{t}^{+} v_{t}\right)&=&-4\mathcal{K}\left((\mu_{t}^{+}-\mu_{t}^{-})-(\nu^{+}-\nu^{-})\right)\mu_{t}^{+}\quad &\text{in $(0,T)\times \Sp^{d}$},\\
        \partial_{t}\mu_{t}^{-}-\diver_{\Sp^{d}}\left(\mu_{t}^{-} v_{t}\right)&=&+4\mathcal{K}\left((\mu_{t}^{+}-\mu_{t}^{-})-(\nu^{+}-\nu^{-})\right)\mu_{t}^{-}\quad &\text{in $(0,T)\times \Sp^{d}$},\\
           v_{t}&=&-\nabla_{\Sp^{d}}\mathcal{K}\left((\mu_{t}^{+}-\mu_{t}^{-})-(\nu^{+}-\nu^{-})\right) \quad &\forall t\in (0,T),\\
           \mu_{0}^{+}&=&\bar\mu^{+},\\
           \mu_{0}^{-}&=&\bar\mu^{-},
    \end{array}
    \right.
\end{equation}
where $\mathcal{K}$ is the operator defined in \eqref{eq:def-arccos-operator+kernel}.

\smallskip
\noindent \textbf{From $\{\pm 1\}\times \Sp^{d}$ to $\Sp^{d}$.} Finally, we show that we can take $\bar\mu^{-}=\nu^{-}=0$ (and consequently $\mu_{t}^{-}=0$) in \eqref{eq:WFR-two-species}, thus reducing to the evolution of a single species, provided that we assume sufficient regularity on the target function $f_{\nu}$. Under the identification $\nu=(\nu^{+},\nu^{-})\in \mathcal{M}_{+}(\Sp^{d})^{2}$ from the previous step, we have $f_{\nu}=\mathcal{H}(\nu^{+}-\nu^{-})$, where
\begin{equation}\label{eq:def-repres-operator-H}
    \mathcal{H}(\eta)(x):=\int_{\Sp^{d}}(x\cdot y)_{+}d\eta(y)\qquad \forall \eta\in \mathcal{M}(\Sp^{d}).
\end{equation}
With some explicit computations in terms of spherical harmonics expansions, one can show the following (see \Cref{lem:ReLU-operator}):
\begin{itemize}
    \item Linear functions are the only odd functions on the sphere that can be written as $\mathcal{H}(\eta)$, for some $\eta\in \mathcal{M}(\Sp^{d})$. In particular, all possible functions that we can represent as $f_{\nu}$ are even on the sphere, up to the addition of some linear function.  
    \item  For all $\gamma\ge 0$, $\mathcal{H}$ defines a linear bijective continuous operator from $H^{\gamma}_{\even}(\Sp^{d})$ to $H^{\gamma+s}_{\even}(\Sp^{d})$, where $s=(d+3)/2$. As a consequence, taking $\gamma>d/2$ and using the Sobolev embedding $H^{\gamma}(\Sp^{d})\hookrightarrow L^{\infty}(\Sp^{d})$ we deduce the following: all functions $f\in H^{\gamma+s}_{\even}(\Sp^{d})$ can be written as $f=f_{\nu}+C_{f}$, for some $\nu=(\nu^{+},0)$, $\nu^{+}$ even, and some constant $C_{f}$ such that $|C_{f}|\lesssim_{d,\gamma} \lVert f\rVert_{H^{\gamma+s}}$.
\end{itemize}
In view of the observations above, we may take $\bar\mu^{-}=\nu^{-}=0$ and $\bar\mu^{+},\nu^{+}$ even. This is done at the expense of a small loss of expressivity power: we can only represent sufficiently regular even functions on the sphere, up to the addition of a controlled constant. Identifying $\bar\mu, \nu$, and $\mu_{t}$ with $\bar\mu^{+},\nu^{+}$, and $\mu_{t}^{+}$, respectively, \eqref{eq:WFR-two-species} becomes 
\begin{equation*}
    \left\{
    \begin{array}{rclll}
        \partial_{t}\mu_{t}+\diver_{\Sp^{d}}\left(\mu_{t}v_{t}\right)&=&-4\mathcal{K}(\mu_{t}-\nu)\mu_{t}\qquad &\text{in $(0,T)\times \Sp^{d}$},\\
           v_{t}&=&-\nabla_{\Sp^{d}} \mathcal{K}(\mu_{t}-\nu)\qquad &\forall t\in (0,T),\\
           \mu_{0}&=&\bar\mu,
    \end{array}
    \right.
\end{equation*}
as we wanted to show.

\subsection{Analysis on the sphere and the arccos kernel operator}\label{subsec:analysis-sphere-arccos}
In this section we recall some notions of analysis on the $d$-dimensional unit sphere $\Sp^{d}\subset \R^{d+1}$. Next, we consider the arccos kernel $K:\Sp^{d}\times \Sp^{d}\to \R$ and the corresponding convolution operator $\mathcal{K}$ defined in \eqref{eq:def-arccos-operator+kernel}, obtaining the precise regularization properties of the latter. We refer the reader to \cite[Appendix D]{bach2017breaking} and the book \cite{dai2013approximation} for a broader introduction to the topic. 
\subsubsection{Spherical harmonics and multiplier operators }
We consider an orthonormal basis of $L^{2}(\Sp^{d})$ made of eigenfunctions for the Laplace-Beltrami operator $-\Delta_{\Sp^{d}}$:  
    \begin{equation*}
    \begin{gathered}
    \{Y_{kj}: k\ge 0, \,\,1\le j\le N(d,k)\},\qquad -\Delta_{\Sp^{d}}Y_{kj}=\mu_{k}Y_{kj},\quad \mu_{k}:=k(k+d-1),\\[5pt]
       N(d,0)=1, \qquad N(d,k)= \frac{2k+d-1}{k}\binom{k+d-2}{d-1}\quad \forall k\ge 1.
    \end{gathered}
    \end{equation*}
    For every $k\ge 0$ and $1\le j\le N(d,k)$, $Y_{kj}$ is the restriction to the sphere of a $k$-homogeneous harmonic polynomial in $\R^{d+1}$. For this reason, $Y_{kj}$ are usually called \emph{spherical harmonics}. 
  
For every distribution $f\in \mathscr{D}'(\Sp^{d})$, every $k\ge 0$ and $1\le j\le N(d,k)$, we define $\hat{f}_{kj}\in \R$ and $f_{k}\in C^{\infty}(\Sp^{d})$ respectively as
  \begin{equation*}
      \hat{f}_{kj}:=\langle f, Y_{kj}\rangle\quad 1\le j\le N(d,k), \qquad f_{k}:= \sum_{j=1}^{N(d,k)}\hat{f}_{kj}Y_{kj}\qquad \forall k\ge 0.
  \end{equation*}
  Every function $f\in L^{2}(\Sp^{d})$ admits an expansion in terms of the orthonormal basis of spherical harmonics:
    \begin{equation*}
        f=\overunderset{\infty}{k=0}{\sum}\overunderset{N(d,k)}{j=1}{\sum}\hat{f}_{kj}Y_{kj}=\sum_{k=0}^{\infty}f_{k},\qquad \lVert f\rVert_{L^{2}(\Sp^{d})}^{2}=\sum_{k=0}^{\infty}\sum_{j=1}^{N(d,k)}|\hat{f}_{kj}|^{2}=\sum_{k=0}^{\infty}\lVert f_{k}\rVert_{L^{2}(\Sp^{d})}^{2}.
    \end{equation*}

 For $\gamma\in \R$, the homogeneous $\gamma$-Sobolev seminorm is defined as
 \begin{equation*}
     \lVert f\rVert_{\dot H^{\gamma}(\Sp^{d})}^{2}:= \sum_{k=1}^{\infty}\mu_{k}^{\gamma}\lVert f_{k}\rVert_{L^{2}(\Sp^{d})}^{2}\qquad \forall f\in \mathscr{D}'(\Sp^{d}),
 \end{equation*}
 where $\mu_{k}=k(k+d-1)$ is the $k$-th eigenvalue of the Laplace-Beltrami operator $-\Delta_{\Sp^{d}}$. The corresponding homogeneous Sobolev space $\dot H^{\gamma}(\Sp^{d})$ is obtained as follows:
\begin{equation*}
    \dot H^{\gamma}(\Sp^{d}):=\left\{f\in \mathscr{D}'(\Sp^{d}):f_{0}=0,\, \lVert f\rVert_{\dot H^{\gamma}(\Sp^{d})}^{2}<\infty\right\}.
\end{equation*}
 When $\gamma\ge 0$, we also introduce the inhomogeneous Sobolev space $H^{\gamma}(\Sp^{d})$:
 \begin{equation*}
     H^{\gamma}(\Sp^{d}):= \left\{f\in L^{2}(\Sp^{d}): \lVert f\rVert_{H^{\gamma}(\Sp^{d})}:= \lVert f\rVert_{L^{2}(\Sp^{d})}+\lVert f\rVert_{\dot H^{\gamma}(\Sp^{d})}<\infty\right\}.
 \end{equation*} 
 
 We say that a linear (possibly unbounded) operator $T$ on $L^{2}(\Sp^{d})$ is a multiplier if there is a sequence $\{b_{k}\}_{k\ge 0}\subset \R$ such that $$(Tf)_{k}= b_{k}f_{k}\qquad \forall k\ge 0.$$
  Typical examples of multiplier operators are powers of the Laplace-Beltrami operator $(-\Delta_{\Sp^{d}})^{\gamma}$, where $\gamma\in \R$, for which $b_{k}=\mu_{k}^{\gamma}$.
Notice that $f\in L^{2}(\Sp^{d})$ is even (resp. odd)\footnote{Here symmetry is considered with respect to the origin. In particular $f$ is even is $f(x)=f(-x)$, and odd if $f(x)=-f(-x)$.} if and only if $f_{k}=0$ for all odd (resp. even) $k\ge 0$. In particular, a multiplier operator maps even (resp. odd) functions to even (resp. odd) functions.

\subsubsection{Sobolev spaces on the sphere in terms of angular derivatives}\label{subsec:angular-derivatives}
For every $1\le i<j\le d+1$ the angular derivative in the coordinates $i,j$ is defined as follows:
    \begin{equation*}
        \nabla_{(i,j)}:= x_{i}\partial_{i}-x_{j}\partial_{j}=\partial_{\theta_{ij}},\qquad (x_{i},x_{j})=r_{ij}(\cos\theta_{ij},\sin\theta_{ij}).
    \end{equation*}
    Some useful properties of angular derivatives $\nabla_{(i,j)}$ are listed below (see \cite[Chapters 1-3]{dai2013approximation} for the proofs):

    \begin{itemize}
        \item Let $\nabla_{\Sp^{d}}$ and $-\Delta_{\Sp^{d}}$ be the tangential gradient and the Laplace-Beltrami operator in $\Sp^{d}$, respectively. Then, the following identities hold for sufficiently regular functions $f,g:\Sp^{d}\to \R$:
        \begin{equation*}
        \begin{gathered}
             (\nabla_{\Sp^{d}} f)_{j}(x)=\sum_{\substack{1\le i\le d+1 \\ i\neq j}}x_{i}\nabla_{(i,j)}f(x),\qquad \nabla_{\Sp^{d}} f\cdot \nabla_{\Sp^{d}} g= \sum_{1\le i<j\le d+1}\nabla_{(i,j)}f\nabla_{(i,j)}g,\\
             \Delta_{\Sp^{d}} f= \sum_{1\le i<j\le d+1}\nabla_{(i,j)}^{2}f.
        \end{gathered}
        \end{equation*}
        \item Angular derivatives $\nabla_{(i,j)}$ leave the space of spherical harmonics of order $k$ invariant, for all $k\ge 0$. In particular, $\nabla_{(i,j)}$ commutes with any multiplier operator. 
        \item The Leibniz rule holds: $\nabla_{(i,j)}(fg)=\nabla_{(i,j)}f g+f\nabla_{(i,j)}g$.
        \item Integration by parts holds: $\int_{\Sp^{d}}f\nabla_{(i,j)}g=-\int_{\Sp^{d}}\nabla_{(i,j)}f g$.
    \end{itemize}

\medskip
Given a multi-index $\beta=\{\beta_{(i,j)}:1\le i<j\le d+1\}$ with $\beta_{(i,j)}\in \N\cup \{0\}$, we write $|\beta|=\sum_{1\le i<j\le d+1}\beta_{(i,j)}$, and we define the corresponding higher order angular derivative $\nabla_{\beta}$ as the composition
    \begin{equation*}
        \nabla_{\beta}:=\underset{1\le i<j\le d+1}{\bigcirc}\nabla_{(i,j)}^{\beta_{(i,j)}}.
    \end{equation*}
    For every smooth function $f\in C^{\infty}(\Sp^{d})$, $\gamma\in \N\cup \{0\}$, and $p\in [1,\infty)$, angular Sobolev norms are defined as follows: 
    \begin{equation*}
        \lVert f\rVert_{\dot W^{\gamma,p}}:= \left(\sum_{|\beta|=\gamma}\lVert \nabla_{\beta}f\rVert_{L^{p}}^{p}\right)^{1/p},\qquad \lVert f\rVert_{W^{\gamma,p}}:= \sum_{k=0}^{\gamma}\lVert f\rVert_{\dot W^{k,p}},
    \end{equation*}
    with the classical meaning of supremum norm in the case $p=\infty$.
    The space $W^{\gamma,p}(\Sp^{d})$ is defined as the closure of $C^{\infty}(\Sp^{d})$ with respect to $\lVert \cdot\rVert_{W^{\gamma,p}}$.

    As proved in \cite[Section 3.5.1]{dai2013approximation}, for every $1\le i<j\le d+1$, the Riesz transform $R_{ij}:=\nabla_{(i,j)}(-\Delta_{\Sp^{d}})^{-1/2}$ is bounded in $L^{p}$ for $1<p<\infty$.
       More precisely, we have
        \begin{equation}\label{eq:comparability-homogeneous-angular}
           \max_{1\le i<j\le d+1}\lVert \nabla_{(i,j)}f\rVert_{L^{p}}\approx_{d,p}  \lVert (-\Delta_{\Sp^{d}})^{1/2}f\rVert_{L^{p}}\qquad \forall f\in W^{1,p}(\Sp^{d}),\quad \forall p\in (1,\infty).
        \end{equation}
       An iterative application of \eqref{eq:comparability-homogeneous-angular} gives
    \begin{equation*}
        \lVert f\rVert_{\dot W^{\gamma,p}}\approx_{d,p,\gamma}\lVert (-\Delta_{\Sp^{d}})^{\gamma/2}f\rVert_{L^{p}} \qquad \forall f\in W^{\gamma,p}(\Sp^{d}),\quad \forall\gamma\in \N\cup \{0\},\quad \forall p\in (1,\infty).
    \end{equation*}
    In the case $p=2$, we have $H^{\gamma}(\Sp^{d})=W^{\gamma,2}(\Sp^{d})$. Moreover, homogeneous and angular Sobolev norms coincide:
    \begin{equation*}
       \lVert f\rVert_{\dot W^{\gamma,2}}^{2}=\sum_{|\beta|=\gamma} \lVert \nabla_{\beta}f\rVert_{L^{2}}^{2}=\int_{\Sp^{d}}f(-\Delta_{\Sp^{d}})^{\gamma}f=\lVert f\rVert_{\dot H^{\gamma}}^{2}\qquad \forall f\in W^{\gamma,2}(\Sp^{d}),\quad \forall \gamma \in \N\cup \{0\}.
    \end{equation*}

\subsubsection{ReLU activation function and arccos kernel on the sphere}
In this section, we study the spectral behavior of the representation operator $\mathcal{H}$ associated with the ReLU activation function in \eqref{eq:def-repres-operator-H}. We deduce that the positive semidefinite operator $\mathcal{K}=\mathcal{H}^{2}$ in \eqref{eq:def-arccos-operator+kernel} is comparable to $(-\Delta_{\Sp^{d}})^{-\frac{d+3}{2}}$, at least when acting on positive even frequencies. Later, in \Cref{subsec:quantitative-convergence-WFR} we will exploit this spectral analysis to prove the local polynomial quantitative convergence result for ReLU shallow neural networks (see \Cref{thm:convergence-arccos}). 

In the following lemma, we report the spectral analysis on $\mathcal{H}$ in spherical harmonics, as derived in \cite{bach2017breaking}:

        \newlength{\nl}
\setlength{\nl}{34mm}

\begin{lem}[\protect{\cite[Appendix C.1 \& D.2]{bach2017breaking}}]\label{lem:ReLU-operator}
    Let $\mathcal{H}$ be the operator defined in \eqref{eq:def-repres-operator-H}, and let $s=(d+3)/2$. Then $\mathcal{H}:L^{2}(\Sp^{d})\to L^{2}(\Sp^{d})$ is a multiplier operator such that $(\mathcal{H}f)_{k}=\beta_{k}f_{k}$, where 

\[
{\renewcommand{\arraystretch}{1.05}%
\begin{aligned}
 (d=1) &&\quad \beta_k &:=
\begin{cases}
2, & \hspace{\nl} k=0,\\
\frac{\pi}{2}, &\hspace{\nl} k=1,\\
(-1)^{\frac{k}{2}-1}\frac{2}{k^{2}-1}, & \hspace{\nl}k\in 2\N,\\
0, & \hspace{\nl}k\in 2\N+1,
\end{cases}
\\[10pt]
 (d\ge 2)&&\quad \beta_k &:=
\begin{cases}
\frac{|\Sp^{d}|(d-1)}{2\pi d}, & k=0,\\
\frac{|\Sp^{d}|(d-1)\Gamma(\frac{d}{2})}{4\sqrt{\pi}\,d\,\Gamma(\frac{d+3}{2})}, & k=1,\\
(-1)^{\frac{k}{2}-1}\frac{|\Sp^{d}|(d-1)\Gamma(\frac{d}{2})}{2\pi}
\frac{\Gamma(k-1)}{2^{k}\Gamma(\frac{k}{2})\Gamma(\frac{k+d}{2}+1)}, & k\in 2\N,\\
0, & k\in 2\N+1.
\end{cases}
\end{aligned}}
\]
         Moreover, we have $|\beta_{k}|\approx_{d} k^{-s}$ for all $k\in 2\N$. Thus, for all $\gamma\ge 0$, $\mathcal{H}$ defines a linear continuous bijective operator from $H^{\gamma}_{\even}(\Sp^{d})$ to $H^{\gamma+s}_{\even}(\Sp^{d})$.  
\end{lem}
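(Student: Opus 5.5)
The plan is to use the Funk--Hecke formula. The kernel $(x\cdot y)_{+}$ is zonal, so $\mathcal{H}$ commutes with rotations. Hence $\mathcal{H}$ acts on each space of degree-$k$ spherical harmonics as a scalar multiple of the identity. This is exactly the statement that $\mathcal{H}$ is a multiplier. The eigenvalue is
\begin{equation*}
\beta_{k}=|\Sp^{d-1}|\int_{-1}^{1}(t)_{+}\,P_{k}^{d}(t)\,(1-t^{2})^{\frac{d-2}{2}}\,dt ,
\end{equation*}
where $P_{k}^{d}$ is the Gegenbauer (Legendre) polynomial of degree $k$ in dimension $d+1$, normalized so that $P_{k}^{d}(1)=1$. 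For $d=1$ this reduces to the Fourier coefficients of $\theta\mapsto(\cos\theta)_{+}$, which can be computed elementarily and give the stated values.

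For the odd case, write $(t)_{+}=\tfrac12 t+\tfrac12|t|$. The part $|t|$ is even and $P_{k}^{d}$ is odd for odd $k$, so $|t|$ contributes nothing when $k$ is odd. The part $\tfrac12 t$ is orthogonal to every $P_{k}^{d}$ with $k\neq 1$, since $t$ is a degree-one polynomial. Together these give $\beta_{k}=0$ for odd $k\ge 3$, and $\beta_{1}$ is just a moment of $t^{2}$ against the weight.

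For even $k$, I will use the Rodrigues formula
\begin{equation*}
P_{k}^{d}(t)(1-t^{2})^{\frac{d-2}{2}}=c_{k,d}\,\frac{d^{k}}{dt^{k}}(1-t^{2})^{k+\frac{d-2}{2}}
\end{equation*}
and integrate by parts on $[0,1]$. The boundary terms at $t=1$ vanish. Two integrations by parts leave only the boundary terms at $t=0$, namely a derivative of order $k-2$ of $(1-t^{2})^{k+\frac{d-2}{2}}$ evaluated at $0$. This is an explicit binomial-type coefficient. Simplifying it with the Gamma function gives the closed form in the statement, with the sign $(-1)^{k/2-1}$.

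Next, Stirling's formula gives
\begin{equation*}
\frac{\Gamma(k-1)}{2^{k}\Gamma(\frac{k}{2})\Gamma(\frac{k+d}{2}+1)}\approx_{d}\frac{2^{k}k^{k/2-3/2}}{2^{k}\,k^{k/2}\,k^{d/2+1}}\;\approx\; k^{-\frac{d+3}{2}}=k^{-s}.
\end{equation*}
The powers of $2$ cancel by the duplication formula.

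Finally, the Sobolev statement follows from the definitions. On even functions, $\|\mathcal{H}f\|_{\dot H^{\gamma+s}}^{2}=\sum_{k}\mu_{k}^{\gamma+s}\beta_{k}^{2}\|f_{k}\|^{2}$. Since $\mu_{k}\approx k^{2}$ and $\beta_{k}^{2}\approx k^{-2s}$, this is comparable to $\|f\|_{\dot H^{\gamma}}^{2}$. The modes $k=0$ and all even $k$ have $\beta_{k}\neq0$, so $\mathcal{H}$ is injective on even functions. Its inverse is the multiplier $\beta_{k}^{-1}$, which is bounded from $H^{\gamma+s}_{\even}$ to $H^{\gamma}_{\even}$. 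This gives bijectivity.

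The main obstacle is the bookkeeping in the even-$k$ integration by parts. One must track the Rodrigues normalization $c_{k,d}$ and the boundary term at $0$ carefully enough to recover the exact Gamma expression. For the comparability $|\beta_{k}|\approx k^{-s}$ alone, the exact constant does not matter and only the asymptotics are needed.
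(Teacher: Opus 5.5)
Your proposal is correct and follows the computation in Bach's Appendix D.2, which is what the paper cites for this lemma; the paper adds no proof of its own. That computation consists of Funk--Hecke, parity for odd $k$, Rodrigues' formula with two integrations by parts for even $k$, Stirling for the decay, and a mode-by-mode comparison of Sobolev weights.

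There are two bookkeeping points. First, the exponents in your Stirling display give $k^{-(d+5)/2}$, not $k^{-(d+3)/2}$. The duplication formula $\Gamma(k-1)=\pi^{-1/2}2^{k-2}\Gamma(\tfrac{k-1}{2})\Gamma(\tfrac{k}{2})$ fixes this: it turns the ratio into $\tfrac{1}{4\sqrt{\pi}}\Gamma(\tfrac{k-1}{2})/\Gamma(\tfrac{k+d}{2}+1)\approx_d k^{-(d+3)/2}$. Second, with the unnormalized surface measure your computation gives the prefactor $|\Sp^{d-1}|\,\Gamma(\tfrac d2)$. The displayed $d\ge 2$ constants instead carry $\tfrac{(d-1)|\Sp^{d}|}{2\pi}=|\Sp^{d-2}|$; for example, $\int_{\Sp^{2}}(x\cdot y)_{+}\,dy=\pi$, whereas the stated $\beta_0$ for $d=2$ is $1$. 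So you should not expect to reproduce those exact constants. This discrepancy does not affect $|\beta_k|\approx_d k^{-s}$ or the bijectivity.
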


\begin{rmk}
The previous statement could also be obtained from the Goodey--Weil  identity  relating the cosine transform $\mathcal{C}$ with the (spherical) Radon transform $\mathcal{R}$ on the sphere\footnote{For any $f\in C^\infty(\Sph^{d})$, the cosine and (spherical) Radon transforms are given, respectively, by 
\[
(\mathcal C f ) (x) = \int_{\Sph^{d}} |x \cdot y| f(y) d y\qquad \text{and}\qquad (\mathcal R f ) (x) = \int_{\Sph^{d}\cap x^\perp}  f(v) d\sigma_{u^\perp} (v), \qquad\text{for}\quad x\in \Sph^{d},
\]
where $dy$ is the $d$-dimensional surface measure on $\Sph^{d}$, and  $d\sigma_{u^\perp} (v)$ denotes the $(d-1)$-dimensional surface measure on $u^\perp\cap \Sph^{d}\cong \Sph^{d-1}$. 
} $\Sph^{d}$ (see \cite[Prop. 2.1]{goodey1992centrally}:
\[
\mathcal{C}^{-1} = \frac{1}{2} (\Delta_{\Sph^{d} }+ d )\mathcal{R}^{-1}, 
\]
This is because $\mathcal{H} = \tfrac12 \mathcal{C}$ when acting on even distributions. \fr
\end{rmk}
 
Next, we consider the symmetric kernel $K:\Sp^{d}\times \Sp^{d}\to \R$ and its corresponding convolution operator $\mathcal{K}$, defined in \eqref{eq:def-arccos-operator+kernel}.
$K$ is often called an \lq\lq arccos-type kernel'', for it can be written as a function of the geodesic distance in $\Sp^{d}$ between $x$ and $y$,
$$\theta(x,y):= \arccos(x\cdot y)= \dist_{\mathbb{S}^{d}}(x,y)\in [0,\pi],\qquad x,y\in \mathbb{S}^{d}.$$
In fact, as shown for instance in \cite{cho2009kernel, cho2011analysis}, there is a positive constant $c=c(d)>0$ such that 
\begin{equation*}
    K(x,y)= J(\theta(x,y))\quad \forall x,y \in \Sp^{d},\qquad J(\theta):= c\left(\sin \theta +(\pi-\theta)\cos \theta\right) \quad \forall \theta\in [0,\pi].
\end{equation*}
The function $J:[0,\pi]\to \R$ is smooth, nonnegative, strictly decreasing, and has the following asymptotics as $\theta\to0^{+}, \pi^{-}$:
\begin{equation*}
    J(\theta)=c\left(\pi-\frac{\pi}{2}\theta^{2}+\frac{1}{3}\theta^{3}\right)+o(\theta^{3})\quad \text{as $\theta\to 0^{+}$},\qquad J(\theta)= \frac{c}{3}(\pi-\theta)^{3}+o((\pi-\theta)^{3})\quad \text{as $\theta\to \pi^{-}$}.
\end{equation*}
As a consequence, working in normal coordinates, one can show that $K\in C^{2,1}(\Sp^{d}\times \Sp^{d})$.
In particular, the following estimate holds for the convolution operator $\mathcal{K}$ in \eqref{eq:def-arccos-operator+kernel}:   
\begin{equation}\label{eq:C21-bound-arccos-operator}
    \lVert \mathcal{K\eta}\rVert_{C^{2,1}(\Sp^{d})}\lesssim_{d} \lVert \eta\rVert_{\mathcal{M}(\Sp^{d})}\qquad \forall \eta\in \mathcal{M}(\Sp^{d}).
\end{equation}

Leveraging the particular structure of the kernel $K$ in \eqref{eq:def-arccos-operator+kernel}, one can check that $\mathcal{K}=\mathcal{H}^{2}$. This makes $\mathcal{K}$ a positive semidefinite multiplier operator in spherical harmonics, whose spectral behavior is comparable, at least when restricted to positive even frequencies, with that of $(-\Delta_{\Sp^{d}})^{-\frac{d+3}{2}}$ (see \cite{bietti2020deep} for some related results): 

\begin{lem}\label{lem:spectral-behavior-arccos-operator}
Let $\mathcal{K}$ be the operator defined in \eqref{eq:def-arccos-operator+kernel}, and let $s:=\frac{d+3}{2}$. The following hold:
\begin{itemize}
    \item [i)] $\mathcal{K}:L^{2}(\Sp^{d})\to L^{2}(\Sp^{d})$ is a multiplier operator such that $(\mathcal{K}f)_{k}=\lambda_{k}f_{k}$, where $\lambda_{k}=\beta_{k}^{2}$ and $\beta_{k}$ are given by \Cref{lem:ReLU-operator}.
    In particular, $\lambda_{k}\approx_{d} k^{-2s}$ for all $k\in 2\N$, and the approximate identity for $\mathscr{E}^{\nu}_{\Sp^{d}}$ in \eqref{eq:comparability-energy-arccos-H^-s} holds.
    \item [ii)] The multiplier operator $T:= (-\Delta_{\Sp^{d}})^{s}\mathcal{K}:L^{2}(\Sp^{d})\to L^{2}(\Sp^{d})$ is invertible when restricting domain and image to the space of zero-mean even functions $L^{2}_{2\N}(\Sp^{d})$, and 
    \begin{equation*}
        \lVert Tf\rVert_{L^{p}}\approx_{d,p} \lVert f\rVert_{L^{p}}\qquad \forall f\in L^{p}\cap L^{2}_{2\N}(\Sp^{d}),\quad \forall p\in (1,\infty).
    \end{equation*}
    \item [iii)] There is a constant $\bar c=\bar c(d)>0$ such that 
    \begin{equation*}
        \lVert\left(\mathcal{K}-\bar c(-\Delta_{\Sp^{d}})^{-s}\right)(f)\rVert_{L^{2}}\lesssim_{d}\lVert f\rVert_{\dot H^{-2s-1}}\qquad \forall f\in L^{2}_{2\N}.
    \end{equation*}
\end{itemize}
\end{lem}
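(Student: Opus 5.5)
The plan is to reduce every claim to the identity $\mathcal{K}=\mathcal{H}^{2}$ and to the spectral description of $\mathcal{H}$ in \Cref{lem:ReLU-operator}.

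\textbf{Step 1 (item i).} Because $K(x,y)=\int_{\Sp^{d}}(x\cdot\xi)_{+}(y\cdot\xi)_{+}d\xi$, Fubini gives
\[
\mathcal{K}\eta(x)=\int_{\Sp^{d}}(x\cdot\xi)_{+}\,\mathcal{H}\eta(\xi)\,d\xi=\mathcal{H}(\mathcal{H}\eta)(x).
\]
Here $\mathcal{H}$ is a multiplier with coefficients $\beta_{k}$, so $\mathcal{K}$ is a multiplier with $\lambda_{k}=\beta_{k}^{2}\ge0$. The bound $|\beta_{k}|\approx_{d}k^{-s}$ on even $k$ then gives $\lambda_{k}\approx k^{-2s}$.

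For \eqref{eq:comparability-energy-arccos-H^-s}, take $\eta=\mu-\nu$ with $\mu,\nu$ even. Then $\eta_{k}=0$ for odd $k$, and
\[
2\mathscr{E}^{\nu}_{\Sp^{d}}(\mu)=\sum_{k}\lambda_{k}\lVert\eta_{k}\rVert_{L^{2}}^{2}.
\]
Split this sum into two pieces. The $k=0$ term equals $\lambda_{0}|\Sp^{d}|^{-1}(\mu(\Sp^{d})-\nu(\Sp^{d}))^{2}$. For the even $k\ge2$ terms, use $\mu_{k}=k(k+d-1)\approx k^{2}$, so that $\lambda_{k}\approx\mu_{k}^{-s}$; these terms therefore sum to a quantity comparable to $\lVert\eta\rVert_{\dot H^{-s}}^{2}$. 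To apply this to measures, first mollify on the sphere, or work directly with coefficients $\hat\eta_{kj}$, which are well defined for measures.

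\textbf{Step 2 (item ii).} On $L^{2}_{2\N}$, $T$ is the multiplier $t_{k}=\mu_{k}^{s}\beta_{k}^{2}$ for even $k\ge2$. This sequence is bounded above and below, so the $L^{2}$ invertibility is immediate.

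The real work is the $L^{p}$ equivalence, and I expect it to be the main obstacle. I would use a Marcinkiewicz/Hörmander-type multiplier theorem for spherical harmonic expansions (e.g.\ the one in \cite{dai2013approximation}). It requires $t_{k}=m(k)$ for a smooth $m$ with $|m^{(j)}(x)|\lesssim x^{-j}$, or bounded finite differences $|\Delta^{j}t_{k}|\lesssim k^{-j}$ up to order about $d/2+1$.

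From the Gamma-function formula, for even $k$,
\[
t_{k}=C_{d}\,(k(k+d-1))^{s}\left(\frac{\Gamma(k-1)}{2^{k}\Gamma(k/2)\Gamma((k+d)/2+1)}\right)^{2}.
\]
Duplication, $\Gamma(k-1)=\pi^{-1/2}2^{k-2}\Gamma(\tfrac{k-1}{2})\Gamma(\tfrac{k}{2})$, rewrites this as a ratio of Gamma functions. Stirling's expansion then shows it is a smooth function of $k$ with an asymptotic expansion $c_{0}+c_{1}k^{-1}+\dots$, hence of symbol type of order $0$.

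Since only even $k$ appear, I would extend $m$ to odd $k$ by the same smooth formula. This does not affect the operator on even functions. Boundedness of $T$ follows, and the same argument applied to $1/m$ gives boundedness of $T^{-1}$.

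\textbf{Step 3 (item iii).} Set $\bar c:=\lim_{k}t_{k}=c_{0}$. The Stirling expansion gives $|t_{k}-\bar c|\lesssim k^{-1}$. Hence
\[
\lVert(\mathcal{K}-\bar c(-\Delta)^{-s})f\rVert_{L^{2}}^{2}=\sum_{k\in2\N}\mu_{k}^{-2s}|t_{k}-\bar c|^{2}\lVert f_{k}\rVert^{2}\lesssim\sum_{k}\mu_{k}^{-2s-1}\lVert f_{k}\rVert^{2},
\]
using $k^{-2}\approx\mu_{k}^{-1}$. The last sum is $\lVert f\rVert_{\dot H^{-2s-1}}^{2}$.

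The only genuine difficulty is checking the symbol-type derivative bounds in Step 2. Everything else is bookkeeping with the coefficients.
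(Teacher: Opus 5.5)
Your proposal is correct and follows essentially the paper's argument: $\mathcal{K}=\mathcal{H}^{2}$ for i), the spherical-harmonics multiplier theorem of \cite{dai2013approximation} applied to a smooth interpolating symbol whose Mikhlin-type bounds are checked via Stirling for ii), and $|t_{k}-\bar c|\lesssim k^{-1}$ for iii). Your duplication-formula reduction of $t_k$ to the single ratio $\Gamma(\frac{k-1}{2})/\Gamma(\frac{k+d+2}{2})$ is a slightly cleaner way to organize the Stirling step than the paper's three-factor rewriting. Two harmless details remain. For $d=1$ the Gamma formula degenerates, since it carries the factor $d-1$, so there you must use the explicit $\beta_k$, which gives the rational symbol $4k^{4}/(k^{2}-1)^{2}$. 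Also, the interpolating symbol has to be extended by hand to $[0,2]$, since $\Gamma(k-1)$ has poles at $k=0,1$; any smooth positive extension works, because those values are irrelevant on $L^{2}_{2\N}$.
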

\begin{proof} Point i) follows from \Cref{lem:ReLU-operator} and the fact that $\mathcal{K}=\mathcal{H}^{2}$. 

Let us prove point ii). Being $\mu_{k}=k(k+d-1)$ the eigenvalues of $-\Delta_{\Sp^{d}}$, the composition $T=(-\Delta_{\Sp^{d}})^{s}\mathcal{K}$ will have symbol
\begin{equation*}
    (T f)_{k}=c_{k}f_{k},\qquad c_{k}=(k(k+d-1))^{s}\lambda_{k}\qquad \forall k\ge 0,
\end{equation*}
and $c_{k}>0$ for all $k\in 2\N$. We first consider the case $d\ge 2$. Let 
$g:[2,\infty)\to \R$ be the function
    $$g(z)= \frac{|\Sp^{d}|^{2}(d-1)^{2}\Gamma\left(\frac{d}{2}\right)^{2}}{4\pi^{2}}\frac{(z(z+d-1))^{\frac{d+3}{2}}\Gamma(z-1)^{2}}{2^{2z}\Gamma\left(\frac{z}{2}\right)^{2}\Gamma\left(\frac{z+d}{2}+1\right)^{2}}$$
    such that $g(k)=c_{k}$ for all $k\in 2\N$. Thanks to \cite[Theorem 3.3.1]{dai2013approximation}, to prove the content of point ii), all we need to show is that $g(z)$ has a positive limit as $z\to \infty$, and the following
    Mikhlin-type condition holds:
    \begin{equation}\label{eq:Mikhlin-condition}
        \left|\frac{d^{j}}{dz^{j}}g(z)\right|\lesssim z^{-j}\qquad \forall z\in [2,\infty),\quad \forall j\in\left\{0,\dots, \left\lceil \frac{d+1}{2}\right\rceil\right\}.
    \end{equation}
    To show that, one can use Stirling's approximation formula in the following form: 
    $$\Gamma(x)= \sqrt{2\pi}\, x^{x-\frac{1}{2}}e^{-x}e^{r(x)},\qquad r(x)= \overunderset{\infty}{n=2}{\sum}\frac{a_{n}}{(x+1)\cdots (x+n-1)}.$$
    Here the coefficients $a_{n}\in \R$ are such that the corresponding series is convergent for all $x>0$, which makes $r(x)$ an analytic remainder which vanishes as $x\to \infty$, and satisfies the Mikhlin condition \eqref{eq:Mikhlin-condition}. Expanding $g(z)$ as $z\to \infty$ using Stirling's formula above we find
    \begin{equation*}
        g(z)=\bar c+O(z^{-1}),\qquad \bar c:= \frac{2^{d-3}(d-1)^{2}|\Sp^{d}|^{2}\Gamma\left(\frac{d}{2}\right)^{2}}{\pi^{3}}>0.
    \end{equation*}
    Moreover, one may rewrite $g$ as
    \begin{align*}
        g(z)&= C(d)\frac{z^{\frac{d+5}{2}}(z+d-1)^{\frac{d+3}{2}}}{(z-1)^{3}(z+d+2)^{d+1}}\times\\
        &\quad \exp\Big[-z\left(\log\left(1+\frac{1}{z-1}\right)+\log\left(1+\frac{d+3}{z-1}\right)\right)\Big]\times\\
        &\quad \exp\Big[2\left(r(z-1)-r\left(\frac{z}{2}\right)-r\left(\frac{z+d}{2}+1\right)\right)\Big].
    \end{align*}
    Since each of the three factors in the right-hand side above satisfy the Mikhlin condition \eqref{eq:Mikhlin-condition}, then their product does. In the case $d=1$, the same argument using $g(z)=4z^{4}/(z^{2}-1)^{2}$. In this case $g(z)\to \bar c:= 4$ as $z\to \infty$.
    
    Finally, we prove point iii). Let $\bar c=\bar{c}(d)>0$ be the limit of $c_{k}$ obtained above. We have
    \begin{equation*}
        \lambda_{k}-\bar{c}(k(k+d-1))^{-s}= (g(k)-\bar{c})(k(k+d-1))^{-s}\lesssim_{d} k^{-2s-1} \qquad \forall k\in 2\N,
    \end{equation*}
    and finally, for every $f\in L^{2}_{2\N}$,
    \begin{equation*}
        \lVert (\mathcal{K}-\bar{c}(-\Delta_{\Sp^{d}})^{-s})(f)\rVert_{L^{2}}^{2}= \sum_{k\in 2\N}(\lambda_{k}-\bar{c}(k(k+d-1))^{-s})^{2}\lVert f_{k}\rVert_{L^{2}}^{2}\lesssim_{d} \sum_{k\in 2\N}|k|^{-4s-2}\lVert f_{k}\rVert_{L^{2}}^{2}\lesssim_{d} \lVert f\rVert_{\dot H^{-2s-1}}^{2},
    \end{equation*}
    as desired. 
\end{proof}

\subsection{Well-posedness of the Wasserstein--Fisher--Rao dynamics}\label{subsec:well-posedness-WFR}
In this section, we study well-posedness for the non-conservative active-scalar equation \eqref{eq:PDE-Wasserstein-Fisher-Rao-sphere10},
where $\bar\mu, \nu \in \mathcal{M}_{+}(\Sp^{d})$ are two given nonnegative measures, and $\mathcal{K}$ is the convolution operator defined in \eqref{eq:def-arccos-operator+kernel}. The following is the precise notion of weak solution we consider:
\begin{defn}\label{def:solution-active-scalar-WFR}
     Let $\bar\mu, \nu \in \mathcal{M}_{+}(\Sp^{d})$.
We say that a curve of nonnegative measures $\mu \in C_{w^{*}}([0,T); \mathcal{M}_{+}(\Sp^{d}))$ solves \eqref{eq:PDE-Wasserstein-Fisher-Rao-sphere10}
if $\mu_{0}=\bar\mu$, and denoting $v_{t}:= -\nabla_{\Sp^{d}} \mathcal{K}(\mu_{t}-\nu)$, the forced continuity equation $\partial_{t}\mu_{t}+\diver_{\Sp^{d}}(\mu_{t} v_{t})=-4\mathcal{K}(\mu_{t}-\nu)\mu_{t}$ is solved in the distributional sense, equivalently
\begin{equation*}
    \int_{\Sp^{d}}\varphi d\mu_{t}= \int_{\Sp^{d}}\varphi d\bar\mu +\int_{0}^{t}\int_{\Sp^{d}}\nabla_{\Sp^{d}} \varphi \cdot v_{r}d\mu_{r}dr-4\int_{0}^{t}\int_{\Sp^{d}}\varphi\mathcal{K}(\mu_{r}-\nu) d\mu_{r}dr\qquad \forall t\in (0,T),\quad \forall \varphi \in C^{\infty}(\Sph^{d}).
\end{equation*}
\end{defn}

A solution $\mu$ of \eqref{eq:PDE-Wasserstein-Fisher-Rao-sphere10} according to \Cref{def:solution-active-scalar-WFR} is locally bounded in the space of measures. Therefore, by \eqref{eq:C21-bound-arccos-operator}, it generates a velocity field $v\in L^{\infty}_{\loc}([0,T); C^{1,1}(\Sp^{d};\R^{d+1}))$. In particular, the corresponding flow map $X:[0,T)\times \Sp^{d}\to \Sp^{d}$ is well-defined and $C^{1,1}$-regular by the standard Cauchy--Lipschitz theory.

Using $\varphi\equiv 1$ as a test function in the distributional formulation of \eqref{eq:PDE-Wasserstein-Fisher-Rao-sphere10} above, we find 
\begin{align*}
    \frac{d}{dt}\mu_{t}(\Sp^{d})&=-4\int_{\Sp^{d}}\mathcal{K}(\mu_{t}-\nu)d\mu_{t}\\
    &=-4\int_{\Sp^{d}}\mathcal{K}(\mu_{t})d\mu_{t}+4\int_{\Sp^{d}}\mathcal{K}(\nu)d\mu_{t}\le -4\lambda_{0}\mu_{t}(\Sp^{d})^{2}+C(d)\nu(\Sp^{d})\mu_{t}(\Sp^{d}),
\end{align*}
where $\lambda_{0}>0$ is the first eigenvalue of $\mathcal{K}$ from \Cref{lem:spectral-behavior-arccos-operator}, and we have used \eqref{eq:C21-bound-arccos-operator}. From this differential inequality we deduce the following uniform upper bound on the mass of a solution along the dynamics:  
\begin{equation}\label{eq:uniform-bound-mass-WFR}
    \mu_{t}(\Sp^{d})\le \max \{\bar\mu(\Sp^{d}), C(d)\lambda_{0}^{-1}\nu(\Sp^{d})\}\qquad \forall t\in [0,T).
\end{equation}

Before proceeding with the well-posedness result for the Wasserstein--Fisher--Rao dynamics, let us recall the definition of the so called \textit{Bounded-Lipschitz distance} $d_{BL}:\mathcal{M}(\Sp^{d})\times \mathcal{M}(\Sp^{d})\to [0,\infty)$ between finite measures:
\begin{equation*}
    d_{BL}(\mu_{1}, \mu_{2}):= \sup\left\{\int_{\Sp^{d}}f(d\mu_{1}-d\mu_{2}): \lVert f\rVert_{L^{\infty}}\le 1, \Lip(f)\le 1\right\}\qquad \forall \mu_{1},\mu_{2}\in\mathcal{M}(\Sp^{d}).
\end{equation*}
In \cite{piccoli2014generalized} it is shown that $d_{BL}$ metrizes the weak-$*$ convergence on $\mathcal{M}(\Sp^{d})$. 
As a direct consequence of the definition of $d_{BL}$ and the $C^{2,1}$-regularity of the arccos kernel $K$ from \eqref{eq:def-arccos-operator+kernel}, we find
\begin{equation}\label{eq:uniform-bounds-arccos-operator-BL-distance}
    \lVert\mathcal{K}(\mu_{1}-\mu_{2})\rVert_{C^{2}(\Sp^{d})}\lesssim_{d}d_{BL}(\mu_{1},\mu_{2})\qquad \forall \mu_{1},\mu_{2}\in \mathcal{M}(\Sp^{d}).
\end{equation}
The rest of this section is devoted to the proof of the following result:
\begin{prop}\label{prop:well-posedness-WFR}
    Let $\bar\mu, \nu \in \mathcal{M}_{+}(\Sp^{d})$. There exists a unique global solution $\mu\in C_{w^{*}}([0,\infty); \mathcal{M}_{+}(\Sp^{d}))$ of \eqref{eq:PDE-Wasserstein-Fisher-Rao-sphere10} according to \Cref{def:solution-active-scalar-WFR}, and it can be represented as 
    \begin{equation}\label{eq:weighted-push-forward-WFR}
        \mu_{t}=(X_{t})_{\#}(G_{t}\bar\mu),\qquad G_{t}(x):= \exp\left(-4\int_{0}^{t}\mathcal{K}(\mu_{r}-\nu)\circ X_{r}(x)dr\right)\qquad \forall t\in [0,\infty),
    \end{equation}
    where $X:[0,\infty)\times \Sp^{d}\to \Sp^{d}$ is the flow map associated to the velocity field $v_{t}=-\nabla_{\Sp^{d}}\mathcal{K}(\mu_{t}-\nu)$. Moreover, the energy dissipation identity \eqref{eq:energy-dissipation-Fisher-Rao} holds. In addition:
    \begin{itemize}
        \item [i)] (Preservation of parity). If $\bar\mu, \nu$ are even on $\Sp^{d}$, then $\mu_{t}$ is even for all $t\in (0,\infty)$.
        \item [ii)] (Stability). If $\bar\mu^{n}, \nu^{n}\in \mathcal{M}_{+}(\Sp^{d})$ are such that $\bar\mu^{n}\stackrel{*}{\rightharpoonup}\bar\mu$ and $\nu^{n}\stackrel{*}{\rightharpoonup}\nu$, then the corresponding solution $\mu^{n}_{t}$ of \eqref{eq:PDE-Wasserstein-Fisher-Rao-sphere10} converges to $\mu_{t}$ in bounded-Lipschitz distance uniformly in bounded time intervals:
        \begin{equation*}
            \sup_{t\in [0,T)}d_{BL}(\mu_{t}^{n},\mu_{t})\to 0\qquad \text{as $n\to \infty$},\qquad \forall T\in (0,\infty).
        \end{equation*}
        \item [iii)] (Propagation of regularity). If $\bar\mu, \nu \in C^{k,\alpha}(\Sp^{d})$ for some $k\in \N\cup \{0\}$ and $\alpha\in (0,1]$, then $\mu \in L^{\infty}_{\loc}([0,\infty);C^{k,\alpha}(\Sp^{d}))$. In particular, if $\bar\mu, \nu \in C^{\infty}(\Sp^{d})$, then $\mu\in C^\infty([0,\infty)\times \Sp^{d})$. 
    \end{itemize}
\end{prop}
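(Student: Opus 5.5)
The plan is to follow the Lagrangian fixed-point approach of \Cref{prop:existence-yudovich}. It is actually simpler here, because \eqref{eq:C21-bound-arccos-operator} makes every velocity field $C^{1,1}$ as soon as the mass is bounded, so no Yudovich-type log-Lipschitz estimates are needed. Fix $T>0$ and $R:=2\max\{\bar\mu(\Sp^{d}),C(d)\lambda_{0}^{-1}\nu(\Sp^{d})\}+1$. Consider the complete metric space $\mathcal{Y}_{T}$ of curves $\eta\in C_{w^{*}}([0,T];\mathcal{M}_{+}(\Sp^{d}))$ with $\eta_{t}(\Sp^{d})\le R$, equipped with the distance $\sup_{t}d_{BL}(\eta^{1}_{t},\eta^{2}_{t})$. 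Given $\eta\in\mathcal{Y}_{T}$, let $X^{\eta}$ be the flow of $v^{\eta}_{t}=-\nabla_{\Sp^{d}}\mathcal{K}(\eta_{t}-\nu)$ and $G^{\eta}$ the weight in \eqref{eq:weighted-push-forward-WFR} built from $\eta$. Define $\Phi(\eta)_{t}:=(X^{\eta}_{t})_{\#}(G^{\eta}_{t}\bar\mu)$.

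By \eqref{eq:C21-bound-arccos-operator}, the quantities $\lVert v^{\eta}\rVert_{C^{1,1}}$ and $\lVert\mathcal{K}(\eta-\nu)\rVert_{C^{2,1}}$ are bounded by $C(d)(R+\nu(\Sp^{d}))$. Hence $X^{\eta}$ is bi-Lipschitz and $G^{\eta}$ is bounded and Lipschitz, with constants depending only on $R$, $\nu$ and $T$. This gives $\Phi(\eta)_{t}(\Sp^{d})\le \bar\mu(\Sp^{d})e^{CT}\le R$ for $T$ small. For the contraction, \eqref{eq:uniform-bounds-arccos-operator-BL-distance} yields $\lVert v^{\eta^{1}}-v^{\eta^{2}}\rVert_{C^{1}}+\lVert\mathcal{K}(\eta^{1}-\eta^{2})\rVert_{C^{0}}\lesssim d_{BL}(\eta^{1}_{t},\eta^{2}_{t})$. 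Gr\"onwall's inequality then gives $\lVert X^{\eta^{1}}_{t}-X^{\eta^{2}}_{t}\rVert_{\infty}+\lVert G^{\eta^{1}}_{t}-G^{\eta^{2}}_{t}\rVert_{\infty}\lesssim t\sup_{r}d_{BL}(\eta^{1}_{r},\eta^{2}_{r})$. Testing $\Phi(\eta^{1})_{t}-\Phi(\eta^{2})_{t}$ against $f$ with $\lVert f\rVert_{L^\infty},\Lip(f)\le1$ shows that $\Phi$ is a contraction for $T$ small. The fixed point is a solution: a direct computation along characteristics shows that the weighted push-forward solves the forced continuity equation in the sense of \Cref{def:solution-active-scalar-WFR}.

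Uniqueness comes from the fact that any solution is automatically represented by \eqref{eq:weighted-push-forward-WFR}. Indeed, its velocity is Lipschitz, so the forced (linear) continuity equation with given coefficients has a unique measure solution, namely the weighted push-forward; this is checked by duality with the backward transport equation. Therefore every solution is a fixed point of $\Phi$ and coincides with the constructed one on short intervals, hence everywhere. Globality follows from \eqref{eq:uniform-bound-mass-WFR}. The mass never exceeds $R/2$, so the local existence time depends only on $R$, $\nu$ and $\lVert\cdot\rVert$-type quantities that stay bounded, and we can restart indefinitely. The energy dissipation identity \eqref{eq:energy-dissipation-Fisher-Rao} is obtained exactly as in \Cref{prop:gradient-flow-structure}, by differentiating $\tfrac12\langle\mathcal{K}(\mu_{t}-\nu),\mu_{t}-\nu\rangle$, using the weak formulation with $\varphi=\mathcal{K}(\mu_{t}-\nu)\in C^{2,1}$, and using the continuity of $\mathcal{K}(\mu_{r}-\nu)$ in $C^{1}$ as $r\to t$.

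For the remaining points:

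\emph{i)} Let $\iota(x)=-x$. Since $K(\iota x,\iota y)=K(x,y)$, the curve $\iota_{\#}\mu_{t}$ solves the same equation with data $\iota_{\#}\bar\mu=\bar\mu$ and $\iota_{\#}\nu=\nu$, so uniqueness gives $\iota_{\#}\mu_t=\mu_t$.

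\emph{ii)} Repeat the contraction estimate with varying data. The masses are uniformly bounded, and $d_{BL}(\bar\mu^{n},\bar\mu)\to0$ and $d_{BL}(\nu^{n},\nu)\to0$. A Gr\"onwall argument on $\sup_{r\le t}d_{BL}(\mu^{n}_{r},\mu_{r})$ gives a bound $\le C_{T}(d_{BL}(\bar\mu^{n},\bar\mu)+d_{BL}(\nu^{n},\nu))$.

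\emph{iii)} Mimic Step 3 of \Cref{prop:propagation-regularity}. Set $\tilde\mu_{t}=\mu_{t}\circ X_{t}$, which satisfies $\partial_{t}\tilde\mu=-\tilde\mu\,[\diver_{\Sp^{d}}v_{t}+4\mathcal{K}(\mu_{t}-\nu)]\circ X_{t}$. The $C^{k,\alpha}$ norm of $\mathcal{K}(\mu_t-\nu)$ is controlled by that of $\mu_t-\nu$, and the base case $C^{0,\alpha}$ needs only the $C^{2,1}$ bound on the kernel. A bootstrap in $k$ with Gr\"onwall then closes, and smoothness in time follows as in \Cref{rmk:regularity-in-time}.

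The main obstacle is the uniqueness step. One has to justify that an arbitrary distributional solution, defined with only weak-$*$ time continuity, coincides with the Lagrangian representation. I would handle this via the duality/superposition argument for linear continuity equations with Lipschitz velocity and bounded source coefficient, as in \cite{ambrosio2008uniqueness}.
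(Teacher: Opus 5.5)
Your proposal is correct. It shares the paper's overall structure:
\begin{itemize}
\item every solution is shown to be the weighted push-forward \eqref{eq:weighted-push-forward-WFR} built from its own coefficients;
\item a $d_{BL}$--Gr\"onwall estimate based on \eqref{eq:uniform-bounds-arccos-operator-BL-distance} gives uniqueness and stability;
\item the mass bound \eqref{eq:uniform-bound-mass-WFR} lets you restart with a uniform time step, so the solution is global.
\end{itemize}
The differences are in the mechanics.

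For existence, the paper reuses the template of \Cref{prop:existence-yudovich}. It takes Picard iterates, shows that successive $d_{BL}$-increments vanish, extracts limits of $(X^{n},G^{n})$ by Arzel\`a--Ascoli, and passes to the limit in the weak formulation. That template was built to cope with the log-Lipschitz Riesz cases. You instead observe that, thanks to the $C^{2,1}$ kernel bound, the fixed-point map is Lipschitz in $\sup_{t}d_{BL}$ with constant $O(T)$. Banach's theorem then gives existence and local uniqueness at once, with no compactness step.

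For the representation of an arbitrary solution, the paper freezes $g_{r}\mu_{r}$ as a given source and writes a Duhamel formula with the two-parameter flow. You treat $g\rho$ as a zeroth-order linear term and argue by duality with the backward equation $\partial_{t}\phi+v_{t}\cdot\nabla_{\Sp^{d}}\phi+g_{t}\phi=0$. The two arguments are equivalent for $C^{1,1}$ velocities.

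For parity, your argument via $K(-x,-y)=K(x,y)$ plus uniqueness is cleaner than the paper's route. The paper proves parity by induction on the iterates and then passes it through the limit.

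One correction in iii). As stated, mere boundedness of $\mathcal{K}$ on $C^{k,\alpha}$ would not close the bootstrap step from $C^{k',\alpha}$ to $C^{k'+1,\alpha}$, for two reasons:
\begin{itemize}
\item the term $\diver_{\Sp^{d}}v_{t}=-\Delta_{\Sp^{d}}\mathcal{K}(\mu_{t}-\nu)$ in the characteristic equation for $\tilde\mu_{t}$ must be controlled in $C^{k'+1,\alpha}$ by $\lVert\mu_{t}\rVert_{C^{k'+1}}$;
\item $X_{t}\in C^{k'+1,\alpha}$ requires $\nabla_{\Sp^{d}}\mathcal{K}(\mu_{t}-\nu)\in C^{k'+1,\alpha}$ knowing only $\mu_{t}\in C^{k',\alpha}$.
\end{itemize}
What the argument actually uses is the two-derivative gain $\lVert\mathcal{K}\eta\rVert_{C^{m+2,1}}\lesssim\lVert\eta\rVert_{C^{m}}$. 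This follows from \eqref{eq:C21-bound-arccos-operator} because angular derivatives commute with the rotation-invariant operator $\mathcal{K}$. With it, your Gr\"onwall bootstrap closes as in the paper's Riesz case.
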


\begin{proof}
We divide the proof into three steps. 

\smallskip
\noindent \textbf{Step 1:} We first show that if a solution $\mu\in C_{w^{*}}([0,T);\mathcal{M}_{+}(\Sp^{d}))$ exists, then it must be of the form \eqref{eq:weighted-push-forward-WFR}. Let $v$ be the vector field generated by $\mu$ and $\mathcal{X}:[0,T)\times[0,T)\times \Sp^{d}\to \Sp^{d}$ be the extended flow map associated to $v$, solving
\begin{equation*}
    \left\{
    \begin{array}{rclll}
        \frac{d}{dt}\mathcal{X}(t,s,x)&=&v_{t}(\mathcal{X}(t,s,x))\qquad &\forall s,t\in (0,T),\\
        \mathcal{X}(s,s,x)&=&x\qquad &\forall s\in [0,T).
    \end{array}
    \right.
\end{equation*}
Note that $\mathcal{X}_{t,0}\equiv X_{t}$ for all $t\in [0,T)$ and the semigroup property $\mathcal{X}_{t,s}\circ\mathcal{X}_{s,r}=\mathcal{X}_{t,r}$ holds for all $r,s,t \in [0,T)$.
By the definition of push-forward, one can show that $\hat \mu_{t}$ defined as follows is another solution of the forced continuity equation solved by $\mu$, with the same velocity field and forcing term:
\begin{equation*}
    \hat{\mu}_{t}:=(\mathcal{X}_{t,0})_{\#}\bar\mu + \int_{0}^{t}(\mathcal{X}_{t,r})_{\#}g_{r}\mu_{r}dr\qquad g_{t}:=-4\mathcal{K}(\mu_{t}-\nu).
\end{equation*}
Then, by linearity, $\sigma_{t}:=\hat{\mu}_{t}-\mu_{t}$ solves $\partial_{t}\sigma_{t}+\diver_{\Sp^{d}}(\sigma_{t}v_{t})=0$ with $\sigma_{0}=0$, and by the standard theory for uniformly Lipschitz vector fields (see for instance \cite[Lecture 16, Section 1]{ambrosio2021lectures}) we deduce $\sigma_{t}\equiv 0$, that is, $\hat{\mu}_{t}=\mu_{t}$ for every $t\in [0,T)$. In particular, by the semigroup properties of the extended flow map, and the linearity of the push-forward operator:
\begin{align*}
    \mu_{t}=(\mathcal{X}_{t,0})_{\#}\bar\mu+\int_{0}^{t}(\mathcal{X}_{t,r})_{\#}g_{r}\mu_{r}dr= (\mathcal{X}_{t,0})_{\#}\eta_{t},\qquad \eta_{t}:= \bar\mu+ \int_{0}^{t}(\mathcal{X}_{0,r})_{\#}g_{r}\mu_{r}dr\qquad \forall t\in [0,T).
\end{align*}
Using the identity $(\mathcal{X}_{0,r})_{\#}g_{r}\mu_{r}= g_{r}\circ \mathcal{X}_{r,0}(\mathcal{X}_{0,r})_{\#}\mu_{r}=g_{r}\circ X_{r}\eta_{r}$ in the expression above, we find
\begin{equation*}
    \eta_{t}=\bar\mu+\int_{0}^{t}g_{r}\circ X_{r}\eta_{r}dr\qquad \forall t\in [0,T),
\end{equation*}
from which we deduce $\eta_{t}= G_{t}\bar\mu$, where $G_{t}:= \exp\left(\int_{0}^{t}g_{r}\circ X_{r}dr\right)$, and finally $\mu_{t}=(X_{t})_{\#}G_{t}\bar\mu$ for all $t\in [0,T)$, as desired. 

From the representation by push-forward, the energy dissipation identity \eqref{eq:energy-dissipation-Fisher-Rao} follows with a direct computation.

\smallskip
\noindent \textbf{Step 2:} Now we prove the stability of solutions with respect to variations of the data, uniformly in bounded time intervals. This addresses both point iii) and uniqueness.  

For $i=1,2$, let $\bar\mu^{i}, \nu^{i}\in \mathcal{M}_{+}(\Sp^{d})$ and $\mu^{i}\in C_{w^{*}}([0,T);\mathcal{M}_{+}(\Sp^{d}))$ be solutions of \eqref{eq:PDE-Wasserstein-Fisher-Rao-sphere10} for some $T\in (0,\infty)$. We call $v_{t}^{i}$ and $X_{t}^{i}$, respectively, the vector field and the flow map generated by the solution $\mu^{i}$, and 
\begin{equation*}
    g_{t}^{i}:=-4\mathcal{K}(\mu^{i}_{t}-\nu^{i}),\qquad G_{t}^{i}:= \exp \left(\int_{0}^{t}g_{r}^{i}\circ X_{r}^{i}dr\right)\qquad \forall t\in [0,T),\quad i=1,2.
\end{equation*}
By Step 1, we can represent the solutions as
\begin{equation}\label{eq:representation-two-solutions-stability-WFR}
    \mu^{i}_{t}=(X_{t}^{i})_{\#}G_{t}^{i}\bar\mu^{i}\qquad \forall t\in [0,T),\quad i=1,2. 
\end{equation}
We will give a uniform estimate of $d_{BL}(\mu_{t}^{1},\mu_{t}^{2})$ for $t\in [0,T)$ implying the stability in point iii), and in particular uniqueness, when $\bar\mu^{1}=\bar\mu^{2}$ and $\nu^{1}=\nu^{2}$.

We first need some preliminary estimates. We define 
\begin{equation*}
    M:= \bar\mu^{1}(\Sp^{d})+\bar\mu^{2}(\Sp^{d})+\nu^{1}(\Sp^{d})+\nu^{2}(\Sp^{d}).
\end{equation*}
Then, by \eqref{eq:uniform-bound-mass-WFR} and \eqref{eq:C21-bound-arccos-operator} we have
\begin{equation}\label{eq:uniform-bounds-several-objects-stability-WFR}
    \mu^{i}_{t}(\Sp^{d}),\,\lVert v_{t}^{i}\rVert_{C^{1,1}},\,\lVert \nabla X_{t}^{i}\rVert_{C^{0,1}},\,\lVert g^{i}_{t}\rVert_{C^{2,1}},\,\lVert G^{i}_{t}\rVert_{C^{2,1}}\le C(d,M,T),\qquad \forall t\in [0,T),\quad i=1,2.
\end{equation}
Moreover, by \eqref{eq:uniform-bounds-arccos-operator-BL-distance} we also have
\begin{equation}\label{eq:bounds-stability-WFR}
    \lVert g_{t}^{1}-g_{t}^{2}\rVert_{L^{\infty}},\, \lVert v_{t}^{1}-v_{t}^{2}\rVert_{L^{\infty}}\lesssim_{d} d_{BL}(\mu_{t}^{1},\mu_{t}^{2})+d_{BL}(\nu^{1},\nu^{2})\qquad \forall t\in [0,T).
\end{equation}
Integrating these estimates we deduce 
\begin{equation}\label{eq:bounds:stability-WFR-BL-distance}
    \lVert G_{t}^{1}-G_{t}^{2}\rVert_{L^{\infty}},\,\lVert X_{t}^{1}-X_{t}^{2}\rVert_{L^{\infty}}\lesssim_{d,M,T} \int_{0}^{t}d_{BL}(\mu_{r}^{1},\mu_{r}^{2})dr+d_{BL}(\nu^{1},\nu^{2})\qquad \forall t\in [0,T).
\end{equation}
To bound $d_{BL}(\mu_{t}^{1},\mu_{t}^{2})$ for $t\in [0,T)$, we take a Lipschitz function $f:\Sp^{d}\to \R$ such that $\lVert f\rVert_{L^{\infty}}, \Lip(f)\le 1$ and compute, using the representation formula \eqref{eq:representation-two-solutions-stability-WFR} along with \eqref{eq:uniform-bounds-several-objects-stability-WFR}, \eqref{eq:bounds-stability-WFR} and \eqref{eq:bounds:stability-WFR-BL-distance}, 
\begin{align*}
    \int_{\Sp^{d}}fd(\mu_{t}^{1}-\mu_{t}^{2})&=\int_{\Sp^{d}}f\circ X_{t}^{1}G_{t}^{1}d\bar\mu^{1}-\int_{\Sp^{d}}f\circ X_{t}^{2}G_{t}^{2}d\bar\mu^{2}\\
    &= \int_{\Sp^{d}}(f\circ X_{t}^{1}-f\circ X_{t}^{2})G_{t}^{1}d\bar\mu^{1}+\int_{\Sp^{d}}f\circ X_{t}^{2}(G_{t}^{1}-G_{t}^{2})d\bar\mu^{1}+\int_{\Sp^{d}}f\circ X_{t}^{2}G_{t}^{2}d(\bar\mu^{1}-\bar\mu^{2})\\
    &\le \left(\lVert X_{t}^{1}-X_{t}^{2}\rVert_{L^{\infty}}\lVert G_{t}^{1}\rVert_{L^{\infty}}+\lVert G_{t}^{1}-G_{t}^{2}\rVert_{L^{\infty}}\right)\bar\mu^{1}(\Sp^{d})+\lVert f\circ X_{t}^{2}G_{t}^{2}\rVert_{C^{0,1}}d_{BL}(\bar\mu^{1},\bar\mu^{2})\\
    &\lesssim_{d,M,T} \int_{0}^{t}d_{BL}(\mu_{r}^{1},\mu_{r}^{2})dr+d_{BL}(\nu^{1},\nu^{2})+d_{BL}(\bar\mu^{1},\bar\mu^{2}).
\end{align*}
By the arbitrariness of $f$, we obtain
\begin{equation*}
    d_{BL}(\mu_{t}^{1},\mu_{t}^{2})\lesssim_{d,M,T} \int_{0}^{t}d_{BL}(\mu_{r}^{1},\mu_{r}^{2})dr+d_{BL}(\nu^{1},\nu^{2})+d_{BL}(\bar\mu^{1},\bar\mu^{2})\qquad \forall t\in [0,T).
\end{equation*}
Then, Gr\"onwall's inequality yields
\begin{equation*}
    d_{BL}(\mu_{t}^{1}, \mu_{t}^{2})\le d_{BL}(\bar\mu^{1},\bar\mu^{2})e^{C(d,M,T)t}+ \left(d_{BL}(\bar\mu^{1},\bar\mu^{2})+d_{BL}(\nu^{1},\nu^{2})\right)\left(e^{C(d,M,T)t}-1\right)\qquad \forall t\in [0,T),
\end{equation*}
which concludes the proof of point iii) and uniqueness. 

\smallskip
\noindent \textbf{Step 3:} It only remains to prove the existence of a solution, and the propagation of regularity and parity from the data. For both claims, it is sufficient to work in a short time interval $[0,\tau)$. In fact, by the same arguments as in \Cref{prop:maximal-solutions}, the solution can be extended as long as $\mu_{t}(\Sp^{d})$ remains bounded, and so to the whole $[0,\infty)$, thanks to the uniform upper bound from \eqref{eq:uniform-bound-mass-WFR}.

We proceed, as usual, by building recursively a sequence of approximate solutions:
\begin{equation*}
    \mu^{0}_{t}=\bar\mu,\qquad \mu^{n+1}_{t}= (X_{t}^{n})_{\#}G_{t}^{n}\bar \mu\qquad \forall t\in [0,\tau),\quad \forall n\ge 0,
\end{equation*}
where $v_{t}^{n}, X_{t}^{n}$ denote the vector field and the flow map generated by $\mu_{t}^{n}$, respectively, and
\begin{equation*}
    g_{t}^{n}:=-4\mathcal{K}(\mu_{t}^{n}-\nu),\qquad G_{t}^{n}:= \exp\left(\int_{0}^{t}g_{r}^{n}\circ X_{r}^{n}dr\right)\qquad \forall t\in [0,\tau),\quad \forall n\ge 0.
\end{equation*}
First of all we prove that the following holds as soon as $\tau>0$ is chosen sufficiently small:
\begin{equation}\label{eq:unif-bound-mass-existence-WFR}
    \mu_{t}^{n}(\Sp^{d})\le 2\bar\mu(\Sp^{d})\qquad \forall t\in [0,\tau),\quad \forall n\ge 0.
\end{equation}
From this uniform bound on the mass, applying \eqref{eq:C21-bound-arccos-operator}, we will deduce
\begin{equation}\label{eq:unif-bounds-WFR-existence}
      \lVert v_{t}^{n}\rVert_{C^{1,1}},\,\lVert \nabla X_{t}^{n}\rVert_{C^{0,1}},\,\lVert \nabla (X_{t}^{n})^{-1}\rVert_{C^{0,1}},\,\lVert g_{t}^{n}\rVert_{C^{2,1}},\,\lVert G^{n}_{t}\rVert_{C^{2,1}}\le C(d,\tau, \bar\mu(\Sp^{d}), \nu(\Sp^{d}))\qquad
       \forall t\in [0,\tau),\quad \forall n\ge 0.
\end{equation}
The bound \eqref{eq:unif-bound-mass-existence-WFR} is trivial for $n=0$. Assume it holds for some $n\ge 0$ and let us prove it holds for $n+1$. By \eqref{eq:C21-bound-arccos-operator}, for all $t\in [0,\tau)$,
\begin{equation*}
    \mu_{t}^{n+1}(\Sp^{d})=\int_{\Sp^{d}}G_{t}^{n}d\bar\mu \le \lVert G_{t}^{n}\rVert_{L^{\infty}}\bar\mu(\Sp^{d})\le \exp\left\{C(d)(2\bar\mu(\Sp^{d})+\nu(\Sp^{d})\tau\right\}\bar\mu(\Sp^{d})\le 2\bar\mu(\Sp^{d}),
\end{equation*}
provided that we choose $\tau$ sufficiently small. 

Once \eqref{eq:unif-bounds-WFR-existence} is known, proceeding as in Step 2, we find
\begin{equation*}
    d_{BL}(\mu_{t}^{n},\mu_{t}^{n+1})\lesssim_{d,\tau,\bar\mu(\Sp^{d}),\nu(\Sp^{d})}\int_{0}^{t}\left(d_{BL}(\mu_{r}^{n}, \mu_{r}^{n+1})+d_{BL}(\mu_{r}^{n-1}, \mu_{r}^{n})\right)dr\qquad \forall t\in [0,\tau), \quad \forall n\ge 1.
\end{equation*}
From here, by the same argument as in Step 2 of the proof of \Cref{prop:existence-yudovich}, up to decreasing $\tau$, we deduce that 
\begin{equation}\label{eq:convergence-subsequent-approximate-solutions-WFR}
    \lim_{n\to \infty}\sup_{k\ge n}\max_{t\in [0,\tau)}d_{BL}(\mu_{t}^{k},\mu_{t}^{k+1})=0.
\end{equation}
To conclude the proof of existence of a solution to \eqref{eq:PDE-Wasserstein-Fisher-Rao-sphere10} in the interval $[0,\tau)$ we argue similarly to Step 3 of the proof of \Cref{prop:existence-yudovich}. First, by \eqref{eq:unif-bounds-WFR-existence}, $X^{n}, G^{n}$ are uniformly bounded and equi-continuous in $[0,\tau)\times \Sp^{d}$. Therefore, by Arzelà-Ascoli we can find continuous $X:[0,\tau)\times \Sp^{d}\to \Sp^{d}$, $G:[0,\tau)\times \Sp^{d}\to [0,\infty)$, and a subsequence $n_{k}\to \infty$ such that
\begin{equation*}
   \lim_{k\to \infty}\left(\lVert X^{n_{k}}-X\rVert_{C([0,\tau)\times \Sp^{d})}+\lVert G^{n_{k}}-G\rVert_{C([0,\tau)\times \Sp^{d})}\right)=0.
\end{equation*}
Defining $\mu_{t}:= (X_{t})_{\#}G_{t}\bar\mu$, we deduce that
\begin{equation}\label{eq:weak-convergence-approximate-solutions-existence-WFR}
    \mu_{t}^{n_{k}}\stackrel{*}{\rightharpoonup} \mu_{t}\qquad \forall t\in [0,\tau),\qquad \mu \in C_{w^{*}}([0,\tau);\mathcal{M}_{+}(\Sp^{d})).
\end{equation}
Then, from \eqref{eq:convergence-subsequent-approximate-solutions-WFR} we also obtain
\begin{equation*}
    \mu_{t}^{n_{k}-1}\stackrel{*}{\rightharpoonup} \mu_{t}\qquad \forall t\in [0,\tau).
\end{equation*}
Finally, setting $g_{t}:=-4\mathcal{K}(\mu_{t}-\nu)$ and $v_{t}:=-\nabla_{\Sp^{d}}\mathcal{K}(\mu_{t}-\nu)$, \eqref{eq:uniform-bounds-arccos-operator-BL-distance} implies
\begin{equation}\label{eq:strong-convergence-vector-fields-existence-WFR}
     \lVert g_{t}^{n_{k}-1}-g_{t}\rVert_{C(\Sp^{d})},\,\lVert v_{t}^{n_{k}-1}-v_{t}\rVert_{C(\Sp^{d})}\to 0\qquad \forall t\in [0,\tau).
\end{equation}
To show that $\mu$ is a solution of \eqref{eq:PDE-Wasserstein-Fisher-Rao-sphere10}, it is then sufficient to pass to the limit via the dominated convergence theorem in both sides of the distributional formulation
\begin{equation*}
    \int_{\Sp^{d}}\varphi \mu_{t}^{n_{k}}=\int_{\Sp^{d}}\varphi \bar\mu+\int_{0}^{t}\int_{\Sp^{d}}\nabla_{\Sp^{d}} \varphi \cdot v_{r}^{n_{k}-1}\mu_{r}^{n_{k}}dr+\int_{0}^{t}\int_{\Sp^{d}}\varphi g_{r}^{n_{k}-1}d\mu_{r}^{n_{k}}dr\qquad \forall t\in [0,\tau),\quad \forall \varphi \in C^{\infty}(\T^{d}),
\end{equation*}
taking into account \eqref{eq:weak-convergence-approximate-solutions-existence-WFR}, \eqref{eq:strong-convergence-vector-fields-existence-WFR}, and the uniform boundedness of $g^{n}, g, v^{n}, v$. This concludes the proof of existence. 

Regarding point ii), observe that if $\bar\mu, \nu$ are even, then, in the approximating sequence above, $\mu_{t}^{n}$ is even for all $t\in [0,\tau)$ and all $n\ge 0$, as can be shown by induction on $n$ using the fact that $\mathcal{K}$ preserves parity. The parity of the approximating sequence is then inherited by the limiting solution. 

Finally, the proof of the propagation of H\"{o}lder regularity from point iv) can be done as in \Cref{prop:propagation-regularity} using the regularizing properties of $\mathcal{K}$ from \eqref{eq:C21-bound-arccos-operator}. The smooth regularity in space-time of solutions with smooth data is then deduced as in \Cref{rmk:regularity-in-time}. We leave the details concerning this point to the reader.  
\end{proof}

\subsection{Quantitative convergence for the Wasserstein--Fisher--Rao dynamics}\label{subsec:quantitative-convergence-WFR}
In this section we prove \Cref{thm:convergence-arccos}. The strategy is based on the combination of the energy dissipation identity \eqref{eq:energy-dissipation-Fisher-Rao} with the higher order energy estimates that we derive in the next lemma (see \Cref{subsec:idea-proof}).  For convenience, in the sequel we work with integer parameters and we use classical angular derivatives in place of fractional derivatives (see the notation introduced in \Cref{subsec:analysis-sphere-arccos}).

\begin{lem} \label{lem:energy-estimate-arccos} Let $d\in \N$ be odd and $s:=(d+3)/2\in \N$. Let $\bar\mu,  \nu\in C^{\infty}(\Sp^{d})$ be even nonnegative functions, and $\mu \in C^{\infty}([0,\infty)\times \Sp^{d})$ the solution of \eqref{eq:PDE-Wasserstein-Fisher-Rao-sphere10} given by \Cref{prop:well-posedness-WFR}. For every $\gamma\in \N$, $\gamma\ge s-1$, the following energy estimates hold for some constant $C=C(d,\gamma)>0$ and all $t\in (0,\infty)$:
    \begin{align}
    \frac{d}{dt}\lVert \mu_{t}\rVert_{\dot H^{\gamma}}^{2}&\le C\left(\lVert \nu\rVert_{H^{\gamma}}+\lVert \mathcal{K}(\mu_{t}-\nu)\rVert_{C^{2}}\right)\left(\lVert \mu_{t}\rVert_{H^{\gamma}}^{2}+\lVert \nu\rVert_{H^{\gamma}}^{2}\right),\label{eq:energy-estimate-arccos-propagation}\\[5pt]
        \frac{d}{dt}\lVert \mu_{t}\rVert_{\dot H^{\gamma}}^{2}&\le C\lVert \mathcal{K}(\mu_{t}-\nu)\rVert_{C^{2}}\left(\lVert \mu_{t}\rVert_{H^{\gamma}}^{2}+\lVert \nu\rVert_{H^{\gamma}}^{2}\right)\label{eq:energy-estimate-arccos}\\
        &\quad\,-\frac{1}{C}\big(\min_{\Sp^{d}}\nu\big)\lVert \mu_{t}-\nu\rVert_{\dot H^{\gamma-s+1}}^{2}+ C\lVert \nu\rVert_{H^{\gamma}}\lVert \mu_{t}-\nu\rVert_{\dot H^{\max\{\gamma,\frac{d}{2}+1\}-s}}\lVert \mu_{t}-\nu\rVert_{\dot H^{\gamma-s+1}}\notag\\
        &\quad\, +C\lVert \nu\rVert_{H^{\gamma}}\lVert \nu\rVert_{\dot H^{\gamma+1}}\lVert \mu_{t}-\nu\rVert_{\dot H^{\gamma+1-2s}}.
\notag
    \end{align}
\end{lem}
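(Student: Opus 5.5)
We would follow the structure of the proof of \Cref{lem:energy-estimates}, replacing fractional derivatives $D^{\beta}$ by integer angular derivatives $\nabla_{\beta}$, $|\beta|=\gamma$, and using the exact identity $\lVert f\rVert_{\dot H^{\gamma}}^{2}=\sum_{|\beta|=\gamma}\lVert \nabla_{\beta}f\rVert_{L^{2}}^{2}$, the Leibniz rule, integration by parts, and the fact that $\nabla_{(i,j)}$ commutes with the multiplier $\mathcal{K}$. We write $\phi_{t}:=\mathcal{K}(\mu_{t}-\nu)$, $\sigma_{t}:=\mu_{t}-\nu$, so that
\[
\partial_{t}\mu_{t}=\diver_{\Sp^{d}}(\mu_{t}\nabla_{\Sp^{d}}\phi_{t})-4\phi_{t}\mu_{t}.
\]
Then
\[
\tfrac12\tfrac{d}{dt}\lVert\mu_{t}\rVert_{\dot H^{\gamma}}^{2}=\sum_{|\beta|=\gamma}\int\nabla_{\beta}\big(\diver(\mu_{t}\nabla\phi_{t})-4\phi_{t}\mu_{t}\big)\nabla_{\beta}\mu_{t}.
\]
The reaction term, after Leibniz, is bounded by $\lVert\phi_{t}\rVert_{C^{2}}$ times $H^{\gamma}$-norms only at the low-order end. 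For the high-order derivatives of $\phi$, we use $\lVert\nabla_{\beta}\phi_{t}\rVert_{L^{2}}\lesssim\lVert\sigma_{t}\rVert_{\dot H^{\gamma-2s}}$ (via \Cref{lem:spectral-behavior-arccos-operator}) and a Gagliardo--Nirenberg/Moser-type product bound. The transport term, written via $\nabla_{\Sp^{d}}f\cdot\nabla_{\Sp^{d}}g=\sum\nabla_{(i,j)}f\nabla_{(i,j)}g$, splits into three groups of terms:
\begin{itemize}
\item[(a)] where all $\gamma$ derivatives fall on $\mu$. This term is handled by integrating by parts the square, exactly as $I$, costing $\lVert\Delta\phi_{t}\rVert_{L^{\infty}}\lVert\mu_{t}\rVert_{\dot H^{\gamma}}^{2}$.
\item[(b)] where $1\le k\le 2$ derivatives fall on $\phi$. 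These are bounded by $\lVert\phi_{t}\rVert_{C^{2}}\lVert\mu_{t}\rVert_{H^{\gamma}}^{2}$, possibly after one integration by parts for the term carrying $\gamma+1$ derivatives on $\mu$.
\item[(c)] where $\ge 3$ derivatives fall on $\phi$, namely $\int \nabla^{j}\mu\,\nabla^{\gamma+2-j}\phi\,\nabla_{\beta}\mu$ with $j\le\gamma-1$.
\end{itemize}
This already gives \eqref{eq:energy-estimate-arccos-propagation}, once we bound the (c)-type terms by $(\lVert\nu\rVert_{H^{\gamma}}+\lVert\phi_{t}\rVert_{C^{2}})(\lVert\mu_{t}\rVert_{H^{\gamma}}^{2}+\lVert\nu\rVert_{H^{\gamma}}^{2})$ using interpolation and the smoothing $\lVert\phi\rVert_{\dot H^{\gamma+2}}\lesssim\lVert\sigma\rVert_{\dot H^{\gamma+2-2s}}\lesssim\lVert\sigma\rVert_{L^{2}}$ (recall $\gamma\ge s-1$).

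For \eqref{eq:energy-estimate-arccos} we split $\mu_{t}=\nu+\sigma_{t}$ only in the (c)-terms where the top order derivatives hit $\phi$. The principal linear piece is $\int\nu\,\nabla_{\beta}\Delta\phi_{t}\,\nabla_{\beta}\mu_{t}$ together with $\int\nabla\nu\cdot\nabla\nabla_{\beta}\phi_{t}\,\nabla_{\beta}\mu_{t}$. We write $\nabla_{\beta}\mu_{t}=\nabla_{\beta}\sigma_{t}+\nabla_{\beta}\nu$. The $\nabla_{\beta}\nu$ contribution is estimated crudely, giving the last term $C\lVert\nu\rVert_{H^{\gamma}}\lVert\nu\rVert_{\dot H^{\gamma+1}}\lVert\sigma_{t}\rVert_{\dot H^{\gamma+1-2s}}$ after moving one derivative. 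For the $\nabla_{\beta}\sigma_{t}$ contribution we move $s-1$ angular derivatives (integer, since $d$ is odd) from $\sigma$ onto $\nu\nabla\nabla_{\beta}\phi$. The principal part is $-\int\nu\,|\nabla^{\gamma-s+1}\cdots|$ paired against $(-\Delta)^{s}\phi$. Here we replace $\mathcal{K}$ by $\bar c(-\Delta_{\Sp^{d}})^{-s}$ using \Cref{lem:spectral-behavior-arccos-operator} iii), whose remainder is smoother by one derivative, and obtain
\[
-\tfrac1C\big(\min\nu\big)\lVert\sigma_{t}\rVert_{\dot H^{\gamma-s+1}}^{2}.
\]
We note that $\sigma_{t}$ is even and $\mathcal{K}$ annihilates odd frequencies, so the comparability holds; the zero mode is harmless, being controlled by $\lVert\phi\rVert_{C^{2}}$. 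All commutator terms where derivatives hit $\nu$, as well as the $\mathcal{K}$-remainder, are bounded by $C\lVert\nu\rVert_{H^{\gamma}}\lVert\sigma_{t}\rVert_{\dot H^{\max\{\gamma,d/2+1\}-s}}\lVert\sigma_{t}\rVert_{\dot H^{\gamma-s+1}}$. We use $L^{\infty}$ on $\nu$-derivatives of low order and Sobolev embedding $\dot H^{d/2+1}\hookrightarrow W^{1,\infty}$ on the $\phi$-side when many derivatives hit $\nu$, which explains the $\max$. The remaining nonlinear pieces, where high derivatives of $\phi$ multiply $\sigma$, are bounded as in Step 1 by $\lVert\phi_{t}\rVert_{C^{2}}(\lVert\mu_{t}\rVert_{H^{\gamma}}^{2}+\lVert\nu\rVert_{H^{\gamma}}^{2})$.

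The main obstacle is the bookkeeping in the linear part. We must ensure that moving $s-1$ derivatives produces, via Leibniz, only terms where at least one derivative lands on $\nu$, so that they are genuinely lower order, and that the top-order term has a sign only up to the $\mathcal{K}$ versus $(-\Delta)^{-s}$ correction. We would first check this carefully in the model case $\mathcal{K}=\bar c(-\Delta)^{-s}$, where the argument mirrors Step 2 of \Cref{lem:energy-estimates} with Leibniz in place of Kato--Ponce, and then absorb the difference using point iii).
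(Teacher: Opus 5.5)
Your architecture is the paper's: symmetrize the top-order transport term, split $\mu_t=\nu+\sigma_t$, balance derivatives in the linear $\nu$--$\sigma_t$ terms, replace $\mathcal{K}$ by $\bar c(-\Delta_{\Sp^d})^{-s}$ via point iii) of \Cref{lem:spectral-behavior-arccos-operator}, use H\"older--Sobolev for the $\max$ term, and bound the $\nu$--$\nu$ term crudely. Moving $s-1$ derivatives in divergence form, instead of the paper's $s$, is fine: summing over ordered strings gives $\sum_a\nabla_a^{\mathrm{rev}}\nabla_a=\Delta_{\Sp^d}^{s-1}$. Your handling of the zero mode is also fine.

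What is missing is the ingredient that makes the ``tame'' bounds work on the sphere, so ``as in Step 1'' does not transfer. On $\T^d$, \eqref{eq:kato-ponce-non-linear-term} is applied to the single function $\phi=D^{-2s}\sigma$, using $\sigma=D^{2s}\phi$ exactly. On $\Sp^d$ you only have $\sigma_t-(\sigma_t)_0=T^{-1}(-\Delta_{\Sp^d})^{s}\phi_t$, where $T=(-\Delta_{\Sp^d})^s\mathcal{K}$ is a non-constant multiplier. To get bounds linear in $\lVert\nabla^2_{\Sp^d}\phi_t\rVert_{L^\infty}$, every factor must be interpolated by Gagliardo--Nirenberg between $\lVert\nabla^2_{\Sp^d}\phi_t\rVert_{L^\infty}$ and $\lVert\nabla^{\gamma+2s}_{\Sp^d}\phi_t\rVert_{L^2}$. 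That forces you to control $\nabla_\alpha\sigma_t$ in $L^p$, $p\neq2$, by $\nabla^{|\alpha|+2s}_{\Sp^d}\phi_t$ in $L^p$. This is exactly the $L^p$-boundedness of $T^{-1}$, point ii) of \Cref{lem:spectral-behavior-arccos-operator}, which you never invoke. The alternatives do not work:
\begin{itemize}
\item Point iii) is only an $L^2$ statement.
\item A Moser product bound puts $\lVert\sigma_t\rVert_{L^\infty}$ in front, which is the wrong form.
\item Interpolating $\sigma_t$ directly produces $\lVert\nabla^2(-\Delta_{\Sp^d})^{-s}\sigma_t\rVert_{L^\infty}$ instead of $\lVert\mathcal{K}\sigma_t\rVert_{C^2}$, and comparing the two again needs a non-$L^2$ mapping property of $T$.
\end{itemize}
This affects the nonlinear terms, the reaction term, and the $\sigma_t$-pieces of \eqref{eq:energy-estimate-arccos-propagation}.

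Relatedly, your claim that every term in which derivatives hit $\nu$ is bounded by $C\lVert\nu\rVert_{H^\gamma}\lVert\sigma_t\rVert_{\dot H^{\max\{\gamma,\frac d2+1\}-s}}\lVert\sigma_t\rVert_{\dot H^{\gamma-s+1}}$ is false. Take the linear terms $\int\nabla^{j}\nu\,\nabla^{\gamma+2-j}\phi_t\,\nabla_\beta\sigma_t$ with $j\le\gamma-1$. Moving $s-1$ derivatives off $\sigma_t$ (needed, since otherwise $\lVert\sigma_t\rVert_{\dot H^\gamma}$ appears with no $\lVert\phi_t\rVert_{C^2}$ prefactor) puts up to $j+s-1>\gamma$ derivatives on $\nu$. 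For example, $j=\gamma-1$ and $d\ge3$ give $\nabla^{\gamma+s-2}\nu\,\nabla^3\phi_t\,\nabla^{\gamma-s+1}\sigma_t$. Your $\nabla\nu$ principal piece already gives $\gamma+1$ derivatives on $\nu$ when $\gamma=s-1$. Integrating the excess back necessarily puts more than $\gamma-s+1$ derivatives on $\sigma_t$ in some terms, so these cannot enter the commutator line at all.

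The paper (term $I_{2,2,3}$) reduces such terms to $\nabla^{\gamma}\nu$ times a product in which, after rewriting $\sigma_t$-derivatives through $T^{-1}$, the total number of derivatives on $\phi_t$ is exactly $\gamma+2s+2$. It then bounds them by $C\lVert\nabla^2_{\Sp^d}\mathcal{K}\sigma_t\rVert_{L^\infty}\lVert\sigma_t\rVert_{\dot H^\gamma}\lVert\nu\rVert_{\dot H^\gamma}$, so they belong to the first line of \eqref{eq:energy-estimate-arccos}, again via point ii).

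A minor slip: $\lVert\sigma_t\rVert_{\dot H^{\gamma+2-2s}}\lesssim\lVert\sigma_t\rVert_{L^2}$ requires $\gamma\le2s-2$, which $\gamma\ge s-1$ does not give; use $\lVert\sigma_t\rVert_{H^\gamma}$ instead.
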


\begin{proof}
    For convenience, we denote by $\sigma_{t}:= \mu_{t}-\nu$ the perturbation with respect to the target. Using equation \eqref{eq:PDE-Wasserstein-Fisher-Rao-sphere10} and integrating the divergence term by parts, we get
    \begin{align*}
        \frac{d}{dt}\frac{\lVert \mu_{t}\rVert_{\dot H^{\gamma}}^{2}}{2}&=\sum_{|\beta|=\gamma}\int_{\Sp^{d}}\nabla_{\beta}\partial_{t}\mu_{t}\nabla_{\beta}\mu_{t}\\
        &=\underbrace{-\sum_{|\beta|=\gamma}\int_{\Sp^{d}}\nabla_{\beta}\left[\mu_{t}\nabla_{\Sp^{d}}\mathcal{K}(\sigma_{t})\right]\cdot\nabla_{\Sp^{d}}\nabla_{\beta}\mu_{t}}_{I}+\underbrace{(-4)\sum_{|\beta|=\gamma}\int_{\Sp^{d}}\nabla_{\beta}\left[\mu_{t}\mathcal{K}(\sigma_{t})\right]\nabla_{\beta}\mu_{t}}_{II}.
    \end{align*}
    We first show in detail how to bound $I$, which is the leading order term, as $II$ contains two orders of derivative less.
    
    \smallskip
    \noindent \textbf{Step 1:} From $I$, we isolate the term with $2\gamma+1$ derivatives falling on $\mu_{t}$:
    \begin{align*}
        I&= \underbrace{-\sum_{|\beta|=\gamma}\int_{\Sp^{d}}\nabla_{\beta}\mu_{t}\nabla_{\Sp^{d}}\mathcal{K}(\sigma_{t})\cdot \nabla_{\Sp^{d}}\nabla_{\beta}\mu_{t}}_{I_{1}}+\underbrace{(-1)\sum_{0\le|\beta_{1}|\le \gamma-1}\sum_{|\beta_{2}|=\gamma+1-|\beta_{1}|}\int_{\Sp^{d}}\nabla_{\beta_{1}}\mu_{t}\nabla_{\beta_{2}}\mathcal{K}(\sigma_{t})\nabla_{\beta_{1}+\beta_{2}}\mu_{t}}_{I_{2}}.
    \end{align*}
    Using the identity $\nabla_{\Sp^{d}}(\nabla_{\beta}\mu_{t})^{2}=2\nabla_{\beta}\mu_{t}\nabla_{\Sp^{d}}\nabla_{\beta}\mu_{t}$ and integration by parts, we obtain
    \begin{equation}\label{eq:ee-nn-I1}
        I_{1}= \frac{1}{2}\sum_{|\beta|=\gamma}\int_{\Sp^{d}}\Delta_{\Sp^{d}}\mathcal{K}(\sigma_{t})|\nabla_{\beta}\mu_{t}|^{2}\le C\lVert \nabla^{2}_{\Sp^{d}}\mathcal{K}(\sigma_{t})\rVert_{L^{\infty}}\lVert \mu_{t}\rVert_{\dot H^{\gamma}}^{2}.
    \end{equation}
    Next, writing $\mu_{t}=\nu+\sigma_{t}$, we further divide $I_{2}$ into three terms:
    \begin{align*}
        I_{2}&= -\sum_{0\le|\beta_{1}|\le \gamma-1}\sum_{|\beta_{2}|=\gamma+1-|\beta_{1}|}\int_{\Sp^{d}}\nabla_{\beta_{1}}\sigma_{t}\nabla_{\beta_{2}}\mathcal{K}(\sigma_{t})\nabla_{\beta_{1}+\beta_{2}}\mu_{t}\\
        &\quad\, -\sum_{0\le|\beta_{1}|\le \gamma-1}\sum_{|\beta_{2}|=\gamma+1-|\beta_{1}|}\int_{\Sp^{d}}\nabla_{\beta_{1}}\nu\nabla_{\beta_{2}}\mathcal{K}(\sigma_{t})\nabla_{\beta_{1}+\beta_{2}}\sigma_{t}\\
        &\quad\, -\sum_{0\le|\beta_{1}|\le \gamma-1}\sum_{|\beta_{2}|=\gamma+1-|\beta_{1}|}\int_{\Sp^{d}}\nabla_{\beta_{1}}\nu\nabla_{\beta_{2}}\mathcal{K}(\sigma_{t})\nabla_{\beta_{1}+\beta_{2}}\nu=:I_{2,1}+I_{2,2}+I_{2,3}.
    \end{align*}

    \smallskip
    \noindent \textbf{Step 2:} In this step, we prove the following estimate for $I_{2,1}$:
    \begin{equation}\label{eq:ee-nn-I21}
        I_{2,1}\le C\lVert \nabla^{2}_{\Sp^{d}}\mathcal{K}(\sigma_{t})\rVert_{L^{\infty}}\lVert \sigma\rVert_{H^{\gamma}}\lVert \mu_{t}\rVert_{\dot H^{\gamma}}.
    \end{equation}
    In each integral defining $I_{2,1}$, integrating by parts, we may distribute one derivative from $\nabla_{\beta_{1}+\beta_{2}}\mu_{t}$ to the product $\nabla_{\beta_{1}}\sigma_{t}\nabla_{\beta_{2}}\sigma_{t}$. This allows us to write $I_{2,1}$ as the integral of a finite sum of terms of the type $\nabla_{\alpha_{1}}\sigma_{t}\nabla_{\alpha_{2}}\mathcal{K}(\sigma_{t})\nabla_{\alpha_{3}}\mu_{t}$, where 
    \begin{equation}\label{eq:ee-nn-I21-condition-exponents-derivatives}
        0\le |\alpha_{1}|\le \gamma,\quad 2\le |\alpha_{2}|\le \gamma+2,\quad |\alpha_{1}|+|\alpha_{2}|=\gamma+2,\quad |\alpha_{3}|=\gamma.
    \end{equation}
     Then, to conclude \eqref{eq:ee-nn-I21}, it suffices to show that, for every $\alpha_{1},\alpha_{2}$ satisfying \eqref{eq:ee-nn-I21-condition-exponents-derivatives}, the following holds:
    \begin{equation}\label{eq:claim-energy-estimate-NonLin}
        \lVert\nabla_{\alpha_{1}}\sigma_{t}\nabla_{\alpha_{2}}\mathcal{K}(\sigma_{t})\rVert_{L^{2}}\le C\lVert \nabla^{2}_{\Sp^{d}}\mathcal{K}(\sigma_{t})\rVert_{L^{\infty}}\lVert \sigma_{t}\rVert_{H^{\gamma}}.
    \end{equation}
    We will assume that $0\le |\alpha_{1}|<\gamma$ and $2<|\alpha_{2}|\le \gamma+2$, as the case in which $|\alpha_{1}|=\gamma$ and $|\alpha_{2}|=2$ follows easily by H\"{o}lder inequality. 
    We write $\sigma_{t}= (\sigma_{t})_{0}+T^{-1}(-\Delta_{\Sp^{d}})^{s}\mathcal{K}(\sigma_{t})$, where $T=(-\Delta_{\Sp^{d}})^{s}\mathcal{K}$ is the zeroth order operator from point ii) in \Cref{lem:spectral-behavior-arccos-operator}, and $(\sigma_{t})_{0}$ is the mean of $\sigma_{t}$. 
    If $|\alpha_{1}|>0$, then $\nabla_{\alpha_{1}}\sigma_{t}=T^{-1}(-\Delta_{\Sp^{d}})^{s}\nabla_{\alpha_{1}}\mathcal{K}(\sigma_{t})$, therefore, applying H\"{o}lder inequality, point ii) in \Cref{lem:spectral-behavior-arccos-operator}, and Gagliardo--Nirenberg interpolation inequality, we get
    \begin{align*}
        \lVert\nabla_{\alpha_{1}}\sigma_{t}\nabla_{\alpha_{2}}\mathcal{K}(\sigma_{t})\rVert_{L^{2}}
        &\le \lVert T^{-1}(-\Delta_{\Sp^{d}})^{s}\nabla_{\alpha_{1}}\mathcal{K}(\sigma_{t})\rVert_{L^{2\frac{\gamma+2s-2}{\gamma+2s-|\alpha_{2}|}}}\lVert \nabla_{\alpha_{2}}\mathcal{K}(\sigma_{t})\rVert_{L^{2\frac{\gamma+2s-2}{|\alpha_{2}|-2}}}\\
        &\lesssim\lVert \nabla_{\Sp^{d}}^{2s+|\alpha_{1}|}\mathcal{K}(\sigma_{t})\rVert_{L^{2\frac{\gamma+2s-2}{\gamma+2s-|\alpha_{2}|}}}\lVert \nabla_{\Sp^{d}}^{|\alpha_{2}|}\mathcal{K}(\sigma_{t})\rVert_{L^{2\frac{\gamma+2s-2}{|\alpha_{2}|-2}}}\\
        &\lesssim\lVert \nabla^{2}_{\Sp^{d}}\mathcal{K}(\sigma_{t})\rVert_{L^{\infty}}^{\frac{|\alpha_{2}|-2}{\gamma+2s-2}}\lVert \nabla^{2s+\gamma}_{\Sp^{d}}\mathcal{K}(\sigma_{t})\rVert_{L^{2}}^{\frac{\gamma+2s-|\alpha_{2}|}{\gamma+2s-2}}\lVert \nabla^{2}_{\Sp^{d}}\mathcal{K}(\sigma_{t})\rVert_{L^{\infty}}^{\frac{\gamma+2s-|\alpha_{2}|}{\gamma+2s-2}}\lVert \nabla^{2s+\gamma}_{\Sp^{d}}\mathcal{K}(\sigma_{t})\rVert_{L^{2}}^{\frac{|\alpha_{2}|-2}{\gamma+2s-2}}\\
        &=\lVert \nabla^{2}_{\Sp^{d}}\mathcal{K}(\sigma_{t})\rVert_{L^{\infty}}\lVert \nabla^{2s+\gamma}_{\Sp^{d}}\mathcal{K}(\sigma_{t})\rVert_{L^{2}}\\
&\lesssim \lVert \nabla^{2}_{\Sp^{d}} \mathcal{K}(\sigma_{t})\rVert_{L^{\infty}}\lVert \sigma_{t}\rVert_{\dot H^{\gamma}}.
    \end{align*}
    If $|\alpha_{1}|=0$, the same bound holds, provided that we use the full norm $\lVert \sigma_{t}\rVert_{H^{\gamma}}$ instead of $\lVert \sigma_{t}\rVert_{\dot H^{\gamma}}$. This concludes the proof of \eqref{eq:ee-nn-I21}.

\smallskip 
\noindent \textbf{Step 3:} Next, we prove the following bound for $I_{2,2}$:
\begin{equation}\label{eq:ee-nn-I22}
    I_{2,2}\le -\frac{1}{C}\big(\min_{\Sp^{d}}\nu\big)\lVert \sigma_{t}\rVert_{\dot H^{\gamma-s+1}}^{2}+C\lVert \nu\rVert_{H^{\gamma}}\left(\lVert \sigma_{t}\rVert_{\dot H^{\max\{\gamma,\frac{d}{2}+1\}-s}}\lVert \sigma_{t}\rVert_{\dot H^{\gamma-s+1}}+\lVert \nabla^{2}_{\Sp^{d}}\mathcal{K}(\sigma_{t})\rVert_{L^{\infty}}\lVert \sigma_{t}\rVert_{H^{\gamma}}\right).
\end{equation}
In each term appearing in the definition of $I_{2,2}$, we use integration by parts to distribute $s$ derivatives from $\nabla_{\beta_{1}+\beta_{2}}\sigma_{t}$ to the product $\nabla_{\beta_{1}}\nu \nabla_{\beta_{2}}\mathcal{K}(\sigma_{t})$, and we split the result according to the number of derivatives falling on $\nu$:
\begin{align*}
    I_{2,2}&= -\sum_{0\le|\alpha_{1}|\le \gamma+s-1}\sum_{|\alpha_{2}|=\gamma+s+1-|\alpha_{1}|}\int_{\Sp^{d}}\nabla_{\alpha_{1}}\nu\nabla_{\alpha_{2}}\mathcal{K}(\sigma_{t})(-\Delta_{\Sp^{d}})^{-s}\nabla_{\alpha_{1}+\alpha_{2}}\sigma_{t}\\
    &= -\sum_{|\alpha|=\gamma+s+1}\int_{\Sp^{d}}\nu\nabla_{\alpha}\mathcal{K}(\sigma_{t})(-\Delta_{\Sp^{d}})^{-s}\nabla_{\alpha}\sigma_{t}\\
    &\quad\, -\sum_{1\le|\alpha_{1}|\le \gamma-1}\sum_{|\alpha_{2}|=\gamma+s+1-|\alpha_{1}|}\int_{\Sp^{d}}\nabla_{\alpha_{1}}\nu\nabla_{\alpha_{2}}\mathcal{K}(\sigma_{t})(-\Delta_{\Sp^{d}})^{-s}\nabla_{\alpha_{1}+\alpha_{2}}\sigma_{t}\\
    &\quad\, -\sum_{\gamma\le|\alpha_{1}|\le \gamma+s-1}\sum_{|\alpha_{2}|=\gamma+s+1-|\alpha_{1}|}\int_{\Sp^{d}}\nabla_{\alpha_{1}}\nu\nabla_{\alpha_{2}}\mathcal{K}(\sigma_{t})(-\Delta_{\Sp^{d}})^{-s}\nabla_{\alpha_{1}+\alpha_{2}}\sigma_{t}=:I_{2,2,1}+I_{2,2,2}+I_{2,2,3}.
\end{align*}
We first consider $I_{2,2,1}$. Let $\bar{c}=\bar c(d)>0$ be the constant appearing in point iii) of \Cref{lem:spectral-behavior-arccos-operator}. Adding and subtracting $\bar c(-\Delta_{\Sp^{d}})^{-s}$ to $\mathcal{K}$, and using Cauchy--Schwarz inequality, we get
\begin{equation}\label{eq:ee-nn-I221}
    \begin{aligned}
        I_{2,2,1}&= -\bar c\sum_{|\alpha|=\gamma+s+1}\int_{\Sp^{d}}\nu\left((-\Delta_{\Sp^{d}})^{-s}\nabla_{\alpha}\sigma_{t}\right)^{2}\\&\quad\,-\sum_{|\alpha|=\gamma+s+1}\int_{\Sp^{d}}\nu\left(\mathcal{K}-\bar c(-\Delta_{\Sp^{d}})^{-s}\right)(\nabla _{\alpha}\sigma_{t})(-\Delta_{\Sp^{d}})^{-s}(\nabla_{\alpha}\sigma_{t})\\
        &\le -\frac{1}{C}\big(\min_{\Sp^{d}} \nu\big)\lVert \sigma_{t}\rVert_{\dot H^{\gamma-s+1}}^{2}+C\lVert \nu\rVert_{L^{\infty}}\lVert \sigma_{t}\rVert_{\dot H^{\gamma-s}}\lVert \sigma_{t}\rVert_{\dot H^{\gamma-s+1}}.
    \end{aligned}
\end{equation}
To bound $I_{2,2,2}$, we use H\"{o}lder inequality and Sobolev embedding for each integral appearing in its definition. If $\gamma-|\alpha_{1}|>d/2$, since $|\alpha_{2}|\le \gamma+s$, 
\begin{equation*}
    \lVert \nabla_{\alpha_{1}}\nu\nabla_{\alpha_{2}}\mathcal{K}(\sigma_{t})\rVert_{L^{2}}\le \lVert \nabla_{\alpha_{1}}\nu\rVert_{L^{\infty}}\lVert \nabla_{\alpha_{2}}\mathcal{K}(\sigma_{t})\rVert_{L^{2}}\le C\lVert \nu\rVert_{\dot H^{\gamma}}\lVert \sigma_{t}\rVert_{\dot H^{\gamma-s}}. 
\end{equation*}
Suppose $\gamma-|\alpha_{1}|<d/2$. Then, setting $p=2d/(d-2\gamma+2|\alpha_{1}|)$ and $q=2d/(d-2r+2|\alpha_{2}|)$, where $r=d/2+s+1$, we get
\begin{equation*}
    \lVert \nabla_{\alpha_{1}}\nu\nabla_{\alpha_{2}}\mathcal{K}(\sigma_{t})\rVert_{L^{2}}\le \lVert \nabla_{\alpha_{1}}\nu\rVert_{L^{p}}\lVert \nabla_{\alpha_{2}}\mathcal{K}(\sigma_{t})\rVert_{L^{q}}\le C\lVert \nu\rVert_{\dot H^{\gamma}}\lVert \sigma_{t}\rVert_{\dot H^{\frac{d}{2}-s+1}}. 
\end{equation*}
Therefore, 
\begin{equation}\label{eq:ee-nn-I222}
    I_{2,2,2}\le C\lVert \nu\rVert_{\dot H^{\gamma}}\lVert \sigma_{t}\rVert_{\dot H^{\max\{\gamma,\frac{d}{2}+1\}-s}}\lVert \sigma_{t}\rVert_{\dot H^{\gamma-s+1}}.
\end{equation}
Regarding $I_{2,2,3}$, integrating by parts so as to distribute $|\alpha_{1}|-\gamma$ derivatives from $\nabla_{\alpha_{1}}\nu$ to the other two terms, we reduce to bound integrals analogous to those defining $I_{2,1}$, with $\nu$ in place of $\mu_{t}$. The same arguments of Step 2 above then lead to the following estimate:
\begin{equation}\label{eq:ee-nn-I223}
    I_{2,2,3}\le C\lVert \nabla^{2}_{\Sp^{d}}\mathcal{K}(\sigma_{t})\rVert_{L^{\infty}}\lVert \sigma_{t}\rVert_{\dot H^{\gamma}}\lVert \nu\rVert_{\dot H^{\gamma}}.
\end{equation}
Gathering \eqref{eq:ee-nn-I221}, \eqref{eq:ee-nn-I222}, and \eqref{eq:ee-nn-I223}, we get \eqref{eq:ee-nn-I22}.

\smallskip
\noindent \textbf{Step 4:} Finally, we bound $I_{2,3}$:
\begin{equation}\label{eq:ee-nn-I23}
    I_{2,3}\le C\lVert \nu\rVert_{H^{\gamma}}\lVert \nu\rVert_{\dot H^{\gamma+1}}\lVert \sigma_{t}\rVert_{\dot H^{\gamma+1-2s}}.
\end{equation}
The strategy is similar to the one adopted for $I_{2,2,2}$: we use H\"{o}lder inequality and Sobolev embedding for each integral appearing in its definition. If $\gamma-|\beta_{1}|>d/2$, since $|\beta_{2}|\le \gamma+1$, we have
\begin{equation*}
    \lVert \nabla_{\beta_{1}}\nu\nabla_{\beta_{2}}\mathcal{K}(\sigma_{t})\rVert_{L^{2}}\le \lVert \nabla_{\beta_{1}}\nu\rVert_{L^{\infty}}\lVert \nabla_{\beta_{2}}\mathcal{K}(\sigma_{t})\rVert_{L^{2}}\le C\lVert \nu\rVert_{H^{\gamma}}\lVert \sigma_{t}\rVert_{\dot H^{\gamma+1-2s}}.
\end{equation*}
Suppose $\gamma-|\beta_{2}|<d/2$. Setting $p=2d/(d-2\gamma+2|\beta_{1}|)$ and $q=2d/(d-2r+2|\beta_{2}|)$, where $r=d/2+1$, we get
\begin{equation*}
    \lVert \nabla_{\beta_{1}}\nu\nabla_{\beta_{2}}\mathcal{K}(\sigma_{t})\rVert_{L^{2}}\le \lVert \nabla_{\beta_{1}}\nu\rVert_{L^{p}}\lVert \nabla_{\beta_{2}}\mathcal{K}(\sigma_{t})\rVert_{L^{q}}\le C\lVert \nu\rVert_{H^{\gamma}}\lVert \sigma_{t}\rVert_{\dot H^{\frac{d}{2}+1-2s}}.
\end{equation*}
Since, by assumption $\gamma>d/2$, the bound in the right-hand side for the first case is larger, and we find \eqref{eq:ee-nn-I23}. 

If we pre-distribute one derivative from $\nabla_{\beta_{1}+\beta_{2}}\nu$ to the other two terms, the same approach leads to 
\begin{equation}\label{eq:ee-nn-I23-propagation}
     I_{2,3}\le C\lVert \nu\rVert_{H^{\gamma}}^{2}\lVert \sigma_{t}\rVert_{\dot H^{\gamma+2-2s}}.
\end{equation}

\smallskip
\noindent \textbf{Step 5:} 
Gathering \eqref{eq:ee-nn-I1}, \eqref{eq:ee-nn-I21}, \eqref{eq:ee-nn-I22}, \eqref{eq:ee-nn-I23}, and \eqref{eq:ee-nn-I23-propagation}, we get
\begin{equation}\label{eq:ee-nn-I}
\begin{aligned}
        I&\le \lVert \nabla^{2}_{\Sp^{d}}\mathcal{K}(\sigma_{t})\rVert_{L^{\infty}}\left(\lVert \mu_{t}\rVert_{H^{\gamma}}^{2}+\lVert \nu\rVert_{H^{\gamma}}^{2}\right)\\
        &\quad\, -\frac{1}{C}\big(\min_{\Sp^{d}}\nu\big)\lVert \sigma_{t}\rVert_{\dot H^{\gamma-s+1}}^{2}+C\lVert \nu\rVert_{H^{\gamma}}\lVert \sigma_{t}\rVert_{\dot H^{\max\{\gamma,\frac{d}{2}+1\}-s}}\lVert \sigma_{t}\rVert_{\dot H^{\gamma-s+1}}\\
        &\quad\, +C \lVert \nu\rVert_{H^{\gamma}}\min\{\lVert \nu\rVert_{\dot H^{\gamma+1}}\lVert \sigma_{t}\rVert_{\dot H^{\gamma+1-2s}}, \lVert \nu\rVert_{\dot H^{\gamma}}\lVert \sigma_{t}\rVert_{\dot H^{\gamma+2-2s}}\}.
\end{aligned}
\end{equation}
To bound $II$, one can follow the same steps as for $I$, which are made even simpler by the fact that $II$ contains two derivatives less than $I$:
\begin{equation}\label{eq:ee-nn-II}
    II\le C\lVert \mathcal{K}(\sigma_{t})\rVert_{L^{\infty}}\left(\lVert \mu_{t}\rVert_{H^{\gamma}}^{2}+\lVert \nu\rVert_{H^{\gamma}}^{2}\right) + C\lVert \nu\rVert_{H^{\gamma}}\lVert \sigma_{t}\rVert_{\dot H^{\gamma-s}}^{2}+C\lVert \nu\rVert_{H^{\gamma}}^{2}\lVert \sigma_{t}\rVert_{\dot H^{\gamma-2s}}.
\end{equation}
Finally, \eqref{eq:ee-nn-I} and \eqref{eq:ee-nn-II} together yield both \eqref{eq:energy-estimate-arccos-propagation} and \eqref{eq:energy-estimate-arccos}.
\end{proof}

Combining the energy estimate \eqref{eq:energy-estimate-arccos-propagation} with the mass upper bound in \eqref{eq:uniform-bound-mass-WFR}, we may obtain a control on the (exponential) growth of $H^{\gamma}$-norms of smooth solutions, depending only on $d, \gamma$, and the $H^{\gamma}$-norms of the data $\bar\mu, \nu$. In particular, with the usual approximation argument (see for instance the proof of \Cref{prop:propagation-sobolev}), we obtain the following:
\begin{cor}\label{cor:arccos-propagation-sobolev}
    Let $d\in \N$ be odd and let $s=(d+3)/2 \in \N$. Let $\gamma\in \N$ be such that $\gamma\ge s-1$, let $\bar\mu, \nu \in H^{\gamma}(\Sp^{d})$ be even nonnegative functions, and let $\mu$ be the solution of \eqref{eq:PDE-Wasserstein-Fisher-Rao-sphere10} given by \Cref{prop:well-posedness-WFR}. Then, $\mu\in L^{\infty}_{\loc}([0,\infty);H^{\gamma}(\Sp^{d}))$.   
\end{cor}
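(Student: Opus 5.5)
The plan follows the proof of \Cref{prop:propagation-sobolev}, with one simplification: for the Wasserstein--Fisher--Rao dynamics no continuation criterion is needed, since \Cref{prop:well-posedness-WFR} already gives a global solution in $\mathcal{M}_{+}(\Sp^{d})$. All that remains is an a priori $H^{\gamma}$ bound on bounded time intervals, passed to the limit.

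First, approximate the data. Take even, nonnegative, smooth $\bar\mu^{n},\nu^{n}$ converging to $\bar\mu,\nu$ in $H^{\gamma}(\Sp^{d})$; for instance, convolve with an even (zonal) heat-kernel regularization, which preserves parity and nonnegativity. By \Cref{prop:well-posedness-WFR} iii), the corresponding solutions $\mu^{n}$ are smooth and even, so \Cref{lem:energy-estimate-arccos} applies. In \eqref{eq:energy-estimate-arccos-propagation}, bound $\lVert \mathcal{K}(\mu^{n}_{t}-\nu^{n})\rVert_{C^{2}}$ via \eqref{eq:C21-bound-arccos-operator} by $C(d)(\mu^{n}_{t}(\Sp^{d})+\nu^{n}(\Sp^{d}))$. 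This is bounded uniformly in $t$ and $n$ by the mass bound \eqref{eq:uniform-bound-mass-WFR}, since the masses of $\bar\mu^{n},\nu^{n}$ are controlled by their $L^{2}$ norms.

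To close the estimate, add the lower-order part of the norm. Using $\lVert \mu_{t}\rVert_{L^{2}}\le\lVert \mu_t\rVert_{H^\gamma}$, and estimating $\frac{d}{dt}\lVert \mu^{n}_{t}\rVert_{L^{2}}^{2}$ directly from the equation (integrating by parts and using the $C^{2}$ bound on $\mathcal{K}(\sigma_t)$), gives a linear Gr\"onwall inequality
\[
\frac{d}{dt}\left(\lVert \mu^{n}_{t}\rVert_{H^{\gamma}}^{2}+\lVert \nu^{n}\rVert_{H^{\gamma}}^{2}\right)\le C\left(\lVert \mu^{n}_{t}\rVert_{H^{\gamma}}^{2}+\lVert \nu^{n}\rVert_{H^{\gamma}}^{2}\right),
\]
with $C$ depending only on $d,\gamma$, and bounds for $\lVert\bar\mu\rVert_{H^\gamma}$ and $\lVert\nu\rVert_{H^{\gamma}}$. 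Hence $\lVert\mu^{n}_{t}\rVert_{H^{\gamma}}\le C e^{Ct}$ uniformly in $n$. This is the exponential growth mentioned before the corollary.

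To pass to the limit, use stability (\Cref{prop:well-posedness-WFR} ii)): $\mu^{n}_{t}\to\mu_{t}$ in $d_{BL}$ uniformly on bounded intervals. Together with the uniform $H^{\gamma}$ bound, weak compactness in $H^{\gamma}$ and lower semicontinuity of the norm give $\mu_{t}\in H^{\gamma}$ with the same bound on $[0,T]$ for every $T$, so $\mu\in L^{\infty}_{\loc}([0,\infty);H^{\gamma})$.

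The main obstacle I expect is getting a linear (not superlinear) differential inequality. This hinges on \eqref{eq:energy-estimate-arccos-propagation} having the coefficient $\lVert\mathcal{K}(\sigma_t)\rVert_{C^{2}}$ controlled by mass alone, which the $C^{2,1}$ regularity of the arccos kernel provides. The remaining delicate points are the $L^{2}$ term in the inhomogeneous norm and making the approximation preserve evenness and nonnegativity, both of which are routine.
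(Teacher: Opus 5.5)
Your proposal is correct and follows the paper's own (only sketched) argument. You control the coefficient $\lVert \mathcal{K}(\mu_t-\nu)\rVert_{C^{2}}$ in \eqref{eq:energy-estimate-arccos-propagation} by the mass bound \eqref{eq:uniform-bound-mass-WFR}, which gives a linear Gr\"onwall inequality and hence exponential growth of $H^{\gamma}$ norms for smooth data, and you then pass to the limit by approximation and stability. Your direct $L^{2}$ estimate for the lower-order part of the norm, and your choice of heat-kernel regularization to preserve evenness and nonnegativity, fill in details the paper leaves implicit.
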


\begin{proof}[Proof of \Cref{thm:convergence-arccos}]
By approximation, we may reduce to prove \eqref{eq:conclusion-convergence-arccos} assuming $\bar\mu,\nu \in C^{\infty}(\Sp^{d})$, and so $\mu \in C^{\infty}([0,\infty)\times \Sp^{d})$.

   In the sequel, for convenience, we call $\sigma_{t}:=\mu_{t}-\nu$. Moreover, we introduce the quantity
    \begin{equation*}
        E_{\gamma}(t):= \lVert \mu_{t}\rVert_{\dot H^{\gamma}}^{2}+\mathscr{E}^{\nu}_{\Sp^{d}}(\mu_{t})+\lVert \nu\rVert_{H^{\gamma}}^{2} \qquad \forall t\in [0,\infty).
    \end{equation*}
    Note that by \eqref{eq:comparability-energy-arccos-H^-s}, $E_{\gamma}(t)\approx_{d,\gamma} \lVert \mu_{t}\rVert_{H^{\gamma}}^{2}+\lVert \nu\rVert_{H^{\gamma}}^{2}$.
    
    Consider the second line in the right-hand side of the energy estimate \eqref{eq:energy-estimate-arccos}. Since $\max\{\gamma,\frac{d}{2}+1\}-s \in (-s,\gamma-s+1)$, we may use Sobolev interpolation of $\lVert \sigma_{t}\rVert_{\dot H^{\max\{\gamma,\frac{d}{2}+1\}-s}}$ between $\lVert \sigma_{t}\rVert_{\dot H^{-s}}$ and $\lVert \sigma_{t}\rVert_{\dot H^{\gamma-s+1}}$, Young's inequality, and the lower bound $\nu\ge \alpha>0$ to absorb the positive term inside the negative one, up to a lower order term:
    \begin{equation*}
        -\frac{1}{C}\big(\min_{\T^{d}}\nu\big)\lVert \sigma_{t}\rVert_{\dot H^{\gamma-s+1}}^{2}+ C\lVert \nu\rVert_{H^{\gamma}}\lVert \sigma_{t}\rVert_{\dot H^{\max\{\gamma, \frac{d}{2}+1\}-s}}\lVert \sigma_{t}\rVert_{\dot H^{\gamma-s+1}}
        \le C\lVert \sigma_{t}\rVert_{\dot H^{-s}}^{2}\le C\mathscr{E}^{\nu}_{\Sp^{d}}(\mu_{t}),
    \end{equation*}
    where in the last step we used \eqref{eq:comparability-energy-arccos-H^-s}. 
    Plugging the estimate above in \eqref{eq:energy-estimate-arccos}, and using fact that, by \eqref{eq:energy-dissipation-Fisher-Rao}, $\mathscr{E}^{\nu}_{\Sp^{d}}(\mu_{t})$ is decreasing in time, we deduce
    \begin{equation*}
         \frac{d}{dt}E_{\gamma}(t)\le C\lVert \mathcal{K}(\sigma_{t})\rVert_{C^{2}}E_{\gamma}(t)+ C\mathscr{E}^{\nu}_{\Sp^{d}}(\mu_{t})+ C\lVert \sigma_{t}\rVert_{\dot H^{\gamma+1-2s}}.     
\end{equation*}
which, after integration, yields
    \begin{equation}\label{eq:energy-estimate-simplified-arccos}
        \begin{aligned}
         E_{\gamma}(t)&\le \left(E_{\gamma}(0)+C\int_{0}^{t}\left(\mathscr{E}^{\nu}_{\Sp^{d}}(\mu_{r})+\lVert \sigma_{r}\rVert_{\dot H^{\gamma+1-2s}}\right)dr\right)\exp\left(C\int_{0}^{t}\lVert\mathcal{K}(\sigma_{r})\rVert_{C^{2}}dr\right).       
    \end{aligned}
    \end{equation}
    On the other hand, the energy dissipation identity \eqref{eq:energy-dissipation-Fisher-Rao}, once combined with \eqref{eq:comparability-energy-arccos-H^-s} and Sobolev interpolation of $\lVert \sigma_{t}\rVert_{\dot H^{-s}}$ between $\lVert \sigma_{t}\rVert_{\dot H^{1-2s}}$ and $\lVert \sigma_{t}\rVert_{\dot H^{\gamma}}$ gives
    \begin{equation}\label{eq:energy-dissipation-arccos-interpolated}
    \begin{aligned}
         \frac{d}{dt}\mathscr{E}^{\nu}_{\Sp^{d}}(\mu_{t})&= -4\int_{\Sp^{d}}\left(|\nabla_{\Sp^{d}}\mathcal{K}(\sigma_{t})|^{2}+|\mathcal{K}(\sigma_{t})|^{2}\right)\mu_{t}\\
         &\le -\frac{1}{C}\big(\inf_{\Sp^{d}}\mu_{t}\big)\left(\sigma_{t}(\Sp^{d})^{2}+\lVert \sigma_{t}\rVert_{\dot H^{1-2s}}^{2}\right)\\
         &\le -\frac{1}{C}\big(\inf_{\Sp^{d}}\mu_{t}\big)\left(\sigma_{t}(\Sp^{d})^{2}+\lVert \sigma_{t}\rVert_{\dot H^{\gamma}}^{-\frac{2s-2}{\gamma+s}}\lVert \sigma_{t}\rVert_{\dot H^{-s}}^{2+\frac{2s-2}{\gamma+s}}\right)
         \le -\frac{1}{C}\big(\inf_{\Sp^{d}}\mu_{t}\big)E_{\gamma}(t)^{-\frac{s-1}{\gamma+s}}\mathscr{E}^{\nu}_{\Sp^{d}}(\mu_{t})^{1+\frac{s-1}{\gamma+s}}.
    \end{aligned}
    \end{equation}
    
    Let us call $M:= E_{\gamma}(0)$ and consider  
    \begin{equation*}
        \tau:= \sup\left\{t>0: E_{\gamma}(r)\le 2M \text{\,\, and \,\,} \inf_{\Sp^{d}} \mu_{r}\ge \alpha/2,\,  \forall r\in [0,t)\right\}\in (0,\infty].
    \end{equation*}
    Our goal is to prove that if $\mathscr{E}_{\Sp^{d}}^{\nu}(\bar\mu)$ is sufficiently small, then $\tau=\infty$. 
    By \eqref{eq:energy-dissipation-arccos-interpolated} and the definition of $\tau$ we have 
    \begin{equation*}
        \frac{d}{dt}\mathscr{E}^{\nu}_{\Sp^{d}}(\mu_{t})\le -\kappa\mathscr{E}^{\nu}_{\Sp^{d}}(\mu_{t})^{1+\frac{s-1}{\gamma+s}}\qquad \forall t\in [0,\tau),
    \end{equation*}
    After integration we get
    \begin{equation}\label{eq:polynomial-decay-energy-arccos}
        \mathscr{E}^{\nu}_{\Sp^{d}}(\mu_{t})\le   \mathscr{E}^{\nu}_{\Sp^{d}}(\bar\mu)\left(1+K t\right)^{-\frac{\gamma+s}{s-1}}\quad \forall t\in [0,\tau),\qquad K=C\mathscr{E}^{\nu}_{\Sp^{d}}(\bar\mu)^{\frac{s-1}{\gamma+s}}.
    \end{equation}
    By Sobolev interpolation, we deduce a bound for $\lVert \sigma_{t}\rVert_{\dot H^{\gamma+1-2s}}$:
    \begin{equation}\label{eq:polynomial-decay-Hgamma+1-2s-arccos}
    \begin{aligned}
        \lVert \sigma_{t}\rVert_{\dot H^{\gamma+1-2s}}&\le \lVert \sigma_{t}\rVert_{\dot H^{-s}}^{\frac{2s-1}{\gamma+s}}\lVert \sigma_{t}\rVert_{\dot H^{\gamma}}^{\frac{\gamma+1-s}{\gamma+s}}\\&\le C\mathscr{E}^{\nu}_{\Sp^{d}}(\mu_{t})^{\frac{2s-1}{2(\gamma+s)}}E_{\gamma}(t)^{\frac{\gamma+1-s}{2(\gamma+s)}}\le C\mathscr{E}^{\nu}_{\Sp^{d}}(\bar\mu)^{\frac{2s-1}{2(\gamma+s)}}(1+Kt)^{-\frac{2s-1}{2s-2}}
    \end{aligned}\qquad \forall t\in [0,\tau).
    \end{equation}
    Similarly, we may obtain a bound for $\lVert\mathcal{K}(\sigma_{t})\rVert_{C^{2}}$: Calling $\eta:= \gamma-d/2>0$ and $\hat{\gamma}:= d/2+2-2s+\eta/2$, the boundedness of $\nabla^{2}(-\Delta)^{-s}: \dot H^{\hat{\gamma}}\to L^{\infty}$ together with Sobolev interpolation gives
    \begin{equation}\label{eq:polynomial-decay-Lip-vectorfield-arccos}
        \begin{aligned}
             \lVert \mathcal{K}(\sigma_{t})\rVert_{C^{2}}&\le C\lVert \mathcal{K}(\sigma_{t})\rVert_{H^{\hat{\gamma}+2s}}\\
             &\le  C\mathscr{E}^{\nu}_{\Sp^{d}}(\mu_{t})^{\frac{2s-2+\eta/2}{2(\gamma+s)}}E_{\gamma}(t)^{\frac{\gamma+2-\eta/2-s}{2(\gamma+s)}}\le C \mathscr{E}^{\nu}_{\Sp^{d}}(\bar\mu)^{\frac{2s-2+\eta/2}{2(\gamma+s)}}(1+Kt)^{-\frac{2s-2+\eta/2}{2s-2}}
        \end{aligned}\qquad \forall t\in [0,\tau).
    \end{equation}

    Integrating in the time interval $[0,\tau)$ the bounds from \eqref{eq:polynomial-decay-energy-arccos}, \eqref{eq:polynomial-decay-Hgamma+1-2s-arccos}, and \eqref{eq:polynomial-decay-Lip-vectorfield-arccos}, we obtain the following integral estimates (with constants independent from $\tau$):
    \begin{equation}\label{eq:integral-estimates-convergence-arccos}
        \int_{0}^{\tau}\mathscr{E}^{\nu}_{\Sp^{d}}(\mu_{t})dt\le C\mathscr{E}^{\nu}_{\Sp^{d}}(\bar\mu)^{\frac{\gamma+1}{\gamma+s}},\quad \int_{0}^{\tau}\lVert \sigma_{t}\rVert_{\dot H^{\gamma+1-2s}}dt\le C\mathscr{E}^{\nu}_{\Sp^{d}}(\bar\mu)^{\frac{1}{2(\gamma+s)}},\quad \int_{0}^{\tau}\lVert \mathcal{K}(\sigma_{t})\rVert_{C^{2}}dt\le C\mathscr{E}^{\nu}_{\Sp^{d}}(\bar\mu)^{\frac{\gamma-d/2}{4(\gamma+s)}}.
    \end{equation}
    From here, we deduce a uniform lower bound on $\mu_{t}$. Indeed, since $\inf \bar\mu \ge \alpha$, by the Lagrangian representation formula for the solution $\mu$ obtained in \Cref{prop:well-posedness-WFR}, we get
    \begin{equation}\label{eq:uniform-lower-bound-local-convergence-arccos}
    \begin{aligned}
        \inf_{\Sp^{d}} \mu_{t}&\ge \big(\inf_{\Sp^{d}} \bar \mu\big)\, \exp\left(-C\int_{0}^{t}\lVert \mathcal{K}(\sigma_{r})\rVert_{C^{2}}dr\right) \ge \alpha \exp \left(-C\mathscr{E}^{\nu}_{\Sp^{d}}(\bar\mu)^{\frac{\gamma-d/2}{4(\gamma+s)}}\right)
    \end{aligned}\qquad \forall t\in [0,\tau).
    \end{equation}
    
    We now conclude the proof by showing that if $\mathscr{E}^{\nu}_{\Sp^{d}}(\bar\mu)$ is sufficiently small, then $\tau =\infty$. This combined with \eqref{eq:polynomial-decay-energy-arccos} shows that the estimates in \eqref{eq:conclusion-convergence-arccos} hold for all times $t\in [0,\infty)$. Suppose by contradiction that $\tau<\infty$. Thanks to \eqref{eq:uniform-lower-bound-local-convergence-arccos}, by choosing $\mathscr{E}^{\nu}_{\Sp^{d}}(\bar\mu)$ small enough, we can make sure that $\inf_{\Sp^{d}} \mu_{t}\ge 3\alpha/4$ for every $t\in [0,\tau)$, so that necessarily $E_{\gamma}(\tau)=2M$. However, \eqref{eq:energy-estimate-simplified-arccos}, together with \eqref{eq:integral-estimates-convergence-arccos}, gives
    \begin{align*}
        E_{\gamma}(t)\le \left(M+ C\left(\mathscr{E}^{\nu}_{\Sp^{d}}(\bar\mu)^{\frac{\gamma+1}{\gamma+s}}+\mathscr{E}^{\nu}_{\Sp^{d}}(\bar\mu)^{\frac{1}{2(\gamma+s)}}\right) \right)\exp \left(C\mathscr{E}^{\nu}_{\Sp^{d}}(\bar\mu)^{\frac{\gamma-d/2}{4(\gamma+s)}}\right)\le \frac{3}{2}M,
    \end{align*}
    provided that $\mathscr{E}^{\nu}_{\Sp^{d}}(\bar\mu)$ is chosen sufficiently small. This is a contradiction and concludes the proof.
\end{proof}

\begin{rmk}\label{rmk:improved-regularity-target-convergence-arccos}
    As in \Cref{rmk:improved-regularity-target-convergence}, the assumption $\nu\in H^{\gamma+1}(\Sph^d)$ is technical. 
Working with fractional Sobolev regularity in the neural-network setting, one can relax it to $\nu\in H^{\gamma+\eps}(\Sph^d)$ for any fixed $\eps>0$, 
by applying a fractional Leibniz rule on $\Sph^d$ and redistributing $\eps$ derivatives in \eqref{eq:ee-nn-I23} onto the other two factors (instead of a full derivative); 
see, e.g., \cite[Corollary 1.2]{bui2025bilinear}. \fr
\end{rmk}

\begin{proof}[Proof of \Cref{cor:fnu}]
    Let $\gamma:=\beta-s-1 \ge s-1$, and consider the multiplier operator $\mathcal{H}$ defined \eqref{eq:def-repres-operator-H} and studied in \Cref{lem:ReLU-operator}. We have $\nu=\mathcal{H}^{-1}(g+S)=\mathcal{H}^{-1}(g)+c(d)S$, for some positive constant $c(d)>0$. Therefore, by Sobolev embedding,
    \begin{equation*}
        \min_{\Sp^{d}}\nu \ge c(d)S-C(d,\beta)\lVert\mathcal{H}^{-1}(g)\rVert_{H^{\gamma+1}}\ge c(d)K(1+\lVert g\rVert_{H^{\beta}})-C(d,\beta)\lVert g\rVert_{H^{\beta}}\ge 2
    \end{equation*}
    provided we choose $K=K(d,\beta)>0$ sufficiently large. With this choice of $K$, we have $\lVert \nu\rVert_{H^{\gamma+1}}\lesssim_{d,\beta} \lVert g\rVert_{H^{\beta}}+1$. 
    
    Next, note that $\mathscr{E}^{\nu}_{\Sp^{d}}(\bar\mu)=\lVert f-f_{\bar\mu}\rVert_{L^{2}}^{2}\le \delta$. Hence, calling $\eta:= \gamma-d/2>0$, by Sobolev embedding and interpolation,
    \begin{align*}
        \lVert \bar\mu-\nu\rVert_{L^{\infty}}&\le C(d,\beta)\lVert \bar\mu-\nu\rVert_{H^{\frac{d+\eta}{2}}}\\
        &\le C(d,\beta)\mathscr{E}^{\nu}_{\Sp^{d}}(\bar\mu)^{\frac{2\gamma-d-\eta}{4(\gamma+s)}}\lVert \bar\mu-\nu\rVert_{H^{\gamma}}^{\frac{d+2s+\eta}{4(\gamma+s)}}\le C(d,\beta)\delta^{\frac{2\gamma-d-\eta}{4(\gamma+s)}}\left(M+\lVert g\rVert_{H^{\beta}}\right)^{\frac{d+2s+\eta}{4(\gamma+s)}}\le 1,
    \end{align*}
    provided we choose $\delta$ small enough depending only on $d, \beta, M$, and $\lVert g\rVert_{H^{\beta}}$. With this choice, $\bar\mu \ge 1$. At this point, the corollary follows from a direct application of \Cref{thm:convergence-arccos}, noting that $\lVert f_{\mu_{t}}-f\rVert_{L^{2}}^{2}=\mathscr{E}^{\nu}_{\Sp^{d}}(\mu_{t})$, and $\lVert f_{\mu_{t}}\rVert_{H^{\beta-1}}\approx_{d,\beta} \lVert \mu_{t}\rVert_{H^{\gamma}}$. 
\end{proof}

\addcontentsline{toc}{section}{Appendix}

\appendix
\section*{Appendix}

\refstepcounter{section}

The appendix is divided in two parts. In \Cref{subsec-appendix-kernels} we collect several useful estimates concerning the Riesz kernels $K_{s}$, for $s\ge 1$. Then, in \Cref{subsec:appendix-katoponce} we extend some fractional Kato--Ponce commutator estimates to the periodic setting. 

\subsection{Riesz Kernel estimates}\label{subsec-appendix-kernels} 
Here, for any $s\ge 1$, $K_{s}:\T^{d}\to \R$ is the periodic Riesz kernel defined in \eqref{eq:def-Riesz-kernel}.
We start with the following lemma, that gives the precise asymptotic behavior of $K_{s}$ in $0$ (see, for instance, \cite[Theorem 3.4]{fahrenwaldt2017off} for a much more general result). 
\begin{lem}\label{lem:asymptotic-behavior-kernels}
    Let $s\ge 1$ and $K_{s}$ be the kernel defined in \eqref{eq:def-Riesz-kernel}.
    Let $\varphi \in C^{\infty}(\T^{d};[0,1])$ be a cut-off function such that $\varphi=1$ in $B_{1/8}$ and $\varphi=0$ in $B_{1/4}$. Then we have
    \begin{equation}\label{eq:asymptotic-behavior-kernel}
        K_{s}= c_{d,s}K_{s}^{{\rm sing}}\varphi+K_{s}^{{\rm reg}},\qquad K_{s}^{{\rm sing}}(x):=\begin{cases}
            |x|^{2s-d}\qquad &\text{for $2s-d \not\in 2\N\cup\{0\}$},\\
            |x|^{2s-d}\log(|x|)\qquad &\text{for $2s-d\in 2\N\cup\{0\}$},
        \end{cases}\qquad K_{s}^{{\rm reg}}\in C^{\infty}(\T^{d}).
    \end{equation}
    In particular, the following bounds hold:
    \begin{equation}\label{eq:upper-bounds-derivatives-kernel}
        |\nabla K_{s}(x)|\lesssim_{d,s} \begin{cases}
            |x|^{2s-d-1}\quad &\text{for $s\in \left[1,\frac{d}{2}+\frac{1}{2}\right)$},\\
            1\quad &\text{for $s\ge \frac{d}{2}+\frac{1}{2}$},
        \end{cases}\qquad |\nabla^{2}K_{s}(x)|\lesssim_{d,s} \begin{cases}
            |x|^{2s-d-2}\quad &\text{for $s\in \left[1,\frac{d}{2}+1\right)$},\\
            \log(2+|x|^{-1})\quad &\text{for $s=\frac{d}{2}+1$},\\
            1\quad &\text{for $s> \frac{d}{2}+1$}.
        \end{cases}
    \end{equation}
\end{lem}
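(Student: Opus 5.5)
The plan is to compare the periodic kernel $K_s$ with a localized version of the Euclidean fundamental solution of $(-\Delta)^s$ on $\R^d$, and to show that the difference is smooth. Recall that on $\R^d$ the distribution $c_{d,s}|x|^{2s-d}$ (or $c_{d,s}|x|^{2s-d}\log|x|$ when $2s-d\in 2\N\cup\{0\}$) is a fundamental solution of $(-\Delta)^s$, i.e.\ its Fourier transform equals $(2\pi|\xi|)^{-2s}$ away from the origin, up to a polynomial (supported at $\xi=0$) in the logarithmic case. Set $F:=c_{d,s}K_s^{\rm sing}\varphi$, viewed as a compactly supported distribution in $B_{1/4}$, and hence as a periodic distribution on $\T^d$.

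The first step is to compute the Fourier coefficients of $F$ on the torus. They are given by $\hat F_k=\widehat{F}^{\R^d}(k)$, the Euclidean Fourier transform of the compactly supported $F$ evaluated at $k\in\Z^d$. We then write
\[
\widehat{F}^{\R^d}=\widehat{c_{d,s}K^{\rm sing}_s}*\widehat{\varphi}.
\]
Since $\widehat\varphi$ is Schwartz and $(1-\varphi)K^{\rm sing}_s$ is smooth away from $0$, the standard symbol calculus gives the following: $\widehat{F}^{\R^d}(\xi)-(2\pi|\xi|)^{-2s}$ is a Schwartz-type rapidly decaying function for $|\xi|\ge1$. Indeed, $\widehat{(1-\varphi)c_{d,s}K^{\rm sing}_s}$ decays faster than any power at infinity, because $(1-\varphi)K^{\rm sing}_s$ is smooth and all of its derivatives of high order are integrable. (In the log case, the polynomial correction is supported at $\xi=0$ and does not matter for $|\xi|\ge1$.)

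It follows that $K_s-F$ has Fourier coefficients $(2\pi|k|)^{-2s}-\hat F_k$ that decay faster than any power of $|k|$ for $k\neq0$, plus a single finite coefficient at $k=0$. Hence $K_s^{\rm reg}:=K_s-F\in C^\infty(\T^d)$, which proves \eqref{eq:asymptotic-behavior-kernel}.

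The main technical point is the rapid decay of $\widehat{(1-\varphi)|x|^{2s-d}}$ (and of its log analogue). I would handle it by integrating by parts with $|\xi|^{-2N}\Delta_x^N$: for $N$ large, the derivative $\Delta^N[(1-\varphi)K^{\rm sing}_s]$ decays like $|x|^{2s-d-2N}$ at infinity, which is integrable. An alternative route is to use the homogeneity of the symbol directly.

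The bounds \eqref{eq:upper-bounds-derivatives-kernel} then follow by differentiating this decomposition. On $\T^d\setminus B_{1/8}$ everything is smooth and bounded, so it suffices to differentiate $|x|^{2s-d}$ near $0$. One derivative gives $|x|^{2s-d-1}$, which is bounded exactly when $2s-d-1\ge0$. Two derivatives give $|x|^{2s-d-2}$, which is bounded when $2s-d-2>0$. In the borderline case $2s=d+2$, the kernel is $|x|^2\log|x|$, whose Hessian is $O(\log(2+|x|^{-1}))$. In the other log cases ($2s-d\in 2\N\cup\{0\}$) we estimate the extra logarithm as follows: for exponents at most $0$, the exponent $2s-d$ is an even integer at most $0$, so $2s-d-1<0$ and the log is absorbed by slightly worsening nothing beyond the stated power, since the stated bounds in that range are powers $|x|^{2s-d-1}$, $|x|^{2s-d-2}$ attained up to the log. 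More precisely, the case $s=d/2$ gives $\nabla K\sim|x|^{-1}$ exactly, because $\nabla\log|x|=x/|x|^2$. For $2s-d\ge2$, the derivatives of order at most $2$ of $|x|^{2s-d}\log|x|$ are bounded except at $2s-d=2$, which was treated above.
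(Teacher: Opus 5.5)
Your argument is correct, and it does more than the paper does. The paper gives no proof of the decomposition: it refers to a general result (Fahrenwaldt, Theorem~3.4, on near-diagonal asymptotics of kernels of pseudodifferential operators on closed manifolds) and treats the derivative bounds as an immediate consequence.

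You instead give a self-contained Fourier argument specific to the flat torus. It has three steps:
\begin{itemize}
\item you identify the torus Fourier coefficients of the localized singular part with the Euclidean Fourier transform evaluated at integer frequencies;
\item you use the classical fact that $\widehat{|x|^{2s-d}}$ (resp.\ $\widehat{|x|^{2s-d}\log|x|}$) is a nonzero multiple of $|\xi|^{-2s}$ on $\{\xi\neq0\}$;
\item you show that the tail $(1-\varphi)K_s^{\rm sing}$ has a Fourier transform that is smooth and rapidly decreasing on $\{|\xi|\ge1\}$, because $\Delta^N[(1-\varphi)K_s^{\rm sing}]=O(|x|^{2s-d-2N})$ is integrable for $N>s$.
\end{itemize}

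What your route buys is an elementary, fully checkable proof that also explains the dichotomy in $K_s^{\rm sing}$. The constant in $\widehat{|x|^{2s-d}}=c\,|\xi|^{-2s}$ off the origin is proportional to $\Gamma(s)/\Gamma(\frac d2-s)$, which vanishes exactly when $2s-d\in2\N\cup\{0\}$; that is what forces the logarithm. The cited result buys generality (other closed manifolds such as $\Sph^d$, variable-coefficient operators), which fits the paper's remark that the well-posedness theory extends beyond $\T^d$.

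A few clean-ups are worth making.
\begin{itemize}
\item The convolution formula $\widehat F=\widehat{c_{d,s}K^{\rm sing}_s}*\widehat\varphi$ is never used. What you actually use is $F=c_{d,s}K_s^{\rm sing}-(1-\varphi)c_{d,s}K_s^{\rm sing}$. Both transforms on the right are smooth functions on $\{\xi\neq0\}$, so the identity holds pointwise at every $k\neq0$; say this explicitly.
\item In the logarithmic case the extra term in the transform is a finite combination of derivatives of $\delta_0$ (the transform of a polynomial), not ``a polynomial supported at $\xi=0$''.
\item The sentence about ``exponents at most $0$ \dots\ even integer at most $0$'' in your last paragraph is garbled. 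The only logarithmic case with nonpositive exponent is $s=d/2$. There $|\nabla\log|x||=|x|^{-1}$ and $|\nabla^2\log|x||\lesssim|x|^{-2}$ give exactly the stated bounds, so just say that.
\item When you absorb the smooth remainder and the region outside $B_{1/8}$, note why this is allowed. Since $|x|\le\sqrt d/2$ on $\T^d$, all right-hand sides in \eqref{eq:upper-bounds-derivatives-kernel} are bounded below by a positive constant.
\end{itemize}
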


Next, we give a bound on the modulus of continuity of $\nabla K_{s}*\eta$, for $\eta$ belonging to the space $\mathscr{X}_{s}(\T^{d})$ defined in \eqref{eq:def-yudovich-class}.
\begin{lem}\label{lem:modulus-continuity-vector-field}
    Let $s\ge 1$ and $\mathscr{X}_{s}(\T^{d})$ be defined in \eqref{eq:def-yudovich-class}. Then, for every $\eta\in \mathscr{X}_{s}(\T^{d})$, we have
    \begin{equation}\label{eq:modulus-continuity-vector-field}
        |\nabla K_{s}*\eta(x)-\nabla K_{s}*\eta(y)|\lesssim_{d,s}\lVert \eta\rVert_{\mathscr{X}_{s}}\omega_{s}(|x-y|)\qquad \forall x,y\in \T^{d}.
    \end{equation}
    where the modulus of continuity $\omega_{s}:[0,\infty)\to [0,\infty)$ is given by
    \begin{equation}\label{eq:def:mod-continuity-vect-field}
        \omega_{s}(r):=\begin{cases}
            r\qquad &\text{if $s\not \in \{1,d/2+1\}$},\\
            r\log(2+r^{-1})\qquad &\text{if $s\in \{1,d/2+1\}$}.
        \end{cases}
    \end{equation}
\end{lem}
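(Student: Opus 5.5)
We write $\nabla K_{s}*\eta(x)=\int_{\T^{d}}\nabla K_{s}(x-z)\,\eta(z)\,dz$ and bound the difference at $x,y$ with the classical Yudovich-type splitting. Set $r:=|x-y|$ and assume $r\le 1/8$; larger $r$ is handled by the uniform bound on $\nabla K_{s}*\eta$, since then $\omega_{s}(r)\gtrsim 1$. By \Cref{lem:asymptotic-behavior-kernels}, $K_{s}=c_{d,s}K_{s}^{\rm sing}\varphi+K_{s}^{\rm reg}$. The smooth part $K_{s}^{\rm reg}$ contributes at most $\lVert \nabla^{2}K_{s}^{\rm reg}\rVert_{L^{\infty}}\lVert\eta\rVert_{L^{1}}\,r$, which is fine because $L^{1}$ is controlled by $\lVert\eta\rVert_{\mathscr{X}_{s}}$ in all three cases. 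So it is enough to handle the kernel $k:=\nabla(K^{\rm sing}_{s}\varphi)$. Away from $0$ this kernel satisfies $|k(w)|\lesssim |w|^{2s-d-1}$ and $|\nabla k(w)|\lesssim|w|^{2s-d-2}$, with a logarithm in the borderline case $s=d/2+1$, as in \eqref{eq:upper-bounds-derivatives-kernel}.

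\emph{Case $s>d/2+1$.} Here $\nabla^{2}K_{s}$ is bounded, and the mean value theorem gives $|\nabla K_{s}*\eta(x)-\nabla K_{s}*\eta(y)|\le\lVert\nabla^{2}K_{s}\rVert_{L^{\infty}}\lVert\eta\rVert_{\mathcal{M}}\,r$.

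\emph{Case $s=d/2+1$.} Near the diagonal $|\nabla^{2}K_{s}(w)|\lesssim\log(2+|w|^{-1})$, and $\nabla K_{s}$ is bounded. Split $\T^{d}$ into $B_{2r}(x)$ and its complement. On $B_{2r}(x)$, use $|\nabla K_{s}(x-z)-\nabla K_{s}(y-z)|\lesssim |x-z|+|y-z|$, which follows by integrating the log-bounded Hessian along rays from $0$ and gives $\lesssim r$ there. On the complement, the mean value theorem gives $\lesssim r\log(2+r^{-1})$. Both bounds are then integrated against the finite measure $|\eta|$.

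\emph{Cases $1\le s<d/2+1$.} Use the same splitting $B_{2r}(x)$ versus its complement, now with the density $\eta$. The near part is bounded by $\int_{B_{3r}}|w|^{2s-d-1}|\eta(\cdot)|$. For the far part, $|x-z|\ge 2r$ makes $|y-z|$ comparable to $|x-z|$, so the mean value theorem gives the integrand $r\,|x-z|^{2s-d-2}|\eta(z)|$ over $\{|x-z|\ge 2r\}$.

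For $s=1$ with $\eta\in L^{\infty}$ this is the standard computation. The near part is $\lesssim\lVert\eta\rVert_{L^{\infty}}\int_{B_{3r}}|w|^{1-d}\sim r$. The far part is $\lesssim r\lVert\eta\rVert_{L^{\infty}}\int_{2r}^{1}\rho^{-1}d\rho\sim r\log(1/r)$, which gives the log-Lipschitz modulus.

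For $1<s<d/2+1$ with $p=d/(2s-2)$, use Hölder's inequality in Lorentz spaces, $\int|f g|\le\lVert f\rVert_{L^{p,1}}\lVert g\rVert_{L^{p',\infty}}$. The function $|w|^{2s-d-2}=|w|^{-d/p'}$ lies in $L^{p',\infty}$ with a norm independent of $r$, so the far part is $\lesssim r\lVert\eta\rVert_{L^{p,1}}$. For the near part, take $g=|w|^{2s-d-1}\mathbf 1_{B_{3r}}$. It has $L^{p',\infty}$ quasi-norm $\lesssim r$: its distribution function is that of $|w|^{-(d/p'-1)}$ truncated at radius $3r$, and a scaling computation gives $\sup_\lambda\lambda|\{g>\lambda\}|^{1/p'}\lesssim r$. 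This is the main point to check carefully, and it yields a genuinely Lipschitz bound. The endpoint $L^{p,1}$ (rather than $L^{p}$) is exactly what removes the logarithm.

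Combining the cases gives \eqref{eq:modulus-continuity-vector-field}. The expected obstacle is the critical Lorentz duality estimate in the range $1<s<d/2+1$. The borderline $s=d/2+1$ is routine once the logarithmic Hessian bound from \Cref{lem:asymptotic-behavior-kernels} is available.
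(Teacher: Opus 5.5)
Your argument is correct. For $s=1$ and $s>d/2+1$ it is exactly the paper's proof; the differences are in the two intermediate ranges.

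For $1<s<d/2+1$ the paper does not split space at all. Since $s>1$, the singularity $|\nabla^{2}K_{s}(w)|\lesssim|w|^{2s-d-2}$ is locally integrable. So the distributional Hessian of $K_{s}*\eta$ is the genuine function $\nabla^{2}K_{s}*\eta$, and one application of H\"older's inequality in Lorentz spaces, $\lVert\nabla^{2}K_{s}*\eta\rVert_{L^{\infty}}\lesssim\lVert\nabla^{2}K_{s}\rVert_{L^{q,\infty}}\lVert\eta\rVert_{L^{p,1}}$ with $q=p'$, gives the Lipschitz bound. Your near/far splitting reaches the same conclusion but needs the extra weak-type estimate $\lVert\, |w|^{2s-d-1}\mathbf{1}_{B_{3r}}\rVert_{L^{p',\infty}}\lesssim r$. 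You compute that correctly. For $s\ge d/2+1/2$ the truncated function is simply bounded by $(3r)^{2s-d-1}$ on a set of measure $\sim r^{d}$, which again gives $\lesssim r$. In exchange, you only use the mean value theorem on the kernel away from its singularity and never need to identify the distributional Hessian of $K_{s}*\eta$.

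For $s=d/2+1$ the paper just notes that $\nabla K_{d/2+1}$ is itself log-Lipschitz by \Cref{lem:asymptotic-behavior-kernels} and integrates against $|\eta|$. Your splitting is essentially a proof of that property of the kernel.

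One step in that last case is misstated. The bound $|\nabla K_{s}(x-z)-\nabla K_{s}(y-z)|\lesssim|x-z|+|y-z|$ on $B_{2r}(x)$ is false for $s=d/2+1$, because $\nabla(|w|^{2}\log|w|)=w(2\log|w|+1)$ has size $\sim|w|\log(1/|w|)$ near $0$. Integrating the logarithmic Hessian bound along rays only gives $|w|\log(2+|w|^{-1})$. The near part is therefore $\lesssim r\log(2+r^{-1})\lVert\eta\rVert_{\mathcal{M}}$ rather than $\lesssim r$. This is still $\lesssim\omega_{s}(r)$, so your conclusion is unaffected.
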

\begin{proof}We divide the proof according to the choice of $s\ge 1$.    

    \smallskip
    \noindent \textbf{$\bm{(s=1)}$.} We take $x,y\in \T^{d}$, we call $r:=|x-y|$, and we assume without loss of generality that $r<1/8$. Then, 
    \begin{equation*}
        |\nabla K_{1}*\eta(x)-\nabla K_{1}*\eta(y)|\le \lVert \eta\rVert_{L^{\infty}}\int_{\T^{d}}|\nabla K_{1}(x-z)-\nabla K_{1}(y-z)|dz.
    \end{equation*}
    We split the integral in the right-hand side in the two regions $B_{2r}(x)$ and $\T^{d}\setminus B_{2r}(x)$. For the first term, we have
    \begin{align*}
        \int_{B_{2r}(x)}|\nabla K_{1}(x-z)-\nabla K_{1}(y-z)|dz&\le \int_{B_{2r}(x)}|\nabla K_{1}(x-z)|dz+\int_{B_{3r}(x)}|\nabla K_{1}(y-z)|dz\\
        &\lesssim_{d} \int_{B_{3r}}|w|^{1-d}\lesssim_{d} r,
    \end{align*}
    where we used the bound on $|\nabla K_{s}|$ from \eqref{eq:upper-bounds-derivatives-kernel}.
    On the other hand, observe that for every $z\in \T^{d}\setminus B_{2r}(x)$, by the mean value theorem there exists $p_{z}$ in the segment connecting $x$ to $y$ such that 
    \begin{equation*}
        |\nabla K_{1}(x-z)-\nabla K_{1}(y-z)|\le |\nabla^{2}K_{1}(p_{z}-z)||x-y|\lesssim_{d} r|p_{z}-z|^{-d}\lesssim r|x-z|^{-d},
    \end{equation*}
    where we used the bound on $|\nabla^{2}K_{s}|$ from \eqref{eq:upper-bounds-derivatives-kernel}. Therefore: 
    \begin{align*}
         \int_{\T^{d}\setminus B_{2r}(x)}|\nabla K_{1}(x-z)-\nabla K_{1}(y-z)|dz&\lesssim_{d} r\int_{\T^{d}\setminus B_{2r}}|w|^{-d}\lesssim_{d} r\log(2+r^{-1}).
    \end{align*}
    Collecting the two estimates we get precisely \eqref{eq:modulus-continuity-vector-field} in the case $s=1$.
    
    \smallskip
    \noindent \textbf{$\bm{(1<s<d/2+1)}$.} Let $p=d/(2s-2)$ and $q=p'=d/(d-2s+2)$. By H\"{o}lder inequality in Lorentz spaces,
    \begin{align*}
        \lVert \nabla^{2}K_{s}*\eta\rVert_{L^{\infty}}\lesssim_{d,s} \lVert \nabla^{2}K_{s}\rVert_{L^{q,\infty}}\lVert \eta\rVert_{L^{p,1}}\lesssim_{d,s}\lVert \eta\rVert_{L^{p,1}},
    \end{align*}
    where we used the bound $|\nabla^{2}K_{s}(x)|\lesssim_{d,s}|x|^{2s-d-2} \in L^{q,\infty}$ from \eqref{eq:upper-bounds-derivatives-kernel}.
    
    \smallskip
    \noindent \textbf{$\bm{(s=d/2+1)}$.} In this case, \eqref{eq:asymptotic-behavior-kernel} gives
    \begin{equation*}
        |\nabla K_{d/2+1}(x)-\nabla K_{d/2+1}(y)|\lesssim_{d}|x-y|\log(2+|x-y|^{-1})\qquad \forall x,y\in \T^{d}. 
    \end{equation*}
    Therefore, 
    \begin{equation*}
        |\nabla K_{d/2+1}*\eta(x)-\nabla K_{d/2+1}*\eta(y)|\lesssim_{d}\lVert \eta\rVert_{\mathcal{M}}|x-y|\log(2+|x-y|^{-1}).
    \end{equation*}
    
    \smallskip
    \noindent \textbf{$\bm{(s>d/2+1)}$.} By \eqref{eq:upper-bounds-derivatives-kernel}, it holds $\lVert \nabla^{2}K_{s}*\eta\rVert_{L^{\infty}}\le \lVert \nabla^{2}K_{s}\rVert_{L^{\infty}}\lVert \eta\rVert_{\mathcal{M}}\lesssim_{d,s}\lVert \eta\rVert_{\mathcal{M}}$.
\end{proof}
In the following two lemmas, we prove bounds on $\nabla K_{s}*(\eta_{1}-\eta_{2})$ in Lebesgue spaces in terms of the Wasserstein distance $W_{2}(\eta_{1},\eta_{2})$ and a suitable integrability control on $\eta_{1}$ and $\eta_{2}$. 
\begin{lem}\label{lem:W2-dual}
    Let $s\in \left[1,d/2+1\right)$, $p:=d/(2s-2)\in (1,\infty]$, $q:=p'=d/(d-2s+2)\in [1,\infty)$, and $\eta_{1},\eta_{2}\in \mathscr{P}\cap L^{p}(\T^{d})$. Then, the following estimate holds:
        \begin{equation*}
            \lVert \nabla K_{s}*(\eta_{1}-\eta_{2})\rVert_{L^{2q}}\lesssim_{d,s} \max\{\lVert \eta_{1}\rVert_{L^{p}},\lVert \eta_{2}\rVert_{L^{p}}\}^{1/2} W_{2}(\eta_{1},\eta_{2}).
        \end{equation*}
    \end{lem}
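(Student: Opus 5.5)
The plan is to use the Benamou--Brenier / duality characterization of $W_2$, so that the estimate reduces to a Hardy--Littlewood--Sobolev-type inequality for the operator $\nabla K_s *$. We interpolate along the Wasserstein geodesic: let $(\rho_\theta)_{\theta\in[0,1]}$ be the displacement interpolation between $\eta_1$ and $\eta_2$, with velocity field $w_\theta$. Then
\[
\partial_\theta\rho_\theta+\diver(\rho_\theta w_\theta)=0,
\qquad
\int_0^1\!\!\int_{\T^d}|w_\theta|^2\,d\rho_\theta\,d\theta=W_2^2(\eta_1,\eta_2).
\]
By displacement convexity of $\rho\mapsto\lVert\rho\rVert_{L^p}^p$ (McCann's condition holds for $p\ge1$; on the torus this is still valid, since the Ricci curvature vanishes), we get
\[
\lVert\rho_\theta\rVert_{L^p}\le\max\{\lVert\eta_1\rVert_{L^p},\lVert\eta_2\rVert_{L^p}\}=:A
\]
for all $\theta$. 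The case $p=\infty$ is handled either as the limit or directly, since the $L^\infty$ bound along geodesics is classical.

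Next we write
\[
\eta_1-\eta_2=-\int_0^1\diver(\rho_\theta w_\theta)\,d\theta,
\qquad\text{so}\qquad
\nabla K_s*(\eta_1-\eta_2)=-\int_0^1\nabla K_s*\diver(\rho_\theta w_\theta)\,d\theta .
\]
By Minkowski's inequality it suffices to bound, for each $\theta$, the quantity $\lVert\nabla\diver(-\Delta)^{-s}(\rho_\theta w_\theta)\rVert_{L^{2q}}$. The operator $\nabla\diver(-\Delta)^{-s}=\nabla\diver(-\Delta)^{-1}\circ(-\Delta)^{1-s}$ is a composition of a Calder\'on--Zygmund (Riesz transform) operator, bounded on $L^r$ for $1<r<\infty$, with a Riesz potential of order $2s-2$. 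By Hardy--Littlewood--Sobolev on the torus (mean-zero parts; the zero mode is killed by the divergence), this operator maps $L^{r}\to L^{2q}$ whenever
\[
\frac1r=\frac1{2q}+\frac{2s-2}{d}=\frac{1}{2q}+\frac1p .
\]
When $s=1$ there is no potential and $r=2q=2$, so only $L^2$ boundedness of the Riesz transforms is needed.

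It remains to control $\lVert\rho_\theta w_\theta\rVert_{L^r}$ by H\"older, writing $\rho w=\rho^{1/2}\cdot\rho^{1/2}w$:
\[
\lVert\rho_\theta w_\theta\rVert_{L^r}\le\lVert\rho_\theta^{1/2}\rVert_{L^{2p}}\,\lVert\rho_\theta^{1/2}w_\theta\rVert_{L^2}
=\lVert\rho_\theta\rVert_{L^p}^{1/2}\Big(\int|w_\theta|^2\,d\rho_\theta\Big)^{1/2}.
\]
The exponent check is $\frac{1}{2p}+\frac12=\frac1r$, which requires $\frac{1}{2q}+\frac1p=\frac{1}{2p}+\frac12$, i.e.\ $\frac1{2q}+\frac{1}{2p}=\frac12$. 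This is exactly $q=p'$, so the exponents match. Integrating in $\theta$ and applying Cauchy--Schwarz in $\theta$ gives the bound $A^{1/2}\,W_2(\eta_1,\eta_2)$.

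The main obstacle is the endpoint bookkeeping. When $p=\infty$ (i.e.\ $s=1$), we have $2q=2$ and $r=2$, so everything is $L^2$ and fine. When $q=1$, which corresponds to $p=\infty$ only, there is nothing extra. The HLS step requires $1<r<2q<\infty$, which holds since $p\in(1,\infty)$ for $s>1$. Beyond this, the points needing care are justifying displacement convexity on $\T^d$ and the distributional identity for $\eta_1-\eta_2$, which we would do first for smooth positive densities and then conclude by approximation.
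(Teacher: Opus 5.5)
Your proposal is correct and follows essentially the same route as the paper: displacement interpolation with the $L^p$ bound along the geodesic, rewriting $\eta_1-\eta_2$ through the continuity equation, the H\"older split $\rho w=\rho^{1/2}\cdot\rho^{1/2}w$ into $L^r$ with $1/r=1/(2p)+1/2$, and then Calder\'on--Zygmund plus the Riesz-potential (HLS) bound $L^r\to L^{2q}$ for $\nabla^2K_s*=(-\Delta)^{1-s}\nabla^2K_1*$. The only difference is that you justify the $L^p$ bound via displacement convexity, which the paper simply quotes as known. Your sign in $\eta_1-\eta_2=-\int_0^1\diver(\rho_\theta w_\theta)\,d\theta$ is flipped, which is harmless since only norms are estimated.
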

\begin{proof}
    Since $\eta_{1},\eta_{2}\ll \mathscr{L}^{d}$, there exists a unique optimal transport map $T:\T^{d}\to \T^{d}$ with respect to the quadratic cost. Call $\xi:\T^{d}\to \R^{d}$ the vector field given by the displacement $\xi(x)=T(x)-x$ and let $X:[1,2]\times \T^{d}\to \T^{d}$ be the associated flow map $X_{t}(x)=(2-t)x +(t-1)T(x)$, such that $X_{1}=\id$ and $X_{2}=T$. We thus define the displacement interpolation $$\eta_{t}:= (X_{t})_{\#}\eta_{1}\qquad \forall t\in [1,2].$$
    As it is well-known, $\eta \in AC([1,2];\mathscr{P}(\T^{d}))$ is the unique constant speed geodesic connecting $\eta_{1}$ to $\eta_{2}$ in the Wasserstein space, and it satisfies the continuity equation
    \begin{equation}\label{eq:cont-equation-displacement-interpolation}
        \partial_{t}\eta_{t}+\diver(\eta_{t}w_{t})=0,\qquad w_{t}=\xi \circ X_{t}^{-1},\qquad \lVert w_{t}\rVert_{L^{2}(\T^{d}, \eta_{t};\R^{d})}= W_{2}(\eta_{1},\eta_{2}).
    \end{equation}
    Moreover, the $L^{p}$-norm of the interpolation $\eta_{t}$ is bounded by the norms of the extrema $\eta_{1},\eta_{2}$:
    \begin{equation}\label{eq:bound-norm-interpolation-displacement}
        \lVert\eta_{t}\rVert_{L^{p}}\le \max\{\lVert\eta_{1}\rVert_{L^{p}},\lVert\eta_{2}\rVert_{L^{p}}\}\qquad \forall t\in [1,2].
    \end{equation}
    We can write, in the sense of distributions,
    \begin{align}\label{eq:rewriting-vector-field-geodesics}
        \nabla K_{s}*(\eta_{1}-\eta_{2})=\nabla K_{s}*\int_{1}^{2}\partial_{t}\eta_{t}dt= -\int_{1}^{2}\nabla K_{s}*\diver(\eta_{t}w_{t})dt=\int_{1}^{2}\nabla^{2} K_{s}*(\eta_{t}w_{t})dt.
    \end{align}
    Now, let $\beta:= 2q/(2q-1)\in (1,2]$ be such that $1/\beta=1/2p+1/2$. By H\"{o}lder inequality along with \eqref{eq:cont-equation-displacement-interpolation} and \eqref{eq:bound-norm-interpolation-displacement} we find
    \begin{equation*}
        \lVert \eta_{r}w_{r}\rVert_{L^{\beta}}\le \lVert \eta_{r}\rVert_{L^{p}}^{1/2}\lVert \eta_{r}^{1/2}w_{r}\rVert_{L^{2}}\le \max\{\lVert\eta_{1}\rVert_{L^{p}},\lVert\eta_{2}\rVert_{L^{p}}\}^{1/2}W_{2}(\eta_{1},\eta_{2})\qquad \forall r\in [1,2].
    \end{equation*}
    Then, by \eqref{eq:rewriting-vector-field-geodesics}, 
    \begin{align*}
        \lVert \nabla K_{s}*(\eta_{1}-\eta_{2})\rVert_{L^{2q}}&\le \int_{1}^{2}\lVert \nabla^{2}K_{s}*(\eta_{r}w_{r})\rVert_{L^{2q}}dr\\
        &\lesssim_{d,s} \int_{1}^{2}\lVert \eta_{r}w_{r}\rVert_{L^{\beta}}dr\le \max\{\lVert\eta_{1}\rVert_{L^{p}},\lVert\eta_{2}\rVert_{L^{p}}\}^{1/2}W_{2}(\eta_{1},\eta_{2}),
    \end{align*}
    where in the second inequality we used $\nabla^{2}K_{s}*=(-\Delta)^{1-s}\nabla^{2}K_{1}*$, and we combined the Calder\'on-Zygmund estimate $\nabla^{2}K_{1}*:L^{2q}\to L^{2q}$ with the sobolev embedding for zero averaged functions $(-\Delta)^{1-s}:L^{2q}\to L^{\beta}$. This is the desired inequality and the proof is concluded.
\end{proof}

\begin{lem}\label{lem:W2-Linfty}
    Let $s\ge 1$, and $\eta_{1},\eta_{2}\in \mathscr{P}(\T^{d})$. Then, the following estimates hold:
    \begin{itemize}
        \item [i)] If $s\in \left[1,d/2+1\right)$, $p=d/(2s-2)$ and $\eta_{1},\eta_{2} \in L^{p}(\T^{d})$:
        \begin{equation*}
            \lVert \nabla K_{s}*(\eta^{1}-\eta^{2})\rVert_{L^{\infty}}\lesssim_{d,s}\max\{\lVert \eta_{1}\rVert_{L^{p}},\lVert \eta_{2}\rVert_{L^{p}}\}^{\frac{d+2-2s}{d+3-2s}}W_{2}(\eta^{1},\eta^{2})^{\frac{1}{d+3-2s}}.
        \end{equation*}
        \item [ii)] If $s=d/2+1$: 
        \begin{equation*}
            \lVert \nabla K_{s}*(\eta_{1}-\eta_{2})\rVert_{L^{\infty}}\lesssim_{d}W_{2}(\eta_{1},\eta_{2})\log\left(2+W_{2}(\eta_{1},\eta_{2})^{-1}\right).
        \end{equation*}
        \item [iii)] If $s>d/2+1$:
        \begin{equation*}
             \lVert \nabla K_{s}*(\eta_{1}-\eta_{2})\rVert_{L^{\infty}}\lesssim_{d,s}W_{2}(\eta_{1},\eta_{2}).
        \end{equation*}
    \end{itemize}
\end{lem}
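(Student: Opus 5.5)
The plan is to treat the three regimes by a common idea: write $\nabla K_s*(\eta_1-\eta_2)$ as a superposition of the kernel's Hessian acting on a transport flux. Then estimate it pointwise, splitting the convolution near and far from the evaluation point.

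As in the proof of \Cref{lem:W2-dual}, I take an optimal plan $\gamma\in\Gamma_{\opt}(\eta_1,\eta_2)$; in cases ii)--iii) no absolute continuity is assumed, so I use a plan rather than a map. For fixed $x$ I write
\begin{equation*}
\nabla K_s*(\eta_1-\eta_2)(x)=\int_{\T^d\times\T^d}\big(\nabla K_s(x-y)-\nabla K_s(x-z)\big)\,d\gamma(y,z),
\end{equation*}
working with the shortest representative of $y-z$ on the torus.

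\textbf{Case iii)} is then immediate. For $s>d/2+1$, \eqref{eq:upper-bounds-derivatives-kernel} gives $\nabla K_s$ globally Lipschitz, so the integrand is at most $C|y-z|$. Cauchy--Schwarz then bounds the whole expression by $CW_2(\eta_1,\eta_2)$.

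\textbf{Case ii)} uses that for $s=d/2+1$ the proof of \Cref{lem:modulus-continuity-vector-field} shows $\nabla K_s$ has log-Lipschitz modulus $\omega(r)=r\log(2+r^{-1})$. The integrand is therefore at most $C\omega(|y-z|)$. The function $r\mapsto\omega(\sqrt r)$ is concave on the bounded range of distances, so Jensen's inequality (exactly as at the end of the proof of \Cref{lem:nabla-Ks-in-subdifferential}) gives
\begin{equation*}
\int\omega(|y-z|)\,d\gamma\le \omega\Big(\big(\textstyle\int|y-z|^2d\gamma\big)^{1/2}\Big)=\omega(W_2),
\end{equation*}
which is the claim.

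\textbf{Case i)} is the main obstacle, since $\nabla K_s$ is only Hölder (or singular) near $0$ and the densities must be used. I split $K_s$ with a radial cutoff at scale $\rho$: $K_s=K_s\chi_\rho+K_s(1-\chi_\rho)$, with $\chi_\rho=1$ on $B_\rho$ and supported in $B_{2\rho}$.

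\emph{Near part.} Using only $|\nabla K_s(w)|\lesssim|w|^{2s-d-1}$ (for $2s-d-1\ge0$ this is bounded) and Hölder with $\eta_i\in L^p$, I get
\begin{equation*}
\big|(\nabla K_s\chi_\rho)*\eta_i(x)\big|\le \|\eta_i\|_{L^p}\,\big\||w|^{2s-d-1}\big\|_{L^{q}(B_{2\rho})}\lesssim \|\eta_i\|_{L^p}\,\rho^{2s-d-1+d/q}=\|\eta_i\|_{L^p}\,\rho,
\end{equation*}
using $d/q=d-2s+2$; the exponent $2s-d-1$ is $>-d/q$, so the $L^q$ norm is finite. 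The derivatives of the cutoff produce terms of the same order. The case $p=\infty$ ($s=1$) works the same way.

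\emph{Far part.} The smoothed kernel has $\|\nabla^2(K_s(1-\chi_\rho))\|_{L^\infty}\lesssim\rho^{2s-d-2}$, so the transport-plan argument from case iii) gives a bound $\lesssim\rho^{2s-d-2}W_2$.

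\emph{Optimization.} Adding the two parts,
\begin{equation*}
\|\nabla K_s*(\eta_1-\eta_2)\|_{L^\infty}\lesssim M\rho+\rho^{2s-d-2}W_2,\qquad M=\max_i\|\eta_i\|_{L^p}.
\end{equation*}
Choosing $\rho=(W_2/M)^{1/(d+3-2s)}$ balances the terms and yields $M^{\frac{d+2-2s}{d+3-2s}}W_2^{\frac1{d+3-2s}}$. This is the stated exponent.

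It remains to check the regime where this $\rho$ exceeds a fixed size, say $1/8$. There I use the trivial bound $\|\nabla K_s*(\eta_1-\eta_2)\|_{L^\infty}\lesssim M$, which follows from the near-part estimate with $\rho\sim1$. In that regime $W_2\gtrsim M$, so the trivial bound is already dominated by the right-hand side. The delicate points are verifying the pointwise bound on $\nabla^2$ of the truncated kernel across the transition region, and handling periodic distances in the coupling.
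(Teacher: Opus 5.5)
The gap is in case i), in the decomposition $K_s=K_s\chi_\rho+K_s(1-\chi_\rho)$. You cut off the potential $K_s$ rather than the field $\nabla K_s$. As a result, two of your claims are false as soon as $s\ge d/2$, which for $d\le2$ is the whole range of case i): the claim that ``the derivatives of the cutoff produce terms of the same order'', and the bound $\lVert\nabla^2(K_s(1-\chi_\rho))\rVert_{L^\infty}\lesssim\rho^{2s-d-2}$.

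For $s>d/2$ the kernel is continuous with $K_s(0)=\sum_{k\neq0}(2\pi|k|)^{-2s}>0$. On the annulus $\rho\le|w|\le2\rho$ this gives:
the term $K_s\nabla\chi_\rho$ has size $\approx\rho^{-1}$ and $L^q$ norm $\approx\rho^{d/q-1}=\rho^{d+1-2s}$, not $\rho$;
the term $K_s\nabla^2\chi_\rho$ has size $\approx\rho^{-2}\gg\rho^{2s-d-2}$.
At $s=d/2$ the logarithmic singularity of $K_s$ costs an extra factor $\log(1/\rho)$ in both places. You bound the near part separately for $\eta_1$ and $\eta_2$, so no cancellation rescues this, and the optimization in $\rho$ breaks down. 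Already for $d=1$, $s=1$ one has $d+1-2s=0$, so the near part is $O(M)$ and does not decay as $\rho\to0$. This is exactly the ``delicate point'' you flagged at the end, and it fails.

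The repair is simple, and it is what the paper does. Since only $\nabla K_s$ enters, split the field itself: $\nabla K_s=(\nabla K_s)\chi_\rho+(\nabla K_s)(1-\chi_\rho)$. Alternatively, cut off $K_s-c_\rho$ for a constant such as $c_\rho=K_s(\rho e_1)$, which the gradient does not see. Your near-part H\"older computation is then correct as written, with no cutoff-derivative term. The far kernel $G_\rho:=(\nabla K_s)(1-\chi_\rho)$ satisfies $|\nabla G_\rho|\lesssim\rho^{2s-d-2}$, because $|\nabla^2K_s(w)|\lesssim\rho^{2s-d-2}$ for $|w|\ge\rho$ and $\rho^{-1}|\nabla K_s(w)|\lesssim\rho^{2s-d-2}$ for $\rho\le|w|\le2\rho$.

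The second of these needs $|\nabla K_s(w)|\lesssim|w|^{2s-d-1}$ also when $2s-d-1\ge0$, i.e.\ it needs $\nabla K_s(0)=0$. This follows from oddness together with \eqref{eq:asymptotic-behavior-kernel}; the mere boundedness recorded in \eqref{eq:upper-bounds-derivatives-kernel} for $s\ge\frac d2+\frac12$ is not enough. With this change, your coupling estimate (equivalent to the paper's Kantorovich--Rubinstein duality with $W_1\le W_2$), the choice of $\rho$, and the large-$\rho$ regime all go through, and case i) coincides with the paper's proof.

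Your cases ii) and iii) are fine. Case iii) is the paper's argument. Case ii) takes a different and cleaner route than the paper, which reruns the cutoff argument with a logarithmic choice of scale: you use the log-Lipschitz modulus of $\nabla K_{d/2+1}$ directly together with Jensen's inequality. This is legitimate because $t\mapsto\sqrt t\,\log(2+t^{-1/2})$ is concave on all of $(0,\infty)$.
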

\begin{proof}
    We split the proof according to the choice of $s\ge 1$.

    \smallskip
    \noindent \textbf{$\bm{(1\le s<d/2+1)}$.} Let $\psi\in C^{\infty}_{c}(B_{1/4};[0,1])$ be a cut-off function such that $\psi=1$ in $B_{1/8}$, and let $\eps\in (0,1)$ be some parameter to be chosen later. For a fixed $x\in \T^{d}$, we can split
    \begin{align*}
        \nabla K_{s}*(\eta_{1}-\eta_{2})(x)&= \int_{\T^{d}}\nabla K_{s}(x-y)(\eta_{1}-\eta_{2})(y)\psi\left(\eps^{-1}(x-y)\right)\\
        &\quad\,+\int_{\T^{d}}\nabla K_{s}(x-y)(\eta_{1}-\eta_{2})(y)\left(1-\psi\left(\eps^{-1}(x-y)\right)\right)= (I)+(II).
    \end{align*}
    Being $p=d/(2s-2)$ and $q=p'=d/(d-2s+2)$,
    for the first term we use H\"{o}lder inequality and the bound on $|\nabla K_{s}|$ from \eqref{eq:upper-bounds-derivatives-kernel}:
    \begin{align*}
        |(I)|&\le \int_{B_{\eps/4}(x)}|\nabla K_{s}(x-y)||(\eta_{1}-\eta_{2})(y)|dy
        \lesssim  \max\{\lVert \eta_{1}\rVert_{L^{p}},\lVert \eta_{2}\rVert_{L^{p}}\}\left(\int_{B_{\eps/4}}|\nabla K_{s}|^{q}\right)^{1/q}\\&\lesssim_{d,s} \max\{\lVert \eta_{1}\rVert_{L^{p}},\lVert \eta_{2}\rVert_{L^{p}}\}\left(\int_{B_{\eps/4}}|x|^{q(2s-d-1)}\right)^{1/q}\lesssim_{d,s}\max\{\lVert \eta_{1}\rVert_{L^{p}},\lVert \eta_{2}\rVert_{L^{p}}\} \eps,
    \end{align*}
    To bound the second term, observe that the function $\nabla K_{s}(x-\cdot)\left(1-\psi\left(\eps^{-1}(\cdot-x)\right)\right)$ has Lipschitz constant $\lesssim_{d,s} \eps^{2s-d-2}$ because of \eqref{eq:upper-bounds-derivatives-kernel}. Therefore:
    \begin{equation*}
        |(II)|\lesssim_{d,s} \eps^{2s-d-2}\sup_{\Lip(f)\le 1}\int_{\T^{d}}f(\eta_{1}-\eta_{2})= \eps^{2s-d-2}W_{1}(\eta_{1},\eta_{2})\le \eps^{2s-d-2}W_{2}(\eta_{1},\eta_{2}),
    \end{equation*}
    where we used the well-known Kantorovich-Rubinstein duality formula for the $1$-Wasserstein distance $W_{1}$, along with the trivial estimate $W_{1}\le W_{2}$. 
    Choosing $c=c(d)$ so that $$\eps= c\left(\frac{W_{2}(\eta_{1},\eta_{2})}{\max\{\lVert \eta_{1}\rVert_{L^{p}},\lVert \eta_{2}\rVert_{L^{p}}\}}\right)^{\frac{1}{d+3-2s}}\in (0,1),$$
    and gathering the estimates for $(I)$ and $(II)$, we obtain
    \begin{align*}
        \lVert \nabla K_{s}*(\eta_{1}-\eta_{2})\rVert_{L^{\infty}}&\lesssim_{d,s} \eps \max\{\lVert \eta_{1}\rVert_{L^{p}},\lVert \eta_{2}\rVert_{L^{p}}\}+\eps^{2s-d-2}W_{2}(\eta_{1},\eta_{2})\\&\lesssim_{d} \max\{\lVert \eta_{1}\rVert_{L^{p}},\lVert \eta_{2}\rVert_{L^{p}}\}^{\frac{d+2-2s}{d+3-2s}}W_{2}(\eta_{1},\eta_{2})^{\frac{1}{d+3-2s}},
    \end{align*}
    as desired.

    \smallskip
    \noindent \textbf{$\bm{(s=d/2+1)}$.} The same argument as above with the choice $\eps=cW_{2}(\eta_{1},\eta_{2})\log\left(2+W_{2}(\eta_{1},\eta_{2})^{-1}\right)\in (0,1)$ yields
    \begin{align*}
        \lVert \nabla K_{s}*(\eta_{1}-\eta_{2})\rVert_{L^{\infty}}&\lesssim_{d,s} \eps +\log(2+\eps^{-1})W_{2}(\eta_{1},\eta_{2})\\&\lesssim_{d} W_{2}(\eta_{1},\eta_{2})\log\left(2+W_{2}(\eta_{1},\eta_{2})^{-1}\right).
    \end{align*}
    \smallskip
    \noindent \textbf{$\bm{(s>d/2+1)}$.} In this case $\lVert\nabla^{2} K_{s}\rVert_{L^{\infty}}\lesssim_{d,s}1$. Therefore, for any $x\in \T^{d}$,
    \begin{equation*}
        |\nabla K_{s}*(\eta_{1}-\eta_{2})(x)|=\left|\int_{\T^{d}}\nabla K_{s}(x-y)(\eta_{1}-\eta_{2})dy\right|\lesssim_{d,s}\sup_{\Lip(f)\le 1}\int f(\eta_{1}-\eta_{2})\le W_{2}(\eta_{1},\eta_{2}),
    \end{equation*}
    as desired.    
\end{proof}

Finally, we give $L^{\infty}$ bounds on $\nabla^{2}K_{s}*f$ in critical spaces with the help of a logarithm of a slightly stronger norm.  
\begin{lem}\label{lem:log-interp-BKM-criterion}
    Let $s\ge 1$. The following hold:
    \begin{itemize}
        \item [i)] If $s=1$, $\alpha\in (0,1)$, and $f\in C^{0,\alpha}(\T^{d})$, then 
        \begin{equation*}
            \lVert \nabla^{2}K_{1}*f\rVert_{L^{\infty}}\lesssim_{d} \lVert f \rVert_{L^{\infty}}\left(1+\log\left(1+\frac{[f]_{C^{0,\alpha}}}{\lVert f \rVert_{L^{\infty}}}\right)\right).
        \end{equation*}
        \item [ii)] If $s\in \left(1,d/2+1\right]$, $p= d/(2s-2)\in [1,\infty)$, and $f\in L^{r}(\T^{d})$ for some $r>p$ then
        \begin{equation*}
            \lVert \nabla^{2}K_{s}*f\rVert_{L^{\infty}}\lesssim_{d,s,r} \lVert f \rVert_{L^{p}}\left(1+\log\left(1+\frac{\lVert f\rVert_{L^{r}}}{\lVert f \rVert_{L^{p}}}\right)\right).
        \end{equation*}
    \end{itemize}
\end{lem}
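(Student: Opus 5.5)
The plan is the classical Littlewood--Paley/Brezis--Gallouet--Wainger splitting of the kernel into a near and a far part, with a scale $\eps\in(0,1/8)$ chosen at the end to balance the two contributions. In both cases write $\nabla^{2}K_{s}*f(x)=\mathrm{p.v.}\int \nabla^{2}K_{s}(x-y)f(y)\,dy$ (plus a constant multiple of $f(x)$ times the identity when $s=1$, coming from the delta in $-\Delta K_{1}=\delta_{0}-1$), and use a cut-off $\psi_\eps=\psi(\cdot/\eps)$ as in the proof of \Cref{lem:W2-Linfty} to split into $(I)$ over $|x-y|\lesssim\eps$, $(II)$ over $\eps\lesssim|x-y|\lesssim 1/8$, and $(III)$ over the complement, where by \eqref{eq:asymptotic-behavior-kernel} the kernel is smooth and $(III)\lesssim\lVert f\rVert_{L^{1}}\le\lVert f\rVert_{L^{p}}$ (resp.\ $\lVert f\rVert_{L^\infty}$).

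For i), $s=1$: the delta term is bounded by $\lVert f\rVert_{L^\infty}$. In $(I)$ the singular part $c_{d}\nabla^{2}|x|^{2-d}$ (or its log analogue in $d=2$) has zero mean on spheres, so I subtract $f(x)$: $|(I)|\le\int_{B_\eps}|z|^{-d}|f(x-z)-f(x)|\,dz+O(\eps\lVert f\rVert_\infty)\lesssim [f]_{C^{0,\alpha}}\eps^{\alpha}$, the error coming from $\nabla^2K_1^{\mathrm{reg}}$ and the cut-off derivative. For $(II)$ I use $|\nabla^{2}K_{1}(z)|\lesssim|z|^{-d}$ from \eqref{eq:upper-bounds-derivatives-kernel}, giving $\lVert f\rVert_{L^\infty}\log(1/\eps)$. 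Choosing $\eps^{\alpha}=\min\{1/8,\lVert f\rVert_{L^\infty}/[f]_{C^{0,\alpha}}\}$ yields the claim.

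For ii), $1<s\le d/2+1$: now $\nabla^{2}K_{s}$ is integrable near $0$ when $s>1$, so no cancellation is needed, and the same three-region split works with H\"older inequality. On $(I)$, H\"older with exponents $r,r'$ gives $\lVert f\rVert_{L^{r}}\,\lVert|z|^{2s-d-2}\rVert_{L^{r'}(B_\eps)}\lesssim\lVert f\rVert_{L^{r}}\eps^{2s-2-d/r}$, and the exponent is positive precisely because $r>p=d/(2s-2)$. On $(II)$, H\"older with $p,p'$: here $|z|^{(2s-d-2)p'}=|z|^{-d}$ since $(d+2-2s)p'=d$, so $\lVert\nabla^2K_s\rVert_{L^{p'}(B_{1/8}\setminus B_\eps)}\lesssim\log(1/\eps)^{1/p'}\le 1+\log(1/\eps)$. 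At the endpoint $s=d/2+1$ ($p=1$, $p'=\infty$) the kernel bound is $\log(2+|z|^{-1})$, integrable, and the same computation gives the bound directly. I then optimize with $\eps^{2s-2-d/r}=\min\{1/8,\lVert f\rVert_{L^p}/\lVert f\rVert_{L^r}\}$, converting $\log(1/\eps)$ into $\log(1+\lVert f\rVert_{L^r}/\lVert f\rVert_{L^p})$ up to a constant depending on $d,s,r$.

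The main obstacle I expect is justifying the principal-value cancellation in i), together with the precise form of the singular part in the logarithmic case $2s-d\in 2\N\cup\{0\}$ (namely $d=2$), where the second derivatives of $|x|^{0}\log|x|$ still have the homogeneous degree $-2$ zero-mean structure. Once that is settled, everything else reduces to elementary radial integrals.
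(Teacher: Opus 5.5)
Your proposal is correct and is essentially the paper's proof. Split at scale $\eps$ with a radial cut-off; near the origin use either the vanishing spherical means of the singular part of $\nabla^{2}K_{1}$ together with the H\"older seminorm, or, for $s>1$, H\"older's inequality with $L^{r}$ near and $L^{p}$ far; bound the far region by $\log(1/\eps)$; then optimize $\eps$. Your explicit handling of the $\delta_{0}$-contribution and of $\nabla^{2}K_{1}^{\mathrm{reg}}$ is slightly more careful than the paper: the full periodic $\nabla^{2}K_{1}$ does not have zero spherical means, since its diagonal entries average to $1/d$, but this only costs $O(\lVert f\rVert_{L^{\infty}})$. As in the paper's own proof, your constant in i) also depends on $\alpha$, and this cannot be avoided, so the stated $\lesssim_{d}$ should read $\lesssim_{d,\alpha}$.
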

\begin{proof}
    We first consider the case $s=1$. Let $\psi\in C^{\infty}_{c}(B_{1/4};[0,1])$ be a radial cut-off function such that $\psi=1$ in $B_{1/8}$, and let $\eps\in (0,1)$ be some parameter to be chosen later. For a given $x\in \T^{d}$, we split
    \begin{equation*}
        \nabla^{2}K_{1}*f(x)=\underbrace{\int_{B_{\eps/4}(x)}\nabla^{2}K_{1}(x-y)\psi((y-x)/\eps)f(y)}_{(I)}+\underbrace{\int_{\T^{d}\setminus B_{\eps/4}(x)}\nabla^{2}K_{1}(x-y)\left(1-\psi((y-x)/\eps)\right)f(y)}_{(II)}.
    \end{equation*}
    By \eqref{eq:upper-bounds-derivatives-kernel}, we can estimate $(II)$ with
    \begin{equation*}
        |(II)|\lesssim_{d} \lVert f\rVert_{L^{\infty}}\int_{\T^{d}\setminus B_{\eps/4}(x)}|y-x|^{-d}\lesssim_{d}\lVert f\rVert_{L^{\infty}} \log(1+\eps^{-1}).
    \end{equation*}
    To bound $(I)$, note that by symmetry, $\nabla^{2}K_{1}$ is zero averaged in $\partial B_{t}$, for all $t\in (0,1/4)$. Since $\psi$ is radial, we deduce that $\nabla^{2}K_{1}(x-\cdot)\psi((\cdot-x)/\eps)$ is zero averaged in $B_{\eps/4}(x)$. Therefore, using \eqref{eq:upper-bounds-derivatives-kernel}, we may bound
    \begin{align*}
        |(I)|=\left|\int_{B_{\eps/4}(x)}\nabla^{2}K_{1}(x-y)\psi((y-x)/\eps)(f(y)-f(x))\right|\lesssim_{d}[f]_{C^{0,\alpha}}\int_{B_{\eps/4}(x)}|y-x|^{\alpha-d}\lesssim_{d,\alpha}[f]_{C^{0,\alpha}}\eps^{\alpha}.
    \end{align*}
    Combining the two estimates and choosing $\eps=\min\{1/2, (\lVert f\rVert_{L^{\infty}}/[f]_{C^{0,\alpha}})^{1/\alpha}\}$ we derive the desired estimate. 

    The case $s\in (1,d/2+1]$ can be handled in the same way, using H\"{o}lder inequality and \eqref{eq:upper-bounds-derivatives-kernel} to obtain analogous estimates of the two terms $(I), (II)$ above, with $\lVert f\rVert_{L^{p}}$ in the place of $\lVert f\rVert_{L^{\infty}}$ and $\lVert f\rVert_{L^{r}}$ in the place of $[f]_{C^{0,\alpha}}$, where $p=d/(2s-2)$ and $r>p$.
\end{proof}
\subsection{Kato--Ponce Commutator estimates on the $d$-dimensional torus}\label{subsec:appendix-katoponce}
In this section, we extend some Kato--Ponce fractional commutator estimates (\cite{kato1988commutator, li2019kato}) to the periodic setting. We adopt the strategy proposed in \cite{benyi2025fractional}, which consists of reducing to the corresponding estimate in $\R^{d}$ by means of Poisson's summation formula\footnote{For every Schwartz function $u\in \mathcal{S}(\R^{d})$: $$\sum_{k\in \Z^{d}}\mathcal{F}_{\R^{d}}(u)(k)e^{2\pi i k\cdot x}=\sum_{k\in \Z^{d}}u(x+k)\qquad \forall x\in \R^{d}.$$}. In the following, for any $\beta\in \R$, we denote $D^{\beta}_{\T^{d}}:=(-\Delta_{\T^{d}})^{\beta/2}$. 

\begin{lem}\label{lem:kato-ponce}
    Let $\gamma>0$ and $2\le p_{1},q_{1},p_{2},q_{2}\le \infty$ be such that $1/2=1/p_{i}+1/q_{i}$, $i=1,2$. Then, for every $f,g \in C^{\infty}(\T^{d})$ we have
    \begin{equation}\label{eq:kato-ponce}
        \lVert D_{\T^{d}}^{\gamma}(fg)-fD_{\T^{d}}^{\gamma}g\rVert_{L^{2}(\T^{d})}\lesssim_{d,\gamma,p_{1},q_{1},p_{2},q_{2}} \lVert \nabla f\rVert_{L^{p_{1}}(\T^{d})}\lVert D_{\T^{d}}^{\gamma-1}g\rVert_{L^{q_{1}}(\T^{d})}+\lVert D_{\T^{d}}^{\gamma}f\rVert_{L^{p_{2}}(\T^{d})}\rVert g\rVert_{L^{q_{2}}(\T^{d})}.
    \end{equation}
\end{lem}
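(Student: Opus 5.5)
The plan is to transfer the Euclidean Kato--Ponce commutator estimate (in the form of \cite{kato1988commutator, li2019kato}, valid on $\R^d$ for Schwartz functions with the same exponent constraints) to the torus, following the strategy of \cite{benyi2025fractional}. The transfer goes through a periodization/localization argument based on Poisson's summation formula.

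\textbf{Splitting the multiplier.} First I would split $D^\gamma_{\T^d}$ into a low-frequency part and a high-frequency part. Fix a smooth cutoff $\chi$ on $\R^d$ with $\chi=1$ near $0$ and $\chi=0$ for $|\xi|\ge 1/2$, so that $\chi$ vanishes on $\Z^d\setminus\{0\}$. On $\T^d$ the multiplier $(2\pi|k|)^\gamma$ then coincides with the restriction to $\Z^d$ of the smooth symbol $m(\xi)=(1-\chi(\xi))(2\pi|\xi|)^\gamma$. This $m$ differs from $|2\pi\xi|^\gamma$ only by a compactly supported symbol, which is smooth away from the origin.

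\textbf{Localization and periodization.} Given $f,g\in C^\infty(\T^d)$, I would take a smooth partition of unity $\{\eta(\cdot-j)\}_{j\in\Z^d}$ with $\eta\in C_c^\infty(\R^d)$ and $\sum_j\eta(x-j)=1$. I would also take a cutoff $\Phi\in C_c^\infty(\R^d)$ equal to $1$ on a large neighbourhood of $\supp\eta$, and set $F=\Phi f$, $G=\Phi g$, viewed as functions on $\R^d$.

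For $x\in\supp\eta$, I would write the torus operator applied to $g$ as a convolution with the periodized kernel. By Poisson summation, the kernel of $m(D)$ on $\R^d$ decays rapidly away from the origin, since $m$ is smooth apart from the homogeneous singularity at high frequency. Hence on $\supp\eta$,
\[
D^\gamma_{\T^d}g = |D|^\gamma_{\R^d}G + (\text{operator with smooth, rapidly decaying kernel applied to } g).
\]
The same identity holds for $fg$ with $FG$ in place of $G$. Consequently, the commutator $[D^\gamma_{\T^d},f]g$ equals $[|D|^\gamma_{\R^d},F]G$ on $\supp\eta$, up to error terms. These error terms are either far-field kernel contributions, namely integrals against $|x-y|^{-d-\gamma}$ tails over $|x-y|\gtrsim1$, or contributions from the low-frequency correction.

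\textbf{Estimating the main and error terms.} The main term is handled by the Euclidean estimate:
\[
\|[|D|^\gamma,F]G\|_{L^2(\R^d)} \lesssim \|\nabla F\|_{L^{p_1}}\,\||D|^{\gamma-1}G\|_{L^{q_1}}+\||D|^\gamma F\|_{L^{p_2}}\,\|G\|_{L^{q_2}}.
\]
Each norm on the right-hand side is then bounded by the corresponding torus norm of $f$ or $g$. This uses that $\Phi$ is compactly supported, together with the fractional Leibniz estimates for multiplication by a smooth cutoff.

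Lower-order pieces are controlled by $\|f\|_{L^{p}}$ and $\|g\|_{L^{q}}$, and these must then be absorbed. Since the commutator kills constants in $f$, I can replace $f$ by $f-\bar f$. Poincaré's inequality then gives $\|f-\bar f\|_{L^{p_1}}\lesssim\|\nabla f\|_{L^{p_1}}$ and also $\lesssim\|D^\gamma f\|_{L^{p_2}}$ after Sobolev embedding on the compact torus.

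The error terms have smooth kernels, so they are bounded by low norms. The only delicate one is the low-frequency term carrying $\hat g_0$. This is harmless because $D^\gamma_{\T^d}$ annihilates constants, so this piece reduces to $-\bar g\,D^\gamma_{\T^d}f$, which is directly bounded by $\|D^\gamma f\|_{L^{p_2}}\|g\|_{L^{q_2}}$.

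\textbf{Main obstacle.} I expect the main obstacle to be controlling $\||D|^{\gamma-1}G\|_{L^{q_1}}$ by $\|D^{\gamma-1}_{\T^d}g\|_{L^{q_1}}$ plus a quantity that can be absorbed, especially when $\gamma<1$ (negative order) or $q_1=\infty$. If the direct comparison fails, I would first try working with the $\R^d$ commutator estimate written in the form with $\nabla D^{\gamma-2}$ acting on $G$. Failing that, I would restrict the endpoint cases to the ranges actually used in the paper, $p=\infty$ and $q=2$, where Plancherel on the torus handles the $g$ factor directly.

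Summing over the translates $j$ by periodicity, only one cell is needed, and this gives \eqref{eq:kato-ponce}.
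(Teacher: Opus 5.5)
Your route is essentially the paper's. There too, $g$ is localized by a periodic partition of unity $\varphi_j$ and $f$ by a cutoff $\psi_j$, and Poisson summation writes $D^\gamma_{\T^d}(fg)-fD^\gamma_{\T^d}g$ exactly as $\sum_j\sum_{k\in\Z^d}[D^\gamma_{\R^d}(FG_j)-FD^\gamma_{\R^d}G_j](\cdot+k)$. The Euclidean estimate is then applied to the finitely many near translates. The far translates and the piece $F(1-\psi_j)D^\gamma_{\R^d}G_j$ are bounded through the $|x|^{-d-\gamma}$ kernel tail by $\lVert f\rVert_{L^{p_2}}\lVert g\rVert_{L^{q_2}}$, and absorbed using zero means. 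Your smoothing of the symbol near $\xi=0$ is unnecessary, since the exact identity already evaluates $|2\pi\xi|^\gamma$ on $\Z^d$, but it is harmless. Your reduction to $\hat g_0=0$ via $[D^\gamma_{\T^d},f]\bar g=\bar g\,D^\gamma_{\T^d}f$ justifies a step the paper takes for granted.

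The step you leave open is bounding $\lVert\nabla F\rVert_{L^{p_1}}\lVert |D|^{\gamma-1}G\rVert_{L^{q_1}(\R^d)}+\lVert |D|^{\gamma}F\rVert_{L^{p_2}(\R^d)}\lVert G\rVert_{L^{q_2}}$ by the torus right-hand side. This is exactly the step the paper asserts in one line without argument, so your proposal is no less complete than the published proof.

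Your worry about $\gamma<1$ is justified. For $\gamma<1$ and compactly supported $G$, one has $|D|^{\gamma-1}G(x)\sim c\,|x|^{1-\gamma-d}\int G$ as $|x|\to\infty$. So if $d=1$ and $q_1\gamma\le 1$, the Euclidean norm is infinite whenever $\int G\neq 0$. This does happen for localizations of zero-mean $g$, e.g.\ $g=\cos 2\pi x$ times a nonnegative bump supported in $(-\frac14,\frac14)$. In that corner one should instead use the $0<\gamma<1$ variant, which has only the term $\lVert D^\gamma f\rVert_{L^{p}}\lVert g\rVert_{L^{q}}$.

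For $\gamma\ge 1$ and $q_1=2$, your Plancherel fallback does work. With $\hat g_0=0$ one has $\widehat{\Phi g}(\xi)=\sum_{k\neq 0}\hat g_k\widehat{\Phi}(\xi-k)$. Cauchy--Schwarz with weights $|\widehat{\Phi}(\xi-k)|$, together with $|\xi|^{2\gamma-2}\lesssim |k|^{2\gamma-2}+|\xi-k|^{2\gamma-2}$, gives $\lVert |D|^{\gamma-1}(\Phi g)\rVert_{L^2(\R^d)}\lesssim \lVert D^{\gamma-1}_{\T^d}g\rVert_{L^2}$. Also $\lVert\nabla(\Phi f)\rVert_{L^\infty}\lesssim\lVert\nabla f\rVert_{L^\infty}$ by Poincar\'e. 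Finally, $\lVert |D|^\gamma(\Phi f)\rVert_{L^{p_2}(\R^d)}\lesssim\lVert D^\gamma_{\T^d}f\rVert_{L^{p_2}}$ for $2\le p_2<\infty$ is the standard localization of Bessel-potential spaces.

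The range $\gamma>1$, $q_1=2$, $p_2<\infty$ is all that the bounds displayed in the paper's energy estimates and appendix remarks actually require. In the full generality claimed (all $\gamma>0$, endpoint exponents), the transfer step is supplied neither by your plan nor by the paper.
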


\begin{proof}
    We will prove the result by reducing to the corresponding inequality in $\R^{d}$, which is proved in \cite[Corollary 5.2]{li2019kato}. Without loss of generality, we can assume that $f,g\in C^{\infty}(\T^{d})$ have zero average in $\T^{d}$. We call $F, G\in C^{\infty}(\R^{d})$ the periodic extensions of $f,g$ respectively. We consider a periodic partition of unity, namely a collection of $1$-periodic functions $\varphi_{1},\dots,\varphi_{N}\in C^{\infty}(\R^{d};[0,1])$ such that
    \begin{equation*}
        \sum_{j=1}^{N}\varphi_{j}(x)=1\quad \forall x\in \R^{d},\qquad \supp \varphi_{j}\subset \bigcup_{k\in \Z^{d}}\left\{Q_{1/2}(\bar x_{j})+k\right\}\quad \forall j\in \{1,\dots,N\}, 
    \end{equation*}
    where $Q_{1/2}(\bar x_{j})$ is the open cube with center $\bar x_{j}\in [0,1]^{d}$ and side length $1/2$. We denote by $G_{j}$ the one-period localization of $G\varphi_{j}$:
    $$G_{j}:= G\varphi_{j}\mathbbm{1}_{Q_{1}(\bar x_{j})}\qquad \forall j\in \{1,\dots,N\}.$$
    For every $k\in \Z^{d}$, we can write
    \begin{align*}
        \mathcal{F}_{\T^{d}}(g)(k)&=\int_{\T^{d}}e^{-2\pi i k\cdot x}g(x)=\int_{[0,1]^{d}}e^{-2\pi ik\cdot x}G(x)\\
        &=\sum_{j=1}^{N}\int_{[0,1]^{d}}e^{-2\pi i k\cdot x}G(x)\varphi_{j}(x)=\sum_{j=1}^{N}\int_{Q_{1}(\bar x_{j})}e^{-2\pi i k\cdot x}G(x)\varphi_{j}(x)= \sum_{j=1}^{N}\mathcal{F}_{\R^{d}}(G_{j})(k),
    \end{align*}
    where $\mathcal{F}_{\T^{d}}$ and $\mathcal{F}_{\R^{d}}$ denote Fourier transform in $\T^{d}$ and $\R^{d}$, respectively.
    Similarly,
    $$\mathcal{F}_{\T^{d}}(fg)(k)=\sum_{j=1}^{N}\mathcal{F}_{\R^{d}}(FG_{j})(k).$$
    In particular, the commutator $[D^{\gamma}_{\T^{d}}, f](g)(x)$ can be written as follows:
    \begin{align*}
        D^{\gamma}_{\T^{d}}(fg)(x)-fD^{\gamma}_{\T^{d}}g(x)&= \sum_{k\in \Z^{d}}(2\pi |k|)^{\gamma}\Big[\mathcal{F}_{\T^{d}}(fg)(k)-f(x)\mathcal{F}_{\T^{d}}(g)(k)\Big]e^{2\pi i k\cdot x}\\
        &=\sum_{j=1}^{N}\sum_{k\in \Z^{d}}(2\pi |k|)^{\gamma}\Big[\mathcal{F}_{\R^{d}}(FG_{j})(k)-F(x)\mathcal{F}_{\R^{d}}(G_{j})(k)\Big]e^{2\pi i k\cdot x}\\
        &=\sum_{j=1}^{N}\sum_{k\in \Z^{d}}\Big[\mathcal{F}_{\R^{d}}\left(D^{\gamma}_{\R^{d}}(FG_{j})\right)(k)-F(x)\mathcal{F}_{\R^{d}}\left(D^{\gamma}_{\R^{d}}G_{j}\right)\Big]e^{2\pi i k\cdot x}\\
        &= \sum_{j=1}^{N}\sum_{k\in \Z^{d}}\Big[D^{\gamma}_{\R^{d}}(FG_{j})-FD^{\gamma}_{\R^{d}}G_{j}\Big](x+k),
    \end{align*}
    where $D^{\gamma}_{\R^{d}}:=(-\Delta_{\R^{d}})^{\gamma/2}$, and we used Poisson's summation formula in the last step.
    In particular, we can bound
    \begin{align*}
        \lVert D^{\gamma}_{\T^{d}}(fg)-fD^{\gamma}_{\T^{d}}g\rVert_{L^{2}(\T^{d})}&\lesssim_{d} \sum_{j=1}^{N}\Big\lVert \sum_{k\in \Z^{d}}\big[D^{\gamma}_{\R^{d}}(FG_{j})-FD^{\gamma}_{\R^{d}}G_{j}\big](\cdot+k)\Big\rVert_{L^{2}([0,1]^{d})}\\
        &\lesssim_{d} \sum_{j=1}^{N}\sum_{\substack{k\in \Z^{d}\\ |k|\le 2\sqrt{d}} }\left\lVert \big[D^{\gamma}_{\R^{d}}(FG_{j})-FD^{\gamma}_{\R^{d}}G_{j}\big](\cdot+k)\right\rVert_{L^{2}([0,1]^{d})}\\
        &\quad\, +\sum_{j=1}^{N}\Big\lVert \sum_{\substack{k\in \Z^{d} \\ |k|>2\sqrt{d}}}\big[D^{\gamma}_{\R^{d}}(FG_{j})-FD^{\gamma}_{\R^{d}}G_{j}\big](\cdot+k)\Big\rVert_{L^{2}([0,1]^{d})}
        =: (I)+(II).
    \end{align*}
    
    Let us bound $(I)$. For every $j\in \{1,\dots,N\}$, we take a cutoff function $\psi_{j}\in C^{\infty}_{c}(Q_{4/5}(\bar x_{j}); [0,1])$ such that $\psi_{j}=1$ on $Q_{3/4}(\bar x_{j})$, and we call $F_{j}:= F\psi_{j}$, so that $FG_{j}=F_{j}G_{j}$. Let $k\in \Z^{d}$ be such that $|k|\le 2\sqrt{d}$, then
    \begin{align*}
        \left\lVert \big[D_{\R^{d}}^{\gamma}(FG_{j})-FD_{\R^{d}}^{\gamma}G_{j}\big](\cdot+k)\right\rVert_{L^{2}([0,1]^{d})}&\le \left\lVert D_{\R^{d}}^{\gamma}(FG_{j})-FD_{\R^{d}}^{\gamma}G_{j}\right\rVert_{L^{2}(\R^{d})}\\
        &\le \left\lVert D_{\R^{d}}^{\gamma}(F_{j}G_{j})-F_{j}D_{\R^{d}}^{\gamma}G_{j}\right\rVert_{L^{2}(\R^{d})}+\left\lVert F(1-\psi_{j})D_{\R^{d}}^{\gamma}G_{j}\right\rVert_{L^{2}(\R^{d})}.
    \end{align*}
    To bound the first term on the right-hand side of the last inequality we use the Euclidean commutator estimate from \cite[Corollary 5.2]{li2019kato}:
    \begin{align*}
        \left\lVert D_{\R^{d}}^{\gamma}(F_{j}G_{j})-F_{j}D_{\R^{d}}^{\gamma}G_{j}\right\rVert_{L^{2}(\R^{d})}&\lesssim_{d,\gamma,p_{1},q_{1},p_{2},q_{2}}\lVert \nabla F_{j}\rVert_{L^{p_{1}}(\R^{d})}\lVert D_{\R^{d}}^{\gamma-1}G_{j}\rVert_{L^{q_{1}}(\R^{d})}+\lVert D_{\R^{d}}^{\gamma}F_{j}\rVert_{L^{p_{2}}(\R^{d})}\lVert G_{j}\rVert_{L^{q_{2}}(\R^{d})}\\
        &\lesssim_{d,\gamma,p_{1},q_{1},p_{2},q_{2}} \lVert \nabla f\rVert_{L^{p_{1}}(\T^{d})}\lVert D_{\T^{d}}^{\gamma-1}g\rVert_{L^{q_{1}}(\T^{d})}+\lVert D_{\T^{d}}^{\gamma}f\rVert_{L^{p_{2}}(\T^{d})}\lVert g\rVert_{L^{q_{2}}(\T^{d})},
    \end{align*}
    Next, we consider the second term. Calling $\mathcal{K}_{\gamma}:\R^{d}\to \R$ the kernel associated to the operator $D^{\gamma}_{\R^{d}}$\footnote{$\mathcal{K}_{\gamma}$ is such that $D^{\gamma}_{\R^{d}}u=\mathcal{K}_{\gamma}*u$ for every $u\in \mathcal{S}(\R^{d})$.}, we have
    \begin{equation*}
        |\mathcal{K}_{\gamma}(x)|\lesssim_{d,\gamma}|x|^{-d-\gamma}\qquad \forall x\in \R^{d}.
    \end{equation*}
    In particular, using H\"{o}lder inequality, along with $\psi_{j}\equiv 1$ on $Q_{3/4}(\bar x_{j})$ and $\supp G_{j}\subset Q_{1/2}(\bar x_{j})$, we get 
    \begin{align*}
    \lVert F(1-\psi_{j})D^{\gamma}_{\R^{d}}G_{j}\rVert_{L^{2}}
    & \le \left(\sum_{\ell \in \Z^{d}}\int_{(\ell+[0,1]^{d})\setminus Q_{3/4}(\bar x_{j})}\left(F\mathcal{K}_{\gamma}* G_{j}\right)^{2}\right)^{1/2}\\
    &\le \left(\sum_{\ell \in \Z^{d}}\lVert F\rVert_{L^{p_{2}}\left((\ell+[0,1]^{d})\setminus Q_{3/4}(\bar x_{j})\right)}^{2} \lVert \mathcal{K}_{\gamma}*G_{j}\rVert_{L^{q_{2}}\left((\ell+[0,1]^{d})\setminus Q_{3/4}(\bar x_{j})\right)}^{2}\right)^{1/2}\\
    &\lesssim_{d,\gamma} \lVert f\rVert_{L^{p_{2}}(\T^{d})}\left(\sum_{\ell \in \Z^{d}}|\ell|^{-2d-2\gamma}\lVert g\rVert_{L^{q_{2}}(\T^{d})}^{2}\right)^{1/2}\lesssim_{d,\gamma}\lVert D_{\T^{d}}^{\gamma}f\rVert_{L^{p_{2}}(\T^{d})}\lVert g\rVert_{L^{q_{2}}(\T^{d})}.
    \end{align*}
    Therefore:
    $$(I)\lesssim_{d,\gamma,p_{1},q_{1},p_{2},q_{2}} \lVert \nabla f\rVert_{L^{p_{1}}(\T^{d})}\lVert D_{\T^{d}}^{\gamma-1}g\rVert_{L^{q_{1}}(\T^{d})}+\lVert D_{\T^{d}}^{\gamma}f\rVert_{L^{p_{2}}(\T^{d})}\lVert g\rVert_{L^{q_{2}}(\T^{d})}.$$
    
    Finally, let us bound $(II)$. For any $k\in \Z^{d}$ such that $|k|>2\sqrt{d}$, any $x\in [0,1]^{d}$, and any $y\in \supp G_{j}$, we have $|x+k-y|\gtrsim |k|$. Therefore,
    \begin{align*}
        \left|\big[D^{\gamma}_{\R^{d}}(FG_{j})-FD^{\gamma}_{\R^{d}}G_{j}\big](x+k)\right|&=\left|\int_{\R^{d}}K_{\gamma}(x+k-y)\big[F(y)-F(x)\big]G_{j}(y)dy\right|\\&\lesssim |k|^{-d-\gamma}\int_{\R^{d}}\left(|F(x)|+|F(y)|\right)|G_{j}(y)|dy.
    \end{align*}
    Summing over $k$ we obtain
    \begin{equation*}
        \Bigg| \sum_{\substack{k\in \Z^{d} \\ |k|>2\sqrt{d}}}\Big[D^{\gamma}_{\R^{d}}(FG_{j})-FD^{\gamma}_{\R^{d}}G_{j}\Big](x+k)\Bigg|\lesssim \int_{\R^{d}}\left(|F(x)|+|F(y)|\right)|G_{j}(y)|dy \le (|F(x)|+\lVert f\rVert_{L^{p_{2}}(\T^{d})})\lVert g\rVert_{L^{q_{2}}(\T^{d})}.
    \end{equation*}
    We conclude that
    \begin{equation*}
        (II)\lesssim \lVert f\rVert_{L^{p_{2}}(\T^{d})}\lVert g\rVert_{L^{q_{2}}(\T^{d})}\lesssim \lVert D_{\T^{d}}^{\gamma}f\rVert_{L^{p_{2}}(\T^{d})}\lVert g\rVert_{L^{q_{2}}(\T^{d})}.
    \end{equation*}
    Combining the bounds on $(I)$ and $(II)$ we get \eqref{eq:kato-ponce}, so the proof is concluded.
\end{proof}

\begin{rmk}
When $\gamma\in (0,1]$ and $2\le p, q\le \infty$ are such that $1/p+1/q=1/2$, the following refinements of \eqref{eq:kato-ponce} can be obtained as above, using the corresponding Euclidean bounds from \cite[Lemma 5.2]{li2019kato}: 
    \begin{gather}
        \lVert D_{\T^{d}}(fg)-fD_{\T^{d}}g\rVert_{L^{2}(\T^{d})}\lesssim_{d,p,q} \lVert \nabla f\rVert_{L^{p}(\T^{d})}\lVert g\rVert_{L^{q}(\T^{d})},\label{eq:kato-ponce-gamma1}\\[5pt]
        \lVert D_{\T^{d}}^{\gamma}(fg)-fD_{\T^{d}}^{\gamma}g\rVert_{L^{2}(\T^{d})}\lesssim_{d,\gamma,p,q} \lVert D_{\T^{d}}^{\gamma}f\rVert_{L^{p}(\T^{d})}\lVert g\rVert_{L^{q}(\T^{d})}\qquad \text{for $\gamma \in (0,1)$}.
    \end{gather}\fr
\end{rmk}
\begin{rmk}
As a direct consequence of \Cref{lem:kato-ponce} and Gagliardo--Nirenberg (Brezis--Mironescu \cite{brezis2018gagliardo}, in the fractional case) interpolation inequalities, for every $f\in C^{\infty}(\T^{d})$, we deduce 
    \begin{equation}\label{eq:kato-ponce-non-linear-term}
        \lVert D_{\T^{d}}^{\gamma}(\nabla fD_{\T^{d}}^{\beta}f)-\nabla f D_{\T^{d}}^{\gamma+\beta}f\rVert_{L^{2}}\lesssim_{d,\gamma,\beta} \lVert \nabla^{2} f\rVert_{L^{\infty}}\lVert D_{\T^{d}}^{\gamma+\beta-1}f\rVert_{L^{2}},\qquad \forall\gamma>1,\quad \forall\beta\ge 2.
    \end{equation}
Indeed, choosing
    \begin{equation*}
        p=\frac{2(\gamma+\beta-3)}{\gamma-1},\qquad q=\frac{2(\gamma+\beta-3)}{\beta-2},
    \end{equation*}
    by fractional Gagliardo--Nirenberg inequalities \cite{brezis2018gagliardo}, 
    \begin{equation*}
        \lVert D_{\T^{d}}^{\gamma}\nabla f\rVert_{L^{p}}\lVert D_{\T^{d}}^{\beta}f\rVert_{L^{q}}\lesssim \lVert \nabla^{2} f\rVert_{L^{\infty}}^{\frac{2}{q}}\lVert D_{\T^{d}}^{\gamma+\beta-1}f\rVert_{L^{2}}^{\frac{2}{p}} \lVert \nabla^{2} f\rVert_{L^{\infty}}^{\frac{2}{p}}\lVert D_{\T^{d}}^{\gamma+\beta-1}f\rVert_{L^{2}}^{\frac{2}{q}}=\lVert \nabla^{2} f\rVert_{L^{\infty}}\lVert D_{\T^{d}}^{\gamma+\beta-1}f\rVert_{L^{2}}.
    \end{equation*}
    Now \eqref{eq:kato-ponce-non-linear-term} follows from \eqref{eq:kato-ponce}, with the choice $(p_{1},q_{1},p_{2},q_{2})=(2,\infty,p,q)$. \fr
\end{rmk}

\smallskip
\paragraph{Acknowledgements.} M.C., R.C., and X.F.~are supported by the Swiss State Secretariat for Education, Research and Innovation (SERI) under contract number MB22.00034 through the project TENSE. R.C. and X.F. ~are also supported by the Swiss National Science Foundation (SNF grant PZ00P2\_208930). X.F. is further supported  by the AEI project PID2021-125021NA-I00 (Spain).

\bibliography{RefsRieszGF.bib}
\bibliographystyle{alpha}

\end{document}